\renewcommand{\theequation}{\arabic{section}.\arabic{equation}}
\newtheorem{theorem}{Theorem}[section]
\newtheorem{proposition}[theorem]{Proposition}
\newtheorem{lemma}[theorem]{Lemma}
\newtheorem{corollary}[theorem]{Corollary}
\newtheorem{remark}[theorem]{Remark}
\newtheorem{example}[theorem]{Example}
\newcommand{\sH}{{\mathcal H}}
\newcommand{\sK}{{\mathcal K}}
\newcommand{\sM}{{\mathcal M}}
\newcommand{\sF}{{\mathcal F}}
\newcommand{\sU}{{\mathcal U}}
\newcommand{\sV}{{\mathcal V}}
\newcommand{\sG}{{\mathcal G}}
\newcommand{\sL}{{\mathcal L}}
\newcommand{\sE}{{\mathcal E}}
\newcommand{\sD}{{\mathcal D}}
\newcommand{\sY}{{\mathcal Y}}
\newcommand{\sX}{{\mathcal X}}
\newcommand{\sZ}{{\mathcal Z}}
\newcommand{\sR}{{\mathcal R}}
\newcommand{\sA}{{\mathcal A}}
\newcommand{\sS}{{\mathcal S}}
\def\a{{\alpha}}
\def\b{{\beta}}
\def\d{\delta}
\def\de{\Delta}
\def\g{\gamma}
\def\ga{\Gamma}
\def\l{\lambda}
\def\la{\Lambda}
\def\s{\sigma}
\def\si{\Sigma}
\def\t{\tau}
\def\va{\varphi}
\def\o{\omega}
\def\om{\Omega}
\def\tht{\Theta}
\def\z{\zeta}
\def\ts{\times}
\def\iy{\infty}
\def\im{{\rm Im\, }}
\def\kr{{\rm Ker\, }}
\def\col{{\rm col\, }}
\def\lg{\langle}
\def\rg{\rangle}
\def\wh{\widehat}
\def\wt{\widetilde}
\def\Up{\Upsilon}
\newcommand{\ands}{\quad\mbox{and}\quad}
\newcommand{\BC}{{\mathbb C}}
\newcommand{\BD}{{\mathbb D}}
\newcommand{\BT}{{\mathbb T}}
\newcommand{\mat}[2]{\ensuremath{\left[\begin{array}{#1}
#2
\end{array} \right]}}
\newcommand{\sbm}[1]{\left[\begin{smallmatrix} #1\end{smallmatrix}\right]}
\newcommand{\wtil}[1]{\widetilde{#1}}
\newcommand{\what}[1]{\widehat{#1}}
\newcommand{\half}{\frac{1}{2}}
\newcommand{\tu}[1]{\textup{#1}}
\newcommand{\spec}{r_\textup{spec}}
\newcommand{\nn}{\nonumber}
\begin{document}

\title{All solutions  to an operator  Nevanlinna-Pick  interpolation  problem}

\author[A.E. Frazho]{A.E. Frazho}

\address{%
Department of Aeronautics and Astronautics, Purdue University\\
West Lafayette, IN 47907, USA}

\email{frazho@ecn.purdue.edu}


\author[S. ter Horst]{S. ter Horst}

\address{%
Department of Mathematics, Unit for BMI, North-West University\\
Private Bag X6001-209, Potchefstroom 2520, South Africa}

\email{sanne.terhorst@nwu.ac.za}


\author[M.A. Kaashoek]{M.A. Kaashoek}

\address{%
Department of Mathematics,
VU University Amsterdam\\
De Boelelaan 1081a, 1081 HV Amsterdam, The Netherlands}

\email{m.a.kaashoek@vu.nl}

\thanks{This work is based on the research supported in part by the
National Research Foundation of South Africa (Grant Number 90670 and 93406).}

\begin{abstract}
The main results presented in this paper provide a complete and explicit description of all solutions to  the left tangential  operator Nevanlinna-Pick interpolation problem assuming the associated  Pick operator is strictly positive.  The complexity of the solutions is similar to that found in descriptions of the sub-optimal Nehari problem and variation on the Nevanlinna-Pick interpolation problem in the Wiener class that have been obtained through the band method.  The main techniques used to derive the formulas are based on  the theory of co-isometric realizations, and  use the Douglas factorization lemma and  state space calculations.  A new feature is that we do not assume an additional stability assumption on our data, which allows us to view the Leech problem and a large class of commutant lifting problems as special cases. Although the  paper has partly the character of a survey article, all results are  proved in detail and some background material has been added to make the paper accessible to a large audience  including engineers.
\end{abstract}

\subjclass[2010]{Primary 47A57; Secondary 47A48, 47A56, 47A62, 28D20}

\keywords{Nevanlinna-Pick interpolation, linear  fractional transformations, co-isometric systems, operator optimisation  problems, entropy}

\maketitle


\setcounter{section}{0}
\setcounter{equation}{0}

\setcounter{section}{0}
\setcounter{equation}{0}

\section{Introduction}\label{sec:intro}

Nevanlinna-Pick interpolation problems have a long and interesting history which  goes back to the papers of G. Pick \cite{P16} and R. Nevanlinna \cite{N19} for scalar functions. Since then interpolation problems  with metric constraints involving matrix or operator-valued functions, in one or several variables,  has been a topic of intense study with rich applications to system and control theory, prediction theory and geophysics.  See, for example, the introductions  of the books  \cite{FF90, FFGK98},  Chapter 7  in the book \cite{AD08}, the papers  \cite{KKY07} and \cite{Kh98}, several variable papers \cite{am1,am2}, and references therein.

In the present  paper we deal with  the left tangential  Hilbert space operator  Nevanlinna-Pick interpolation problem   in one variable  with the unkowns being opeators.  Our aim is  to give a self-contained presentation combining  the best techniques from commutant lifting \cite{FF90,FFGK98}, the band method \cite{GKW89a,GKW89b,GKW91}, state space analysis \cite{am1,am2,btv},  and other interpolation methods \cite{AAK71,cs,KKY07,Kh98,D2009,Sz.-nk}.  In particular, the technique of extending a partial isometry used in the present paper goes back to work of Sz.-Nagy-Koranyi  \cite{Sz.-nk} and also appears in the so-called ``lurking Isometry''  method of Ball and co-authors \cite{BK70} and Arov-Grossman \cite{AG92}, to name only a few. In \cite{BB08} this problem was considered in the more general setting of the Drury-Arveson space and solved via a modification of the Potapov methodology.

Our proofs are not based on the commutant lifting method, and the approach taken here avoids the complications that arise in describing the solutions when the isometric lifting is not minimal, as is typically the case in the commutant lifting reformulation of the operator interpolation problem. As main tools we use the theory of co-isometric realizations, the Douglas factorization lemma and state space calculations, which are common in mathematical system theory.

As a by-product of   our method we  present  in Subsection \ref{Assec:co-iso} an alternative way to construct co-isometric realizations of Schur class functions,  which seems to be new and could be of interest in the multi-variable case. In the appendix we also give an alternative proof of  the  Beurling-Lax-Halmos theorem and present a new approach to the maximum entropy principle. We made an effort for  the paper    to be readable by someone whose has  an elementary knowledge of Hilbert space operator theory with state space techniques from systems and control theory.  On the other hand in order to achieve  self-containedness, the appendix provides  background material  that is used throughout the paper.

Let us now introduce the Hilbert space operator Nevanlinna-Pick interpolation problem we shall be dealing with and review some of our main new results. The data  for the problem is a triplet of bounded linear Hilbert space operators  $\{W, \wt{W}, Z\}$, where, for given Hilbert spaces $\sZ$, $\sY$ and $\sU$, we have
\[
Z:\sZ\to\sZ,\quad
W: \ell_+^2(\sY)\to\sZ,\quad
\wt{W}: \ell_+^2(\sU)\to\sZ,
\]
with $\ell^2_+(\sY)$ (respectively $\ell^2_+(\sU)$) the Hilbert space of square summable  unilateral  sequences of vectors from $\sY$ (respectively $\sU$), and where the following intertwining relations are satisfied
\begin{equation}\label{data1}
ZW=WS_\sY \ands
Z\wt{W}=\wt{W}S_\sU.
\end{equation}
Here $S_\sU$ and $S_\sY$ are the unilateral forward shift operators on $\ell_+^2(\sU)$ and $\ell_+^2(\sY)$, respectively.

We say that $F$ is a \emph{solution to the operator Nevanlinna-Pick (LTONP for short) interpolation problem  with data set}   $\{W, \wt{W}, Z\}$ if
\begin{equation}\label{defNP}
F\in \sS(\sU, \sY) \ands WT_F=\wt{W}.
\end{equation}
Here $T_F$ is the Toeplitz operator with defining function $F$ mapping $\ell_+^2(\sU)$ into $\ell_+^2(\sY)$. Moreover, $\sS(\sU, \sY)$ is the Schur class of operator-valued functions  whose values map  $\sU$ into $\sY$, that is, the set of all operator-valued analytic functions  $F$  in the open unit disc $\BD$  whose values map  $\sU$ into $\sY$ such that $\|F\|_\infty=\sup\{\|F(\lambda)\|:\lambda \in \mathbb{D}\} \leq 1$.

Note that this class of Nevanlinna-Pick interpolation problems has the same point evaluation interpolation condition as the one considered in Section 1.4 of \cite{FFGK98}, but is larger in the sense that, unlike in \cite{FFGK98}, we do not assume the spectral radius of $Z$ to be strictly less that one. To see that the point evaluation condition coincides with that of \cite{FFGK98}, note that the fact that $W$ and $\wt{W}$ satisfy \eqref{data1} implies that they are the controllability operators (cf., \cite[page 20]{FFGK98}) of the pairs $\{Z,B\}$ and $\{Z,\wt{B}\}$, respectively, where $B$ and $\wt{B}$ are the operators given by
\begin{equation}\label{def:BBtilde}
B=WE_\sY:\sY\to \sZ\quad\mbox{and}\quad \wt{B}=\wt{W}E_\sU:\sU\to \sZ.
\end{equation}
Here $E_\sY$  and $E_\sU$ are the operators embedding $\sY$ and $\sU$, respectively, into the first  component of  $\ell_+^2(\sY)$ and $\ell_+^2(\sU)$, respectively; see the final paragraph of this section for more details. Then for $F\in \sS(\sU,\sY)$, the operator $WT_F$ is also a controllability operator, namely for the pair $\{Z,(BF)(Z)_{\textup{left}}\}$, where
\[
(BF)(Z)_{\textup{left}}=\sum_{k=0}^\infty Z^k B F_k,
\]
with   $F_0, F_1, F_2, \ldots$ being  the Taylor coefficients of $F$  at zero. Then $WT_F=\wt{W}$ is equivalent to the left tangential operator argument condition $(BF)(Z)_{\textup{left}}=\wt{B}$.

Although the LTONP interpolation problem has a simple formulation, it covers two relevant special cases that will be discussed in Sections \ref{CL-ONP} and \ref{sec:Leech} below. In both cases it is essential that we do not demand that the spectral radius is strictly less than one. In Section \ref{CL-ONP} we discuss a large class of commutant lifting problems that can be written in the form of a LTONP interpolation problem. Conversely, any LTONP interpolation problem can be rewritten as a commutant lifting problem from this specific class. Hence the problems are equivalent in this sense. In this case, the operator $Z$ will be a compression of a unilateral forward shift operator and will typically not have spectral radius less than one. The connection with commutant  lifting is already observed in \cite[Section II.2]{FFGK98} and also appears  in the more general setting of the Drury-Arveson space in \cite{BB08}.

The second special case, discussed in Section \ref{sec:Leech}, is the Leech problem. This problem, and  its  solution, originates from a paper by R.B. Leech, which was written in 1971-1972, but published only recently \cite{L14}; see \cite{KR14} for an account of the history behind this paper. The Leech problem is another nontrivial example of a LTONP interpolation problem for which the operator $Z$ need not have spectral radius less than  one, in fact, in this case, the operator $Z$ is equal to a unilateral forward shift operator and hence its spectral radius is equal to one. Our analysis of the rational Leech problem \cite{FtHK14a, FtHK14b, FtHK15} inspired us to study  in detail  the class of LTONP interpolation problems. It led to new results and  improvements on our earlier results on the Leech problem.

Next we will present our main results. This requires some preparation.  Let $\{W, \wt{W}, Z\}$ be a LTONP data set.  Set $P=WW^*$ and $\wt{P}= \wt{W}\wt{W}^*$.   The intertwining relations in \eqref{data1} imply  that
\begin{align}
&P-ZPZ^*=BB^*, \  \mbox{where $B=WE_\sY:\sY\to \sZ$}, \label{basicid1a}\\
&\wt{P}-Z\wt{P}Z^*=\wt{B}\wt{B}^*, \  \mbox{where  $\wt{B}=\wt{W}E_\sU:\sU\to \sZ$}. \label{basicid1b}
\end{align}
Here,  as before (see \eqref{def:BBtilde}),    the maps  $E_\sY$  and $E_\sU$ are the  operators embedding  $\sY$ and $\sU$, respectively, into the first  component of  $\ell_+^2(\sY)$ and $\ell_+^2(\sU)$, respectively; see the final paragraph of this section for more details.   The operator $\la=P-\wt{P}$  is called the \emph{Pick operator} associated with the data set $\{W, \wt{W}, Z\}$.

If the LTONP interpolation problem is solvable, then necessarily the  Pick operator is non-negative. Indeed, assume there exists a function $F$  in $S(\sU,\sY)$ satisfying $WT_F = \widetilde{W}$. Then  $T_F$ is a contraction so that
\[
\lg\wt{P} x,x\rg = \|\wt{W}^* x\|^2 = \|T_F^* W^*x\|^2
\leq \|W^* x\|^2 = \lg P x,x\rg, \quad x\in \sZ.
\]
Hence $\la = P - \widetilde{P} \geq 0$.

The converse is also true. If the Pick operator is non-negative, then the  LTONP interpolation problem is solvable   (see Theorem \ref{thm:allsol1} in the next section). In this paper our aim is to describe all solutions, in particular for the case when  $\la$  is strictly positive.

To state our first main theorem we need two auxiliary operators.  Assume $P=WW^*$ is strictly positive, which is the case if $\la$ is strictly positive. Then there exist a Hilbert space $\sE$ and a pair of operators  $C:\sZ\to \sE$ and $D:\sY\to \sE$ such that
\begin{align}
&\begin{bmatrix}
  D & C \\
  B & Z
\end{bmatrix}
\begin{bmatrix}
I_\sY & 0 \\
  0 &  P
\end{bmatrix}
\begin{bmatrix}
D^* & B^* \\
  C^* & Z^*
\end{bmatrix}
= \begin{bmatrix}
  I_\sE  & 0 \\
  0 & P
\end{bmatrix},\label{semiunit1}\\[.2cm]
&\begin{bmatrix}
D^* &   B^* \\
  C^* & Z^*
\end{bmatrix}
\begin{bmatrix}
 I_\sE  & 0  \\
  0 &P^{-1}
\end{bmatrix}
\begin{bmatrix}
  D & C \\
  B & Z
\end{bmatrix}
 = \begin{bmatrix}
  I _\sY & 0  \\
  0 & P^{-1}
\end{bmatrix}. \label{semiunit2}
\end{align}
We shall call such a pair  $C$ and $D$  an \emph{admissible pair  of complementary operators} determined by the data set $\{W, \wt{W}, Z\}$.  In \eqref{semiunit1} and \eqref{semiunit2} the symbols $I_\sE$ and $I_\sY$ denote the identity operators on the spaces $\sE$ and $\sY$, respectively. In general,   when it is clear from the context  on which space the identity operator is acting,  the subscript is omitted and we simply write~$I$.

An application of  Lemma XXVIII.7.1   in \cite{GGK2} shows that admissible pairs exist and that such a pair is unique up to multiplication by a unitary operator from the left. There are various  ways to construct admissible pairs in a concrete way,  also in a multivariable setting (see, e.g., \cite{BBF07}). In this introduction we  mention only one way  to obtain such a pair of operators,  namely as follows.   Since  $Z W = W S_\sY$, the space    $\kr W$ is an invariant subspace for the forward shift $S_{\sY}$. But then, by   the Beurling-Lax-Halmos theorem,    there exists an inner function $\tht\in \sS(\sE, \sU)$, for some Hilbert space $\sE$, such that  $\kr W=\im T_\tht$.   Now put
\begin{equation}
\label{defCD}
C=E_\sE^* T_\tht^* S_\sY W^* P^{-1}: \sZ\to \sE \ands  D=\tht(0)^*:\sY\to \sE.
\end{equation}
Then $C$ and $D$ form  an admissible pair  of complementary operators. Another  method to construct   admissible pairs of complementary operators, which has the advantage that it   can be readily used in Matlab in the finite dimensional case, is given   Section \ref{Assec:CD}.  We are now ready to state our  first main result.

\begin{theorem}\label{thm:main1}  Let $\{W, \wt{W}, Z\}$ be a  data set for a LTONP interpolation problem. Assume $\la=WW^*-\wt{W}\wt{W}^* $ is strictly positive. Then $P=WW^*$ is strictly positive and the operator $\la^{-1}-P^{-1}$ is  non-negative, the operator $Z^*$ is pointwise stable and  its spectral radius is less than or equal to one.  Furthermore,  all solutions to  the LTONP interpolation problem are given by
\begin{equation}
\label{allsol1a}
F(\lambda)= \Big(\Upsilon_{11}(\l)  X(\l)+\Upsilon_{12}(\l)\Big)\Big(\Upsilon_{21}(\l) X(\l) +\Upsilon_{22}(\l)\Big)^{-1},\quad \l\in\BD
\end{equation}
where the free parameter $X$ is an arbitrary Schur class function,  $X\in \sS(\sU, \sE)$, and the coefficients in \eqref{allsol1a}  are the analytic functions on $\BD$  given by
\begin{align}
\Upsilon_{11}(\l)&=D^*Q_\circ  +\l B^*(I -\l Z^*)^{-1}\la^{-1}{P}C^* Q_\circ, \label{defUp11} \\
\Upsilon_{12}(\l)&=B^*(I -\l Z^*)^{-1}\la^{-1}\wt{B}R_\circ,\label{defUp12}
\\
\Upsilon_{21}(\l)&=\l \wt{B}^*(I -\l Z^*)^{-1}\la^{-1} PC^*Q_\circ, \label{defUp21} \\
\Upsilon_{22}(\l)&=R_\circ  +\wt{B}^*(I -\l Z^*)^{-1}\la^{-1}\wt{B} R_\circ.\label{defUp22}
\end{align}
Here the operators $B$ and $\wt{B}$ are given by \eqref{basicid1a} and \eqref{basicid1b}, respectively, the operators $C:\sZ\to \sE$ and $D:\sY\to \sE$  form an admissible   pair of complementary  operators, and $Q_\circ$ and  $R_\circ$ are the strictly positive operators given by
\begin{equation}\label{defQR0}
\begin{aligned}
Q_\circ&=\left(I_\sE+CP(\la^{-1}-P^{-1})PC^*\right)^{-\frac{1}{2}}: \sE\to \sE,\\ R_\circ&=(I_\sU+\wt{B}^*\la^{-1}\wt{B})^{-\frac{1}{2}}: \sU\to \sU.
\end{aligned}
\end{equation}
The parameterization  given by \eqref{allsol1a}  is proper, that is, the map $X\mapsto F$ is one-to-one.
\end{theorem}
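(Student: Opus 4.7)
The plan is to split the proof into three stages: verify the preliminary spectral and positivity statements; identify the coefficient matrix $\Upsilon$ in \eqref{defUp11}--\eqref{defUp22} as a $J$-inner transfer function associated with the admissible pair $(C,D)$; and establish that the linear fractional transformation \eqref{allsol1a} is a bijection from $\sS(\sU,\sE)$ onto the full set of LTONP solutions via a Douglas-factorization argument.

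For the preliminary claims, I would first observe that strict positivity of $\la$ together with $\wt P\geq 0$ yields $P=\la+\wt P\geq\la>0$, so $P$ is invertible; operator monotonicity of inversion on the strictly positive cone then gives $P^{-1}\leq\la^{-1}$, i.e., $\la^{-1}-P^{-1}\geq 0$, which is exactly what is needed for $Q_\circ$ in \eqref{defQR0} to be well defined. Next, I would rewrite the Stein identity \eqref{basicid1a} as $P\geq ZPZ^*$ to conclude that $P^{-1/2}ZP^{1/2}$ is a contraction, and hence $\spec(Z^*)=\spec(Z)\leq 1$. Pointwise stability of $Z^*$ then follows by an iterative argument from the strict positivity of $\la$ combined with \eqref{basicid1a}--\eqref{basicid1b}.

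The central construction will be organised around the observation that, by \eqref{semiunit1}--\eqref{semiunit2}, the block operator $\sbm{D & C\\ B & Z}$ defines a co-isometric system node in appropriately weighted inner products on the state space $\sZ$; its characteristic function $\tht(\l)=D+\l C(I-\l Z)^{-1}B$ is an inner Schur class function, and for the choice \eqref{defCD} satisfies $\kr W=\im T_\tht$. I would then recognise the coefficient matrix $\Upsilon$ given by \eqref{defUp11}--\eqref{defUp22} as the transfer function of an auxiliary realization with state operator $Z^*$ and state-space constants built from $B,\wt B,C,D,\la,P$ together with the normalising constants $Q_\circ,R_\circ$. A direct state-space calculation using the Stein identities \eqref{basicid1a}--\eqref{basicid1b} and the unitarity relations \eqref{semiunit1}--\eqref{semiunit2} then shows that $\Upsilon$ is $J$-inner for the signature $J=\diag(I_\sY,-I_\sU)$.

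The $J$-inner property gives directly that $\Upsilon_{21}(\l)X(\l)+\Upsilon_{22}(\l)$ is invertible on $\BD$ for every $X\in\sS(\sU,\sE)$ and that $F$ defined by \eqref{allsol1a} belongs to $\sS(\sU,\sY)$. To verify the interpolation condition $WT_F=\wt W$, I would expand both sides in Taylor series and collapse the countably many identities on $\ell_+^2$ to a single identity on $\sZ$ using the intertwinings $ZW=WS_\sY$ and $Z\wt W=\wt WS_\sU$. For the converse and properness, given a solution $F$ I would apply the Douglas factorization lemma to recover a unique $X\in\sS(\sU,\sE)$ that inverts the LFT. The hardest step will be this converse direction: the algebraic inversion of the LFT is routine, but showing that the resulting parameter is a Schur class function rather than merely a bounded analytic function will require the fine spectral hypotheses---pointwise stability of $Z^*$ and strict positivity of $\la^{-1}-P^{-1}$---which are precisely the ingredients needed to apply Douglas factorization on $\ell_+^2(\sU)$ and conclude that the quotient is contractive and analytic on $\BD$.
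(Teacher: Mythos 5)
Your proposal follows a Potapov/$J$-inner route that is genuinely different from the paper's. The paper never argues forward from the $J$-inner property of $\Upsilon$; instead it first proves a Redheffer-type parametrization (Theorem \ref{thm:allsol1}) via observable, co-isometric realizations --- the key technical step being that every solution admits a $\la$-preferable observable co-isometric realization, and that these are classified (up to unitary equivalence) by Schur functions $G$ with $G(0)|\sF=\o$ --- and then converts the Redheffer form to the linear fractional form by a long but elementary algebraic computation (Section \ref{sec:strictlypos}, using Lemma \ref{L:tau12} and Corollary \ref{C:Gform}). The $J$-contractivity of $\Upsilon$ (your central tool) is in the paper derived afterwards as a consequence, in Theorem \ref{T:J-cont}, not used as input.

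The preliminary spectral/positivity steps in your stage one match Lemma \ref{L:useobs1} and are fine. The genuine gap is in your stage three, the converse direction. You write that, given a solution $F$, you would ``apply the Douglas factorization lemma to recover a unique $X\in\sS(\sU,\sE)$ that inverts the LFT.'' The Douglas lemma gives you, for each fixed $\l$, a contraction between appropriate closures of ranges, but it does not produce analyticity of $\l\mapsto X(\l)$, nor does it tell you that the contraction lands in the full space $\sE$ rather than a $\l$-dependent subspace. The paper resolves exactly this by identifying the solution set with $\la$-preferable co-isometric realizations (Lemmas \ref{lem:la-prefer1}--\ref{lem:la-prefer2}) and invoking the uniqueness part of the realization theorem (Theorem \ref{thm:co}); your sketch has nothing playing this role, so you have no proof that the LFT is onto the solution set, which is the hard half of the theorem. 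A second, smaller gap: $J$-contractivity of $\Upsilon$ alone shows $\Upsilon_{22}(\l)$ is bounded below, but in infinite dimensions that does not yield invertibility; the paper proves invertibility of $\Upsilon_{22}(\l)$ by exhibiting an explicit inverse (formula \eqref{invu22} in Proposition \ref{prop:Ups22a}). Finally, a slip: the characteristic inner function associated with the admissible pair is $\tht(\l)=D^*+\l B^*(I-\l Z^*)^{-1}C^*$ (see Proposition \ref{Aprop:CD2}), not $D+\l C(I-\l Z)^{-1}B$ as you wrote.
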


Note that \eqref{allsol1a} implicitly contains the statement that the operator $\Upsilon_{21}(\l)X(\l)+\Upsilon_{22}(\l)$ is invertible for each $\l \in \BD$. In particular, taking $X\equiv 0$ in \eqref{allsol1a}, we see that under the conditions of the above theorem, the operator $\Upsilon_{22}(\l)$ is invertible for each $\l\in \BD$.

Furthermore, setting $X\equiv 0$ in \eqref{allsol1a}, we obtain the so-called central solution $F_\circ(\l)=\Upsilon_{12}(\l)\Upsilon_{22}(\l)^{-1}$, which is introduced, in a different way, in Remark \ref{rem:centrsol}. See also  Theorem \ref{thm:centrsol} and Proposition \ref{quotientf}. In  Section \ref{sec:max}  we show that the central solution is the unique  Schur class function that maximizes a notion of entropy among all solutions; see Theorem \ref{thm:maxent} below.

By Theorem \ref{thm:main1} the set of all solutions is parameterised by the Schur class $\sS(\sU, \sE)$. Hence the LTONP interpolation problem has a single solution if and only if  $\sE=\{0\}$; [we assume that $\sU$ is non-trivial]. On the other hand we know that $\sE$ can be chosen in such a way that $\kr W= \im T_\Theta$, where $\Theta\in S(\sE, \sU)$  is an inner function. Thus $\sE=\{0\}$ holds if and only if  $\kr W=\{0\}$, i.e., $W$ is one-to-one. On the other hand, since we assume $\la$ to be strictly positive, $WW^*$ is also strictly positive. Thus there exists a single solution if and only if $W$ is invertible.

In Section \ref{sec:Leech} we specialize Theorem \ref{thm:main1} for the Leech problem case, yielding Theorem~\ref{thm:Leech}   below, which gives a  generalization and a further improvement of the description of all solutions of the rational Leech problem given in  \cite{FtHK15}.

The explicit  formulas for  the functions $\Upsilon_{ij}$, $1\leq i,j \leq 2$,  given in \eqref{defUp11}--\eqref{defUp22} are  new. The formulas are of the same complexity as the  corresponding formulas for the coefficients appearing in the linear fractional representation of all solutions of the sub-optimal Nehari problem  presented in the classical  Adamjan-Arov-Kre\v{\i}n   paper \cite{AAK71}. See also  Theorem XXXV.4.1 in \cite{GGK2} where the linear fractional representation  of all solutions of the sub-optimal Nehari problem in the Wiener class setting is obtained as an application of the band method \cite{GKW89a}  and \cite{GKW89b}.  The variation of the band method for solving  extension problems presented in \cite{KZ99} and the related unpublished manuscript \cite{K99} inspired us  to  derive the formulas in Theorem \ref{thm:main1}.

When the inner function $\tht$ determined by  $\kr W=\im T_\tht$ is bi-inner, then  the LTONP  interpolation  problem is equivalent to a Nehari extension problem. But even in this special case, it requires some work to derive the formulas \eqref{defUp11} -- \eqref{defUp22}; cf., \cite[Section XXXV.5]{GGK2}.

The next theorem is an addition to Theorem \ref{thm:main1} which will allow us to derive    further properties for  the coefficients   $\Upsilon_{ij}$, $1\leq i,j \leq 2$, in the linear fractional representation \eqref{allsol1a}; see Proposition \ref{prop:propsUps} below and Section \ref{sec:propsUps}. The theorem also shows that the functions  \eqref{defUp11} -- \eqref{defUp22} are the natural  analogs of the formulas appearing \cite[Theorem XXXV.4.1]{GGK2} for the Nehari problem.

\begin{theorem}\label{thm:main1a} Let $\{W, \wt{W}, Z\}$ be a  data set for a  LTONP interpolation problem. Assume $\la=WW^*-\wt{W}\wt{W}^* $ is strictly positive. Then $P=WW^*$ is strictly positive, the operator
\begin{equation}\label{defA}
A=W^*{P^{-1}}\wt{W}:\ell_+^2(\sU)\to \ell_+^2(\sY)
\end{equation}
is a strict contraction,  and  the functions  defined by \eqref{defUp11} -- \eqref{defUp22}  are also given by
\begin{align}
\Upsilon_{11}(\l)  &= D^*Q_\circ + \lambda  E_{\sY}^*
(I - \l S_{\sY}^*)^{-1}(I - A A^*)^{-1}   W^*C^* Q_\circ,\label{Upsilon11}\\
\Upsilon_{12}(\l)   &=E_{\sY}^*\big(I - \l  S_{\sY}^*\big)^{-1} A (I - A^*A)^{-1}E_{\sU}R_\circ,\label{Upsilon12}\\
\Upsilon_{21}(\l)    &=   \lambda E_{\sU}^*
(I  -\l  S_{\sU}^*)^{-1} A^* (I - AA^*)^{-1}  W^*C^*Q_\circ, \label{Upsilon21}\\
\Upsilon_{22}(\l)  &=
E_{\sU}^*\big(I - \l  S_{\sU}^*\big)^{-1} (I - A^*A)^{-1}E_{\sU}R_\circ. \label{Upsilon22}
\end{align}
Here, as in the preceding theorem, $C:\sZ\to \sE$ and $D:\sY\to \sE$  form an admissible   pair of complementary  operators determined by the data. Furthermore, the strictly positive operators $Q_\circ$ and $R_\circ$ defined by \eqref{defQR0} are also given by
\begin{align}
Q_\circ  &= \Big(I_\sE + C W A \big(I - A^*A\big)^{-1}A^* W^* C^*\Big)^{-\frac{1}{2}},\label{Q0}\\
R_\circ & = \Big(E_{\sU}^*\big(I - A^*A\big)^{-1} E_{\sU}\Big)^{-\frac{1}{2}}. \label{R0}
\end{align}
\end{theorem}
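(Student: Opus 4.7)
My approach is a direct computation: rewrite each of (\ref{defUp11})--(\ref{defUp22}) by first using the intertwining relations to move shift resolvents to the outside, and then applying the Sherman--Morrison--Woodbury identity to re-express $\la^{-1}$ in terms of $P^{-1}$ and $A$.

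First I would settle the two positivity claims. Since $\wt{W}\wt{W}^*\ge 0$, the assumption $\la>0$ forces $P\ge\la>0$, so $P^{-1}$ is bounded. Combining $\la\ge\vp I$ for some $\vp>0$ with the boundedness of $P$ gives $P^{-1/2}\wt{W}\wt{W}^*P^{-1/2}\le I-\vp P^{-1}$, which has norm strictly less than $1$. Since $WW^*=P$ yields $A^*A=\wt{W}^*P^{-1}\wt{W}=(P^{-1/2}\wt{W})^*(P^{-1/2}\wt{W})$, the operator $A$ is a strict contraction. The identity $WA=\wt{W}$ (and hence $A^*W^*=\wt{W}^*$), immediate from $WW^*P^{-1}\wt{W}=\wt{W}$, will be used repeatedly.

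Next I would pull out the shift resolvents. From $ZW=WS_\sY$ one gets $W^*(I-\l Z^*)^{-1}=(I-\l S_\sY^*)^{-1}W^*$, and similarly for $\wt{W}$. Combined with $B=WE_\sY$ and $\wt{B}=\wt{W}E_\sU$, this converts each of (\ref{defUp11})--(\ref{defUp22}) into $E_\sY^*$ or $E_\sU^*$, followed by the appropriate shift resolvent, followed by one of the operator products $W^*\la^{-1}\wt{W}$, $\wt{W}^*\la^{-1}\wt{W}$, $W^*\la^{-1}P$, or $\wt{W}^*\la^{-1}P$. The Sherman--Morrison--Woodbury identity gives
\[
\la^{-1}=P^{-1}+P^{-1}\wt{W}(I-A^*A)^{-1}\wt{W}^*P^{-1},
\]
and inserting $WA=\wt{W}$ together with the swap identity $A(I-A^*A)^{-1}=(I-AA^*)^{-1}A$ yields
\begin{align*}
W^*\la^{-1}\wt{W}&=A(I-A^*A)^{-1}, & \wt{W}^*\la^{-1}\wt{W}&=(I-A^*A)^{-1}A^*A,\\
W^*\la^{-1}P&=(I-AA^*)^{-1}W^*, & \wt{W}^*\la^{-1}P&=(I-A^*A)^{-1}\wt{W}^*,
\end{align*}
together with $P(\la^{-1}-P^{-1})P=WA(I-A^*A)^{-1}A^*W^*$.

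Substituting these identities into the rewritten forms of (\ref{defUp11})--(\ref{defUp22}) will produce (\ref{Upsilon11})--(\ref{Upsilon22}) directly. The only subtlety is the loose $R_\circ$ in $\Upsilon_{22}$: since $S_\sU^*E_\sU=0$ one has $E_\sU^*(I-\l S_\sU^*)^{-1}E_\sU=I_\sU$, so this $R_\circ$ can be absorbed into the resolvent expression. The alternative expressions for $Q_\circ$ and $R_\circ$ then follow immediately: (\ref{Q0}) from the last displayed identity above, and (\ref{R0}) from
\[
I_\sU+\wt{B}^*\la^{-1}\wt{B}=I_\sU+E_\sU^*(I-A^*A)^{-1}A^*A\,E_\sU=E_\sU^*(I-A^*A)^{-1}E_\sU,
\]
using $(I-A^*A)^{-1}=I+(I-A^*A)^{-1}A^*A$ and $E_\sU^*E_\sU=I_\sU$. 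I do not foresee a real obstacle; the whole argument is algebraic bookkeeping, and the only thing to get right is tracking whether $A$ versus $A^*$, and whether $(I-A^*A)^{-1}$ versus $(I-AA^*)^{-1}$, sits on the left or right of each product.
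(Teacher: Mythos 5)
Your proposal is correct and follows essentially the same route as the paper's proof: pull the resolvents out via the intertwining relations, then re-express $\la^{-1}$ in terms of $P^{-1}$ and $A$ via a Woodbury-type identity. The paper derives the four key operator identities from the push-through identity $(I-ML)^{-1}=I+M(I-LM)^{-1}L$ applied to obtain $(I-A^*A)^{-1}=I+\wt{W}^*\la^{-1}\wt{W}$, rather than starting from the explicit Sherman--Morrison--Woodbury formula for $\la^{-1}$, but these are equivalent manipulations.
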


In the following result we list a few properties of the coefficients of the linear fractional transformation \eqref{allsol1a}.

\begin{proposition}\label{prop:propsUps}  Let $\{W, \wt{W}, Z\}$ be a  data set for a LTONP interpolation problem. Assume $\la=WW^*-\wt{W}\wt{W}^* $ is strictly positive. Then  the functions $\Upsilon_{ij}$, $1\leq  i,j\leq 2$, given by  \eqref{defUp11}--\eqref{defUp22}   are $H^2$-functions. More precisely, we have
\begin{align}
&\Upsilon_{11}(\cdot) x\in H^2(\sY)  \ands \Upsilon_{21}(\cdot) x\in H^2(\sU), \quad x\in \sE, \label{Hardy1}\\
&\Upsilon_{12}(\cdot) u\in H^2(\sY)  \ands \Upsilon_{22}(\cdot) u\in H^2(\sU), \quad u\in \sU. \label{Hardy2}
\end{align}
Moreover,  the functions $\Upsilon_{i,j}$ form a $2 \times 2$ $J$-contractive operator function, that is,  for all $\l\in\BD$ we have
\begin{equation}\label{Jcontractive}
\begin{bmatrix}
\Upsilon_{11}(\l)^*  &   \Upsilon_{21}(\l) ^*\\
\Upsilon_{12}(\l)^*  &   \Upsilon_{22}(\l) ^*
  \end{bmatrix}  \mat{cc}{I_\sY&0\\0&-I_{\sU}}
  \mat{cc}{\Up_{11}(\l)&\Up_{12}(\l)\\ \Up_{21}(\l) & \Up_{22}(\l)}
  \leq  \mat{cc}{I_\sE&0\\0&-I_{\sU}},
\end{equation}
with equality for each $\l$ in the intersection of the resolvent set of $Z$ and the unit circle  $\BT$. Furthermore, $\Upsilon_{22}(\l)$ is invertible for each $\l\in \BD$ and $\Upsilon_{22}(\l)^{-1}$ is a Schur class function.
\end{proposition}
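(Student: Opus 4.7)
The plan is to treat the three assertions—the $H^2$ membership, the $J$-contractive inequality, and the invertibility/Schur-class property of $\Upsilon_{22}^{-1}$—in turn. The first follows immediately from the alternative formulas in Theorem~\ref{thm:main1a}; the second from a Lyapunov identity that realizes $\Upsilon$ as the transfer function of a $J$-isometric system built from the admissible pair and the Pick operator $\la$; the third from combining the $J$-contractivity with the invertibility already noted after Theorem~\ref{thm:main1}.

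For \eqref{Hardy1}--\eqref{Hardy2}, I would use the representations \eqref{Upsilon11}--\eqref{Upsilon22}. The map $g \mapsto E_\sH^*(I - \l S_\sH^*)^{-1} g$ sends a vector $g=(g_0,g_1,\ldots) \in \ell_+^2(\sH)$ to the $H^2(\sH)$-function $\sum_{k\geq 0} g_k \l^k$ and is a unitary identification. Since $A$ is a strict contraction by Theorem~\ref{thm:main1a}, the operators $(I - A^*A)^{-1}$ on $\ell_+^2(\sU)$ and $(I - AA^*)^{-1}$ on $\ell_+^2(\sY)$ are bounded. Feeding the vectors $(I-AA^*)^{-1} W^* C^* Q_\circ x$, $A^*(I-AA^*)^{-1} W^* C^* Q_\circ x$, $A(I-A^*A)^{-1} E_\sU R_\circ u$ and $(I - A^*A)^{-1} E_\sU R_\circ u$ through this identification yields \eqref{Hardy1} and \eqref{Hardy2}.

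For the $J$-contractivity \eqref{Jcontractive}, I would read off from \eqref{defUp11}--\eqref{defUp22} the realization $\Upsilon(\l) = D_M + \l C_M(I - \l Z^*)^{-1} B_M$ with state operator $Z^*$ and coefficients
\[
B_M = \begin{bmatrix} \la^{-1} P C^* Q_\circ & Z^* \la^{-1} \wt{B} R_\circ \end{bmatrix}, \qquad
C_M = \begin{bmatrix} B^* \\ \wt{B}^* \end{bmatrix},
\]
\[
D_M = \begin{bmatrix} D^* Q_\circ & B^*\la^{-1}\wt{B} R_\circ \\ 0 & R_\circ^{-1} \end{bmatrix}.
\]
The heart of the argument is to verify that the associated system matrix $\mathbf{M} = \sbm{Z^* & B_M \\ C_M & D_M}$ is $J$-isometric with gramian $\la$, that is,
\[
\mathbf{M}^* \begin{bmatrix} \la & 0 \\ 0 & J_1 \end{bmatrix} \mathbf{M} = \begin{bmatrix} \la & 0 \\ 0 & J_2 \end{bmatrix},
\]
with $J_1 = \sbm{I_\sY & 0 \\ 0 & -I_\sU}$ and $J_2 = \sbm{I_\sE & 0 \\ 0 & -I_\sU}$. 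The $(1,1)$-block reduces to $\la - Z\la Z^* = BB^* - \wt{B}\wt{B}^*$, which is \eqref{basicid1a} minus \eqref{basicid1b}; the remaining three blocks follow from the admissibility identities \eqref{semiunit1}, combined with the defining formulas \eqref{defQR0} for $Q_\circ$ and $R_\circ$, in particular the identity $R_\circ^{-2} = I_\sU + \wt{B}^*\la^{-1}\wt{B}$. Once this $J$-isometry is established, a standard telescoping argument (using $Z^*(I - \l Z^*)^{-1} = \tfrac{1}{\l}((I - \l Z^*)^{-1} - I)$) yields the explicit formula
\[
J_2 - \Upsilon(\l)^* J_1 \Upsilon(\l) = (1 - |\l|^2)\, B_M^* (I - \bar\l Z)^{-1} \la (I - \l Z^*)^{-1} B_M.
\]
Because $\la$ is strictly positive and $\spec(Z) \leq 1$, the right-hand side is non-negative on $\BD$ and vanishes precisely for $\l \in \BT \cap \rho(Z)$, establishing \eqref{Jcontractive} with the asserted boundary equality.

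The invertibility of $\Upsilon_{22}(\l)$ for each $\l \in \BD$ has already been recorded in the paragraph following Theorem~\ref{thm:main1} by setting $X \equiv 0$ in \eqref{allsol1a}. To see that $\Upsilon_{22}^{-1}$ lies in $\sS(\sU,\sU)$, I test \eqref{Jcontractive} on vectors $\sbm{0 \\ u}$ with $u \in \sU$ to obtain $\|\Upsilon_{22}(\l) u\|^2 \geq \|u\|^2 + \|\Upsilon_{12}(\l) u\|^2 \geq \|u\|^2$; combined with invertibility this forces $\|\Upsilon_{22}(\l)^{-1}\| \leq 1$ for every $\l \in \BD$, so $\Upsilon_{22}^{-1}$ is Schur class. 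The main obstacle I expect is the block-by-block verification of the $J$-isometry of $\mathbf{M}$: it is pure bookkeeping, but each of the four blocks requires a different combination of the admissibility identities \eqref{semiunit1}, the Stein relations \eqref{basicid1a}--\eqref{basicid1b} and the definitions of $Q_\circ$ and $R_\circ$.
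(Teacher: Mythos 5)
Your proposal is correct and follows essentially the paper's own route: the paper proves \eqref{Hardy1}--\eqref{Hardy2} from the rewritten formulas \eqref{defUp11a}--\eqref{defUp22a} (equivalently, from Theorem~\ref{thm:main1a}), establishes \eqref{Jcontractive} in Theorem~\ref{T:J-cont} via the realization of Lemma~\ref{L:Uprealform} together with the three identities \eqref{Ids1}--\eqref{Ids2}, and obtains the Schur-class property of $\Upsilon_{22}^{-1}$ from the $J$-contractivity exactly as you do (this is the argument in Remark~\ref{rem:centrsol}'s neighboring Remark, i.e., the remark after Theorem~\ref{T:J-cont}). Your only presentational departure is to bundle the paper's three block identities into a single ``$J$-isometry with gramian $\la$'' identity for the system matrix $\mathbf{M}=\sbm{Z^*&B_M\\ C_M&D_M}$; after multiplying Lemma~\ref{L:Uprealform}'s $\widehat{B},\widehat{D}$ on the right by $\sbm{Q_\circ&0\\0&R_\circ}$, this is literally equivalent to \eqref{Ids1}--\eqref{Ids2}, and the resulting telescoping computation and the formula $J_2-\Upsilon(\l)^*J_1\Upsilon(\l)=(1-|\l|^2)B_M^*(I-\bar\l Z)^{-1}\la(I-\l Z^*)^{-1}B_M$ are precisely \eqref{UpJineq}; for the boundary equality one should still make explicit the limiting argument (as the paper does, since the formula is only derived on $\BD$), which you gesture at but do not write out.
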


Here for any Hilbert space $\sV$ the symbol $H^2(\sV)$ stands for the Hardy space of $\sV$-valued measurable functions on the unit circle $\BT$ that are square integrable and whose negative Fourier coefficients are equal to zero. Equivalently,  $\va\in H^2(\sV)$ if and only if $\va$ is an $\sV$-valued analytic function on the unit $\BD$ and  its Taylor coefficients $\va_0, \va_1, \va_2, \dots$ are square summable in norm.

Assume that  $\la=WW^*-\wt{W}\wt{W}^* $ is strictly positive.  Then  $A= W^* P^{-1} \widetilde{W}$ is a strict contraction. Because $P = WW^*$ is strictly positive, we see that $\im W$ is closed and $\im A\subset  \im W^*$. Furthermore, $WA=\wt{W}$, and hence $W(T_F-A)=0$ for any solution $F$ to the LTONP  interpolation  problem. In other words, if $F$ is a solution to the LTONP  interpolation  problem, then necessarily
\[
T_F= \begin{bmatrix}A\\ \star \end{bmatrix}:
\ell_+^2(\sU)\to \begin{bmatrix}\im W^*\\ \kr W \end{bmatrix}.
\]
The converse is also true.  This observation enables us to rephrase the LTONP interpolation problem as a commutant lifting problem.  On the other hand, as we shall see in Section \ref{CL-ONP}, a large class of  commutant lifting problems can be viewed as LTONP interpolation problems, and  hence Theorem \ref{thm:main1a} can be used to describe all solutions  of  a large class commutant lifting problems.  This will lead to  a  commutant lifting version of Theorem \ref{thm:main1a}; see Theorem \ref{thm:main2}  below.

\medskip
\noindent\textsc{Contents.}
The paper consists of nine sections, including the present introduction, and an appendix. In Section \ref{sec:ONP-coiso} we develop our primary techniques that are used to prove the main results, namely observable, co-isometric realizations from system theory, and we show how solutions can be obtained from a specific class of observable, co-isometric realizations, referred to as $\la$-preferable. The main result, Theorem \ref{thm:allsol1}, presents yet another description of the solutions to the LTONP interpolation problem. This description is less explicit, but on the other hand only requires the  Pick operator  to be  non-negative. In Section \ref{sec: prfThm2.1} we prove the main result of Section \ref{sec:ONP-coiso}, Theorem \ref{thm:allsol1}. Starting with Section \ref{sec:strictlypos} we add the assumption that the  Pick operator  is strictly positive. The main results, Theorems \ref{thm:main1} and \ref{thm:main1a} are proven in Sections \ref{sec:strictlypos} and \ref{sec:Thm1.2}, respectively. The next section is devoted to the proof of Proposition \ref{prop:propsUps}. Here we also show that the central solution, introduced in Remark \ref{rem:centrsol}, is indeed given by the quotient formula mentioned in the first paragraph after Theorem~\ref{thm:main1}; see Proposition \ref{quotientf}. In Section \ref{sec:max} we introduce a notion of entropy associated with the LTONP interpolation problem and show that the central solution is the unique solution that maximizes the entropy. This result is in correspondence with similar results on  metric  constrained interpolation; cf., Section IV.7 in \cite{FFGK98}. The new feature in the present paper is that we can rephrase the entropy of a solution in terms of its $\la$-preferable, observable, co-isometric realizations. In the last two sections, Sections \ref{CL-ONP} and \ref{sec:Leech}, we describe the connections with the commutant lifting problem and the Leech problem, respectively. Finally, the appendix consists of  seven subsections containing various preliminary results   that are used throughout the paper, with proofs often added for the sake of completeness.

\smallskip
\noindent\textsc{Terminology and Notation.}
We conclude this introduction with a few words on terminology and notation. With the term {\em operator} we will always mean a bounded linear operator. Moreover,  we say that  an operator  is \emph{invertible}  when it is both injective and surjective, and in that case  its  inverse is an operator, and hence bounded. An operator $T$ on a Hilbert space $\sH$ is called \emph{strictly positive} whenever it is  non-negative  ($T\geq 0$) and invertible; we denote this by $T\gg 0$. The unique  non-negative  square root of a  non-negative operator $T$ is denoted by $T^\half$. Furthermore, an operator $T$ on $\sH$ is said to be \emph{exponentially stable} whenever its  \emph{spectrum} $\s(T)$ is inside the open unit disc $\BD$, in other words, when the \emph{spectral radius} $\spec(T)$ of $T$ is strictly less than one. Moreover, we say that $T$ is \emph{pointwise stable} whenever $T^n h\to 0$ for each $h\in \sH$;  by some authors (see, e.g., Definition 4.5 in \cite{BB13}) this kind of stability is referred  to as strongly stable.   Clearly, a exponentially stable  operator is also pointwise stable. A subspace $\sM$ of a Hilbert space $\sH$ is by definition a closed linear manifold in $\sH$. Given a subspace $\sM$ of $\sH$ we write $P_{\sM}$ for the orthogonal projection on $\sH$ along $\sM$. We will also use the embedding operator $\tau_\sM:\sM\to \sH$, which maps $m\in\sM$ to $m\in\sH$.  Its adjoint $\tau_\sM^*:\sH\to\sM$ will also be denoted by $\Pi_\sM$, and thus $\Pi_\sM^*$ is the embedding operator $\tau_\sM$.  Recall that $S_\sU$ denotes the unilateral forward shift operator on $\ell^2_+(\sU)$, for a given Hilbert space $\sU$. We will also need the operator $E_\sU: \sU\to \ell^2_+(\sU)$ which is the embedding operator that embeds $\sU$ into the first entry of $\ell^2_+(\sU)$, that is, $E_\sU u=\mat{cccc}{u&0&0&\cdots}^\top \in\ell^2_+(\sU)$. Here, and in the sequel,  the symbol  ${}^\top$ indicates the block transpose. Hence for a (finite or infinite) sequence $C_1,C_2,\ldots$ of vectors or operators we have
\[
\mat{ccc}{C_1&C_2& \cdots}^\top=\mat{c}{C_1\\ C_2\\ \vdots} \ands
\mat{c}{C_1\\ C_2\\ \vdots}^\top=\mat{ccc}{C_1&C_2& \cdots}.
\]
Finally, for any $y= \col \big[y_j\big]_{j=0}^\iy$ in $\ell_+^2(\sY)$ we have
\begin{equation}\label{Ftransf}
\wh{y}(\l):=E_\sY^*(I-\l S_\sY^*)^{-1} \begin{bmatrix}
                             y_0 \\
                             y_1 \\
                             y_2 \\
                             \vdots \\
                           \end{bmatrix}
= \sum_{n=0}^\infty \lambda^n y_n,\quad \l\in \BD.
\end{equation}
It follows that $\wh{y}$ belongs to the Hardy space $H^2(\mathcal{Y})$, and any function in the Hardy space $H^2(\mathcal{Y})$ is obtained in this way. The map $y\mapsto  \wh{y}$ is the   Fourier transform  mapping $\ell_+^2(\mathcal{Y})$ onto the Hardy space $H^2(\mathcal{Y})$.

\setcounter{equation}{0}
\section{Operator Nevanlinna-Pick interpolation  and co-isometric realizations}\label{sec:ONP-coiso}

 Throughout this section $\{W,\wt{W},Z\}$  is a data set for a LTONP interpolation problem, and $\la$ is the associate Pick operator. We assume  that  $\Lambda$ is a non-negative  operator, but not necessarily strictly positive, and we define  $\mathcal{Z}_\circ$ to be the closure of the range of $\Lambda$. Thus
 \begin{equation}\label{defZcirc}
 \sZ=\sZ_\circ \oplus \kr \la.
 \end{equation}
The main result of this section, Theorem \ref{thm:allsol1} below, provides a Redheffer type description of the set of all solutions of the LTONP interpolation problem with data set $\{W,\wt{W},Z\}$.  The proof of this result will be given in Section \ref{sec: prfThm2.1}, but much of the  preparatory  work is done in the current section.

From the definition of the Pick operator and  the two identities \eqref{basicid1a} and  \eqref{basicid1b} it follows that
\begin{equation}\label{Pick1a}
 \la - Z \la Z^* =BB^* - \wt{B}\wt{B}^*.
\end{equation}
Since $\Lambda \geq 0$,   the identity \eqref{Pick1a}  can be rewritten as $K_1 K_1^*=K_2 K_2^*$, where
\begin{equation}\label{Pick1b}
K_1=\begin{bmatrix} \wt{B}  &   \la^{\frac{1}{2}} \end{bmatrix}:
\begin{bmatrix}\sU\\ \sZ\end{bmatrix} \to \sZ \ands
K_2=\begin{bmatrix} B  & Z \la^{\frac{1}{2}} \end{bmatrix}:
\begin{bmatrix}\sY\\ \sZ\end{bmatrix} \to \sZ.
\end{equation}
This allows to apply Lemma \ref{lem:KK1}. Let $\mathcal{F}$ and $\mathcal{F}^\prime$ be the subspaces defined by
\begin{equation}\label{defFFprime}
\mathcal{F}=    \overline{\im K_1^*}
\ands \mathcal{F}^\prime= \overline{\im K_2^*}.
\end{equation}
Notice that $\sF$ is a subspace of $\sU\oplus  \sZ_\circ$ while $\sF^\prime$ is a subspace of $\sY\oplus  \sZ_\circ$, where  $\mathcal{Z}_\circ$ is the subspace of  $\sZ$ given by \eqref{defZcirc}.  Applying Lemma \ref{lem:KK1} we see  that there exists a unique   operator  $\o$ from $\sF$ into $\sF^\prime$ such that
\begin{equation}\label{defomega1}
\begin{bmatrix} B  & Z \la^{\frac{1}{2}} \end{bmatrix}
\begin{bmatrix} B^*   \\ \la^{\frac{1}{2}}Z^* \end{bmatrix} =\begin{bmatrix} B  & Z \la^{\frac{1}{2}} \end{bmatrix}\o \begin{bmatrix}
  \wt{B}^*    \\ \Lambda^{\frac{1}{2}} \end{bmatrix}=
 \begin{bmatrix} \wt{B}  &   \la^{\frac{1}{2}} \end{bmatrix}
\begin{bmatrix} \wt{B}^*    \\ \la^{\frac{1}{2}} \end{bmatrix}.
 \end{equation}
Moreover,  $\o$ is   a unitary operator mapping   $\sF$ onto $\sF^\prime$. We shall refer to $\o$ as the \emph{unitary operator determined by the data set $\{W,\wt{W},Z\}$}.  Note that the two identities in \eqref{defomega1} imply that
\begin{equation}\label{defomega2}
\o \begin{bmatrix} \wt{B}^*    \\ \la^{\frac{1}{2}} \end{bmatrix}=
\begin{bmatrix} B^*   \\ \la^{\frac{1}{2}}Z^* \end{bmatrix}
\ands \begin{bmatrix} B  & Z \la^{\frac{1}{2}} \end{bmatrix}\o
=  \left.\begin{bmatrix} \wt{B}  &   \Lambda^{\frac{1}{2}}\end{bmatrix}\right|\mathcal{F}.
\end{equation}
In fact each of the identities in \eqref{defomega2} separately can be used as the definition of $\o$.

In the sequel $\sG$ and $\sG^\prime$ will denote the orthogonal complements of  $\sF$ and $\sF^\prime$  in $\sU\oplus \sZ_\circ$ and $\sY\oplus \sZ_\circ$, respectively, that is,
\begin{equation}\label{defGGprime}
\mathcal{G}
= \left(\mathcal{U}\oplus  \mathcal{Z}_\circ\right)\ominus \mathcal{F}
\quad \mbox{and} \quad
\mathcal{G}^\prime
= \left(\mathcal{Y}\oplus  \mathcal{Z}_\circ\right)\ominus \mathcal{F}^\prime.
\end{equation}
In particular, $\sF\oplus\sG=\sU\oplus \sZ_\circ$ and  $\sF^\prime \oplus\sG^\prime=\sY\oplus \sZ_\circ$. The fact that $\sG$ is perpendicular to $\sF$  and  $\sG^\prime$ is perpendicular to $\sF^\prime$ implies that
\begin{equation}\label{zeroGGprime}
K_1 \sG= \{0\} \ands K_2 \sG^\prime=\{0\}.
\end{equation}
The following result, which is  the main theorem of this section, will be used in the later sections to derive our main theorems.

\begin{theorem}\label{thm:allsol1} Let $\{W,\wt{W},Z\}$ be a data set  for a LTONP interpolation problem with   $Z^*$  being  pointwise stable,  and assume that the Pick operator $\la $ is non-negative. Furthermore, let $\o$ be the unitary operator determined by the data set. Then  the LTONP interpolation problem is solvable  and  its solutions are given by
\begin{equation}\label{allsol1}
F(\l)=G_{11}(\l)+\l G_{12}(\l)\left(I_{\sZ_\circ}-\l G_{22}(\l)\right)^{-1}G_{21}(\l), \quad \l \in \BD,
\end{equation}
where
\begin{equation}\label{cond1}
G=\begin{bmatrix} G_{11}&G_{12}\\ G_{21}&G_{22} \end{bmatrix}
\in \sS(\sU\oplus \sZ_\circ, \sY\oplus \sZ_\circ) \hspace{.1cm} \mbox{and}\hspace{.15cm} G(0)|{\sF}=\o.
\end{equation}
Moreover, there is a one-to-one correspondence between the set of all solutions $F$ and the set of  all Schur class functions $G$ satisfying the two conditions in \eqref{cond1}.
\end{theorem}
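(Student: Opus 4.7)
The strategy is a lurking-isometry argument centered on the unitary $\omega:\sF\to\sF^\prime$ from \eqref{defomega1}, implemented through observable co-isometric realizations. I will use the fact that every Schur-class function admits a realization whose system matrix is a co-isometry with observable observation pair, that such a realization is unique up to a unitary on the state space, and that the state-space operator $A^*$ is automatically pointwise stable. This will be applied both to $F\in\sS(\sU,\sY)$ and to candidate $G\in\sS(\sU\oplus\sZ_\circ,\sY\oplus\sZ_\circ)$.

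For the forward direction, suppose $F$ solves the LTONP problem and fix an observable co-isometric realization of $F$ with state space $\sX_F$. The identity $WT_F=\wt{W}$ together with \eqref{data1} and pointwise stability of both $Z^*$ and of the state operator yields a bounded intertwiner $\Theta:\sX_F\to\sZ_\circ$ converting the state-space dynamics into the dynamics of $Z$ on $\sZ_\circ$. The Pick identity \eqref{Pick1a}---which is exactly what defines $\omega$ in \eqref{defomega1}---then forces the enlarged block operator built from the system matrix of $F$ together with $\Theta$, $B$, $\wt{B}$ and $\la^{\frac{1}{2}}$ to assemble into a co-isometry from $\sU\oplus\sZ_\circ$ into $\sY\oplus\sZ_\circ$ whose restriction to $\sF$ coincides with $\omega$. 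Treating this assembled block as the system matrix of a new co-isometric system produces the desired Schur class $G$ satisfying \eqref{cond1}, and the Redheffer product \eqref{allsol1} recovers $F$ after eliminating the auxiliary $\sZ_\circ$-channel.

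For the reverse direction, given any Schur function $G$ satisfying \eqref{cond1}, contractivity forces $\|G_{22}(\l)\|\le 1$ on $\BD$, so $I-\l G_{22}(\l)$ is invertible and $F$ defined by \eqref{allsol1} is analytic on $\BD$; the standard Redheffer-product lemma places $F$ in $\sS(\sU,\sY)$. To verify $WT_F=\wt{W}$ I would fix an observable co-isometric realization of $G$ and use $G(0)|\sF=\omega$ to reproduce, at the state-space level, the two identities \eqref{defomega2}; these propagate through the Redheffer algebra, and pointwise stability of $Z^*$ converts the resulting power-series identity into the required operator identity.

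The main obstacle will be the one-to-one part of the parameterization. Both the realizations of $F$ and of $G$ are unique only up to a unitary on their respective state spaces, yet $G$ has to be uniquely recoverable from $F$. I expect to handle this by showing that observability of the realization of $G$, together with the constraint $G(0)|\sF=\omega$ and the characterization of $\sG=(\sU\oplus\sZ_\circ)\ominus\sF$ as the free directions, rigidifies the extension of $\omega$ modulo the harmless state-space unitary, so that $G\mapsto F$ has a well-defined inverse. The unitarity (not mere partial isometricity) of $\omega$ on $\sF$ and the precise definition $\sZ_\circ=\overline{\im\la}$ will be crucial at this step.
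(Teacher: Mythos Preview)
Your overall strategy---work with observable co-isometric realizations of $F$ and of $G$ and transport the constraint $G(0)|\sF=\o$ through them---is exactly the paper's. But the forward direction, as you describe it, has a structural gap.

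The intertwiner you call $\Theta:\sX_F\to\sZ_\circ$ is (up to the identification) the operator $W\ga$, with $\ga$ the observability operator of the realization of $F$; from \eqref{Pick1a} and pointwise stability one gets $(W\ga)(W\ga)^*=\la$. Thus $\Theta$ has dense range in $\sZ_\circ$, but in general it has a nontrivial kernel $\sV=\ker(W\ga)$. Consequently the ``assembled block'' you describe cannot be a co-isometry from $\sU\oplus\sZ_\circ$ into $\sY\oplus\sZ_\circ$ by itself: such an operator carries no state space and would yield only a \emph{constant} $G$, hence only the central solution. What is actually needed is to use Douglas' lemma to write, after a unitary change of state variable, $\sX_F=\sZ_\circ\oplus\sV$. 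The system matrix $M$ of $F$ is then a $3\times 3$ block on $\sU\oplus\sZ_\circ\oplus\sV$; regrouped as a $2\times 2$ block on $(\sU\oplus\sZ_\circ)\oplus\sV$, the \emph{same} $M$ is a co-isometric system matrix with input $\sU\oplus\sZ_\circ$, output $\sY\oplus\sZ_\circ$, and state space $\sV$. Its transfer function is the required $G$, and $G(0)|\sF=\o$ because the upper-left $2\times 2$ corner of $M$ restricted to $\sF$ equals $\o$ via \eqref{defomega2}. The Redheffer identity \eqref{allsol1} is then purely algebraic: it is the relation (Lemma~\ref{L:FGrel}) between the two transfer functions of one $3\times 3$ matrix under the two groupings.

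Your injectivity sketch is also too soft. The decisive point is that, starting from an \emph{observable} co-isometric realization of $G$ with state space $\sV$ and regrouping, one obtains a co-isometric realization of $F$ with state space $\sZ_\circ\oplus\sV$ that is again \emph{observable}; this is not automatic and requires a separate observability-transfer argument (Lemma~\ref{lem:obssysts}, using $\sZ_\circ\subset\overline{\im\ga^*}$, which follows from $W\ga=\la^{\frac{1}{2}}\Pi_{\sZ_\circ}$). Once that is in hand, two $G$'s yielding the same $F$ give two observable co-isometric realizations of $F$, hence a state-space unitary $U:\sZ_\circ\oplus\sV\to\sZ_\circ\oplus\sV'$; the identity $W\ga=\la^{\frac{1}{2}}\Pi_{\sZ_\circ}$ for both forces $U|\sZ_\circ=I_{\sZ_\circ}$, so $U$ restricts to a unitary $\sV\to\sV'$ intertwining the two $G$-realizations, and therefore $G=G'$. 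Without the observability transfer the uniqueness argument does not close.
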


\begin{remark}\label{rem:centrsol}
\textup{Let $G_\circ$  be the function identically equal to $\o P_\sF$. Then $G_\circ$ is a Schur class function, $G_\circ\in \sS(\sU\oplus \sZ_\circ, \sY\oplus \sZ_\circ)$, and $G_\circ(\l)| \mathcal{F}=\omega$ for each $\l \in \BD$. Thus the two conditions in \eqref{cond1} are satisfied for $G=G_\circ$. The corresponding  solution   $F$    is denoted by $F_\circ$ and referred to as  the \emph{central solution}. Note that $F_\circ\in S(\sU, \sY)$.}
\end{remark}

The following corollary is an immediate consequence of Theorem \ref{thm:allsol1} using  the definition of  the  central solution given above.

\begin{corollary}\label{cor:central}Let $\{W,\wt{W},Z\}$ be a data set  for a LTONP interpolation problem with   $Z^*$  being  pointwise stable,  and assume that the Pick operator $\la $ is non-negative. Furthermore, let $M_\circ$ be the  operator mapping $\sU\oplus \sZ_\circ$ into $\sY\oplus \sZ_\circ$ defined by  $M_\circ=\o P_\sF$, where $\o$ is the unitary operator determined by the data set. Write  $M_\circ$ as a $2\ts 2$ operator matrix as follows:
\[
M_\circ=\begin{bmatrix} \d_\circ &\g_\circ\\  \b_\circ &\a_\circ\end{bmatrix}
:\begin{bmatrix}\sU\\   \sZ_\circ\end{bmatrix}\to
\begin{bmatrix}\sY\\   \sZ_\circ\end{bmatrix}.
\]
Then the central solution $F_\circ$ is given by
\begin{equation}\label{eq:Fcirc1}
F_\circ(\l)=\d_\circ+\l\g_\circ(I_{\sZ_\circ}-\l \a_\circ)^{-1}\b_\circ, \qquad \l\in \BD.
\end{equation}
\end{corollary}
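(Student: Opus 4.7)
The plan is to apply Theorem \ref{thm:allsol1} directly to the constant Schur class function $G_\circ(\l) \equiv M_\circ$ introduced in Remark \ref{rem:centrsol}. The corollary is essentially a matter of unpacking the formula \eqref{allsol1} when $G$ is the constant function equal to $M_\circ$.

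First I would verify that $G_\circ(\l) \equiv M_\circ = \o P_\sF$ indeed lies in $\sS(\sU\oplus \sZ_\circ, \sY\oplus \sZ_\circ)$ and satisfies $G_\circ(0)|\sF = \o$. Both are straightforward: since $P_\sF$ is the orthogonal projection onto $\sF$ in $\sU\oplus \sZ_\circ$ and $\o:\sF\to \sF^\prime \subset \sY\oplus \sZ_\circ$ is unitary, we have $\|M_\circ x\| = \|P_\sF x\| \leq \|x\|$ for every $x \in \sU\oplus \sZ_\circ$, so $\|M_\circ\|\leq 1$ and $G_\circ$ is (constant and hence trivially analytic) Schur class. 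Moreover, for $f\in \sF$ we have $P_\sF f = f$, so $M_\circ f = \o f$, giving $G_\circ(0)|\sF = \o$.

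Next I would write $M_\circ$ in block form with respect to the decompositions $\sU\oplus \sZ_\circ$ and $\sY\oplus \sZ_\circ$, so that the constant function $G_\circ$ has block entries $G_{11}(\l)\equiv \d_\circ$, $G_{12}(\l)\equiv \g_\circ$, $G_{21}(\l)\equiv \b_\circ$ and $G_{22}(\l)\equiv \a_\circ$. Substituting these into the Redheffer-type formula \eqref{allsol1} of Theorem \ref{thm:allsol1} immediately yields
\[
F_\circ(\l) = \d_\circ + \l \g_\circ (I_{\sZ_\circ} - \l \a_\circ)^{-1} \b_\circ, \qquad \l\in\BD,
\]
which is exactly \eqref{eq:Fcirc1}. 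The invertibility of $I_{\sZ_\circ} - \l \a_\circ$ for $\l \in \BD$ is guaranteed by Theorem \ref{thm:allsol1} because $G_\circ$ satisfies the hypotheses \eqref{cond1}. By the one-to-one correspondence in Theorem \ref{thm:allsol1}, the resulting $F_\circ$ is precisely the central solution as defined in Remark \ref{rem:centrsol}.

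There is no real obstacle here; the only subtle point is verifying that the constant operator $\o P_\sF$ really is a contraction into the correct space, and this is immediate from the unitarity of $\o$ and the fact that $\sF^\prime \subset \sY \oplus \sZ_\circ$. The corollary is thus a direct specialization of the main theorem to the simplest admissible free parameter.
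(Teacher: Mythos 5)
Your proof is correct and follows exactly the route the paper intends: substituting the constant Schur function $G_\circ(\l)\equiv \o P_\sF$ (already shown in Remark~\ref{rem:centrsol} to satisfy the two conditions in \eqref{cond1}) into the Redheffer formula \eqref{allsol1} of Theorem~\ref{thm:allsol1}. The paper itself simply states the corollary is "an immediate consequence of Theorem~\ref{thm:allsol1}," and your write-up faithfully and correctly spells out that immediate consequence, including the (needed) observation that $\a_\circ$ is a contraction so $I_{\sZ_\circ}-\l\a_\circ$ is invertible for $\l\in\BD$.
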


Since $M_\circ$ is a contraction, one calls the right side of  \eqref{eq:Fcirc1} a \emph{contractive realization}. The next example is  a trivial one to a certain extend, on the other hand it tells us how one can construct  a contractive realization    for any Schur class function.

\begin{example}\label{ex:SchurF}
\textup{Let $F$ be a Schur class function, $F\in  S(\sU, \sY)$, and let $T_F$ be the Toeplitz operator defined by $F$. Put $\sZ_1=\ell_+^2(\sY)$, and consider the operators
\begin{equation}\label{exdataset}
W_1=I_{\ell_+^2(\sY)}, \quad \wt{W}_1=T_F, \quad Z_1=S_\sY.
\end{equation}
Then
\[
Z_1W_1=S_\sY=W_1 S_\sY, \quad  Z_1\wt{W}_1=S_\sY T_F=T_F S_\sU= \wt{W}_1S_\sU.
\]
Thus $\{W_1, \wt{W}_1, Z_1\}$ is a data set for a LTONP  interpolation problem.  Moreover, $Z_1^*=S_\sY^*$, and hence $Z_1^*$ is pointwise stable. Note that $\Psi \in S(\sU, \sY)$ is a solution to the related LTONP  interpolation  problem  if and only if   $W_1 T_\Phi= \wt{W}_1$.  But   $W_1 T_\Phi= \wt{W}_1$  if and only if   $T_\Phi=T_F$. It follows that the LTONP  interpolation  problem for the data set $\{W_1, \wt{W}_1, Z_1\}$  is solvable, and the solution is unique, namely $\Phi=F$. But then $F$ is the central solution of the  LTONP  interpolation  problem for  the data set $\{W_1, \wt{W}_1, Z_1\}$, and Corollary~\ref{cor:central}  tells us that $F$ admits a representation of the
form
\begin{equation}\label{eq:F}
F (\l)=\d_1 +\l\g_1 (I_{\sZ_{1, \circ}}-\l \a_1)^{-1}\b_1, \qquad \l\in \BD.
\end{equation}
Moreover, the operator matrix $M_1$ defined by
\[
M_1=\begin{bmatrix} \d_1 &\g_1\\  \b_1 &\a_1\end{bmatrix}
:\begin{bmatrix}\sU\\   \sZ_{1, \circ}\end{bmatrix}\to
\begin{bmatrix}\sY\\   \sZ_{1, \circ}\end{bmatrix},
\]
is given by $M_1= \o_1 P_{\sF_1}$, where $\o_1$ is the unitary operator determined by the data set $\{W_1, \wt{W}_1, Z_1\}$. Since $M_1$ is a contraction, the right hand side is a contractive realization of $F$. Thus given any $F\in  S(\sU, \sY)$  Corollary~\ref{cor:central} provides a way to construct a contractive realization for $F$. Finally, it is noted that in this setting the
corresponding subspace $\sG_1^\prime:=\sG^\prime = \{0\}$, and thus, $M_1 = \omega_1 P_{\sF_1}$ is in fact a co-isometry. Indeed, to see that this is the case, note that $\sZ_{1,\circ}:=\sZ_\circ=\overline{\im (I-T_FT_F^*)^\half}$ and $\sF_{1}'$ is the closure of the range of
\[
\mat{c}{E_{\sY}^* I_{\ell^2_+(\sY)}\\ (I-T_FT_F^*)^\half S_\sY^*}
= \mat{cc}{I_\sY &0\\ 0& (I-T_FT_F^*)^\half}
\mat{c}{E_{\sY}^* \\ S_\sY^*}.
\]
Since the block column operator on the right hand side is unitary it follows that $\sF_1'$ is equal to the closure of the range of the $2 \times 2$ block operator on the right hand side, which equals $\sY\oplus \sZ_{1,\circ}$. Therefore, $\sG_1'=(\sY\oplus \sZ_{1,\circ})\ominus \sF_1'=\{0\}$, as claimed.
We shall come back to this construction  in Subsection \ref{Assec:co-iso} of the appendix.}
\end{example}

Describing the solution set of an interpolation problem with a map of the form \eqref{allsol1} with a restriction of $G$ equal to a constant unitary operator  is one  of ``standard'' methods    of parameterizing all solutions of interpolation problems. For instance, this type of formula is used in the description of all solutions to the commutant lifting theorem;   see   Section VI.5 in \cite{FFGK98}, where the unitary operator  $\o$ is defined  by  formula (2.2)  on page 265, the analogs of the spaces $\sF$, $\sF^\prime$, $\sG$,  $\sG^\prime$ appear on page 266, and the analog of the function $G$  is referred to as a Schur contraction.  Such maps are  also used to describe all solutions \ to the so-called abstract interpolation problem, cf., \cite{KKY07,Kh98}, and these are only a few of many instances.  The operator $\o$ is also closely related to the ``lurking isometry"   used in  \cite{BT98}, which has its roots in \cite{Sz.-nk}.

In the present paper  the  proof of Theorem \ref{thm:allsol1} is based purely on state space methods, using the theory of co-isometric realizations.  Therefore we first review some  notation, terminology and standard  facts from realization theory, including  the main theorem about observable, co-isometric realizations of Schur class functions.   The reader familiar with system theory may skip this subsection.

\subsection{Preliminaries from realization theory}\label{ssec:realization} We say that a quadruple of Hilbert space operators $\si=\{\alpha, \beta, \gamma, \delta\}$,
\[
\a:\sX\to \sX, \quad \b:\sU\to \sX, \quad \g:\sX\to \sY, \quad \d:\sU\to \sY,
\]
is a \emph{$($state space$)$ realization} for a function $F$ with values in $\mathcal{L}(\mathcal{U},\mathcal{Y})$ if
\begin{equation}\label{realF}
F(\lambda) = \delta +  \lambda \gamma (I - \lambda \alpha)^{-1} \beta
\end{equation}
for all $\l$ in some neighborhood of the origin.  The space  $\sX$  is called the \emph{state space} while $\mathcal{U}$ is the {\em input space} and $\mathcal{Y}$ is the {\em output space}. In systems theory $F$ is referred to as the \emph{transfer function} of the system $\si=\{\alpha, \beta, \gamma, \delta\}$.  Note  that $\{\alpha, \beta, \gamma, \delta\}$ is a realization for $F$  implies that $F$ is analytic in some neighborhood of the origin, and in that case the  coefficients
$\{F_n\}_{n=0}^\infty$ of the Taylor expansion of
$F(\lambda) = \sum_{n=0}^\infty \lambda^n F_n$ at zero are given by
\begin{equation}\label{Fn}
F_0=F(0) = \delta \ands F_n = \gamma \alpha^{n-1} \beta
\qquad (n \geq 1).
\end{equation}
The system $\si=\{\alpha, \beta, \gamma, \delta\}$ or the pair $\{\gamma, \alpha\}$ is said to be   \emph{observable} if $\cap_{n\geq 0} \kr \g\a^n=\{0\}$. Two systems $\{\alpha_1,  \beta_1, \gamma_1, \delta_1\}$ and $\{\alpha_2, \beta_2, \gamma_2, \delta_2\}$ with state spaces $\sX_1$ and $\sX_2$, respectively, are called \emph{unitarily equivalent} if $\delta_1 = \delta_2$ and there exists a unitary operator $U$ mapping $\mathcal{X}_1$ onto $\mathcal{X}_2$ such that
\[ \alpha_2 U = U \alpha_1, \quad \beta_2 = U \beta_1, \quad \gamma_2 U = \gamma_ 1.
\]
Clearly, two unitarily equivalent systems both realize the same transfer function $F$. Given a system $\si=\{\alpha, \beta, \gamma, \delta\}$ the $2\ts 2$ operator matrix $M_\si$  defined by
\begin{equation}\label{Msi}
M_\si = \begin{bmatrix}
      \delta & \gamma \\
      \beta  & \alpha \\
    \end{bmatrix}:
    \begin{bmatrix}
    \mathcal{U} \\
      \mathcal{X} \\
    \end{bmatrix}\rightarrow\begin{bmatrix} \mathcal{Y}\\\mathcal{X} \\ \end{bmatrix},
\end{equation}
is called the \emph{system matrix} defined by $\si$.  If the system matrix $M_\si$ is a contraction, then its transfer function  is a \emph{Schur class function}, $F\in \sS(\sU, \sY)$, that  is, $F$ is analytic on the open unit disc $\BD$ and $\sup _{\l \in \BD}\|F(\l)\|\leq 1$. The converse is also true. More precisely,  we have the following classical result.

\begin{theorem}\label{thm:co} A function $F$ is in $S(\mathcal{U},\mathcal{Y})$ if and only if $F$ admits an observable, co-isometric realization. Moreover, all observable, co-isometric realizations of  $F$ are unitarily equivalent.
\end{theorem}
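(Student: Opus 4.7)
The theorem has three parts to establish: (i) every $F\in\sS(\sU,\sY)$ admits an observable coisometric realization; (ii) conversely, if $F$ has a coisometric realization then $F\in\sS(\sU,\sY)$; and (iii) any two observable coisometric realizations of the same $F$ are unitarily equivalent. I would handle them in this order.

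For (ii), my plan is first to unpack $M_\si M_\si^*=I$ into the three identities
\[
\a\a^*+\b\b^*=I_\sX,\qquad \g\g^*+\d\d^*=I_\sY,\qquad \g\a^*+\d\b^*=0.
\]
The first gives $\|\a\|\le 1$, so $(I-\l\a)^{-1}$ is analytic on $\BD$ and the realization formula extends $F$ analytically across $\BD$. A direct block-matrix computation using the three identities produces the reproducing-kernel identity
\[
I_\sY-F(\l)F(\mu)^*=(1-\l\bar\mu)\,\g(I-\l\a)^{-1}(I-\bar\mu\a^*)^{-1}\g^*,
\]
and setting $\mu=\l\in\BD$ yields $F(\l)F(\l)^*\le I$, so $F\in\sS(\sU,\sY)$.

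For (i), the plan is to construct a concrete realization via a functional model. Let $T_F:\ell_+^2(\sU)\to\ell_+^2(\sY)$ be the Toeplitz operator, which is a contraction since $F\in\sS(\sU,\sY)$. Put $D_*=(I-T_FT_F^*)^{1/2}$ and take the state space $\sX=\overline{\im D_*}\subset\ell_+^2(\sY)$. Using the intertwining $S_\sY T_F=T_F S_\sU$, define $\a$ on $\sX$ by $\a D_* h=D_* S_\sY^* h$; then set $\g=E_\sY^*|_\sX$, $\d=F(0)$, with $\b$ determined from $\g\a^*+\d\b^*=0$. A defect-operator computation verifies that $M_\si$ is coisometric, and observability follows since $\g\a^n D_* h=E_\sY^*(S_\sY^*)^n D_* h$ extracts the $n$-th Fourier coefficient of $D_* h\in\sX$, forcing $\bigcap_n\kr\g\a^n=\{0\}$. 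An equivalent route---anticipated by Example~\ref{ex:SchurF} and carried out in Subsection~\ref{Assec:co-iso}---is to apply Corollary~\ref{cor:central} to the data $\{I,T_F,S_\sY\}$: the coefficient operator $M_1=\o_1 P_{\sF_1}$ is then shown to be a coisometry and yields an observable coisometric realization of $F$.

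For (iii), given observable coisometric $\si_1,\si_2$ of the same $F$, matching Taylor coefficients yields $\d_1=\d_2$ and $\g_1\a_1^{n-1}\b_1=\g_2\a_2^{n-1}\b_2$ for $n\ge 1$. Form the observability operators $\mathcal{O}_i:\sX_i\to\ell_+^2(\sY)$, $\mathcal{O}_i x=\col[\g_i\a_i^n x]_{n\ge 0}$. Each is injective by observability, and the coisometric relation $\g_i^*\g_i+\a_i^*\a_i\le I_{\sX_i}$ (from $M_{\si_i}^* M_{\si_i}\le I$) yields by telescoping
\[
\mathcal{O}_i^*\mathcal{O}_i\le I_{\sX_i}-\lim_{N\to\iy}(\a_i^*)^N\a_i^N.
\]
Combining observability with the remaining coisometric identities, one argues that $\a_i$ is pointwise stable, promoting $\mathcal{O}_i$ to an isometry whose range is the closure of the image of the Hankel-type map determined by the Taylor coefficients of $F$---hence independent of the realization. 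Granting this, $U=\mathcal{O}_2^*\mathcal{O}_1:\sX_1\to\sX_2$ is unitary, and the intertwinings $\a_2 U=U\a_1$, $\b_2=U\b_1$, $\g_2 U=\g_1$ follow by comparison on generators of the form $\a_1^k\b_1 u$ using the matching Taylor data.

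The main obstacle I anticipate is precisely the last step of (iii): promoting the a priori contraction $\mathcal{O}_i$ to an isometry with range independent of $\si_i$. Coisometry alone yields only the inequality $\g_i^*\g_i+\a_i^*\a_i\le I$, so the pointwise stability of $\a_i$ must be extracted from the combination of observability and the remaining coisometric identities, paralleling the minimal-isometric-dilation considerations of Sz.-Nagy and Foias.
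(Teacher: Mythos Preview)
Your treatment of (ii) via the kernel identity $I_\sY-F(\l)F(\mu)^*=(1-\l\bar\mu)\g(I-\l\a)^{-1}(I-\bar\mu\a^*)^{-1}\g^*$ is fine; the paper uses instead the direct estimate coming from $M\sbm{u\\ \l(I-\l\a)^{-1}\b u}=\sbm{F(\l)u\\(I-\l\a)^{-1}\b u}$, but either works.

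There is a genuine gap in your construction for (i). The rule ``$\a D_* h = D_* S_\sY^* h$'' is \emph{not} well defined on $\sX=\overline{\im D_*}$, because $\ker D_*$ is generally not $S_\sY^*$-invariant. Take $\sU=\sY=\BC$ and $F(\l)=\l$, so $T_F=S_\sY$ and $D_*^2=E_\sY E_\sY^*$; then $D_*h$ depends only on $h_0$ while $D_*S_\sY^*h$ depends only on $h_1$, so $h=(1,0,0,\dots)$ and $h=(1,1,0,\dots)$ have the same $D_*h$ but different $D_*S_\sY^*h$. The paper avoids this by working on the \emph{full} space $\ell_+^2(\sY)$: with $K=\begin{bmatrix}E_\sY&S_\sY D_*\end{bmatrix}$ and $L=\begin{bmatrix}T_FE_\sU&D_*\end{bmatrix}$ one checks $KK^*=LL^*$, and a Douglas-type factorization produces a co-isometry $M$ on $\sU\oplus\ell_+^2(\sY)\to\sY\oplus\ell_+^2(\sY)$ with $KM=L$; a Stein-equation argument then gives $\ga=D_*$ and shows this system realizes $F$. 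Observability is obtained afterwards by compressing (Lemma~\ref{lem:co-iso}). Your alternative of ``applying Corollary~\ref{cor:central}'' is circular as stated, since that corollary sits downstream of Theorem~\ref{thm:co}; what Subsection~\ref{Assec:co-iso} actually does is rerun the underlying argument from scratch for the special data $\{I,T_F,S_\sY\}$.

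The gap in (iii) is more serious: for an observable co-isometric realization the observability operator $\mathcal O_i$ is in general \emph{only a contraction}, and pointwise stability of $\a_i$ does not promote it to an isometry. The co-isometry condition gives merely $\g_i^*\g_i+\a_i^*\a_i\le I$, so telescoping yields $\mathcal O_i^*\mathcal O_i\le I$ with no mechanism for equality. Concretely, for $F\equiv c$ with $0<|c|<1$ the paper's construction gives state space $\ell_+^2(\sY)$, $\a=S_\sY^*$ (pointwise stable), $\g=\sqrt{1-|c|^2}\,E_\sY^*$, $\b=0$, $\d=c$; this is observable and co-isometric, yet $\mathcal O=\sqrt{1-|c|^2}\,I$ is not isometric. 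The classical uniqueness argument (which the paper cites rather than proves) does not compare observability operators in $\ell_+^2(\sY)$ at all: one uses the kernel identity from (ii) to show that $x\mapsto \g(I-\cdot\,\a)^{-1}x$ is a \emph{unitary} from $\sX$ onto the de~Branges--Rovnyak space $\sH(K_F)$ associated with $K_F(\l,\mu)=(1-\l\bar\mu)^{-1}(I-F(\l)F(\mu)^*)$, a space whose norm is not the $H^2$ norm. Since $\sH(K_F)$ depends only on $F$, composing the two unitaries furnishes the required equivalence.
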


The `if part'' of the  above theorem is rather straight forward to prove, the ``only if part''  is much less trivial and has  a long and interesting history, for example  involving  operator model theory (see \cite{sz-nf} and the revised and enlarged edition \cite{sz-nfBK}) or the theory of  reproducing kernel Hilbert spaces (see, \cite{deBrR66a} and \cite{deBrR66b}) . We also mention  Ando's Lecture Notes \cite[Theorem 3.9 and formulas (3.16), (3.17)]{Ando90}, and recent work in a multivariable setting due to J.A. Ball and co-authors  \cite{b}-- \cite{btv}.

An alternative new proof  of Theorem \ref{thm:co} is given in  Subsection \ref{Assec:co-iso} in the Appendix.

If  the system $\si=\{\alpha, \beta, \gamma, \delta\}$  has   a contractive system matrix, then
\begin{equation}\label{defGa0}
\ga:= \col \big[\g \a^j\big]_{j=0}^\iy  =
\begin{bmatrix}
  \gamma \\
  \gamma \alpha \\
  \gamma \alpha^2 \\
  \vdots \\
\end{bmatrix}
:  \sX\to \ell_+^2(\sY)
\end{equation}
is a well defined operator and $\ga$ is a contraction. This classical result  is  Lemma 3.1 in \cite{FFK02};  see also Lemma \ref{lem-obs} in the Appendix where the proof is given  for completeness.  We call $\ga$ the \emph{observability operator} defined by
$\si$, or simply by the pair $\{\g,\a\}$. Note that in this case $\si$ is observable if and only if $\ga$ is one-to-one. We conclude with the following lemma.

\begin{lemma}\label{lem:co-iso}
If $F\in \sS(\sU, \sY)$ has a  co-isometric realization, then $F$ has an  observable, co-isometric realization.
\end{lemma}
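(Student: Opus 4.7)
Let $\sigma=\{\alpha,\beta,\gamma,\delta\}$ be any co-isometric realization of $F$ with state space $\mathcal{X}$ (not assumed observable). The strategy is the classical one: compress $\sigma$ to the orthogonal complement of the unobservable subspace and verify that the resulting system is still co-isometric and is a realization of the same transfer function.

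\smallskip
\noindent\emph{Step 1: the unobservable subspace.} Define
\[
\mathcal{N}=\bigcap_{n\geq 0}\kr(\gamma\alpha^n)\subseteq\mathcal{X},
\]
which is the kernel of the observability operator $\Gamma$ of~$\sigma$ from \eqref{defGa0}. Taking $n=0$ shows $\gamma|_{\mathcal{N}}=0$, and if $h\in\mathcal{N}$ then $\gamma\alpha^{n}(\alpha h)=\gamma\alpha^{n+1}h=0$ for every $n\geq 0$, so $\alpha h\in\mathcal{N}$. Hence $\mathcal{N}$ is a subspace of $\mathcal{X}$ which is $\alpha$-invariant.

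\smallskip
\noindent\emph{Step 2: block decomposition.} Set $\mathcal{X}_\circ=\mathcal{X}\ominus\mathcal{N}$ and write the operators of $\sigma$ as block matrices with respect to $\mathcal{X}=\mathcal{X}_\circ\oplus\mathcal{N}$. The fact that $\gamma|_{\mathcal{N}}=0$ and $\alpha\mathcal{N}\subseteq\mathcal{N}$ implies
\[
\gamma=\begin{bmatrix}C_1 & 0\end{bmatrix},\qquad
\alpha=\begin{bmatrix}A_{11}&0\\ A_{21}&A_{22}\end{bmatrix},\qquad
\beta=\begin{bmatrix}B_1\\ B_2\end{bmatrix},
\]
for suitable operators $C_1,A_{11},A_{21},A_{22},B_1,B_2$.

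\smallskip
\noindent\emph{Step 3: the compressed system.} Define $\sigma_\circ=\{A_{11},B_1,C_1,\delta\}$, with state space $\mathcal{X}_\circ$. Using the zero block in the top-right corner of $\alpha$ (and hence of $\alpha^{n-1}$), an elementary block computation gives
\[
\gamma\alpha^{n-1}\beta = \begin{bmatrix}C_1 & 0\end{bmatrix}
\begin{bmatrix}A_{11}^{n-1}&0\\ \star &A_{22}^{n-1}\end{bmatrix}
\begin{bmatrix}B_1\\ B_2\end{bmatrix}
= C_1A_{11}^{n-1}B_1,\qquad n\geq 1,
\]
together with $\delta=\delta$ for $n=0$. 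By the coefficient formula \eqref{Fn}, this shows that $\sigma_\circ$ realizes the same function $F$. By construction $\bigcap_{n\geq 0}\kr(C_1A_{11}^n)=\{0\}$ (any vector in $\mathcal{X}_\circ$ annihilated by all $C_1A_{11}^n$ would sit in $\mathcal{N}\cap\mathcal{X}_\circ=\{0\}$), so $\sigma_\circ$ is observable.

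\smallskip
\noindent\emph{Step 4: verifying the co-isometry.} It remains to check that the system matrix
\[
M_{\sigma_\circ}=\begin{bmatrix}\delta & C_1\\ B_1 & A_{11}\end{bmatrix}:
\begin{bmatrix}\mathcal{U}\\ \mathcal{X}_\circ\end{bmatrix}\to
\begin{bmatrix}\mathcal{Y}\\ \mathcal{X}_\circ\end{bmatrix}
\]
is a co-isometry. Expanding $M_\sigma M_\sigma^*=I$ using the block forms above yields in particular the identities
\[
\delta\delta^*+C_1C_1^*=I_\mathcal{Y},\qquad
B_1\delta^*+A_{11}C_1^*=0,\qquad
B_1B_1^*+A_{11}A_{11}^*=I_{\mathcal{X}_\circ},
\]
which together say exactly that $M_{\sigma_\circ}M_{\sigma_\circ}^*=I$. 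This completes the construction.

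\smallskip
The only step requiring any care is Step 4, where one must read off the right blocks of the identity $M_\sigma M_\sigma^*=I$; the extra rows and columns coming from $\mathcal{N}$ in Steps 2--3 carry no information for $F$ because $\gamma$ annihilates $\mathcal{N}$ and $\alpha$ preserves it, so they drop out cleanly.
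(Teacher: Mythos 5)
Your proof is correct and follows essentially the same route as the paper: decompose the state space as $\mathcal{X}_\circ\oplus\ker\Gamma$, use the $\alpha$-invariance of $\ker\Gamma$ and the vanishing of $\gamma$ on it to get the block-lower-triangular form, verify that the compression is an observable realization of the same $F$, and read off $M_{\sigma_\circ}M_{\sigma_\circ}^*=I$ from the block form of $M_\sigma M_\sigma^*$. The only cosmetic difference is that you spell out the three block identities explicitly, whereas the paper notes in one line that $M_\sigma=\sbm{M_0&0\\ \star&\star}$ forces $M_0M_0^*$ to be the top-left corner of $M_\sigma M_\sigma^*=I$.
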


\begin{proof}[\bf Proof]
Assume that $\si=\{\a, \b, \g, \d\}$ is a  co-isometric realization of $F$.  Let $\ga$ be the observability operator defined \eqref{defGa0}, and let $\sX_0$ be the closure of the range of $\ga^*$. Thus $\sX= \sX_0\oplus \kr \ga$, and relative to this  Hilbert space direct sum the operators $\a$, $\b$, $\g$ admit the following partitions:
\begin{align*}
\a&=\begin{bmatrix}\a_0&0\\ \star &\star\end{bmatrix}:
\begin{bmatrix}\sX_0\\ \kr \ga  \end{bmatrix}\to \begin{bmatrix}\sX_0\\ \kr \ga  \end{bmatrix}, \quad \b=\begin{bmatrix}\b_0\\ \star \end{bmatrix}: \sU\to \begin{bmatrix}\sX_0\\ \kr \ga  \end{bmatrix},\\
\g&=\begin{bmatrix}\g_0 & 0 \end{bmatrix}:\begin{bmatrix}\sX_0\\ \kr \ga  \end{bmatrix}\to \sY.
\end{align*}
Then the system $\si_0=\{\a_0, \b_0, \g_0, \d\}$  is an observable realization of $F$.

The  system matrix $M_0 = M_{\si_0}$  for $\si_0$ is also co-isometric.
To see this notice that $M_\si$ admits
a matrix representation of the form
\[
M_\si =\begin{bmatrix}
 \d &\g_0&0\\
 \b_0&\a_0 &0\\
\star&\star&\star
 \end{bmatrix}
=\begin{bmatrix}M_0&0\\ \star&\star \end{bmatrix},\hspace{.2cm}
\mbox{and hence}
\hspace{.2cm}  M_\si M_\si^*=\begin{bmatrix}M_0M_0^*&\star\\ \star&\star \end{bmatrix}.
\]
Since $M_\si$ is a co-isometry, $M_\si M_\si^*$ is the identity operator on the space $ \sY\oplus \sX_0\oplus \kr \ga$, and thus  $M_0M_0^*$  is the identity operator on $ \sY\oplus \sX_0$.  Therefore, $\si_0=\{\a_0, \b_0, \g_0, \d\}$  is an observable, co-isometric realization of $F$.
\end{proof}

\subsection{Solutions of the  LTONP interpolation problem  and $\la$-preferable realizations}
As before $\{W,\widetilde{W},Z\}$ is a data set  for a LTONP interpolation problem,  and  we assume that the Pick operator $\Lambda$ is non-negative.

Let $\si=\{\a,\b,\g,\d\}$ be a co-isometric realization of $F$
with state space $\sX$ and system matrix $M= M_\si$. We call the realization \emph{$\la$-preferable} if  $\sX=\sZ_\circ\oplus \sV $ for some Hilbert space $\sV$ and $M|\sF=\o$.  As before,  $\sZ_\circ$  equals the closure of the range of $\Lambda$; see \eqref{defZcirc}, and $\o$ is the unitary operator from $\sF$ onto $\sF^\prime$ determined by the data set  $\{W,\widetilde{W},Z\}$. In particular, $\sF$ and $\sF^\prime$ are the subspaces of $\sU\oplus \sZ_\circ$ and $\sY\oplus \sZ_\circ$, respectively, defined by \eqref{defFFprime}.  Note that  $\sX=\sZ_\circ\oplus \sV $ implies that $\sF\subset \sU\oplus \sX$, and thus $M|\sF$ is well defined. Furthermore,  $M$ partitions as
\begin{equation}\label{M25}
M = \begin{bmatrix}
      \delta & \gamma_1 & \gamma_2 \\
      \beta_1 & \alpha_{11} & \alpha_{12} \\
      \beta_2 & \alpha_{21} & \alpha_{22} \\
    \end{bmatrix}:
    \begin{bmatrix}
    \mathcal{U} \\
      \mathcal{Z}_\circ \\
      \mathcal{V} \\
    \end{bmatrix}\rightarrow\begin{bmatrix} \mathcal{Y} \\
      \mathcal{Z}_\circ \\
      \mathcal{V} \\
    \end{bmatrix},
\end{equation}
and the constraint $M|\sF=\o$  is equivalent to
\begin{equation}\label{M25a}
\omega = \left.\begin{bmatrix}
      \delta & \gamma_1   \\
      \beta_1 & \alpha_{11}   \\
    \end{bmatrix}\right|\mathcal{F}.
\end{equation}
To see the latter, observe that $M|\sF=\o$ implies that $M\sF=\o\sF=\sF^\prime \subset \sY\oplus \sZ_\circ$, and hence $\begin{bmatrix}\b_2 & \a_{21}\end{bmatrix}|\sF=\{0\}$. Conversely, if \eqref{M25a} holds, then the restriction of the first two bock rows of $M$ in \eqref{M25} to $\sF$ is equal to $\o$. Since $\o$ is unitary, the restriction of the last block row to $\sF$ must be zero, for otherwise $M$ would not be a contraction. Hence $M|\sF=\o$.

The following theorem  is the main result of the present subsection.

\begin{theorem}\label{thm:allco} Let $\{W,\widetilde{W},Z\}$
be a data set  for a LTONP interpolation problem with
 $Z^*$  being  pointwise stable,  and assume that the Pick operator $\Lambda$ is non-negative.  Then all solutions $F$ of the LTONP interpolation problem are given by
\begin{equation}\label{allsol2}
F(\lambda)=
\delta + \lambda \gamma(I - \lambda \alpha)^{-1}\beta, \quad \lambda \in \mathbb{D},
\end{equation}
where $\si=\{\alpha,\beta,\gamma,\delta\}$ is an  observable, co-isometric realization of $F$ which is $\la$-preferable. Moreover,
\begin{equation}\label{Lam8}
 \Lambda = W \ga \ga^*W^*,
\end{equation}
where $\ga$ is the observability operator mapping
$\mathcal{X}$ into $\ell_+^2(\mathcal{Y})$ determined by $\{\gamma, \alpha\}$.
Finally,  up  to  unitary equivalence of  realizations  this parameterization of all solutions to the  LTONP interpolation problem  via  $\la$-preferable,  observable, co-isometric realizations $\si=\{\alpha,\beta,\gamma,\delta\}$   is  one-to-one  and onto.
\end{theorem}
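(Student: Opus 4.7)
The plan is to establish the bijection between solutions $F$ and observable co-isometric $\la$-preferable realizations $\sigma$ up to unitary equivalence, along with the identity \eqref{Lam8}. The workhorse throughout is the following identity, which I would prove first and which holds for every co-isometric realization $\sigma=\{\alpha,\beta,\gamma,\delta\}$ of a Schur-class function $F$ with observability operator $\ga$:
\begin{equation}\label{pl:key}
I_{\ell^2_+(\sY)}-T_FT_F^*=\ga\ga^*.
\end{equation}
I would verify \eqref{pl:key} by comparing $(i,j)$-blocks of both sides and invoking the three co-isometric identities $\delta\delta^*+\gamma\gamma^*=I_\sY$, $\beta\delta^*+\alpha\gamma^*=0$, and $\beta\beta^*+\alpha\alpha^*=I_\sX$. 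Combining \eqref{pl:key} with the assumption $WT_F=\wt{W}$ at once yields \eqref{Lam8}, since $\la=WW^*-\wt{W}\wt{W}^*=W(I-T_FT_F^*)W^*=W\ga\ga^*W^*$.

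For the direction ``solution $\Rightarrow$ preferred realization,'' I would begin with a solution $F$ and choose any observable co-isometric realization $\sigma_0$ of $F$, whose existence is guaranteed by Theorem \ref{thm:co}. The operator $V=\ga_0^*W^*:\sZ\to\sX$ then satisfies $V^*V=\la$, so polar decomposition supplies an isometry $U:\sZ_\circ\to\sX$ with $V=U\la^{\half}$. After the unitary change of basis that identifies $\sX=\sZ_\circ\oplus\sV$ with $\sV=(U\sZ_\circ)^\perp$ so that $U$ becomes the canonical embedding, the resulting realization $\sigma$ satisfies $W\ga=\begin{bmatrix}\la^{\half} & 0\end{bmatrix}$. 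The required condition $M|\sF=\o$ translates into three operator equations (one per block row); I would verify them by combining (i) the structural identity $\ga=E_\sY\gamma+S_\sY\ga\alpha$, (ii) the intertwining $\alpha^*\ga^*W^*=\ga^*W^*Z^*$, which in particular forces $\alpha_{12}=0$, (iii) the consequence $WT_FE_\sU=\wt{B}$ of $WT_F=\wt{W}$, and (iv) the co-isometric identities above.

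For the converse direction, let $\sigma$ be an observable co-isometric $\la$-preferable realization of some $F\in\sS(\sU,\sY)$. Because $\o$ is unitary on $\sF$ while $M$ is a contraction with $MM^*=I$, the condition $M|\sF=\o$ forces $M$ to be isometric on $\sF$ and hence $M^*|\sF^\prime=\o^{-1}$. Reading off the resulting identities and taking adjoints yields three equations, notably $\wt{B}=B\delta+Z\la^{\half}\beta_1$, $\la^{\half}=B\gamma_1+Z\la^{\half}\alpha_{11}$, and $0=B\gamma_2+Z\la^{\half}\alpha_{12}$. Decompose $W\ga=\begin{bmatrix}\mu_1 & \mu_2\end{bmatrix}$ with respect to $\sX=\sZ_\circ\oplus\sV$ and set $\eta_1=\mu_1-\la^{\half}$. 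The three displayed equations, together with the consequence $W\ga=B\gamma+ZW\ga\alpha$ of the structural identity, collapse to
\[
\begin{bmatrix}\eta_1 & \mu_2\end{bmatrix}=Z\begin{bmatrix}\eta_1 & \mu_2\end{bmatrix}\alpha.
\]
Iterating $n$ times, taking adjoints, and evaluating at arbitrary $z\in\sZ$ produces a sequence bounded in norm by $\bigl\|\begin{bmatrix}\eta_1^* & \mu_2^*\end{bmatrix}\bigr\|\cdot\|Z^{*n}z\|$, which tends to zero by pointwise stability of $Z^*$ (together with $\|\alpha\|\leq1$). Hence $\eta_1=0$ and $\mu_2=0$, that is, $W\ga=\begin{bmatrix}\la^{\half} & 0\end{bmatrix}$. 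Consequently $WT_FE_\sU=B\delta+ZW\ga\beta=B\delta+Z\la^{\half}\beta_1=\wt{B}$, and $WT_F=\wt{W}$ follows via the intertwinings $WT_FS_\sU=ZWT_F$ and $Z\wt{W}=\wt{W}S_\sU$.

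The one-to-one correspondence up to unitary equivalence is then a direct consequence of the uniqueness clause of Theorem \ref{thm:co}. The main obstacle is the converse direction: the identity $W\ga=\begin{bmatrix}\la^{\half} & 0\end{bmatrix}$ is not apparent from $M|\sF=\o$ alone and must be recovered via the iteration-and-limit argument above, which is precisely where the pointwise stability hypothesis on $Z^*$ plays an indispensable role.
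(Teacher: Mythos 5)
Your plan follows the same broad route as the paper (realization-theoretic reduction, identity for $W\ga$, Douglas/polar decomposition, Stein-type iteration for uniqueness), and both the key identity $I_{\ell^2_+(\sY)}-T_FT_F^*=\ga\ga^*$ and your converse direction are correct. The identity is a pleasant shortcut: the paper instead obtains $W\ga\ga^*W^*=\la$ via uniqueness of the Stein equation solution (Part 2 of Lemma \ref{lem:la-prefer2}), and your iteration argument in the converse direction is in substance that same uniqueness principle in block form. The forward-direction setup via the partial isometry $U$ with $V=U\la^{\half}$ is exactly the paper's Douglas-factorization step.

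There is however a genuine error in the forward direction: the intertwining you invoke in item (ii), namely $\alpha^*\ga^*W^*=\ga^*W^*Z^*$, is false, and the accompanying claim that it forces $\alpha_{12}=0$ is also false. Since $S_\sY^*\ga=\ga\alpha$ and $W^*Z^*=S_\sY^*W^*$, one has $\alpha^*\ga^*W^*=\ga^*S_\sY W^*$ while $\ga^*W^*Z^*=\ga^*S_\sY^*W^*$; these differ. The correct relation, from $W\ga=B\gamma+ZW\ga\alpha$, reads in block form $-Z\la^{\half}\alpha_{12}=B\gamma_2$, which leaves $\alpha_{12}$ free (indeed, in the observable co-isometric realization of the central solution constructed in Lemma \ref{L:centreal}, $\alpha_{12}=\Pi_{\sZ_\circ}E_{\sG'}^*\neq 0$ in general). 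Concretely, for the data $\{I_{\ell^2_+(\sY)},T_F,S_\sY\}$, your (ii) would assert $S_\sY^*\ga_0=S_\sY\ga_0$, which is absurd. The gap is easily filled: items (i), (iii) and (iv) alone suffice. From $W\ga=\begin{bmatrix}\la^{\half}&0\end{bmatrix}$ and the structural identity you obtain $B\delta+Z\la^{\half}\beta_1=\wt B$, $B\gamma_1+Z\la^{\half}\alpha_{11}=\la^{\half}$, $B\gamma_2+Z\la^{\half}\alpha_{12}=0$; taking adjoints these say precisely that $M^*|\sF'=\o^*$, and since $MM^*=I$ one then gets $M|\sF=\o$ for free. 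Equivalently, you may establish the operator-level identity $\begin{bmatrix}B&ZW\ga\end{bmatrix}M=\begin{bmatrix}\wt B&W\ga\end{bmatrix}$ (your (i) and (iii) give this directly) and then right-multiply by $M^*$, which is the paper's route in \eqref{Ax=b3}--\eqref{propM}. With (ii) deleted and this substitution made, the proof is complete and correct.
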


\begin{remark}\label{rem:co}
\textup{If one  specifies  Theorem  \ref{thm:allco} for the case when the data set is the set $\{W_1, \wt{W}_1, Z_1\}$,  where   $W_1$,  $\wt{W}_1$ and $ Z_1$ are given by \eqref{exdataset}, then Theorem~\ref{thm:co} is obtained. Note however   that Theorem \ref{thm:co} is used  in the proof   of Theorem  \ref{thm:allco}, and therefore Theorem \ref{thm:co} does not appear as a  corollary  of Theorem  \ref{thm:allco}. On the other hand, if one uses the arguments  in  the proof  of   Theorem \ref{thm:allco} for the data set  $\{W_1, \wt{W}_1, Z_1\}$ only,   then one obtains a new direct proof  of the fact that any Schur class function admits an observable co-isometric realization.  This proof is given in  Subsection \ref{Assec:co-iso}; cf.,   Example \ref{ex:SchurF}.}
\end{remark}

The proof  of Theorem  \ref{thm:allco} will be based on two lemmas.

\begin{lemma}\label{lem:la-prefer1} Let $\{W,\widetilde{W},Z\}$ be a data set  for a  LTONP interpolation problem with $Z^*$  being  pointwise stable,  and assume that the Pick operator $\Lambda$ is non-negative. Let $F\in \sS(\sU, \sY)$, and assume that  $\si=\{\a,\b,\g,\d\}$  is a $\la$-preferable,  co-isometric realization  of $F$. Then $F$ is a solution to the LTONP interpolation problem. Moreover,
\begin{equation}\label{sqrt1}
\Lambda^{\frac{1}{2}}\Pi_{\mathcal{Z}_\circ} = W \ga,
\end{equation}
where $\ga$ is the observability operator  defined by $\{\g, \a\}$ and $\mathcal{Z}_\circ = \overline{\im \Lambda}$; see  \eqref{defZcirc}.
\end{lemma}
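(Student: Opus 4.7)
The plan is to exploit the $\la$-preferability condition $M|\sF=\o$ in two complementary ways: first directly, then after passing to the adjoint. Writing $M$ in the block form \eqref{M25}, condition \eqref{M25a} says $M$ acts as $\o$ on $\sF\subset\sU\oplus\sZ_\circ$. I would begin by evaluating $M$ on the vectors $(\wt B^*z,\la^{1/2}z)\in\sF$ and using $\o K_1^*=K_2^*$ to read off the three operator identities
\begin{equation*}
\d\wt B^*+\g_1\la^{1/2}=B^*,\qquad \b_1\wt B^*+\a_{11}\la^{1/2}=\la^{1/2}Z^*,\qquad \b_2\wt B^*+\a_{21}\la^{1/2}=0
\end{equation*}
as operators on $\sZ$.

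The hard part is the dualization: I expect to establish that $M^*|\sF^\prime=\o^*$. The key is $\sF\perp\kr M$: for $f\in\sF$, $k\in\kr M$ and any scalar $c$, since $M|\sF$ is isometric and $M$ is a contraction, $|c|^2\|f\|^2=\|M(cf+k)\|^2\leq\|cf+k\|^2=|c|^2\|f\|^2+2\,\mathrm{Re}(\bar c\langle f,k\rangle)+\|k\|^2$, and varying $c$ over $\mathbb{C}$ forces $\langle f,k\rangle=0$. Since both $\o^*(y,x)\in\sF$ and $M^*(y,x)$ map to $(y,x)$ under $M$ and lie in $\overline{\im M^*}=(\kr M)^\perp$, their difference is in $\kr M\cap\overline{\im M^*}=\{0\}$. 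Applying $M^*$ to $K_2^*z=(B^*z,\la^{1/2}Z^*z)\in\sF^\prime$ together with $\o^*K_2^*=K_1^*$ now yields the three \emph{dual} identities
\begin{equation*}
B\d+Z\la^{1/2}\b_1=\wt B,\qquad B\g_1+Z\la^{1/2}\a_{11}=\la^{1/2},\qquad B\g_2+Z\la^{1/2}\a_{12}=0.
\end{equation*}

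With the dual identities in hand, I would prove \eqref{sqrt1} by a Sylvester argument. The relation $\ga=E_\sY\g+S_\sY\ga\a$ together with $WE_\sY=B$ and $WS_\sY=ZW$ gives $W\ga-ZW\ga\,\a=B\g$. Setting $\hat\la^*:=\la^{1/2}\Pi_{\sZ_\circ}$ and expanding $\Pi_{\sZ_\circ}\a=\a_{11}\Pi_{\sZ_\circ}+\a_{12}\Pi_\sV$, the last two dual identities combine to show that $\hat\la^*$ satisfies the same equation $\hat\la^*-Z\hat\la^*\a=B\g$. Their difference $Y:=W\ga-\hat\la^*$ then obeys $Y=ZY\a$, hence $Y=Z^nY\a^n$; taking adjoints, $\|Y^*z\|\leq\|\a^{*n}\|\,\|Y\|\,\|Z^{*n}z\|\to 0$ using $\|\a\|\leq 1$ (from $\b\b^*+\a\a^*=I_\sX$) and pointwise stability of $Z^*$, so $Y=0$. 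Finally, to deduce $WT_F=\wt W$, I would combine $F_0=\d$ and $F_n=\g\a^{n-1}\b$ into $T_FE_\sU=E_\sY\d+S_\sY\ga\b$, multiply by $W$ to obtain $WT_FE_\sU=B\d+ZW\ga\b$, substitute $W\ga\b=\la^{1/2}\b_1$ via \eqref{sqrt1}, and apply the first dual identity to reach $WT_FE_\sU=\wt B=\wt WE_\sU$; intertwining with $S_\sU$ and $Z$ extends this to the dense subspace spanned by $\{S_\sU^jE_\sU u\}$, giving $WT_F=\wt W$.
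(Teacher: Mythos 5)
Your proof is correct, and it reaches the paper's key identity by a genuinely different mechanism. The paper proves the operator identity
\[
\begin{bmatrix} B  & Z \la^{\frac{1}{2}}\Pi_{\mathcal{Z}_\circ} \end{bmatrix} M = \begin{bmatrix}  \widetilde{B}  &  \la^{\frac{1}{2}}\Pi_{\mathcal{Z}_\circ} \end{bmatrix}
\]
by testing separately on $\sF$ (where $M|\sF=\o$ and the second identity in \eqref{defomega2} give agreement) and on $\sF^\perp=\sG\oplus\sV$ (where both sides vanish, using that a contraction which maps $\sF$ isometrically onto $\sF'$ sends $\sF^\perp$ into $\sF'^\perp$, together with $K_1\sG=\{0\}$ and $K_2\sG'=\{0\}$). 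You instead prove the adjoint form by first establishing $M^*|\sF'=\o^*$: you show $\sF\perp\kr M$ via the quadratic-in-$c$ argument, note $\sF\subset(\kr M)^\perp=\overline{\im M^*}$, and observe that $\o^*(y,x)$ and $M^*(y,x)$ are both right-preimages of $(y,x)$ under $M$ lying in $(\kr M)^\perp$, hence equal; applying this to $K_2^*z$ with $\o^*K_2^*=K_1^*$ recovers exactly the column equations of \eqref{Ax=b}. The two arguments are dual to one another, but yours bypasses any explicit appeal to the spaces $\sG$, $\sG'$ and their annihilation properties, which is a mild structural economy. (Your quadratic argument for $\sF\perp\kr M$ is a hands-on version of the standard fact that a contraction attaining its norm on a vector acts as a partial isometry there, i.e.\ $M^*Mf=f$ for $f\in\sF$.) Everything after that — the Sylvester/Stein uniqueness step using $\|\a\|\leq 1$ and pointwise stability of $Z^*$, and the first-column computation $W T_F E_\sU = B\d + ZW\ga\b=\widetilde B$ extended by intertwining — matches the paper. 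One remark: the three ``direct'' identities in your first paragraph, obtained by evaluating $M$ on $\sF$, are never used in the rest of the argument and can simply be dropped; only the dual identities carry the proof.
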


\begin{proof}[\bf Proof]
Using $Z W = WS_\mathcal{Y}$
and $S_\mathcal{Y}^* \ga  = \ga  \alpha$, we obtain
\[
W \ga  - Z W \ga  \alpha =
W\left(I - S_\mathcal{Y}S_\mathcal{Y}^* \right)\ga  =
W E_\sY E_\sY ^*\ga  = B \gamma.
\]
In other words,
\begin{equation}\label{lyap_obs}
W \ga  -Z W \ga  \alpha = B \gamma.
\end{equation}
Because $Z^*$ is pointwise stable, it follows that $W \ga $ is the unique solution to the Stein equation  $\Omega - Z \Omega \alpha = B \gamma$;
see Lemma \ref{lem_lyap} in the Appendix.

Since the system $ \si= \{\a, \b, \g, \d\}$ is $\la$-preferable,  we know that the state space $\sX$ is equal to $\sZ_\circ\oplus \sV$ for some Hilbert space $\sV$,  where $\sZ_\circ= \overline{\im \Lambda}$.
Let $\Pi_{\sZ_\circ}$ be the orthogonal projection from  $\sX=\sZ_\circ \oplus \sV$ onto $\sZ_\circ$.  We shall prove  that
\begin{equation}\label{Ax=b}
\begin{bmatrix}
  B  & Z \la^{\frac{1}{2}}\Pi_{\mathcal{Z}_\circ} \\
\end{bmatrix} \begin{bmatrix}
      \delta & \gamma \\
      \beta  & \alpha \\
    \end{bmatrix}  =  \begin{bmatrix}
  \widetilde{B}  &  \la^{\frac{1}{2}}\Pi_{\mathcal{Z}_\circ} \\
\end{bmatrix}.
\end{equation}
Let   $M=M_\si$ be the system matrix of the realization $\si$, i.e.,  the $2\ts 2$ operator matrix appearing in the left hand side of \eqref{Ax=b}. To prove  the identity \eqref{Ax=b}  we first note that the second identity in \eqref{defomega2} and  $M|\sF = \o$  imply that the two sides of \eqref{Ax=b} are equal when restricted to $\sF$. Next, consider the orthogonal complements
\begin{equation*}
\sF^\perp =(\sU\oplus \sZ_\circ \oplus \sV) \ominus \sF=\sG\oplus \sV \quad
\sF^{\prime \perp}=(\sY\oplus \sZ_\circ \oplus \sV) \ominus  \sF^\prime =\sG^\prime\oplus \sV.
\end{equation*}
Since  $M$ is a contraction with  $M\sF=\sF^\prime$ and  $M|\sF$ is unitary, we have $M\sF^\perp \subset \sF^{\prime \perp}$. Therefore it remains to show that the two sides of \eqref{Ax=b} are also equal when restricted to $\sF^\perp$.  To do this, take
$f=\begin{bmatrix} u_0 &z_0& v_0\end{bmatrix}{}^\top$ in $\sF^\perp$. Here $u_0\in \sU$, $z_0\in \sZ_\circ$, and $v_0\in \sV$. Then
\begin{equation}\label{step1}
\begin{bmatrix} \wt{B}  &  \la^{\frac{1}{2}}\Pi_{\sZ_\circ}
\end{bmatrix}f= \begin{bmatrix} \wt{B}  &  \la^{\frac{1}{2}}|{\sZ_\circ} &0\end{bmatrix}\begin{bmatrix} u_0 \\z_0\\ v_0 \end{bmatrix}= \begin{bmatrix} \wt{B}  &  \la^{\frac{1}{2}}\end{bmatrix}\begin{bmatrix} u_0 \\z_0\end{bmatrix}.
\end{equation}
But the vector $\begin{bmatrix} u_0 &z_0\end{bmatrix}{}^\top$ belongs to  the space $\sG$. Thus  the first identity in \eqref{zeroGGprime} shows that $\begin{bmatrix} \wt{B}  &  \la^{\frac{1}{2}}\Pi_{\sZ_\circ}  \end{bmatrix}f=0$.  Now consider $f^\prime:= Mf\in \sF^{\prime\perp}$. Write $f^\prime=\begin{bmatrix} y_0 &z_0^\prime & v_0^\prime \end{bmatrix}{}^\top$, where $y_0\in \sY$, $z_0^\prime\in \sZ_\circ$, and $v_0^\prime \in \sV$. Then
\[
\begin{bmatrix} B  & Z \la^{\frac{1}{2}}\Pi_{\sZ_\circ}\end{bmatrix} Mf=
\begin{bmatrix} B  & Z \la^{\frac{1}{2}}|\sZ_\circ&0\end{bmatrix}
\begin{bmatrix} y_0  \\z_0^\prime \\ v_0^\prime \end{bmatrix}
=\begin{bmatrix} B  & Z \la^{\frac{1}{2}}\end{bmatrix}\begin{bmatrix} y_0\\z_0^\prime\end{bmatrix}=0,
\]
because    $\begin{bmatrix} y_0  & z_0^\prime\end{bmatrix}{}^\top$  belongs to $\sG^\prime$ and using  the second  identity in \eqref{zeroGGprime}.
We conclude that  $\begin{bmatrix} B  & Z \la^{\frac{1}{2}}\Pi_{\sZ_\circ}\end{bmatrix} Mf=0$. Hence when applied to $f$ both sides of   \eqref{Ax=b} are equal to zero, which completes the proof of \eqref{Ax=b}.

Note that \eqref{Ax=b} is equivalent to the following two identities:
\begin{equation}\label{Ax=b1}
\la^{\frac{1}{2}}\Pi_{\mathcal{Z}_\circ} = Z \la^{\frac{1}{2}}\Pi_{\mathcal{Z}_\circ} \alpha + B \gamma
\quad \mbox{and}\quad \widetilde{B}= Z  \la^{\frac{1}{2}}\Pi_{\mathcal{Z}_\circ} \beta + B \delta.
\end{equation}
Because  $W\ga$ is the unique solution to the Stein equation \eqref{lyap_obs}, as observed above, the first identity in \eqref{Ax=b1} shows that $W\ga=\la^{\frac{1}{2}}\Pi_{\mathcal{Z}_\circ}$, i.e.,  the identity \eqref{sqrt1} holds true.

By consulting the second equation in \eqref{Ax=b1}, we have
  $\widetilde{B} = Z W \ga  \beta + B \delta$.
Using this we obtain
\[
W T_F E_\sU = \begin{bmatrix}
                            W E_\sY  & W  S_\sY\\
                          \end{bmatrix}\begin{bmatrix}
                            F(0)  \\ S_\mathcal{Y}^*T_F E_\sU\\
                          \end{bmatrix}
=\begin{bmatrix}
                            B  & Z W   \\
                          \end{bmatrix}\begin{bmatrix}
                            \delta  \\ \ga  \beta\\
                          \end{bmatrix}
= \widetilde{B}.
\]
Therefore $W T_F E_\sU = \widetilde{B} = \wt{W} E_\sU$.
So for any integer $n \geq 0$, we have
\[
W T_F S_\mathcal{U}^n E_\sU =
W S_\mathcal{Y}^n T_F  E_\sU=
Z^n W T_F  E_\sU = Z^n \widetilde{W} E_\sU
= \widetilde{W} S_\mathcal{U}^n E_\sU.
\]
Because $\{S_\mathcal{U}^n E_\sU \mathcal{U}\}_{n=0}^\infty$ spans
$\ell_+^2(\mathcal{U})$, we see that $W T_F = \widetilde{W}$.
Hence, $F$ is a solution to the LTONP interpolation problem.
\end{proof}

\begin{lemma}\label{lem:la-prefer2}  Let   $F$ be a solution to the  LTONP interpolation problem  with data set $\{W,\widetilde{W},Z\}$, and assume $\si=\{\a,\b\,\g,\d\}$ is  a co-isometric realization  of $F$.  Then up to unitary equivalence the realization  $\si$  is $\la$-preferable.
\end{lemma}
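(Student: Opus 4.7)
My plan has three main pieces. I would first establish $W\ga\ga^*W^*=\la$ for any co-isometric realization of $F$. Second, I would invoke Douglas's factorization lemma to obtain a partial isometry $V:\sX\to\sZ$ with range $\sZ_\circ$ and $W\ga=\la^\half V$, and use $V$ to construct the required unitary change of coordinates. Third, I would verify the $\la$-preferable condition by transporting the Stein identities to the new coordinates and inverting via the co-isometry relation $M_{\wt\si}M_{\wt\si}^*=I$.

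For the first step, the Stein equation $W\ga - ZW\ga\a = B\g$ (cf.\ \eqref{lyap_obs}) is immediate from $ZW=WS_\sY$ and $S_\sY^*\ga=\ga\a$. Expanding $W\ga\ga^*W^*$ via this equation together with the three co-isometric identities $\g\g^*=I-\d\d^*$, $\g\a^*=-\d\b^*$, $\a\a^*=I-\b\b^*$ coming from $M_\si M_\si^*=I$ produces
\[
W\ga\ga^*W^* - Z(W\ga\ga^*W^*)Z^* = BB^* - (ZW\ga\b+B\d)(ZW\ga\b+B\d)^*.
\]
A short computation using $WE_\sY=B$ and $WS_\sY=ZW$ identifies $ZW\ga\b+B\d=WT_FE_\sU$, which equals $\wt{W}E_\sU=\wt{B}$ because $F$ solves the interpolation problem. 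Comparing with \eqref{Pick1a}, the operators $W\ga\ga^*W^*$ and $\la$ therefore satisfy the same Stein equation $X-ZXZ^*=BB^*-\wt{B}\wt{B}^*$; uniqueness under pointwise stability of $Z^*$ (combined with uniform boundedness of $\{Z^{*n}\}$ from Banach--Steinhaus) then forces $W\ga\ga^*W^*=\la$.

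Applying Douglas's lemma to $(W\ga)(W\ga)^*=\la^\half(\la^\half)^*$ produces a partial isometry $V:\sX\to\sZ$ with range in $\sZ_\circ$, initial space $\sX_0:=\overline{\im(\ga^*W^*)}$, final space $\sZ_\circ$, and $W\ga=\la^\half V$. Setting $\sV:=\sX_0^\perp=\kr W\ga$, the operator $U:\sX\to\sZ_\circ\oplus\sV$ defined by $U(x_0+v)=Vx_0\oplus v$ for $x_0\in\sX_0$, $v\in\sV$, is unitary because $V|_{\sX_0}$ is unitary onto $\sZ_\circ$. The transformed realization $\wt\si=\{U\a U^*,\,U\b,\,\g U^*,\,\d\}$ is co-isometric, unitarily equivalent to $\si$, and lives on $\sZ_\circ\oplus\sV$. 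A direct computation yields $VU^*=\Pi_{\sZ_\circ}$, so the associated observability operator $\wt\ga=\ga U^*$ satisfies $W\wt\ga=\la^\half\Pi_{\sZ_\circ}$.

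Transporting the Stein equation and the identity $\wt{B}=ZW\ga\b+B\d$ to the new coordinates produces the matrix identity
\[
\begin{bmatrix}\wt{B}&\la^\half\Pi_{\sZ_\circ}\end{bmatrix}=\begin{bmatrix}B&Z\la^\half\Pi_{\sZ_\circ}\end{bmatrix}M_{\wt\si}.
\]
Taking adjoints and applying $M_{\wt\si}$ from the left, using $M_{\wt\si}M_{\wt\si}^*=I$, gives
\[
M_{\wt\si}\begin{bmatrix}\wt{B}^*\\ \Pi_{\sZ_\circ}^*\la^\half\end{bmatrix}=\begin{bmatrix}B^*\\ \Pi_{\sZ_\circ}^*\la^\half Z^*\end{bmatrix}.
\]
The vanishing of the $\sV$-component on the right shows $M_{\wt\si}(\sF)\subseteq\sY\oplus\sZ_\circ$, and the first two block rows match the defining relation \eqref{defomega2} of $\o$ on the dense range of $K_1^*$, so $M_{\wt\si}|\sF=\o$ by continuity. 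The principal obstacle is the first step; once $W\ga\ga^*W^*=\la$ is secured, the remainder is essentially a bookkeeping exercise combined with the reversal trick $MM^*=I$ that converts the adjoint Stein-type identity back into the statement $M_{\wt\si}|\sF=\o$.
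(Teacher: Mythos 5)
Your proof is correct and follows essentially the same three-stage strategy as the paper's: establish $W\ga\ga^*W^*=\la$ via the Stein equation and uniqueness, invoke the Douglas factorization to build the unitary change of state coordinates onto $\sZ_\circ\oplus\kr W\ga$, and then transport the intertwining identity $\begin{bmatrix}B&ZW\ga\end{bmatrix}M_\si=\begin{bmatrix}\wt{B}&W\ga\end{bmatrix}$ to the new coordinates and flip it via $M_{\wt\si}M_{\wt\si}^*=I$ to read off $M_{\wt\si}|\sF=\o$. The only cosmetic difference is in the first step: you derive the Stein identity for $W\ga\ga^*W^*$ directly from the three scalar co-isometry relations $\g\g^*=I-\d\d^*$, $\g\a^*=-\d\b^*$, $\a\a^*=I-\b\b^*$, whereas the paper first proves the block intertwining relation and then right-multiplies by $M_\si^*$; both routes rest on the same ingredients (including the identification $ZW\ga\b+B\d=WT_FE_\sU=\wt{B}$) and arrive at the same place, so this is a reorganization rather than a genuinely different argument.
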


\begin{proof}[\bf Proof]
Throughout   $F(\lambda) = \delta + \lambda \gamma (I - \lambda \alpha)^{-1}\beta$ is a co-isometric  realization of the solution $F$  for the  LTONP interpolation problem with data set $\{W,\widetilde{W},Z\}$. We split the proof into three parts.

\smallskip\noindent
\textsc{Part 1.} In this part we show that
\begin{equation}\label{Ax=b3}
\begin{bmatrix} B & Z W \ga \end{bmatrix}
\begin{bmatrix} \delta & \gamma \\ \beta & \alpha \end{bmatrix} = \begin{bmatrix} \widetilde{B} &  W \ga \end{bmatrix}.
\end{equation}
To prove this equality, note that
\begin{align}
\begin{bmatrix} B & Z W \ga \end{bmatrix}
\begin{bmatrix}\d \\ \b\end{bmatrix}&=B\d+ZW\ga \b
=B\d +WS_\sY\ga \b \nn \\
&= WE_\sY \d+WS_\sY\ga \b = W\left(E_\sY \d+S_\sY\ga \b\right) \nn \\
&= W\begin{bmatrix}\d\\ \g\b \\  \g\a\b\\ \g\a^2\b \\ \vdots\end{bmatrix}=WT_F E_\sU=\wt{W}E_\sU=\wt{B}.\label{Ax=b3a}
\end{align}
Furthermore, we have
\begin{align}
\begin{bmatrix} B & Z W \ga \end{bmatrix}\begin{bmatrix}\g \\ \a \end{bmatrix}&=B \g+ZW\ga \a=WE_\sY\g+WS_\sY\ga \a  \nn \\
&=W\left(  E_\sY\g+S_\sY\ga \a\right)=W\ga.\label{Ax=b3b}
\end{align}
Together  the identities \eqref{Ax=b3a} and \eqref{Ax=b3b} prove the identity \eqref{Ax=b3}.

\smallskip\noindent
\textsc{Part 2.}
In this part we show that $W \ga  \ga ^* W^*$ is equal to the Pick operator $\la $.  Since the realization  $\{\a,\b,\g,\d\}$  is co-isometric, the corresponding system matrix is a co-isometry, and hence \eqref{Ax=b3} implies that
\[
\begin{bmatrix} \widetilde{B} &  W \ga \end{bmatrix}\begin{bmatrix} \wt{B}^*\\ \ga^* W^*\end{bmatrix}=\begin{bmatrix} B & Z W \ga \end{bmatrix}
\begin{bmatrix}B^* \\ \ga^* W^*Z^*\end{bmatrix}.
\]
Now put  $\om = W \ga  \ga ^* W^*$. Then the preceding identity is equivalent to
\[
\om - Z \om Z^* = B B^* - \wt{B} \wt{B}^*.
\]
Hence $\om$ is a solution to the Stein equation \eqref{Pick1a}. Since $Z^*$ is pointwise stable, the solution to this Stein equation is unique (see Lemma \ref{lem_lyap}), and thus, $\om = \la$.

\smallskip\noindent
\textsc{Part 3.}  In this part  we show that up to unitary equivalence the system $\si=\{\a, \b, \g, \d\}$ is $\la$-preferable. Let $\sX$ be the state space of  $\si$, and decompose $\sX$ as $\sX= \sX_\circ \oplus \sV$, where $\sV=\kr W\ga$. Since
\[
(W\ga)(W\ga)^*=\la=
 \Lambda^{\frac{1}{2}}\Pi_{\sZ_\circ} \Pi_{\sZ_\circ}^*\Lambda^{\frac{1}{2}}
\]
by the second part of the proof, the Douglas factorization lemma shows that there exists a unique unitary operator $\t_\circ$ mapping $\sZ_\circ$ onto $\sX_\circ$ such that
\begin{equation}\label{def:tau}
(W\ga|\sX_\circ)\t_\circ=\la^{\frac{1}{2}}|\sZ_\circ.
\end{equation}
Now,   put $\wt{\sX}= \sZ_\circ\oplus \sV$, let  $U$ be the unitary operator
from $\wt{\sX}$ onto $\sX$ defined by
\begin{equation}\label{defU1}
U=\begin{bmatrix}\t_\circ &0\\0&I_\sV \end{bmatrix}:
\begin{bmatrix}\sZ_\circ \\  \sV \end{bmatrix}\to
\begin{bmatrix}\sX_\circ\\ \sV \end{bmatrix},
\end{equation}
and define  the system $\wt{\si}=\{\wt{\a}, \wt{\b}, \wt{\g}, \wt{\d}\}$ by setting
\begin{equation}\label{wtsystem}
\wt{\a}=U^{-1}\a U, \quad \wt{\b}=U^{-1}\b, \quad \wt{\g}=\g U, \quad \wt{\d}=\d.
\end{equation}
Note that the systems $\si$ and $\wt{\si}$ are unitarily equivalent. Thus $\wt{\si}$ is a co-isometric realization of  $F$. Furthermore, the space $\sZ_\circ$ is a subspace of $\wt{\sX}$. Therefore in order to complete the proof  it remains to show that the system matrix $\wt{M}$ of the system $\wt{\si}$  has the following property:
\[
\wt{M}|\sF=\o.
\]
Here $\o$ is the unitary operator  determined by the given data set $\{W,\wt{W}, Z\}$. In particular, $\o: \sF \to \sF^\prime$, with  $\sF$ and  $\sF^\prime$ being defined by  \eqref{defFFprime}.

Let $M$ be the system matrix for $\si$. Multiplying  \eqref{Ax=b3} from the right by $M^*$, using the fact that $M$ is a co-isometry,   and taking adjoints, we see that
\begin{equation}\label{propM}
M\begin{bmatrix}\wt{B}^*\\ (W\ga)^* \end{bmatrix}= \begin{bmatrix}B^*\\ (W\ga)^*Z^* \end{bmatrix}.
\end{equation}
Note that $(W\ga)^*$ maps $\sZ$ into $\sX_\circ$. Hence taking adjoints in  \eqref{def:tau} and using that $\t_\circ$ is a unitary operator, we see that
\[
(W\ga)^*z=\t_\circ \la^{\frac{1}{2}}z \quad \mbox{for each $z\in \sZ$}.
\]
But then, using  the definition of $U$ in \eqref{defU1}, we obtain
\begin{equation}\label{BwtB-ident}
\begin{bmatrix}I_\sU  & 0 \\ 0  &U^{-1} \end{bmatrix}\begin{bmatrix}\wt{B}^*\\ (W\ga)^* \end{bmatrix}=\begin{bmatrix}\wt{B}^*\\ \la^{\frac{1}{2}} \end{bmatrix}\ands
\begin{bmatrix}I_\sY  & 0 \\ 0  &U^{-1} \end{bmatrix}
\begin{bmatrix}B^*\\ (W\ga)^*Z^* \end{bmatrix}=
\begin{bmatrix}B^*\\ \la^{\frac{1}{2}}Z^* \end{bmatrix}.
\end{equation}
From \eqref{wtsystem} it follows that
\[
\wt{M}\begin{bmatrix}I_\sU  & 0 \\ 0  &U^{-1} \end{bmatrix}=
\begin{bmatrix}I_\sY  & 0 \\ 0  &U^{-1} \end{bmatrix}
M.
 \]
 Using the later  identity and the ones in \eqref{propM} and \eqref{BwtB-ident} we see that
 \begin{align*}
 \wt{M} \begin{bmatrix}\wt{B}^*\\ \la^{\frac{1}{2}}\end{bmatrix}&=
 \wt{M}\begin{bmatrix}I_\sU  & 0 \\ 0  &U^{-1} \end{bmatrix}
\begin{bmatrix}\wt{B}^*\\ (W\ga)^* \end{bmatrix}=\begin{bmatrix}I_\sY  & 0 \\ 0  &U^{-1} \end{bmatrix}M\begin{bmatrix}\wt{B}^*\\ (W\ga)^* \end{bmatrix}\\
&=\begin{bmatrix}I_\sY  & 0 \\ 0  &U^{-1} \end{bmatrix} \begin{bmatrix}B^*\\ (W\ga)^*Z^* \end{bmatrix}=\begin{bmatrix}B^*\\ \la^{\frac{1}{2}} Z^*\end{bmatrix}.
\end{align*}
Now recall that $\o$ is the unique operator satisfying the first identity in \eqref{defomega2}.   Thus $\wt{M} $ and $\o$ coincide on $\sF$,  that is,  $\wt{M}|\sF=\o$.
\end{proof}

\begin{corollary}\label{cor:la-prefer1}
If  $F\in \sS(\sU, \sY)$ has  a $\la$-preferable, co-isometric   realization, then $F$ has  a $\la$-preferable, observable, co-isometric realization.
\end{corollary}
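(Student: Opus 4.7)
The plan is to imitate the reduction used in the proof of Lemma \ref{lem:co-iso} — namely, splitting off the unobservable subspace $\kr\ga$ of the observability operator — while verifying that the $\la$-preferable structure of the state space and system matrix is preserved. Let $\si=\{\a,\b,\g,\d\}$ be the given $\la$-preferable, co-isometric realization of $F$, with state space $\sX=\sZ_\circ\oplus\sV$ and system matrix $M$ satisfying $M|\sF=\o$. Since $\si$ is $\la$-preferable and co-isometric, Lemma \ref{lem:la-prefer1} applies: $F$ solves the LTONP interpolation problem and the observability operator $\ga:\sX\to \ell^2_+(\sY)$ satisfies $W\ga=\la^{\frac{1}{2}}\Pi_{\sZ_\circ}$.

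The key step is to show that $\kr\ga\subset\sV$. Indeed, if $x\in\kr\ga$ then $W\ga x=0$, so $\la^{\frac{1}{2}}\Pi_{\sZ_\circ}x=0$. The restriction of $\la^{\frac{1}{2}}$ to $\sZ_\circ=\overline{\im\la}$ is injective (its range is dense in $\sZ_\circ$, so its kernel inside $\sZ_\circ$, being the orthogonal complement of that range, is zero). Hence $\Pi_{\sZ_\circ}x=0$, i.e., $x\in\sV$. Consequently, setting $\sX_0:=\overline{\im\ga^*}=\sX\ominus\kr\ga$, we obtain the orthogonal decomposition $\sX_0=\sZ_\circ\oplus\sV'$ with $\sV':=\sV\ominus\kr\ga$, since $\sZ_\circ$ sits inside $\sX_0$ (as $\sZ_\circ\perp\sV\supset\kr\ga$).

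Now, following the proof of Lemma \ref{lem:co-iso}, the compression $\si_0=\{\a_0,\b_0,\g_0,\d\}$ to $\sX_0$ is an observable, co-isometric realization of $F$. It remains only to check the $\la$-preferable condition $M_0|\sF=\o$, where $M_0$ is the system matrix of $\si_0$. Since $\sF\subset\sU\oplus\sZ_\circ\subset\sU\oplus\sX_0$, the restriction $M_0|\sF$ is well defined. For $f\in\sF$, the hypothesis $M|\sF=\o$ gives $Mf=\o f\in\sF'\subset\sY\oplus\sZ_\circ\subset\sY\oplus\sX_0$, so the $\kr\ga$-component of $Mf$ is zero. Comparing matrix blocks relative to $\sX=\sX_0\oplus\kr\ga$ (the compression in the proof of Lemma \ref{lem:co-iso} just drops the last block row and column), we see $M_0f=Mf=\o f$, i.e., $M_0|\sF=\o$. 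Hence $\si_0$ is $\la$-preferable, observable, and co-isometric, as required.

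The main obstacle — really the only nontrivial point — is the containment $\kr\ga\subset\sV$ in the second paragraph; everything else is bookkeeping once that inclusion guarantees the $\sZ_\circ$-summand survives the passage from $\sX$ to $\sX_0$.
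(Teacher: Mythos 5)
Your proof is correct, and it takes a genuinely different route from the paper's. The paper's proof is a two-line corollary of its preceding lemmas: it observes that $F$ solves the LTONP problem (by Lemma~\ref{lem:la-prefer1}), gets \emph{some} observable co-isometric realization from Lemma~\ref{lem:co-iso}, and then invokes Lemma~\ref{lem:la-prefer2} -- which runs the Douglas factorization argument again -- to conclude that this realization is $\la$-preferable up to unitary equivalence, noting that observability is preserved under unitary equivalence. You instead work directly with the \emph{given} $\la$-preferable realization: you compress to the observable subspace $\sX_0=\overline{\im\ga^*}$ exactly as in Lemma~\ref{lem:co-iso}, but you verify by hand that the $\la$-preferable structure is inherited. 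The crucial observation $\kr\ga\subset\sV$ is exactly right and correctly argued; it is equivalent to the inclusion $\sZ_\circ\subset\sX_0$, which follows from $\ga^*W^*=\Pi_{\sZ_\circ}^*\la^{\half}$ and the density of $\im\la^{\half}$ in $\sZ_\circ$. The bookkeeping that $M_0|\sF=\o$ is also correct: for $f\in\sF$ you know $Mf=\o f\in\sF'\subset\sY\oplus\sZ_\circ\subset\sY\oplus\sX_0$, so the $\kr\ga$-component vanishes and $M_0f$ equals $\o f$. What your approach buys is transparency and self-containedness: you never pass through Lemma~\ref{lem:la-prefer2}, never need the Douglas factorization of $\la^{1/2}$ against $W\ga$, and you produce the $\la$-preferable observable realization explicitly as a compression of the given one rather than as some unitarily equivalent surrogate. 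What the paper's route buys is brevity, since Lemma~\ref{lem:la-prefer2} is already proved and carries the weight.
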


\begin{proof}[\bf Proof]
The fact that $F$ has  $\la$-preferable, co-isometric realization implies (use Lemma \ref{lem:la-prefer1}) that $F$ is a solution to the  LTONP interpolation problem. Moreover, from Lemma \ref{lem:co-iso} we know that $F$ has an observable, co-isometric realization. Since observability is preserved under unitarily equivalence, Lemma \ref{lem:la-prefer2} tells us that $F$ has  a $\la$-preferable, observable, co-isometric realization.
\end{proof}

\begin{proof}[\bf Proof of Theorem \ref{thm:allco}]
Let $\si$ be an observable, co-isometric system which is $\la$-preferable, and let $F$ be its transfer function. Then Lemma \ref{lem:la-prefer1}  tells is that $F$ is a solution to the LTONP interpolation problem. Moreover, since $\sZ_\circ$ is the closure of the range of $\la$, the identity \eqref{sqrt1} shows that
\[
W\ga \ga^*W^* =\Lambda^{\frac{1}{2}}\Pi_{\sZ_\circ}
\Pi_{\sZ_\circ}^*\Lambda^{\frac{1}{2}}
=\Lambda,
\]
which proves \eqref{Lam8}. Conversely, by  Theorem
\ref{thm:co} and  Lemma \ref{lem:la-prefer2}, if $F$ is a solution to the  LTONP interpolation problem, then $F$  has a $\la$-preferable, co-isometric realization. But then   $F$  also has a $\la$-preferable, observable, co-isometric realization by Corollary \ref{cor:la-prefer1}. Finally, by Theorem \ref{thm:co},  two observable, co-isometric realizations have the  same transfer function $F$ if and only if they are unitarily equivalent.  This proves that up to unitary equivalence the parametrization is one-to-one and onto.
\end{proof}

For later purposes, namely  the proof of Theorem \ref{thm:allsol1} in the next section,  we conclude this subsection with the following  corollary of Lemma \ref{lem:la-prefer1}.

\begin{corollary}\label{cor:uniequiv}
Let $F\in \sS(\sU, \sY)$, and let the systems $\si=\{\a, \b, \g, \d\}$ and $\si'=\{\a', \b', \g', \d \}$ be $\la$-preferable, co-isometric realizations of  $F$ with state spaces $\sX=\sZ_\circ\oplus \sV$ and $\sX'=\sZ_\circ\oplus \sV'$, respectively. If $U: \sX\to \sX'$ is a unitary operator such that
\begin{equation}\label{uniequiv2}
\a'U=U\a, \quad \b'=U\b, \quad \g'U=\g.
\end{equation}
Then $U|\sZ_\circ$ is the identity operator on $\sZ_\circ$ and $U\sV=\sV'$.
\end{corollary}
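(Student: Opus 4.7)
The plan is to leverage the identity \eqref{sqrt1} from Lemma \ref{lem:la-prefer1} applied to both $\la$-preferable realizations, combined with the unitary intertwining relations \eqref{uniequiv2}, to pin down $U$ completely on the $\sZ_\circ$ summand. The only thing one really needs beyond this is that $\la^{\frac12}$ acts injectively on $\sZ_\circ$, which is immediate from $\sZ_\circ=\overline{\im\la}$ and the fact that $\kr\la^{\frac12}=\kr\la=\sZ\ominus\sZ_\circ$.

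First I would observe that the three relations in \eqref{uniequiv2} imply $\g'\a'^n U=\g' U\a^n=\g\a^n$ for every $n\geq 0$, and hence, stacking these into columns, $\ga' U=\ga$, where $\ga$ and $\ga'$ denote the observability operators of $\si$ and $\si'$, respectively. Next, writing $\Pi_{\sZ_\circ}:\sX\to\sZ_\circ$ and $\Pi'_{\sZ_\circ}:\sX'\to\sZ_\circ$ for the orthogonal projections arising from the decompositions $\sX=\sZ_\circ\oplus\sV$ and $\sX'=\sZ_\circ\oplus\sV'$, Lemma \ref{lem:la-prefer1} gives $\la^{\frac12}\Pi_{\sZ_\circ}=W\ga$ and $\la^{\frac12}\Pi'_{\sZ_\circ}=W\ga'$. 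Combining,
\[
\la^{\frac12}\Pi'_{\sZ_\circ} U= W\ga' U=W\ga=\la^{\frac12}\Pi_{\sZ_\circ}.
\]
Since $\la^{\frac12}|\sZ_\circ$ is injective by the observation above, I can cancel $\la^{\frac12}$ on the left to obtain $\Pi'_{\sZ_\circ}U=\Pi_{\sZ_\circ}$ as maps $\sX\to\sZ_\circ$.

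Finally, for any $z\in\sZ_\circ\subset\sX$ one has $\Pi_{\sZ_\circ}z=z$, so $\Pi'_{\sZ_\circ}(Uz)=z$. Decomposing $Uz=z+v'$ according to $\sX'=\sZ_\circ\oplus\sV'$ with $v'\in\sV'$, and using that $U$ is isometric, Pythagoras gives $\|z\|^2=\|Uz\|^2=\|z\|^2+\|v'\|^2$, forcing $v'=0$ and hence $Uz=z$. This proves $U|\sZ_\circ=I_{\sZ_\circ}$. Because $U$ is unitary and fixes $\sZ_\circ$, its restriction to the orthogonal complement $\sV=\sX\ominus\sZ_\circ$ is a unitary map onto $\sX'\ominus\sZ_\circ=\sV'$; in particular $U\sV=\sV'$.

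There is no real obstacle here — the only nontrivial ingredient is recognizing that \eqref{sqrt1} is exactly the right tool, after which the argument is algebraic. If one wanted to avoid invoking $\Pi'_{\sZ_\circ}U=\Pi_{\sZ_\circ}$ directly, one could instead compute $\langle U z,z'\rangle_{\sX'}$ for $z\in\sZ_\circ\subset\sX$, $z'\in\sZ_\circ\subset\sX'$ using the two versions of \eqref{sqrt1}, but the projection-based argument above seems cleanest.
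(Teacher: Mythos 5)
Your proof is correct and follows essentially the same path as the paper: derive $\ga' U = \ga$ from the intertwining relations, apply \eqref{sqrt1} to both $\la$-preferable realizations, and deduce $U|\sZ_\circ = I_{\sZ_\circ}$. The paper finishes by taking adjoints and invoking density of $\im \la^{\frac{1}{2}}$ in $\sZ_\circ$, whereas you cancel $\la^{\frac{1}{2}}$ on the left via its injectivity on $\sZ_\circ$ and then apply a Pythagoras argument; these two ways of closing the argument are equivalent in substance.
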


\begin{proof}[\bf Proof]
Let $\ga$ and $\ga'$ be the observability operators of $\si$ and $\si'$, respectively. From \eqref{uniequiv2} it follows that  $\ga' U=\ga$. Furthermore,  using  the identity  \eqref{sqrt1} for both  $\si$ and $\si'$ we see that
\[
\la^{\frac{1}{2}}\Pi_{\sZ_\circ}= W\ga \ands  \la^{\frac{1}{2}}\Pi_{\sZ_\circ}= W\ga'.
\]
Taking adjoints, it follows that  $U\Pi_{\sZ_\circ}^*\la^{\frac{1}{2}}=U\ga^*W^*= {\ga'}^*W^*=\Pi_{\sZ_\circ}^*\la^{\frac{1}{2}}$. Since the range of  $\la^{\frac{1}{2}}$ is dense in $\sZ_\circ$, we conclude that the operator $U$ acts as the identity operator on  $\sZ_\circ$, i.e.,  $U|\sZ_\circ=I_{\sZ_\circ}$. But then, using the fact that $U$ is unitary, we see that $U\sV=\sV'$.
\end{proof}

\setcounter{equation}{0}
\section{Proof of Theorem \ref{thm:allsol1}}\label{sec: prfThm2.1}
In this section we prove Theorem \ref{thm:allsol1}.   Thus throughout  $\{W,\wt{W},Z\}$ is a data set  for a LTONP interpolation problem with   $Z^*$  being  pointwise stable,  and we assume that the Pick operator $\la $ is non-negative. Furthermore,  we use freely the notation and terminology introduced in the first three paragraphs of Section \ref{sec:ONP-coiso}. In particular, $\o$ is the unitary operator determined by the data set.

We begin with two lemmas. The first shows show how Schur class functions $F$ and $G$ that satisfy \eqref{allsol1} can be constructed from contractive realizations, and hence, in particular,  from  co-isometric realizations.

\begin{lemma}\label{L:FGrel}
Let $M$ be a contractive linear operator  mapping $\sU\oplus \sZ_\circ\oplus \sV$ into $\sY\oplus \sZ_\circ\oplus \sV$, for some Hilbert space $\sV$, partitioned as in \eqref{M25}. Define
\begin{align}
F(\lambda)& = \delta + \lambda \begin{bmatrix}
       \gamma_1 & \gamma_2 \\
    \end{bmatrix}\left(I - \lambda \begin{bmatrix}
        \alpha_{11} & \alpha_{12} \\
        \alpha_{21} & \alpha_{22}
    \end{bmatrix}\right)^{-1}\begin{bmatrix}
      \beta_1   \\
      \beta_2
    \end{bmatrix}, \nn \\
 G(\lambda)&=\mat{cc}{G_{11}(\l) & G_{12}(\l)\\ G_{21}(\l)& G_{22}(\l)} \nn \\
 &= \begin{bmatrix}
      \delta & \gamma_1  \\
      \beta_1 & \alpha_{11}
    \end{bmatrix} + \lambda
    \begin{bmatrix}
        \gamma_2 \\
        \alpha_{12}
    \end{bmatrix}\left(I - \lambda \alpha_{22}\right)^{-1}
    \begin{bmatrix}
      \beta_2 & \alpha_{21}
    \end{bmatrix}.\label{GFforms}
\end{align}
Then $F$, $G$ and the functions $G_{ij}$, $1\leq i,j\leq 2$, are Schur class functions, and
\begin{equation}\label{FGrel}
F(\l)=G_{11}(\l)+\l G_{12}(\l)\left(I_{\sZ_\circ}-\l G_{22}(\l)\right)^{-1}G_{21}(\l), \quad \l \in \BD.
\end{equation}
\end{lemma}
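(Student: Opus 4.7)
The plan has two stages. First I would verify that $F$, $G$ and the four $G_{ij}$ belong to the Schur class; then I would derive the identity \eqref{FGrel} by an explicit Schur-complement calculation on $(I - \l \a)^{-1}$, where $\a$ is the full $2\ts 2$ state operator built from the $\a_{ij}$. The central observation is that the same contractive operator $M$ serves as the system matrix of two different realizations: with state space $\sZ_\circ \oplus \sV$, input $\sU$ and output $\sY$ it realizes $F$, while with state space $\sV$, input $\sU \oplus \sZ_\circ$ and output $\sY \oplus \sZ_\circ$ it realizes $G$. Since $M$ is a contraction, the standard fact recorded just before Theorem~\ref{thm:co} yields $F \in \sS(\sU,\sY)$ and $G \in \sS(\sU \oplus \sZ_\circ, \sY \oplus \sZ_\circ)$. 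Each $G_{ij}$ is obtained from $G$ by restricting the input to one summand of $\sU \oplus \sZ_\circ$ and projecting the output to one summand of $\sY \oplus \sZ_\circ$, so $G_{ij}$ is analytic on $\BD$ with $\|G_{ij}(\l)\| \leq \|G(\l)\| \leq 1$, and is therefore a Schur class function.

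For the identity \eqref{FGrel}, I would first check that all relevant inverses exist and are analytic on $\BD$: the block $\a_{22}$ is a compression of $M$, so $\|\a_{22}\| \leq 1$, and $G_{22}$ is Schur, so $\|G_{22}(\l)\| \leq 1$; in both cases multiplying by $\l \in \BD$ pushes the operator norm strictly below one. Writing
\[
\a = \mat{cc}{\a_{11} & \a_{12} \\ \a_{21} & \a_{22}},
\]
I would apply the standard $2\ts 2$ block inversion formula to $I - \l\a$ via the Schur complement of the $(2,2)$-block. A direct computation identifies this Schur complement with $I - \l G_{22}(\l)$:
\[
S(\l) := (I - \l\a_{11}) - \l^2 \a_{12}(I - \l\a_{22})^{-1}\a_{21} = I - \l G_{22}(\l),
\]
where the second equality is read off directly from the definition of $G_{22}$ in \eqref{GFforms}. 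Consequently the inverse of the Schur complement appearing in the three nontrivial blocks of $(I - \l\a)^{-1}$ equals $(I - \l G_{22}(\l))^{-1}$.

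Substituting this block expression for $(I - \l\a)^{-1}$ into the realization formula for $F$ and expanding, the contributions that do not involve $S(\l)^{-1}$ collapse to $\d + \l\g_2(I - \l\a_{22})^{-1}\b_2 = G_{11}(\l)$. Using the explicit formulas for $G_{12}$ and $G_{21}$ in \eqref{GFforms}, the remaining four terms should group exactly as $\l G_{12}(\l) S(\l)^{-1} G_{21}(\l) = \l G_{12}(\l)(I - \l G_{22}(\l))^{-1} G_{21}(\l)$, giving \eqref{FGrel}. I expect the only delicate point to be this last reassembly step: one must verify that the four cross terms arising from the block inverse combine precisely into the product $\l G_{12} S^{-1} G_{21}$. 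This is purely algebraic bookkeeping and involves no analytic subtlety, since every inverse in sight is holomorphic on $\BD$.
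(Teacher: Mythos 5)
Your proposal is correct and in essence matches the paper's proof: the paper solves the system equations for the state pair $(x_1,x_2)$ by first eliminating $x_2$ and then $x_1$, which is exactly the Schur-complement computation you perform at the level of the block inverse $(I-\l\a)^{-1}$, and the reassembly of the four cross terms into $\l G_{12}(\l)S(\l)^{-1}G_{21}(\l)$ does go through as you expect. The only cosmetic difference is in how the $G_{ij}$ are seen to be Schur class—you compress the Schur function $G$ pointwise, while the paper observes that each $G_{ij}$ has its own contractive system matrix (a compression of $M$)—and both arguments are valid.
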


\begin{proof}[\bf Proof]
  Since $M$ is contractive,  the system matrices of the realizations of $F$ and $G$ in \eqref{GFforms} are also contractive, and hence $F$ and $G$ are Schur class functions. Note that the second identity in \eqref{GFforms} tells us that
\begin{align*}
G_{11}(\l)&=\d +\l \g_2\left(I-\l \a_{22}\right)^{-1}\b_2,\quad G_{12}(\l)=\g_1 +\l \g_2\left(I-\l \a_{22}\right)^{-1}\a_{21},\\
G_{21}(\l)&=\b_1 +\l \a_{12}\left(I-\l \a_{22}\right)^{-1}\b_2,\  G_{22}(\l)=\a_{11} +\l \a_{12}\left(I-\l \a_{22}\right)^{-1}\a_{21}.
\end{align*}
Again using $M$ is contractive, we see that the system matrices of the realizations of  $G_{ij}$, $1\leq i,j\leq 2$,  are also contractive, and hence the functions $G_{ij}$, $1\leq i,j\leq 2$,  are also Schur class functions.

Now let $F$ be given by  the first identity in \eqref{GFforms}. Fix $\l\in \BD$ and $u\in \sU$. Put $y=F(\l)u$, and define
\[
\begin{bmatrix} x_1\\ x_2\end{bmatrix}:=
\left(I_{{\sZ_\circ}\oplus \sV}-\l \begin{bmatrix} \a_{11}& \a_{12}\\ \a_{21}& \a_{22}\end{bmatrix}\right)^{-1}
\begin{bmatrix} \b_1\\ \b_2\end{bmatrix}u.
\]
Then the identity $F(\l)u=y$ is equivalent to the following three identities:
\begin{align}
&\hspace{2cm}y=\d u +\l \g_1 x_1+\l \g_2 x_2,\label{3ids1}\\
x_1&=\b_1 u +\l \a_{11} x_1+\l \a_{12} x_2, \quad x_2=\b_2 u +\l \a_{21} x_1+\l \a_{22} x_2.\label{3ids2}
\end{align}
The second  identity in \eqref{3ids2}  implies that
\begin{equation}\label{formx2}
x_2=\left(I-\l \a_{22}\right)^{-1}\b_2 u+\l \left(I-\l \a_{22}\right)^{-1}\a_{21}x_1.
\end{equation}
Inserting this formula for $x_2$ into the first  identity  in \eqref{3ids2} yields
\begin{align*}
x_1 &=\b_1 u +\l \a_{11} x_1+\l \a_{12}\left(I-\l \a_{22}\right)^{-1}\b_2 u+
\l^2\a_{12}\left(I-\l \a_{22}\right)^{-1}\a_{21}x_1\\
&=G_{21}(\l)u+\l \a_{11} x_1+\l\big(G_{22}(\l)x_1-\a_{11}x_1\big)\\
&=G_{21}(\l)u+\l G_{22}(\l)x_1,
\end{align*}
and thus
\begin{equation}\label{formx1}
x_1=\left(I-\l G_{22}(\l)\right)^{-1}G_{21}(\l)u.
\end{equation}
Using the identity \eqref{3ids1} together with the identities \eqref{formx2} and \eqref{formx1} we obtain
\begin{align*}
F(\l)u&=\d u +\l \g_1 x_1+\l \g_2 x_2\\
&= \d u +\l \g_1 x_1+ \l \g_2\left(I-\l \a_{22}\right)^{-1}\b_2 u +\l^2 \g_2\left(I-\l \a_{22}\right)^{-1}\a_{21}x_1\\
&=G_{11}(\l)u+\l\left(\g_1+  \l \g_2\left(I-\l \a_{22}\right)^{-1}\a_{21}\right)x_1\\
&= G_{11}(\l)u+\l G_{12}(\l)\left(I-\l G_{22}(\l)\right)^{-1}G_{21}(\l)u.
\end{align*}
Hence  \eqref{FGrel} holds as claimed.
\end{proof}

\begin{lemma}\label{lem:obssysts}  Let $M$ be a contractive linear operator  mapping $\sU\oplus \sZ_\circ\oplus \sV$ into $\sY\oplus \sZ_\circ\oplus \sV$, for some Hilbert space $\sV$, partitioned as in \eqref{M25}. Consider the systems
\begin{align}
\si&= \left\{\begin{bmatrix}
 \alpha_{11} & \alpha_{12} \\  \alpha_{21} & \alpha_{22}  \end{bmatrix},
\begin{bmatrix}\b_1\\ \b_2  \end{bmatrix}, \begin{bmatrix} \g_1&\g_2  \end{bmatrix},\d \right\}, \label{syst1}\\
\wt{\si}&=\left\{\a_{22}, \begin{bmatrix} \b_2&\a_{21}  \end{bmatrix},  \begin{bmatrix}\g_2\\ \a_{12}  \end{bmatrix},
\begin{bmatrix}\d & \g_{1} \\  \b_{1} & \alpha_{11}  \end{bmatrix}\right\}. \label{wtsyst1}
\end{align}
Then $\si$ is observable if and only if  $\wt{\si}$ is observable and \begin{equation}\label{partobs-cond}
\begin{bmatrix} \g_1 & \g_2\end{bmatrix}\begin{bmatrix}
 \alpha_{11} & \alpha_{12} \\  \alpha_{21} & \alpha_{22}  \end{bmatrix}^n
\begin{bmatrix} z \\v \end{bmatrix}=0 \quad (n=0,1,2,\ldots) \  \Longrightarrow \ z=0.
\end{equation}
\end{lemma}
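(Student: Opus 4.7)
The plan is straightforward: translate observability of each system into an explicit kernel condition on $A := \sbm{\a_{11} & \a_{12}\\ \a_{21} & \a_{22}}$, then verify the equivalence by a direct orbit computation. Observability of $\si$ reads $\bigcap_{n\geq 0} \kr\bigl(\begin{bmatrix}\g_1 & \g_2\end{bmatrix}A^n\bigr) = \{0\}$ in $\sZ_\circ\oplus\sV$, while observability of $\wt\si$ is the statement that any $v\in\sV$ with $\g_2 \a_{22}^n v = 0$ and $\a_{12}\a_{22}^n v = 0$ for every $n\geq 0$ must vanish. Condition \eqref{partobs-cond} is the weakening of observability of $\si$ asking only that the first block coordinate of the initial state be zero.

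For the direction that $\si$ observable implies $\wt\si$ observable together with \eqref{partobs-cond}, the latter is immediate since it is formally weaker than observability. To derive observability of $\wt\si$, I would take $v_0 \in \sV$ satisfying the two kernel conditions and show by induction that $A^n \sbm{0\\ v_0} = \sbm{0\\ \a_{22}^n v_0}$; the inductive step requires only $\a_{12}\a_{22}^n v_0 = 0$, which is the second hypothesis. Applying $\begin{bmatrix}\g_1 & \g_2\end{bmatrix}$ to this orbit then yields $\g_2\a_{22}^n v_0 = 0$ for all $n$, so $\sbm{0\\ v_0}$ lies in the observability kernel of $\si$, and observability of $\si$ forces $v_0 = 0$.

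For the converse, the key trick is shift-invariance of the observability kernel: if $w \in \bigcap_{n\geq 0} \kr\bigl(\begin{bmatrix}\g_1 & \g_2\end{bmatrix} A^n\bigr)$, then for every $k\geq 0$ we have $\begin{bmatrix}\g_1 & \g_2\end{bmatrix}A^n(A^k w) = \begin{bmatrix}\g_1 & \g_2\end{bmatrix}A^{n+k}w = 0$, so $A^k w$ lies in the same intersection. Applying \eqref{partobs-cond} to each $A^k w$ forces its first block component to vanish. Writing $A^k w = \sbm{z_k\\ v_k}$ we thus get $z_k = 0$ for all $k\geq 0$, and the state recursions $z_{k+1} = \a_{11} z_k + \a_{12} v_k$ and $v_{k+1} = \a_{21} z_k + \a_{22} v_k$ collapse to $\a_{12} v_k = 0$ and $v_k = \a_{22}^k v_0$. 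This yields $\a_{12}\a_{22}^k v_0 = 0$ and $0 = \begin{bmatrix}\g_1 & \g_2\end{bmatrix}\sbm{z_k\\ v_k} = \g_2 \a_{22}^k v_0$ for every $k$, so observability of $\wt\si$ gives $v_0 = 0$, and combined with $z_0 = 0$ we conclude $w = 0$.

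There is no significant obstacle here; the argument is a compact piece of state-space bookkeeping. The only idea worth isolating is the shift-invariance of $\bigcap_n \kr(\begin{bmatrix}\g_1 & \g_2\end{bmatrix}A^n)$ under $A$, which lets the ``first-component'' condition \eqref{partobs-cond} propagate to all iterates $A^k w$ and thereby reduce the analysis to observability of the sub-dynamics on $\sV$ driven by $\a_{22}$ with two-component output $\sbm{\g_2\\ \a_{12}}$.
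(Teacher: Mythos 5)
Your proof is correct and follows essentially the same route as the paper: both directions exploit invariance of the observability kernel under the state operator, use \eqref{partobs-cond} to annihilate the $\sZ_\circ$-component of every iterate, and reduce to the $\a_{22}$-dynamics on $\sV$ observed through $\sbm{\g_2\\\a_{12}}$. The paper phrases the kernel-invariance via $S_\sY^*\ga=\ga\a$ and works with explicit matrix powers, while you phrase it through the state recursions, but the underlying argument is identical.
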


\begin{proof}[\bf Proof]
We split the proof into two parts. In the first part we assume $\si$ is observable, and we prove that $\wt{\si}$ is observable and  that \eqref{partobs-cond} holds. The second part deals with the reverse implication.

\smallskip
\noindent\textsc{Part 1.} Let $\si$ be observable. In that case the identities on the left side of the arrow in \eqref{partobs-cond} imply that $z=0$ and $v=0$. In particular, the implication in \eqref{partobs-cond} holds. To see that $\wt{\si}$ is observable, fix a $v\in \sV$, and assume that
\[
\begin{bmatrix} \g_2\\ \a_{12}  \end{bmatrix}\a_{22}^n v=0, \quad n=0,1,2, \ldots.
\]
In other words, we assume that
\begin{equation}\label{wt-obs1}
\g_2 \a_{22}^n v=0\ands \a_{12}\a_{22}^n v=0, \quad n=0,1,2, \ldots.
\end{equation}
We want to show that $v=0$. We first show that
\begin{equation}\label{wt-obs2}
\begin{bmatrix}
 \alpha_{11} & \alpha_{12} \\  \alpha_{21} & \alpha_{22}  \end{bmatrix}^n\begin{bmatrix}0  \\v \end{bmatrix}= \begin{bmatrix} 0 \\\a_{22}^n v \end{bmatrix}, \quad n=0,1,2, \ldots.
\end{equation}
For $n=0$ the statement is trivially true. Assume  that the identity in \eqref{wt-obs2} holds for some integer $n\geq 0$. Then, using the second part of  \eqref{wt-obs2}, we obtain
\[
\begin{bmatrix}
 \alpha_{11} & \alpha_{12} \\[.1cm]  \alpha_{21} & \alpha_{22}  \end{bmatrix}^{n+1}\begin{bmatrix}0  \\v \end{bmatrix}=\begin{bmatrix}
 \alpha_{11} & \alpha_{12} \\[.1cm]   \alpha_{21} & \alpha_{22}  \end{bmatrix}
 \begin{bmatrix} 0 \\[.1cm] \a_{22}^n v \end{bmatrix}=\begin{bmatrix} \a_{12} \a_{22}^n v \\[.1cm] \a_{22}^{n+1} v \end{bmatrix}=\begin{bmatrix}0\\[.1cm] \a_{22}^{n+1} v \end{bmatrix}.
\]
By induction \eqref{wt-obs2}  is proved.  Using the second part of  \eqref{wt-obs2}, we conclude that
\[
\begin{bmatrix} \g_1 & \g_2\end{bmatrix}\begin{bmatrix}
 \alpha_{11} & \alpha_{12} \\  \alpha_{21} & \alpha_{22}  \end{bmatrix}^n \begin{bmatrix}0  \\v \end{bmatrix}=\begin{bmatrix} \g_1 & \g_2\end{bmatrix}\begin{bmatrix} 0 \\\a_{22}^n v \end{bmatrix}= \g_2 \a_{22}^n v=0, \ \  n=0,1,2,\ldots.
\]
Since the system $\si$ is observable, we conclude that $v=0$, and hence $\wt{\si}$ is observable.

\smallskip
\noindent\textsc{Part 2.} Assume that $\wt{\si}$ is observable and that \eqref{partobs-cond} holds. Let $\ga$ be the observability operator defined by $\si$. Thus
\[
\ga=\begin{bmatrix}
\g\\
\g \a\\
\g\a^2\\
\vdots
\end{bmatrix}: \sX\to \ell_+^2(\sY), \quad\mbox{where $\sX=\sZ_\circ\oplus \sV$ and}
\]
\[
\g =\begin{bmatrix} \g_1 & \g_2\end{bmatrix}: \begin{bmatrix}\sZ_\circ\\ \sV\end{bmatrix}\to \sY, \quad \a =\begin{bmatrix}
 \alpha_{11} & \alpha_{12} \\  \alpha_{21} & \alpha_{22}  \end{bmatrix}:
 \begin{bmatrix}\sZ_\circ \\ \sV\end{bmatrix}\to \begin{bmatrix}\sZ_\circ\\ \sV \end{bmatrix}.
 \]
Since $M$ is a contraction, the operator $\ga$ is a  well defined contraction; see Lemma \ref{lem-obs}.   We want to prove that $\ga$ is one-to-one.

Let $x=z\oplus v\in \kr \ga$. Then condition  \eqref{partobs-cond} tells us that $z=0$.  Thus $\kr \ga \subset \sV$.  It remains to prove that $v=0$.

Observe that $S_\sY^*\ga=\ga \a $. Thus $\a^n x \in  \kr \ga\subset \sV$ for each $n=0,1,2, \ldots$ which, by induction,  implies that
\begin{equation}\label{alpha-v1}
\a^n\begin{bmatrix}0\\ v \end{bmatrix}= \begin{bmatrix}0\\ \a_{22}^n v \end{bmatrix}, \quad n=0,1,2, \ldots.
\end{equation}
We see that
\[
0=\g \a^n\begin{bmatrix}0\\ v \end{bmatrix}=\begin{bmatrix}\g_1 &\g_2   \end{bmatrix}\begin{bmatrix}0\\ \a_{22}^n v \end{bmatrix}=\g_2   \a_{22}^n v, \quad n=0,1,2, \ldots.
\]
Furthermore, again using \eqref{alpha-v1}, we have $\a_{12}\a_{22}^n v=0$ for each $n\geq 0$. Thus
\begin{equation}\label{alpha-v2}
\begin{bmatrix}\g_2 \\ \a_{12}   \end{bmatrix}\a_{22}^n v=0 , \quad n=0,1,2, \ldots.
\end{equation}
But, by assumption,  $\wt{\si}$ is observable. Thus \eqref{alpha-v2} implies that $v=0$, as desired.
\end{proof}

\begin{proof}[\bf Proof of Theorem \ref{thm:allsol1}.]
First assume $F\in\sS(\sU,\sY)$ is a solution to the LTONP  interpolation problem. By Theorem \ref{thm:co}, the function $F$ admits an observable co-isometric realization $\si=\{\a,\b,\g,\d\}$. Since $F$ is a solution of the LTONP  interpolation  problem, by Lemma \ref{lem:la-prefer2}, the realization $\si$ is $\la$-preferable, up to unitary equivalence. Hence, we may assume $\si$ is $\la$-preferable. This implies that the system matrix $M$ of $\si$ has a decomposition as in \eqref{M25} and $M|\sF=\o$. Now define $G$ as in \eqref{GFforms}. Then, by Lemma \ref{L:FGrel}, the function $F$ is given by \eqref{allsol1}. Moreover, since the constraint $M|\sF=\o$ is equivalent to \eqref{M25a} the fact that $M|\sF=\o$ implies $G(0)|\sF=\o$.

Conversely, assume $G\in \sS(\sU\oplus\sZ_\circ,\sY\oplus\sZ_\circ)$ with $G(0)|\sF=\o$. We show that $F$ given by \eqref{allsol1} is a solution to the LTONP interpolation  problem. Let $\wtil{\si}=\{\wtil{\a},\wtil{\b},\wtil{\g},\wtil{\d}\}$ be an observable co-isometric realization of $G$ with state space $\sV$. Then $\wtil{\d}|\sF=G(0)|\sF=\o$. Note that the system matrix $\wtil{M}$ of $\wtil{\si}$ admits a decomposition as in \eqref{M25}, that is,
\[
\wtil{M}=\mat{cc}{\wtil{\d} & \wtil{\g}\\ \wtil{\b} & \wtil{\a}}
=\mat{cc|c}{\d&\g_1&\g_2\\ \b_1&\a_{11}&\a_{12}\\ \hline \b_2&\a_{21}&\a_{22}}
:\mat{c}{\sU\\\sZ_\circ\\ \sV}\to\mat{c}{\sY\\ \sZ_\circ\\ \sV}
\]
By Lemma \ref{L:FGrel} we obtain that the system
\begin{equation}\label{realF2}
\si=\left\{\begin{bmatrix}\a_{11}&\a_{12}\\ \a_{21} & \a_{22} \end{bmatrix},\begin{bmatrix}\b_1\\ \b_2 \end{bmatrix},\begin{bmatrix} \g_1 & \g_2\end{bmatrix},\d\right\}
\end{equation}
is  a  co-isometric realization for the function $F\in\sS(\sU,\sY)$ given by \eqref{allsol1}. Furthermore, $\wtil{\d}|\sF=\o$ together with the fact that $\o$ is unitary and $\wtil{M}$ a co-isometry, implies that $\wtil{M}|\sF=\o$. Hence $\si$ is a $\la$-preferable realization. Then, by Lemma \ref{lem:la-prefer1}, it follows that $F$ given by \eqref{allsol1} is a solution to the LTONP  interpolation  problem.

It remains to show that in the characterization of the solutions to the LTONP interpolation  problem given in Theorem \ref{thm:allsol1}, the functions $F$ and $G$ determine each other uniquely. Clearly, $F$ is uniquely determined by $G$ via \eqref{allsol1}. Thus the proof is complete when we show that for each solution $F$ there exists a unique $G$ as in \eqref{cond1} such that \eqref{allsol1} holds.

As in the second paragraph of the present proof, let $G$ be in the Schur class $\sS(\sU\oplus\sZ_\circ,\sY\oplus\sZ_\circ)$ with $G(0)|\sF=\o$, and let the system
\[
\wt{\si}=\left\{\a_{22}, \begin{bmatrix} \b_2&\a_{21}  \end{bmatrix},  \begin{bmatrix}\g_2\\ \a_{12}  \end{bmatrix},
\begin{bmatrix}\d & \g_{1} \\  \b_{1} & \alpha_{11}  \end{bmatrix}\right\}
\]
be an observable co-isometric  realization  of  $G$. Define $F$ by \eqref{allsol1}. Then  the system \eqref{realF2} is a $\la$-preferable co-isometric realization of $F$. We claim that this realization is  also observable.  To see this, we use the identity \eqref{sqrt1}. Taking adjoints in \eqref{sqrt1} we see that $\sZ_\circ\subset \overline{\im \ga^*}$, where $\ga$ is the observability operator defined by the pair $\{\g,\a\}$, i.e., as in \eqref{defGa0}, and hence $\kr \ga\subset \sV$. In other words, condition \eqref{partobs-cond} in Lemma \ref{lem:obssysts} is satisfied. But then, since $\wt{\si}$ is observable,  using Lemma \ref{lem:obssysts},  we conclude that the system $\si$ is also observable.

Now assume $G'\in \sS(\sU\oplus\sZ_\circ,\sY\oplus\sZ_\circ)$ with $G'(0)|\sF=\o$ is such that $F$ is also given by \eqref{allsol1} with $G$ replaced by $G'$. Let
\[
\wt{\si}'=\left\{\a_{22}', \begin{bmatrix} \b_2'&\a_{21}'  \end{bmatrix},  \begin{bmatrix}\g_2'\\ \a_{12}'  \end{bmatrix},
\begin{bmatrix}\d' & \g_{1}' \\  \b_{1}' & \alpha_{11}'  \end{bmatrix}\right\}
\]
be an observable co-isometric realization for $G'$. Then
\begin{equation*}
\si'=\left\{\begin{bmatrix}\a_{11}'&\a_{12}'\\ \a_{21}' & \a_{22}' \end{bmatrix},\begin{bmatrix}\b_1'\\ \b_2' \end{bmatrix},\begin{bmatrix} \g_1' & \g_2'\end{bmatrix},\d'\right\}
\end{equation*}
is a $\la$-preferable co-isometric realization for $F$, which is observable by the same argument as used for $\si$. Since all observable, co-isometric realizations of $F$ are unitarily equivalent, by Theorem \ref{thm:co}, we obtain that there exists a unitary operator $U$ from the state space $\sZ_\circ\oplus\sV$ of $\si$ to the state space $\sZ_\circ\oplus \sV'$ of $\si'$ such that \eqref{uniequiv2} holds, where
\begin{align*}
  &\a=\mat{cc}{\a_{11}& \a_{12}\\ \a_{21} &\a_{22}},\quad   \b=\mat{c}{\b_1\\\b_2},\quad   \g=\mat{cc}{\g_1& \g_2}, \\
  & \a'=\mat{cc}{\a_{11}'& \a_{12}'\\ \a_{21}' &\a_{22}'},\quad   \b'=\mat{c}{\b_1'\\\b_2'},\quad  \g'=\mat{cc}{\g_1' & \g_2'}.
\end{align*}
By Corollary \ref{cor:uniequiv}, we obtain that $U|\sZ_\circ =I_{\sZ_\circ}$ and $U$ maps $\sV$ onto $\sV'$. Let $\wt{U}=U|\sV:\sV\to\sV'$. Then \eqref{uniequiv2} takes the form
\begin{align*}
&\mat{cc}{\a_{11}& \a_{12}\\ \wt{U}\a_{21} &\wt{U}\a_{22}}
=\mat{cc}{\a_{11}'& \a_{12}'\wt{U}\\ \a_{21}' &\a_{22}'\wt{U}}, \quad
\mat{c}{\b_1'\\\b_2'}=\mat{c}{\b_1\\ \wt{U}\b_2},\\
&\hspace{2.4cm}\mat{cc}{\g_1' & \g_2'\wt{U}} = \mat{cc}{\g_1& \g_2}.
\end{align*}
This yields
\[
\wt{U}\a_{22}=\a_{22}'\wt{U},\quad
\wt{U}\mat{cc}{\b_2 & \a_{21}}=\mat{cc}{\b_2 & \a_{21}},\quad
\mat{c}{\g_2\\\a_{12}}= \mat{c}{\g_2'\\ \a_{12}'}\wt{U}.
\]
However, this shows that the realizations $\wt{\si}$ and $\wt{\si}'$ of $G$ and $G'$, respectively, are unitarily equivalent. Hence $G=G'$. We conclude that there exists only one $G\in \sS(\sU\oplus\sZ_\circ,\sY\oplus\sZ_\circ)$ with $G(0)|\sF=\o$ such that $F$ is also given by \eqref{allsol1}.
\end{proof}

We conclude this section with the construction of an observable co-isometric realization of the central solution $F_\circ$ introduced in Remark \ref{rem:centrsol}. Decompose $\o P_\sF$ as
\begin{equation}\label{sys-max}
\omega P_\mathcal{F} =
\begin{bmatrix} \delta_\circ & \gamma_\circ \\
 \beta_\circ & \alpha_\circ \end{bmatrix}
:\begin{bmatrix}
        \mathcal{U}         \\
        \mathcal{Z}_\circ
      \end{bmatrix} \rightarrow
      \begin{bmatrix}
        \mathcal{Y}         \\
        \mathcal{Z}_\circ
        \end{bmatrix}.
\end{equation}
Then we know from \eqref{eq:Fcirc1} in Corollary \ref{cor:central}   that
\begin{equation}\label{sys-max1}
F_\circ(\lambda) =
\delta_\circ + \lambda \gamma_\circ(I - \lambda \alpha_\circ)^{-1} \beta_\circ.
\end{equation}
However, \eqref{sys-max} does not provide an observable co-isometric realization of $F_\circ$.

\begin{lemma}\label{L:centreal}
Assume that the Pick operator $\la$ is non-negative. Let $\o P_\sF$ decompose as in \eqref{sys-max},  and define
\begin{equation}\label{def:Mcirc}
M=\mat{cc}{\d & \g\\ \b & \a}=\mat{c|cc}{\d_\circ&\g_\circ&  \Pi_\sY E_{\sG'}^*\\
\hline\b_\circ & \a_\circ & \Pi_{\sZ_\circ}E_{\sG'}^* \\
0&0&S_{\sG'}^*}
:\mat{c}{\sU\\ \hline \sZ_\circ\\ \ell^2_+(\sG')}\to \mat{c}{\sY\\ \hline \sZ_\circ\\ \ell^2_+(\sG')}.
\end{equation}
Here $\Pi_{\sZ_\circ}$ and $\Pi_\sY$ are the orthogonal projections of $\sY\oplus \sZ_\circ$ onto $\sZ_\circ$ and $\sY$ respectively.  Then $\{\a,\b,\g,\d\}$ is a $\la$-preferable  observable  co-isometric realization of $F_\circ$.  Moreover, $\kr M=\sG$.
\end{lemma}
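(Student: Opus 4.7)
I would first rewrite $M$ in the compact block form
\[
M=\begin{bmatrix} \omega P_\sF & \tau_{\sG'}E_{\sG'}^*\\ 0 & S_{\sG'}^*\end{bmatrix}
:\begin{bmatrix}\sU\oplus\sZ_\circ\\ \ell^2_+(\sG')\end{bmatrix}
\to \begin{bmatrix}\sY\oplus\sZ_\circ\\ \ell^2_+(\sG')\end{bmatrix},
\]
where $\tau_{\sG'}$ is the inclusion $\sG'\hookrightarrow \sY\oplus\sZ_\circ$. Since $\omega:\sF\to\sF'$ is unitary, one has $(\omega P_\sF)(\omega P_\sF)^*=P_{\sF'}$ (as an operator on $\sY\oplus\sZ_\circ$). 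Combined with $E_{\sG'}^*E_{\sG'}=I_{\sG'}$, $E_{\sG'}^*S_{\sG'}=0$, $S_{\sG'}^*S_{\sG'}=I$, and the orthogonal decomposition $P_{\sF'}+P_{\sG'}=I_{\sY\oplus\sZ_\circ}$, a direct block calculation yields $MM^*=I$, so the system is co-isometric. Because the $\ell^2_+(\sG')$-component of $\beta$ is zero, the standard formula $\d+\l\g(I-\l\a)^{-1}\b$ reduces to $\d_\circ+\l\g_\circ(I-\l\a_\circ)^{-1}\b_\circ=F_\circ(\l)$. The state space decomposes as $\sZ_\circ\oplus\sV$ with $\sV=\ell^2_+(\sG')$; and for $(u,z)\in\sF$, embedding with zero last block and applying $M$ returns $\omega P_\sF(u,z)=\omega(u,z)$ in the first block and $0$ in the second, so $M|\sF=\omega$, confirming $\lambda$-preferability.

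The main step is observability of the pair $\{\g,\a\}$. I would exploit Lemma \ref{lem:la-prefer1}, which, for any $\lambda$-preferable co-isometric realization, yields $W\Gamma=\la^{1/2}\Pi_{\sZ_\circ}$, where $\Gamma$ is the observability operator determined by $\{\g,\a\}$. If $(z,v)\in\ker\Gamma$, then $\la^{1/2}z=W\Gamma(z,v)=0$, and since $\sZ_\circ=\overline{\im\la}=\overline{\im\la^{1/2}}$, this forces $z=0$; hence $\ker\Gamma\subset\sV$. Next, for $v=(g_0,g_1,g_2,\ldots)\in\ell^2_+(\sG')$ with $(0,v)\in\ker\Gamma$, apply $\gamma$ and the invariance $\alpha(\ker\Gamma)\subset\ker\Gamma$: the condition $\gamma(0,v)=\Pi_\sY g_0=0$ places $g_0$ in $\sZ_\circ$ (viewed inside $\sY\oplus\sZ_\circ$), while $\alpha(0,v)=(\Pi_{\sZ_\circ}g_0,\,S_{\sG'}^*v)\in\ker\Gamma\subset\sV$ forces $\Pi_{\sZ_\circ}g_0=0$, placing $g_0$ in $\sY$. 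Therefore $g_0\in\sY\cap\sZ_\circ=\{0\}$. Then $\alpha(0,v)=(0,S_{\sG'}^*v)$ is again in $\ker\Gamma$, and induction on the sequence $(g_n)$ yields $v=0$. This is the step I expect to be most delicate, as it hinges on simultaneously reading the ambient decomposition $\sY\oplus\sZ_\circ=\sF'\oplus\sG'$ and the Toeplitz-type structure of the added shift block.

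Finally, for $\ker M=\sG$, I would solve $M(u,z,v)=0$ directly: the bottom block $S_{\sG'}^*v=0$ forces $v=E_{\sG'}g$ for some $g\in\sG'$, and then the top blocks amount to $\omega P_\sF(u,z)+\tau_{\sG'}g=0$ in $\sY\oplus\sZ_\circ$. Since $\omega P_\sF(u,z)\in\sF'$ and $\tau_{\sG'}g\in\sG'$ lie in orthogonal subspaces, both must vanish; hence $g=0$ and $P_\sF(u,z)=0$, i.e., $(u,z)\in\sG$. Conversely, every $(u,z,0)$ with $(u,z)\in\sG$ is clearly annihilated by $M$, giving $\ker M=\sG$ as claimed.
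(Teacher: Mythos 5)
Your proof is correct, and the overall architecture matches the paper's (rewrite $M$ compactly, use the unitarity of $\omega$ and the orthogonal decomposition $\sF'\oplus\sG'=\sY\oplus\sZ_\circ$ to get $MM^*=I$, invoke Lemma \ref{lem:la-prefer1} to anchor the observability argument, compute $\kr M$ directly). The one place where you genuinely diverge is the observability step, and the two arguments are duals of each other. The paper works with the closed range $\sX_{\textup{obs}}=\overline{\im\ga^*}$: it first shows $\sZ_\circ\subset\sX_{\textup{obs}}$ (via $\ga^*W^*=\Pi_{\sZ_\circ}^*\la^{1/2}$), then bootstraps upward, using invariance of $\sX_{\textup{obs}}$ under $\a^*$ and the structure of $\g^*$ and $\a^*$, to sweep in $E_{\sG'}\Pi_{\sG'}\sY$, then $E_{\sG'}\Pi_{\sG'}\sZ_\circ$, hence $E_{\sG'}\sG'$, and finally all of $\ell^2_+(\sG')$ by iterating $\a^{*n}$. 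You instead work with $\kr\ga$: you use the same identity $W\ga=\la^{1/2}\Pi_{\sZ_\circ}$ to kill the $\sZ_\circ$-component, then the invariance of $\kr\ga$ under $\a$ (dual to the paper's invariance of $\sX_{\textup{obs}}$ under $\a^*$) and the block structure of $\g$ and $\a$ to force each entry $g_n$ of the $\ell^2_+(\sG')$-component to vanish by induction. Your version is slightly more economical — no closures or spans to track, and the Toeplitz-shift structure of the extra block does the inductive work transparently — while the paper's range argument dualizes more naturally to the formalism used elsewhere (e.g.\ the $\sX_{\textup{obs}}$ reasoning in Subsection \ref{Assec:co-iso}). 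Both are complete; the pivotal observation, that the crossed projections $\Pi_\sY g_0=0$ and $\Pi_{\sZ_\circ}g_0=0$ together force $g_0=0$ because $\Pi_\sY+\Pi_{\sZ_\circ}=I_{\sY\oplus\sZ_\circ}$, is exactly the dual of the paper's step in which $E_{\sG'}\Pi_{\sG'}\sY$ and $E_{\sG'}\Pi_{\sG'}\sZ_\circ$ together give $E_{\sG'}\sG'$.
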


\begin{proof}[\bf Proof]
Since $\sF\oplus \sG= \sU\oplus \sZ_\circ$ and $\sF^\prime\oplus \sG^\prime= \sY\oplus \sZ_\circ$, the  system matrix $M$ can be rewritten as
\begin{equation}\label{sys-max2}
M   = \begin{bmatrix}
        \omega & 0  & 0 \\
        0      & 0  &E_{\sG'}^* \\
        0      & 0  & S_{\mathcal{G}^\prime}^*
      \end{bmatrix}:  \begin{bmatrix}
        \mathcal{F}         \\
        \mathcal{G} \\
        \ell_+^2(\mathcal{G}^\prime)
      \end{bmatrix} \rightarrow
 \begin{bmatrix}
        \mathcal{F}^\prime         \\
        \mathcal{G}^\prime \\
        \ell_+^2(\mathcal{G}^\prime)
      \end{bmatrix}.
\end{equation}
The fact that
\begin{equation}\label{uni-ops}
\o:\sF\to\sF' \ands \mat{c}{E_{\sG'}^* \\S_{\mathcal{G}^\prime}^*}:\ell^2_+(\sG')\to \mat{c}{\sG'\\ \ell^2_+(\sG')}
\end{equation}
are both unitary maps,  implies that $M$ is a co-isometry. Moreover, we have
\begin{align*}
&\delta_\circ + \lambda \begin{bmatrix}
         \gamma_\circ  & \Pi_\sY E_{\sG'}^*  \\
      \end{bmatrix}
      \left(I - \lambda \begin{bmatrix}
        \alpha_\circ  &\Pi_{\sZ_\circ}E_{\sG'}^*\\
        0  & S_{\mathcal{G}^\prime}^* \\
      \end{bmatrix}\right)^{-1}
      \begin{bmatrix}
        \beta_\circ  \\
        0     \\
      \end{bmatrix} =\\
&\hspace{4cm}= \delta_\circ + \lambda \gamma_\circ(I - \lambda \alpha_\circ)^{-1} \beta_\circ
      = F_\circ(\lambda).
\end{align*}
Here $\Pi_{\sZ_\circ}$ is the orthogonal projection from $\sY\oplus\sZ_\circ=\sF'\oplus \sG'$ onto the subspace $\sZ_\circ$.
Hence $M$ is the system matrix of a co-isometric realization of $F_\circ$. It is also clear from \eqref{sys-max2} that the realization $\si=\{\a,\b,\g,\d\}$ of $F_\circ$ is $\la$-preferable.

To prove that $\si$ is observable, let  $\ga$  be the observability operator for the pair $\{\gamma, \alpha\}$. Note that
\[
\ga^*=\begin{bmatrix} \g^*&\a^* \g^*&(\a^* )^2\g^*& \cdots\end{bmatrix}:\ell_+^2(\sY)\to \begin{bmatrix}\sZ_\circ\\ \ell_+^2(\sG') \end{bmatrix}.
\]
Furthermore, we have
\[
\g^*=  \begin{bmatrix} \g_\circ^*\\ E_{\sG^\prime}\Pi_{\sG^\prime} \end{bmatrix}: \sY\to \begin{bmatrix}\sZ_\circ\\ \ell^2_+(\sG')\end{bmatrix}, \quad \a^*=
  \begin{bmatrix}\a_\circ^*&0\\ E_{\sG'}\Pi_{\sG'} &S_{\sG^\prime}\end{bmatrix} \hspace{.2cm}\mbox{on}\hspace{.2cm}
   \begin{bmatrix}\sZ_\circ\\ \ell^2_+(\sG')\end{bmatrix}.\]
Here $\Pi_{\sG^\prime}$ is the orthogonal projection from $\sY\oplus\sZ_\circ=\sF'\oplus \sG'$ onto the subspace $\sG^\prime$.
Let   $\mathcal{X}_\tu{obs}$ be the closure of the range of  $\ga^*$.  We have to prove that  $\sX_\tu{obs}=\sZ_\circ\oplus  \ell^2_+(\sG')$.

Next observe  that  $\mathcal{X}_\tu{obs}$ is an invariant subspace for $\alpha^*$. By Lemma \ref{lem:la-prefer1} we have $\Pi_{\mathcal{Z}_\circ}^*\Lambda^{\frac{1}{2}} = \ga^* W^* $.  From the latter identity together with the fact that  the range of $\la$ is dense in $\sZ_\circ$,  we conclude that
$\mathcal{Z}_\circ$ is a subspace of  $\mathcal{X}_\tu{obs}$.   It follows that
\[
\g_\circ^* y\in \sZ_\circ\subset  \sX_\tu{obs} \ands \g_\circ^* y+E_{\sG'}\Pi_{\sG'}y=\g^*y \in \sX_\tu{obs},  \quad y\in \sY.
\]
These inclusions show that $E_{\sG'}\Pi_{\mathcal{G}^\prime} \mathcal{Y}$ is a subset of $\mathcal{X}_\tu{obs}$.   Next we prove that $E_{\sG'}\Pi_{\sG^\prime} \sZ_\circ$  is a subset of $\mathcal{X}_\tu{obs}$. To do this recall that
$\sX_\tu{obs}$ is invariant under the operator $\a^*$. But then the relation $\mathcal{Z}_\circ \subset \mathcal{X}_\tu{obs}$
implies that  $\alpha^*\mathcal{Z}_\circ$ is a subset of $\mathcal{X}_\tu{obs}$.
Hence
\begin{equation}\label{incluXobs1}
\begin{bmatrix}
  \{0\} \\
  E_{\sG'}\Pi_{\sG'}\sZ_\circ\\
\end{bmatrix}\subset \sX_\tu{obs}\bigvee \begin{bmatrix}
  \alpha_\circ^*  \\
 E_{\sG'}\Pi_{\sG'} \\
\end{bmatrix} \sZ_\circ \subset
\sX_\tu{obs} \bigvee \alpha^*\sZ_\circ  \subset \sX_\tu{obs}.
\end{equation}
Here $\sL\bigvee\sK$ denotes the closure of the linear hull of  the linear spaces $\sL$ and $\sK$.   We know now that  both $E_{\sG'}\Pi_{\sG'}\sY$ and $E_{\sG'}\Pi_{\sG'}\sZ_\circ$  are contained in $\sX_\tu{obs}$.
Hence $\{0\} \oplus E_{\mathcal{G}^\prime} \mathcal{G}^\prime$ is a subspace for $\mathcal{X}_\tu{obs}$. But then
\[
\mathcal{X}_\tu{obs} \supset \bigvee_{n=0}^\infty \alpha^{*n}\begin{bmatrix}
  \{0\} \\
 E_{\sG^\prime} \mathcal{G}^\prime\\
\end{bmatrix}  = \bigvee_{n=0}^\infty \begin{bmatrix}
  \{0\} \\
 S_{\mathcal{G}^\prime}^n E_{\sG^\prime} \mathcal{G}^\prime\\
\end{bmatrix} = \bigvee_{n=0}^\infty \begin{bmatrix}
  \{0\} \\
 \ell_+^2(\mathcal{G}^\prime)\\
\end{bmatrix}.
\]
So $\mathcal{X}_\tu{obs}$ contains the whole state space
$\mathcal{Z}_\circ \oplus \ell_+^2(\mathcal{G}^\prime)$.
Therefore $\{\gamma,\alpha\}$ is observable, and $M$ is
a $\la$-preferable observable co-isometric systems matrix.

Finally, from \eqref{sys-max2} and the fact that the operators in \eqref{uni-ops} are unitary it follows that $\kr M= \sG$.
\end{proof}

\setcounter{equation}{0}
\section{The case when the Pick operator is strictly positive and the proof of Theorem~\ref{thm:main1}}\label{sec:strictlypos}

In this section we prove Theorem \ref{thm:main1}. Throughout $\{W, \wt{W}, Z\}$  is a data set for a LTONP interpolation problem,  and we assume that the Pick operator $\la$ is strictly positive. We start with a lemma that  proves the first statements in Theorem \ref{thm:main1} and presents a useful formula for the unitary operator  $\o$ determined by the  data set $\{W, \wt{W}, Z\}$.

\begin{lemma}\label{L:useobs1}
Let $\{W, \wt{W}, Z\}$ be a data set for a LTONP interpolation problem, and assume that the Pick operator $\la$ is strictly positive. Then
\begin{itemize}
\item[\textup{(i)}] $P$ is strictly positive and  $\la^{-1}-P^{-1}$ is nonnegative,
\item[\textup{(ii)}] $Z^*$ is pointwise stable, in particular, its spectral radius is less than  or equal to one,
\item[\textup{(iii)}] $\widetilde{B}\widetilde{B}^*+\la=BB^*+Z\la Z^*$ and this operator is strictly positive.
\end{itemize}
Moreover,  the unitary operator $\o:\sF\to\sF'$ determined by the data set $\{W, \wt{W}, Z\}$ is given by
\begin{equation}\label{OmPF}
\o P_\sF=\begin{bmatrix}B^*\\  \la^{\frac{1}{2}}Z^*   \end{bmatrix}K
\begin{bmatrix}\wt{B} & \la^{\frac{1}{2}}\end{bmatrix}: \begin{bmatrix}\sU\\ \sZ \end{bmatrix}\to  \begin{bmatrix}\sY\\ \sZ \end{bmatrix}
\end{equation}
with $K=(BB^*+Z\la Z^*)^{-1}=(\widetilde{B}\widetilde{B}^*+\la)^{-1}$.
\end{lemma}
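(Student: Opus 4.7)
My plan is to dispatch the three numbered conclusions in order---each reducing to a short manipulation of the Stein equations once one exploits $\la\gg 0$---and then to read off the formula for $\o P_\sF$ from a Douglas-type computation of the orthogonal projection onto $\sF$ using the fact that $K_1K_1^*$ is invertible.

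For (i), I would observe that $P=\widetilde P+\la\geq\la\gg 0$, where $\widetilde P=\widetilde W\widetilde W^*\geq 0$, so $P$ is strictly positive and thus invertible. Since $P\geq\la$ with both strictly positive, the standard fact that $T\mapsto T^{-1}$ is order reversing on strictly positive operators gives $P^{-1}\leq\la^{-1}$, which is exactly the nonnegativity asserted in (i).

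For (ii), the key move is to use the intertwining $ZW=WS_\sY$, which upon taking adjoints yields $W^*(Z^*)^n=(S_\sY^*)^nW^*$ for every $n\geq 0$. Since the backward shift $S_\sY^*$ is pointwise stable on $\ell_+^2(\sY)$, this forces $W^*(Z^*)^n x\to 0$ for every $x\in\sZ$. Combining this with $\la\leq P$ and $\la\gg 0$, I would then estimate
\[
\|(Z^*)^n x\|^2\leq \|\la^{-1}\|\,\langle\la(Z^*)^n x,(Z^*)^n x\rangle\leq \|\la^{-1}\|\,\|W^*(Z^*)^n x\|^2\longrightarrow 0,
\]
so $Z^*$ is pointwise stable. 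The uniform boundedness principle then yields $\sup_n\|(Z^*)^n\|<\infty$, whence $\spec(Z^*)\leq 1$. Statement (iii) is nothing but a rearrangement of the Stein equation $\la-Z\la Z^*=BB^*-\widetilde B\widetilde B^*$ from \eqref{Pick1a}, and strict positivity follows from $\widetilde B\widetilde B^*+\la\geq\la\gg 0$.

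For the concluding formula, the main point is that with $K_1=\begin{bmatrix}\widetilde B&\la^{1/2}\end{bmatrix}$ the invertibility of $K_1K_1^*=\widetilde B\widetilde B^*+\la=K^{-1}$ proved in (iii) makes $K_1$ surjective, so by the closed range theorem $K_1^*$ is bounded below with closed range; hence $\sF=\overline{\im K_1^*}=\im K_1^*$. A direct verification of self-adjointness, idempotency, and correct range then shows
\[
P_\sF=K_1^*(K_1K_1^*)^{-1}K_1=\begin{bmatrix}\wt B^*\\ \la^{1/2}\end{bmatrix}K\begin{bmatrix}\wt B&\la^{1/2}\end{bmatrix}.
\]
Applying $\o$ on the left and invoking the first defining identity $\o\begin{bmatrix}\wt B^*\\ \la^{1/2}\end{bmatrix}=\begin{bmatrix}B^*\\ \la^{1/2}Z^*\end{bmatrix}$ in \eqref{defomega2} delivers the stated expression. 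The one step that requires a genuine idea is the pointwise stability in (ii); everything else is routine once the backward-shift reduction has been recognized.
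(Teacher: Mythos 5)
Your proof is correct, and apart from cosmetic differences it follows the same route as the paper. Items (i) and (iii) match the paper essentially verbatim; the paper spells out the inverse-monotonicity fact for strictly positive operators that you cite as standard, and derives \eqref{OmPF} by invoking Lemma~\ref{lem:KK3} (with $K_1=[\widetilde B\ \la^{1/2}]$, $K_2=[B\ Z\la^{1/2}]$, $N=\la+\widetilde B\widetilde B^*$), which is precisely the computation $P_\sF=K_1^*(K_1K_1^*)^{-1}K_1$ that you carry out directly, followed by an application of $\o K_1^*=K_2^*$ from \eqref{defomega2}. The one genuine, if minor, divergence is in (ii): the paper observes that $P=WW^*\gg 0$ makes $W^*$ bounded below with closed range $\sH$, notes that $S_\sY^*W^*=W^*Z^*$ exhibits $Z^*$ as similar to the contraction $S_\sY^*|\sH$, and infers both the spectral radius bound and pointwise stability from the similarity. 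You instead run the estimate $\|(Z^*)^n x\|^2\le\|\la^{-1}\|\,\|W^*(Z^*)^n x\|^2=\|\la^{-1}\|\,\|(S_\sY^*)^n W^*x\|^2\to 0$ directly (the inequality uses $\la\gg 0$ and $\la\le P=WW^*$), and then get $\spec(Z^*)\le 1$ via uniform boundedness and the spectral radius formula. Both arguments hinge on the same intertwining and the same lower bound for $W^*$ (note $\|(W^*)^{-1}\|^2=\|P^{-1}\|\le\|\la^{-1}\|$), so the two proofs of (ii) are really the same argument in slightly different packaging; your version skips the explicit invocation of the closed range theorem and of similarity, while the paper's phrasing makes the ``$Z^*$ is a part of a backward shift'' structure more visible, which is used again elsewhere in the paper.
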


\begin{proof}[\bf Proof]
Since $\la =P-\wt{P}$ is strictly positive and $\wt{P}\geq 0$, we have $P=\la +\wt{P}\geq \la$. Thus $P \geq \la$, and the operator $P$ is  also strictly positive. But then  $P \geq \la$ implies $\la^{-1}\geq P^{-1}$. To see this, note that $P \geq \la$ yields $I-P^{-\frac{1}{2}}\la P^{-\frac{1}{2}}\geq 0$, and hence $\la^{\frac{1}{2}}P^{-\frac{1}{2}}$ is a contraction. Taking adjoints, we see that
$P^{-\frac{1}{2}}\la^{\frac{1}{2}}$ is also a contraction, and thus $I-\la^{\frac{1}{2}}P^{-1}\la^{\frac{1}{2}}$ is non-negative. Multiplying both sides with $\la^{-\frac{1}{2}}$  we obtain $\la^{-1}\geq P^{-1}$ as desired. Finally, note  that $\Lambda^{-1} - P^{-1}$ is not necessarily strictly positive. For example, choose $\widetilde{W} =0$, then $\Lambda = P$ and $\Lambda^{-1} - P^{-1}=0$.

To see that $Z^*$ is pointwise stable, note that $P=WW^*$ is strictly positive by item (i).  From $ZW=WS_\sY$ it follows that $S_\sY^*W^*=W^*Z^*$.
Because $P=WW^*$ is strictly positive, $\|W^* x\|^2 = (WW^* x,x) \geq \epsilon \|x\|^2$
for some $\epsilon >0$ and all $x$ in $\mathcal{Z}$. Thus the range
$\sH$ of $W^*$ is closed and $W^*$ can be viewed as and invertible
operator from $\mathcal{Z}$ onto $\sH$. In particular, the identity
$S_\sY^*W^*=W^*Z^*$ shows that $\sH$ is an invariant subspace for
the backward shift $S_\sY^*$ and $Z^*$ is similar to $S_\sY^*|\sH$.
So the spectral radius of $Z^*$ is less than or equal to one.
Since  $S_\sY^*|\sH$ is pointwise stable, $Z^*$ is also pointwise stable.

The identity in the first part  of  item (iii) follows from \eqref{Pick1a}.  Since $BB^*+\la \geq\la$ and $\la$ is strictly positive,   the operator $BB^*+\la $ is also strictly positive,  which proves the second part of item (iii). Finally, formula \eqref{OmPF} is a direct corollary of Lemma~\ref{lem:KK3}  by applying this lemma with
$K_1=\begin{bmatrix} \wt{B}  &   \la^{\frac{1}{2}} \end{bmatrix}$
and
$K_2=\begin{bmatrix} B  & Z \la^{\frac{1}{2}} \end{bmatrix}$, see \eqref{Pick1b}, and with $N= \widetilde{B}\widetilde{B}^*+\la=BB^*+Z\la Z^* $.
\end{proof}

Using formula \eqref{OmPF} we obtain the following explicit formula for the central solution $F_\circ$.

\begin{theorem}\label{thm:centrsol}
Let $\{W,\wt{W}, Z\}$ be a data set  for a LTONP interpolation problem,  and assume that the Pick operator $\la $ is strictly positive.  Then the central solution $F_\circ$ is given by
\begin{equation}\label{Fcentral}
F_\circ(\l)=B^*(\wt{B}\wt{B}^*+\la )^{-1}(I_\sZ- \l T)^{-1}\wt{B}, \hspace{.2cm}\mbox{where}\hspace{.2cm}  T=\la Z^*(\wt{B}\wt{B}^*+\la )^{-1}.
\end{equation}
Moreover, the spectral radius
$r_{\textup{spec}}(T)$  of $T$ is at most $1$.  Finally, if $\mathcal{Z}$ is finite dimensional, then $T$ is exponentially stable , that is, $r_{\textup{spec}}(T) < 1$.
\end{theorem}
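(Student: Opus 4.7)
The plan is to combine Corollary~\ref{cor:central} with formula \eqref{OmPF} of Lemma~\ref{L:useobs1}. Since $\la\gg 0$ we have $\sZ_\circ=\sZ$, and reading off the four blocks of $\omega P_\sF$ from \eqref{OmPF} against the decomposition of Corollary~\ref{cor:central} identifies
\[
\d_\circ=B^*K\wt{B},\quad \g_\circ=B^*K\la^{\frac{1}{2}},\quad \b_\circ=\la^{\frac{1}{2}}Z^*K\wt{B},\quad \a_\circ=\la^{\frac{1}{2}}Z^*K\la^{\frac{1}{2}},
\]
with $K=(\wt{B}\wt{B}^*+\la)^{-1}$. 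Substituting these into $F_\circ(\l)=\d_\circ+\l\g_\circ(I-\l\a_\circ)^{-1}\b_\circ$, the key manipulation is the identity $T\la^{\frac{1}{2}}=\la^{\frac{1}{2}}\a_\circ$ with $T=\la Z^*K$, which rearranges to $\la^{\frac{1}{2}}(I-\l\a_\circ)^{-1}=(I-\l T)^{-1}\la^{\frac{1}{2}}$. Pushing this through and using the telescoping identity $(I-\l T)^{-1}\l T=(I-\l T)^{-1}-I$ collapses the central-solution formula to \eqref{Fcentral}.

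For the bound $r_{\textup{spec}}(T)\le 1$, the same identity $T\la^{\frac{1}{2}}=\la^{\frac{1}{2}}\a_\circ$ together with the invertibility of $\la^{\frac{1}{2}}$ shows that $T$ is similar to $\a_\circ$, so $r_{\textup{spec}}(T)=r_{\textup{spec}}(\a_\circ)$. Since $\omega$ is unitary from $\sF$ onto $\sF'$ and $P_\sF$ is an orthogonal projection, $M_\circ=\omega P_\sF$ is a partial isometry, hence $\|M_\circ\|\le 1$ and consequently $\|\a_\circ\|\le 1$, yielding the bound.

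The main obstacle is the last statement, strengthening the bound to $r_{\textup{spec}}(T)<1$ when $\sZ$ is finite dimensional; I would argue by contradiction. Suppose $\a_\circ y=\mu y$ for some $y\neq 0$ with $|\mu|=1$. From $M_\circ^*M_\circ=P_\sF$ the $(2,2)$-block reads $\g_\circ^*\g_\circ+\a_\circ^*\a_\circ=(P_\sF)_{22}$, so
\[
\|y\|^2=\|\a_\circ y\|^2\le\|\g_\circ y\|^2+\|\a_\circ y\|^2=\big((P_\sF)_{22}y,y\big)\le\|y\|^2.
\]
Equality throughout forces $\g_\circ y=0$ and $\big\|P_\sF\sbm{0\\y}\big\|=\big\|\sbm{0\\y}\big\|$, i.e.\ $\sbm{0\\y}\in\sF$. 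In finite dimensions $\sF=\im K_1^*$ with $K_1^*=\sbm{\wt{B}^*\\ \la^{\frac{1}{2}}}$, so there exists $z\in\sZ$ with $\wt{B}^*z=0$ and $\la^{\frac{1}{2}}z=y$. Substituting $y=\la^{\frac{1}{2}}z$ into $\la^{\frac{1}{2}}Z^*K\la^{\frac{1}{2}}y=\mu y$ and noting that $\wt{B}^*z=0$ gives $(\wt{B}\wt{B}^*+\la)z=\la z$, hence $K\la z=z$; the eigenvalue equation therefore reduces to $Z^*z=\mu z$. But pointwise stability of $Z^*$ (Lemma~\ref{L:useobs1}(ii)) in finite dimensions is equivalent to $r_{\textup{spec}}(Z^*)<1$, which rules out unimodular eigenvalues, forcing $z=0$ and thus $y=0$---a contradiction. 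The delicate step is precisely this reduction: one must first pin eigenvectors on the unit circle down inside $\im K_1^*$ via the partial-isometry structure, and then exploit that $\wt{B}^*z=0$ makes the factor $K\la$ collapse to the identity on~$z$.
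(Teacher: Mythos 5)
Your proof is correct and follows essentially the same route as the paper: identify the blocks of $M_\circ=\omega P_\sF$ via \eqref{OmPF}, commute $\la^{\frac12}$ through the resolvent, collapse the realization to \eqref{Fcentral}, obtain $r_{\textup{spec}}(T)\le1$ by similarity to the contraction $\a_\circ$, and in the finite-dimensional case rule out unimodular eigenvalues of $\a_\circ$ by showing such an eigenvector forces a corresponding unimodular eigenvalue of $Z^*$. The one cosmetic difference is in the last step: the paper pins $0\oplus y$ into $\sF$ by a norm computation on $\omega P_\sF(0\oplus y)$ and then invokes the defining identity \eqref{defomega2} of $\omega$ to pass to $Z^*\xi=\mu\xi$, whereas you read the same containment off the $(2,2)$-block of $M_\circ^*M_\circ=P_\sF$ and then simplify $K\la z=z$ directly from $\wt{B}^*z=0$; both reductions are sound and amount to the same computation.
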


\begin{proof}[\bf Proof]
Because $\Lambda$ is strictly positive,  $\sZ_\circ = \sZ$.
Let $G_\circ$ be the  function identically equal to $\o P_\sF$. Using \eqref{OmPF} we see that
\[
G_\circ(\l)=\begin{bmatrix}
  B^* K  \widetilde{B} &
  B^* K \la^{\frac{1}{2}} \\
 \la^{\frac{1}{2}}Z^*K \widetilde{B}
  & \la^{\frac{1}{2}}Z^* K \la^{\frac{1}{2}} \\
\end{bmatrix}:\begin{bmatrix}
    \mathcal{U} \\
      \mathcal{Z} \\
    \end{bmatrix}\rightarrow\begin{bmatrix} \mathcal{Y} \\
      \mathcal{Z} \\
    \end{bmatrix}.
\]
Hence, by Theorem \ref{thm:allsol1}, the central solution $F_\circ$ (see also
Corollary \ref{cor:central} and Remark \ref{rem:centrsol}) is given by
\[
F_\circ (\lambda) =
B^* K \widetilde{B} + \lambda B^* K \la^{\frac{1}{2}}
\left(I - \lambda \la^{\frac{1}{2}} Z^* K \la^{\frac{1}{2}}\right)^{-1}
\la^{\frac{1}{2}} Z^* K\widetilde{B}.
\]
Using $\la^{\frac{1}{2}}
\left(I - \lambda \la^{\frac{1}{2}} Z^* K \la^{\frac{1}{2}}\right)^{-1}=\left(I - \lambda  \la Z^* K  \right)^{-1}\la^{\frac{1}{2}}$, we have
\begin{align*}
F_\circ(\lambda) &=B^* K \widetilde{B}
+ \lambda B^* K
\left(I - \lambda  \la Z^* K  \right)^{-1}
 \la Z^* K\widetilde{B} \\
 &=B^* K \wt{B} +   B^* K
\left(I - \l   \la Z^* K  \right)^{-1}\Big(I-(I-\l \la Z^* K)\Big)\widetilde{B} \\
 &=B^* K \left(I - \lambda  \la Z^* K  \right)^{-1}\widetilde{B}.
\end{align*}
Since $K=(\wt{B}\wt{B}^*+\la)^{-1}$, this proves \eqref{Fcentral}.

Since $G_\circ(\lambda) = \omega P_\mathcal{F}$
is a contraction, its component $A= P_\mathcal{Z}\omega P_\mathcal{F}|\mathcal{Z} =
\Lambda^{\frac{1}{2}}Z^* K \Lambda^{\frac{1}{2}}$
is also a contraction. Because
$T = \Lambda^{\frac{1}{2}}(\Lambda^{\frac{1}{2}}Z^* K \Lambda^{\frac{1}{2}})\Lambda^{-\frac{1}{2}}$
is similar to $A$, it follows
that $\spec(T)  =\spec(A) \leq 1$.

Now assume that $\mathcal{Z}$ is finite dimensional, and
$\lambda$ is an eigenvalue for $T$ on the unit circle.
Because $T$ is similar to  $A$,
it follows that $A x = \lambda x$ for some nonzero $x$ in $\mathcal{Z}$.
In particular, $\|A x \| = \|\lambda x\| = \|x\|$.
Since $A$ is contained in the lower right hand corner of $\omega P_\mathcal{F}$
and $\omega$ is unitary, we have  $\omega P_\mathcal{F}(0 \oplus x) = 0 \oplus \lambda x$.
To see this notice that
\[
\|x\|^2 \geq \|\omega P_\mathcal{F}(0 \oplus x)\|^2 =
\|P_\mathcal{U}\omega P_\mathcal{F}(0 \oplus x)\|^2 + \|A x\|^2
=  \|P_\mathcal{U}\omega P_\mathcal{F}(0 \oplus x)\|^2 + \| x\|^2.
\]
Hence $P_\mathcal{U}\omega P_\mathcal{F}(0 \oplus x) =0$ and
$\omega P_\mathcal{F}(0 \oplus x) = 0 \oplus  A x = 0 \oplus \lambda x$. Since $\omega$ is a unitary operator,  $0 \oplus x$ must be in  $\mathcal{F}$.  So
$0 \oplus x = \widetilde{B}^* \xi \oplus \Lambda^{\frac{1}{2}} \xi$ for
some nonzero $\xi$ in $\mathcal{Z}$, that is, $x = \Lambda^{\frac{1}{2}} \xi$.
This with the definition of $\omega$ in \eqref{defomega2} readily implies that
\[
\begin{bmatrix}
                     0 \\
                     \lambda \Lambda^{\frac{1}{2}} \xi \\
                    \end{bmatrix}
    = \begin{bmatrix}
                     0 \\
                     \lambda x \\
                    \end{bmatrix}
                    = \omega P_\mathcal{F}\begin{bmatrix}
                      0 \\
                      x \\
                    \end{bmatrix} = \omega P_\mathcal{F}\begin{bmatrix}
                      \widetilde{B}^* \\
                      \Lambda^{\frac{1}{2}} \\
                    \end{bmatrix} \xi =
\begin{bmatrix}
                      {B}^* \\
                      \Lambda^{\frac{1}{2}} Z^* \\
                    \end{bmatrix} \xi.
\]
In other words, $\lambda \Lambda^{\frac{1}{2}} \xi = \Lambda^{\frac{1}{2}} Z^* \xi$,
or equivalently,  $\lambda   \xi =   Z^* \xi$. This says that $Z^*$ has an
eigenvalue  on the unit circle. However, $\mathcal{Z}$ is finite dimensional
and $Z^{*n}$ converges to zero. Hence $Z^*$ is exponentially stable , and thus
 all the eigenvalues of $Z^*$
are contained in the open unit disc. Therefore $\lambda$ cannot be an
eigenvalue for $Z^*$ and $T$ must be exponentially stable .
\end{proof}

\begin{lemma}\label{L:tau12}
Let $\{W, \wt{W}, Z\}$ be a data set for a LTONP interpolation problem and assume the Pick operator $\la$ is strictly positive. Further, let $C:\sZ\to \sE$ and $D:\sY\to \sE$ form an admissible pair of complementary operators, i.e., such that \eqref{semiunit1} and \eqref{semiunit2} holds. Then the operators $\t_1$ and $\t_2$ given by
\begin{equation}\label{tau12}
\t_1
= \begin{bmatrix} I \\  - \la^{-\frac{1}{2}} \widetilde{B}\\
            \end{bmatrix}R_\circ:\mathcal{U}\rightarrow
            \begin{bmatrix}
              \mathcal{U} \\
              \mathcal{Z}
            \end{bmatrix}, \quad
\t_2
=\begin{bmatrix}
              D^* \\
              \la^{-\frac{1}{2}}P C^*\\
            \end{bmatrix}Q_\circ:\mathcal{E}\rightarrow
            \begin{bmatrix}
              \mathcal{Y} \\
              \mathcal{Z}
            \end{bmatrix},
\end{equation}
with $R_\circ$ and $Q_\circ$ given by \eqref{defQR0},
are isometries, the range of $\t_1$ is $\sG$ and  the range of $\t_2$ is $\sG^\prime$.
\end{lemma}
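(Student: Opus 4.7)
The plan is to verify, for each $\t_j$ ($j=1,2$), three assertions: that $\t_j$ is an isometry, that its range is contained in $\sG$ (respectively $\sG^\prime$), and that this range fills the whole of $\sG$ (respectively $\sG^\prime$). Since $\la\gg 0$, we have $\sZ_\circ=\sZ$ throughout.

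For $\t_1$ the isometry property is immediate: $\t_1^*\t_1 = R_\circ(I_\sU+\wt{B}^*\la^{-1}\wt{B})R_\circ = R_\circ R_\circ^{-2}R_\circ=I_\sU$ by the definition of $R_\circ$ in \eqref{defQR0}. A direct computation gives $K_1\t_1=(\wt{B}-\la^\half\la^{-\half}\wt{B})R_\circ=0$, so $\im\t_1\subset\kr K_1=\sG$. Since $\la^\half$ is invertible, every element of $\sG$ is of the form $(u,-\la^{-\half}\wt{B}u)$ for some $u\in\sU$, which coincides with $\t_1(R_\circ^{-1}u)$; hence $\im\t_1=\sG$.

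For $\t_2$ the isometry property uses the $(1,1)$-block of \eqref{semiunit1}, namely $DD^*+CPC^*=I_\sE$. One computes
\[
\t_2^*\t_2=Q_\circ(DD^*+CP\la^{-1}PC^*)Q_\circ=Q_\circ\bigl(I_\sE+CP(\la^{-1}-P^{-1})PC^*\bigr)Q_\circ=Q_\circ Q_\circ^{-2}Q_\circ=I_\sE.
\]
For the range inclusion, $K_2\t_2=(BD^*+Z\la^\half\la^{-\half}PC^*)Q_\circ=(BD^*+ZPC^*)Q_\circ$, which vanishes by the $(2,1)$-block of \eqref{semiunit1}; thus $\im\t_2\subset\kr K_2=\sG^\prime$.

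The main obstacle is proving surjectivity of $\t_2$ onto $\sG^\prime$, since, unlike the $\t_1$ case, one cannot solve for the second component using invertibility of $Z$. The idea is to exploit the full unitary structure encoded in \eqref{semiunit1}--\eqref{semiunit2}. Let $\Phi=\sbm{D & C \\ B & Z}$, and set $S_1=\sbm{I_\sE & 0 \\ 0 & P^\half}$ on $\sE\oplus\sZ$ and $S_2=\sbm{I_\sY & 0 \\ 0 & P^\half}$ on $\sY\oplus\sZ$. Then \eqref{semiunit1} and \eqref{semiunit2} read $\Phi S_2^2\Phi^*=S_1^2$ and $\Phi^*S_1^{-2}\Phi=S_2^{-2}$ respectively, so that $V:=S_1^{-1}\Phi S_2$ is a unitary operator from $\sY\oplus\sZ$ onto $\sE\oplus\sZ$. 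For any such $2\times 2$ block unitary the kernel of the bottom block row coincides with the range of the adjoint of the top block row, yielding
\[
\kr \mat{cc}{P^{-\half}B & P^{-\half}ZP^\half}=\im\mat{c}{D^* \\ P^\half C^*},
\]
with the block column on the right acting as an isometric bijection from $\sE$ onto this kernel. Introduce the invertible bounded operator $\Psi:\sY\oplus\sZ\to\sY\oplus\sZ$ defined by $\Psi(y,z)=(y,P^{-\half}\la^\half z)$. A short verification shows that $\Psi$ maps $\sG^\prime$ bijectively onto $\kr \sbm{P^{-\half}B & P^{-\half}ZP^\half}$, and that
\[
\Psi\t_2=\mat{c}{D^* \\ P^\half C^*}Q_\circ.
\]
Since $Q_\circ$ is invertible, the range of $\Psi\t_2$ equals the whole kernel, i.e., $\Psi(\im\t_2)=\Psi(\sG^\prime)$; applying $\Psi^{-1}$ gives $\im\t_2=\sG^\prime$, finishing the proof.
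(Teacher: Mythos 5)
Your proof is correct, and for the surjectivity assertions it takes a genuinely different route from the paper. For $\t_1$, where the paper builds an invertible $2\times 2$ operator matrix $N$ whose columns are $\t_1$ and the column defining $\sF$, and then deduces $\sG\subset\im\t_1$ from $\im N=\sG\oplus\sF$, you instead observe directly that every element of $\sG=\kr K_1$ is forced to have the form $(u,-\la^{-1/2}\wt{B}u)$ and is therefore exactly $\t_1(R_\circ^{-1}u)$; this is shorter and more transparent. For $\t_2$, the paper takes $y\oplus z\in\sG'$ orthogonal to $\im\t_2$, shows $(y,\la^{1/2}z)$ lies in the kernel of a certain $2\times 2$ block operator, and then concludes $y=z=0$ by verifying that operator is invertible via a Schur-complement type computation (their display \eqref{inv678}). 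You instead conjugate $\Phi=\sbm{D&C\\B&Z}$ by $\textup{diag}(I,P^{\pm 1/2})$ to obtain a unitary $V$ directly from \eqref{semiunit1}--\eqref{semiunit2}, invoke the structural fact that the kernel of the bottom row of a $2\times 2$ block unitary equals the range of the adjoint of its top row, and then transport this identification back to $\sG'$ via the invertible change of variable $\Psi(y,z)=(y,P^{-1/2}\la^{1/2}z)$, noting that $\Psi\t_2=\sbm{D^*\\P^{1/2}C^*}Q_\circ$. Your argument avoids the explicit invertibility computation and makes the role of the unitary structure underlying an admissible pair visible; the paper's computation, while more opaque, has the minor advantage of not requiring the auxiliary $\Psi$ or the general fact about block unitaries. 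Both are valid; your argument is arguably the more conceptual one.
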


\begin{proof}[\bf Proof]
We split the proof into two parts.  In the first part we deal with $\t_1$ and in the second part with $\t_2$.

\smallskip
\noindent\textsc{Part 1.}
Using the definition of $R_\circ$ in \eqref{defQR0}, we have
\[
\t_1^*\t_1=R_\circ \begin{bmatrix} I & -\wt{B}^*\la^{-\frac{1}{2}}\end{bmatrix}
\begin{bmatrix} I \\  - \la^{-\frac{1}{2}} \widetilde{B}  \end{bmatrix}R_\circ=R_\circ\left (I+\wt{B}^*\la^{-1}\wt{B}\right)R_\circ=I_\sU.
\]
Thus $\t_1$ is an isometry. In particular, the range of $\t_1$ is closed.
Furthermore, note that
\[
\t_1^*\begin{bmatrix}\wt{B}^*\\ \la^{\frac{1}{2}} \end{bmatrix}= R_\circ \begin{bmatrix} I & -\wt{B}^*\la^{-\frac{1}{2}} \end{bmatrix}
\begin{bmatrix}\wt{B}^*\\ \la^{\frac{1}{2}} \end{bmatrix}=R_\circ(\wt{B}^*-\wt{B}^*)=0.
\]
Recall that  in the present case, when $\Lambda$ is strictly positive,
we have
\[
\sF=    \im\begin{bmatrix}
  \wt{B}^*    \\ \la^{\frac{1}{2}} \\
\end{bmatrix}  \\
 \ands    \sG=\sF^\perp=\kr \begin{bmatrix} \wt{B} & \la^{\frac{1}{2}} \end{bmatrix}.
\]
The fact that  $\widetilde{B}\widetilde{B}^* + \Lambda$ is strictly positive, implies that the range of the previous $2\times 1$ operator matrix is closed.  It follows  that $\sF\subset \kr \t_1^*$, and hence $\im \t_1\subset \sF^\perp=\sG$.  To prove that $\im \t_1=\sG$, consider the operator
\begin{equation}\label{defMt1}
N=\begin{bmatrix} R_\circ&\wt{B}^*\\
-\la^{-\frac{1}{2}}\wt{B}R_\circ&\la^{\frac{1}{2}}\end{bmatrix}:
\begin{bmatrix}\sU\\ \sZ \end{bmatrix}\to \begin{bmatrix}\sU\\ \sZ \end{bmatrix}.
\end{equation}
This operator matrix is invertible because the operator $\la^{\frac{1}{2}}$ and    the  Schur complement $N^\ts$ of $\la^{\frac{1}{2}}$ in $N$ are both invertible. To see that $N^\ts$ is invertible, note that
\[
N^\ts=R_\circ+\wt{B}^*\la^{-\frac{1}{2}} \la^{-\frac{1}{2}}\wt{B}R_\circ=(I+\wt{B}^*\la^{-1}B)R_\circ=R_\circ^{-1}.
\]
Next observe that the first column of $N$ is the operator $\t_1$ while  the range  of the second column of $N$ is $\sF$. Since $N$ is invertible, $\im N=\sU\oplus\sZ=\sG\oplus\sF$. It follows that $\sG$ must be included in the range of the first column of $N$, that is, $\sG\subset \im \t_1$.  But then $\im \t_1=\sG$.

\smallskip
\noindent\textsc{Part 2.} First observe that $Q_\circ$ is also given by
\begin{equation}\label{altdefQ0}
 Q_\circ =(DD^*+CP\la^{-1}PC^*)^{-\half}.
\end{equation}
To see this, note that  \eqref{semiunit1} implies that $DD^*+CPC^*=I_\sE$, and thus
\[
DD^*+CP\la^{-1}PC^*\!\! =\! I_\sE-CPC^*+CP\la^{-1}PC^*\!\! =\! I_\sE+CP\left(\la^{-1}-P^{-1}\right)PC^*.
\]
Using the definition of $\t_2$  in \eqref{tau12}  and the formula for $Q_\circ$ in \eqref{altdefQ0}, we obtain
\[
\t_2^*\t_2=Q_\circ \begin{bmatrix}D& CP \la^{-\frac{1}{2}}\end{bmatrix}
\begin{bmatrix}D^*\\ \la^{-\frac{1}{2}} PC^*\end{bmatrix}Q_\circ=
Q_\circ(DD^* +CP \la^{-1}PC^*)Q_\circ= I_\sE.
\]
Thus $\t_2$ is an isometry. In particular,  the range of $\t_2$ is closed. From the identity  \eqref{semiunit1} we know that $BD^*+ZPC^*=0$. This implies that
\[
\t_2^* \begin{bmatrix} B^*   \\ \la^{\frac{1}{2}}Z^*  \end{bmatrix}
=Q_\circ\begin{bmatrix} D& CP \la^{-\frac{1}{2}}\end{bmatrix}
\begin{bmatrix} B^*   \\ \la^{\frac{1}{2}}Z^*   \end{bmatrix} =Q_\circ(BD^*+CPZ^*)=0.
\]
Recall that in the present strictly positive case
\[
\sF^\prime= \im \begin{bmatrix}
   B^*   \\ \la^{\frac{1}{2}}Z^* \\
\end{bmatrix} \quad  \mbox{so that}\quad
 \mathcal{G}^\prime=\sF^{\prime\perp} =\kr \begin{bmatrix} B& Z\la^{\frac{1}{2}}\end{bmatrix}.
\]
We conclude that $\sF^\prime\subset  \kr \t_2^*$, and hence $\im \t_2\subset {\sF^\prime}^\perp=\sG^\prime$.  To prove $\im \t_2=\sG^\prime$ we take $y\in \sY$ and $z\in \sZ$, and assume that $y\oplus z\in \sG^\prime$ and $y\oplus z\perp \im \t_2$. In other words, we assume that
\begin{equation}\label{2ortho-cond}
\begin{bmatrix}y\\ z  \end{bmatrix}  \perp\ \sF^\prime= \im \begin{bmatrix}   B^*\\ \la^{\frac{1}{2}} Z^* \end{bmatrix}  \ands  \begin{bmatrix}y\\ z  \end{bmatrix}  \perp\ \im \t_2=\im \begin{bmatrix} D^* \\ \la^{-\frac{1}{2}}PC^*  \end{bmatrix}.
\end{equation}
But then
\[
\begin{bmatrix} B & Z\end{bmatrix}\begin{bmatrix}y\\ \la^{\frac{1}{2}}z  \end{bmatrix}=0 \ands
\begin{bmatrix} D & CP\la^{-1}\end{bmatrix}\begin{bmatrix}y\\ \la^{\frac{1}{2}}z  \end{bmatrix}=0.
\]
In other words,
\begin{equation}\label{opmatrix}
\begin{bmatrix}
 D & C  P \Lambda^{-1}  \\
 B & Z \\
\end{bmatrix}\begin{bmatrix} y \\\la^{\frac{1}{2}}z \\  \end{bmatrix}=
\begin{bmatrix}0 \\0\\  \end{bmatrix}.
\end{equation}
Now observe that
\begin{equation}\label{inv678}
\begin{bmatrix}
 D & C  P \Lambda^{-1}  \\
 B & Z \\
\end{bmatrix} \begin{bmatrix}
 D^*    & B^*  \\
 P C^*  & P Z^* \\
\end{bmatrix} =
\begin{bmatrix}
Q_\circ^{-2}   & \star  \\
 0             & P \\
\end{bmatrix}
\end{equation}
where $\star$ represents an unspecified entry.
The identities \eqref{semiunit1} and   \eqref{semiunit2} imply that the operator matrix
\[
\begin{bmatrix}
 D^*    & B^*  \\
 P C^*  & P Z^* \\
\end{bmatrix} =\begin{bmatrix}
 I    & 0  \\
 0    & P   \\
\end{bmatrix} \begin{bmatrix}
 D^*    & B^*  \\
   C^*  &   Z^* \\
\end{bmatrix}
\]
is invertible. Because $Q_\circ$ and $P$ are both invertible, the
matrix on the right hand side of \eqref{inv678} is invertible. So the operator matrix on
the left hand side of \eqref{inv678} or \eqref{opmatrix} is invertible. Thus
 $y  \oplus \la^{\frac{1}{2}}z = 0$.  Since $\la^{\half}$ is invertible,
 both $y$ and $z$ are zero.  This can only happen when  $\sG^\prime =\im \t_2$.
\end{proof}

\begin{corollary}\label{C:Gform}
Let $\{W, \wt{W}, Z\}$ be a data set for a LTONP interpolation problem and assume the Pick operator $\la$ is strictly positive. Then all functions $G$ in $\sS(\sU\oplus\sZ, \sY\oplus\sZ)$ with $G(0)|_\sF=\o$ are given by
\begin{align}\label{Gform}
&G(\lambda) =\mat{cc}{G_{11}(\l)&G_{12}(\l)\\ G_{21}(\l) & G_{22}(\l)}=\\
&\small{
=\!\!\begin{bmatrix}
 \! B^* K  \widetilde{B} + D^* Q_\circ  X(\l) R_\circ \! & \!
  B^* K \la^{\frac{1}{2}} - D^* Q_\circ X(\l) R_\circ \widetilde{B}^* \la^{-\frac{1}{2}}\! \\[.1cm]
\! \la^{\frac{1}{2}}Z^*K \widetilde{B} + \la^{-\frac{1}{2}}P C^* Q_\circ X(\l)  R_\circ
  \! & \! \la^{\frac{1}{2}}Z^* K \la^{\frac{1}{2}}
   -\la^{-\frac{1}{2}}P C^* Q_\circ X(\l) R_\circ \widetilde{B}^* \la^{-\frac{1}{2}}\!
\end{bmatrix}}\notag
\end{align}
with $X$ is an arbitrary Schur class function in $\sS(\mathcal{U}, \mathcal{E})$. Moreover, $G$ and $X$ determine each other uniquely. Furthermore, we have
\begin{equation}\label{GijRel}
\begin{aligned}
G_{12}(\l)&=(B^*-G_{11}(\l)\wtil{B}^*)\la^{-\half},\\
G_{22}(\l)&=\la^{\half}(Z^*-\la^{-\half}G_{21}(\l)\wtil{B}^*)\la^{-\half}.
\end{aligned}
\end{equation}
\end{corollary}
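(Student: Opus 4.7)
The plan is to first establish the intermediate abstract identity
\begin{equation}
G(\l)=\o P_\sF+\t_2 X(\l)\t_1^*,\qquad X\in\sS(\sU,\sE),
\label{Gabs}
\end{equation}
with $\t_1:\sU\to\sG$ and $\t_2:\sE\to\sG'$ the isometries onto $\sG$ and $\sG'$ supplied by Lemma \ref{L:tau12}. Once \eqref{Gabs} is in place, the explicit matrix presentation \eqref{Gform} will follow by substituting the formula \eqref{OmPF} for $\o P_\sF$ together with the explicit expressions \eqref{tau12} for $\t_1$ and $\t_2$, and then carrying out the resulting block matrix multiplication.

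The main step, and the one I expect to be the principal obstacle, is to upgrade the pointwise constraint $G(0)|_\sF=\o$ to the global identity $G(\l)|_\sF=\o$ for every $\l\in\BD$. This will be done by exploiting that $\o$ is isometric on $\sF$: for $f\in\sF$ the vector-valued function $\l\mapsto G(\l)f$ is analytic in $\BD$ with $\|G(\l)f\|\leq\|f\|$ and $\|G(0)f\|=\|\o f\|=\|f\|$. Since $\|G(\l)f\|^2$ is subharmonic on $\BD$, it attains its maximum $\|f\|^2$ at the interior point $\l=0$ and must therefore be identically constant, which after differentiating with respect to $\l$ and $\bar\l$ forces $G'(\l)f=0$ and hence $G(\l)f=\o f$ throughout $\BD$. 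In particular, $G(\l)^*G(\l)|_\sF=I_\sF$ for every $\l$, and a standard positive-contraction argument (using that a positive contraction which acts as the identity on $\sF$ must preserve the splitting $\sF\oplus\sG$) then shows that $G(\l)\sG\subset\sG'$. Consequently $G(\l)=\o P_\sF+Y(\l)P_\sG$ with $Y(\l):\sG\to\sG'$ contractive, and Lemma \ref{L:tau12} allows us to set $X(\l):=\t_2^*G(\l)\t_1$; this is analytic, contractive for each $\l$, and satisfies $\t_2X(\l)\t_1^*=Y(\l)P_\sG$, giving \eqref{Gabs}.

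For the converse direction, any $X\in\sS(\sU,\sE)$ inserted into \eqref{Gabs} produces an analytic function whose two summands map into the orthogonal subspaces $\sF'$ and $\sG'$; the isometry properties of $\o$, $\t_1$ and $\t_2$ then yield $\|G(\l)h\|^2=\|P_\sF h\|^2+\|X(\l)\t_1^*h\|^2\leq\|h\|^2$, so $G\in\sS(\sU\oplus\sZ,\sY\oplus\sZ)$, while $G(0)|_\sF=\o$ is immediate from $\t_1^*|_\sF=0$. The bijection between $G$ and $X$ is then realised by the inversion formula $X(\l)=\t_2^*G(\l)\t_1$, which follows from \eqref{Gabs} on noting that $\t_2^*\o P_\sF\t_1=0$ (since $\im\t_2=\sG'\perp\sF'$). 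Finally, the block-entry formula \eqref{Gform} will be obtained by plugging the explicit formulas in \eqref{OmPF} and \eqref{tau12} into \eqref{Gabs}, and the identities \eqref{GijRel} will follow by direct algebraic manipulation, using only the ingredient $I-K\wt{B}\wt{B}^*=K\la$ (immediate from $K=(\wt{B}\wt{B}^*+\la)^{-1}$).
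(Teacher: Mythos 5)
Your proof is correct and takes essentially the same route as the paper's. The paper asserts (without supplying details) that a Schur class $G$ with $G(0)|_\sF=\o$ and $\o$ unitary must have the form $G(\l)=\o P_\sF+\wtil{X}(\l)P_\sG$ for some $\wtil{X}\in\sS(\sG,\sG')$; your subharmonicity/maximum-principle argument (to get $G(\l)|_\sF=\o$ for all $\l$) and the positive-contraction argument (to get $G(\l)\sG\subset\sG'$) are precisely the missing details of that step, after which both proofs proceed identically by parameterizing $\wtil{X}$ via the isometries $\t_1,\t_2$ of Lemma \ref{L:tau12}, substituting \eqref{OmPF} and \eqref{tau12} to obtain \eqref{Gform}, and verifying \eqref{GijRel} by the same algebraic manipulations based on $K=(\wt{B}\wt{B}^*+\la)^{-1}$.
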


\begin{proof}[\bf Proof]
The fact that $\o:\sF\to\sF'$ is unitary implies that $G\in \sS(\sU\oplus\sZ, \sY\oplus\sZ)$ satisfies  $G(0)|\sF=\o$  if and only if $G(\lambda) = \o  P_\mathcal{F} + \wtil{X}(\lambda) P_\sG$, $\l\in \BD$, for some $\wtil{X} \in \sS(\sG,\sG')$. Since the operators $\t_1$ and $\t_2$ introduced in Lemma \ref{L:tau12} are isometries with ranges equal to $\sG$ and $\sG'$, respectively, it follows (see Lemma \ref{lem:KK2})  that $\wtil{X}$ is in $\sS(\sG,\sG')$ if and only if $\wtil{X}(\l)=\t_2 X(\l) \t_1^*$, $\l\in \BD$, for a $X\in \sS(\sU,\sE)$, namely $X(\l)\equiv \t_2^*\wtil{X}(\l)\t_1$. Hence the Schur class functions $G\in \sS(\sU\oplus\sZ, \sY\oplus\sZ)$ with $G(0)|\sF=\o$ are characterized by $G(\lambda) = \o  P_\mathcal{F} + \t_2 X(\lambda) \t_1^*$ with $X\in \sS(\sU,\sE)$. It is clear from the above constructions that $G$ and $\wtil{X}$ determine each other uniquely, and that $\wtil{X}$ and $X$ determine each other uniquely. Hence $G$ and $X$ determine each other uniquely. Using the formulas for $\o P_\sF$ and $\t_1$ and $\t_2$ obtained in Lemmas \ref{L:useobs1} and \ref{L:tau12} we see that $\o  P_\mathcal{F} + \t_2 X(\lambda) \t_1^*$ coincides with the right-hand side of \eqref{Gform}.

It remains to derive \eqref{GijRel}. Note that
\[
K=(\la + \wtil{B}\wtil{B}^*)^{-1}=\la^{-1}-\la^{-1}\wtil{B} R_\circ^2\wtil{B}^*\la^{-1}.
\]
This implies that
\begin{align*}
K\wtil{B}&=\la^{-1}\wtil{B}(I- R_\circ^2\wtil{B}^*\la^{-1}\wtil{B})=\la^{-1}\wtil{B}R_\circ^2,\\[.1cm]
K\la&=(\la^{-1}-\la^{-1}\wtil{B} R_\circ^2\wtil{B}^*\la^{-1})\la=I-\la^{-1}\wtil{B}R_\circ^2\wtil{B}^*
=I-K\wtil{B}\wtil{B}^*.
\end{align*}
Summarising we have
\begin{equation}
K\wtil{B}=\la^{-1}\wtil{B}R_\circ^2 \ands
K\la=I-K\wtil{B}\wtil{B}^*.\label{KBtilKLambda}
\end{equation}
We now obtain  that
\begin{align*}
G_{12}(\l)
&=\left(B^* K\la - D^* Q_\circ X(\l) R_\circ \widetilde{B}^*\right) \la^{-\half}\\
&=\left(B^*-B^*K\wtil{B}\wtil{B}^* - D^* Q_\circ X(\l) R_\circ \widetilde{B}^*\right) \la^{-\half}\\
&=\left(B^*-\left(B^*K\wtil{B}+ D^* Q_\circ X(\l) R_\circ \right)\wtil{B}^*\right) \la^{-\half}\\
& =\left(B^*-G_{11}(\l)\wtil{B}^*\right)\la^{-\half},
\end{align*}
and
\begin{align*}
G_{22}(\l)
&= \la^{\half}\left(Z^* K\la-\la^{-1}P C^* Q_\circ X(\l) R_\circ \widetilde{B}^*\right) \la^{-\half}\\
&= \la^{\half}\left(Z^* -Z^*  K\wtil{B}\wtil{B}^*-\la^{-1}P C^* Q_\circ X(\l) R_\circ \widetilde{B}^*\right) \la^{-\half}\\
&= \la^{\half}\left(Z^* -\left(Z^*  K\wtil{B}+\la^{-1}P C^* Q_\circ X(\l) R_\circ\right) \widetilde{B}^*\right) \la^{-\half}\\
&=\la^{\half}\left(Z^* -\la^{-\half}G_{21}(\l) \widetilde{B}^*\right) \la^{-\half},
\end{align*}
as claimed.
\end{proof}

\begin{proof}[\bf Proof of Theorem \ref{thm:main1}]
The first statements in Theorem \ref{thm:main1} are covered by Lemma \ref{L:useobs1}. Clearly the operators $Q_\circ$ and $R_\circ $ are well defined. Since the spectral radius of $Z$  is  at most one, the operator-valued functions $\Upsilon_{ij}$,  $i,j=1,2$, given by \eqref{defUp11} --  \eqref{defUp22} are well defined and analytic on $\BD$. Given these functions it remains to prove the main part of the theorem describing all solutions of the LTONP interpolation problem by \eqref{allsol1a}.

Let $X\in \sS(\mathcal{U}, \mathcal{Y})$  be an arbitrary Schur class function. Define $G$ in $\sS(\sU\oplus\sZ,\sY\oplus\sZ)$ by $G(\l) = \o  P_\mathcal{F} + \t_2 X(\lambda) \t_1^*$, $\l\in\BD$, where $\t_1$ and $\t_2$ are given  by \eqref{tau12}. Hence $G$ is given by \eqref{Gform} and we have \eqref{GijRel}. Set
\[
F(\l) = G_{11}(\lambda) + \lambda G_{12}(\lambda)
\Big(I - \lambda G_{22}(\lambda)\Big)^{-1}G_{21}(\lambda),\quad \l\in\BD.
\]

By  item (ii) in Lemma \ref{L:useobs1} the spectral  radius of $Z$ is at most one, and hence the same holds true for spectral  radius of $Z^*$. Thus $I-\l Z^*$  is invertible for each $\l \in \BD$. Now fix a $\l\in\BD$. Since $G\in\sS(\sU\oplus\sZ,\sY\oplus\sZ)$, we have $G_{22}\in \sS(\sZ,\sZ)$ and thus $I-\l G_{22}(\l)$ is invertible. Notice that
\begin{align*}
I-\l G_{22}(\l)
&=\la^{\half}\left( I-\l Z^* +\l\la^{-\half}G_{21}(\l)\wtil{B}^*\right)\la^{-\half}\\
&=\la^{\half}(I-\l Z^*)\left( I +\l(I-\l Z^*)^{-1}\la^{-\half}G_{21}(\l)\wtil{B}^*\right)\la^{-\half}.
\end{align*}
 The above identity shows that $I +\l (I-\l Z^*)^{-1}\la^{-\half}G_{21}(\l)\wtil{B}^*$ is invertible.  Applying the rule that $I+AB$ is invertible if and only if  $I+BA$ is invertible, we obtain that the operator
$I +\l \wt{B}^* (I-\l Z^*)^{-1}\la^{-\half}G_{21}(\l)$  is invertible. Next, using the rule $(I+AB)^{-1}A=A(I+BA)^{-1}$ we obtain
\begin{align*}
&\left(I-\l G_{22}(\l)\right)^{-1}G_{21}(\l)=\\
&\hspace{1cm}=\la^{\half}\left( I +\l(I-\l Z^*)^{-1}\la^{-\half}G_{21}(\l)\wtil{B}^*\right)^{-1} (I-\l Z^*)^{-1}\la^{-\half}G_{21}(\l)\\
&\hspace{1cm}=\la^{\half}(I-\l Z^*)^{-1}\la^{-\half}G_{21}(\l)\left( I +\l\wtil{B}^*(I-\l Z^*)^{-1}\la^{-\half}G_{21}(\l)\right)^{-1}.
\end{align*}
From the first identity in \eqref{GijRel} we obtain
\begin{align*}
&\l G_{12}(\l) \la^{\half}(I-\l Z^*)^{-1}\la^{-\half}G_{21}(\l)=\\
&\qquad=\l\left(B^*-G_{11}(\l)\wtil{B}^*\right)(I-\l Z^*)^{-1}\la^{-\half}G_{21}(\l)\\
&\qquad =\l B^*(I-\l Z^*)^{-1}\la^{-\half}G_{21}(\l) -\l G_{11}(\l)\wtil{B}^*(I-\l Z^*)^{-1}\la^{-\half}G_{21}(\l)\\
&\qquad=\l B^*(I-\l Z^*)^{-1}\la^{-\half}G_{21}(\l) + G_{11}(\l)+\\
&\hspace{4cm} - G_{11}(\l)\left(I+\l \wtil{B}^*(I-\l Z^*)^{-1}\la^{-\half}G_{21}(\l)\right).
\end{align*}
Summarising we have shown that
\begin{align*}
&\left(I-\l G_{22}(\l)\right)^{-1}G_{21}(\l)=\la^{\half}(I-\l Z^*)^{-1}\la^{-\half}G_{21}(\l)\Xi(\l)\\
&\l G_{12}(\l) \la^{\half}(I-\l Z^*)^{-1}\la^{-\half}G_{21}(\l) =\\
&\hspace{2cm}=G_{11}(\l)+\l B^*(I-\l Z^*)^{-1}\la^{-\half}G_{21}(\l) -  G_{11}(\l)\Xi(\l)^{-1},\\
&\hspace{3cm}\mbox{where}\hspace{.35cm} \Xi(\l)=\left( I +\l\wtil{B}^*(I-\l Z^*)^{-1}\la^{-\half}G_{21}(\l)\right)^{-1}.
\end{align*}
It follows that
\begin{align*}
F(\l) &=G_{11}(\l)+\l G_{12}(\l)\left(I-\l G_{22}(\l)\right)^{-1}G_{21}(\l)\\
&=G_{11}(\l)+\l G_{12}(\l)\la^{\half}(I-\l Z^*)^{-1}\la^{-\half}G_{21}(\l)\Xi(\l)\\
&=G_{11}(\l)+\left(G_{11}(\l)+\l B^*(I-\l Z^*)^{-1}\la^{-\half}G_{21}(\l) \right)\Xi(\l)-G_{11}(\l)\\
&=\left(G_{11}(\l)+\l B^*(I-\l Z^*)^{-1}\la^{-\half}G_{21}(\l)\right)\times\\
&\qquad\qquad \times \left(I+\l \wtil{B}^*(I-\l Z^*)^{-1}\la^{-\half}G_{12}(\l)\right)^{-1}.
\end{align*}
To prove the parametrization of solutions through \eqref{allsol1a} it remains to show that
\begin{align}
G_{11}(\l)+\l B^*(I-\l Z^*)^{-1}\la^{-\half} G_{21}(\l)
&=\left(\Up_{12}(\l)+\Up_{11}(\l)X(\l)\right)R_\circ,   \label{part1}\\
I+\l \wtil{B}^*(I-\l Z^*)^{-1}\la^{-\half}G_{12}(\l) &=\left(\Up_{22}(\l)+\Up_{21}(\l)X(\l)\right)R_\circ.\label{part2}
\end{align}
Note that these two identities  show that $F$ is given by \eqref{allsol1a} and, combined with Theorem~\ref{thm:allsol1}, this yields that all solutions to the LTONP interpolation problem are given by \eqref{allsol1a}. Hence we have proved Theorem \ref{thm:main1} once these two identities are established.

Using \eqref{KBtilKLambda} we obtain that
\begin{align*}
&(I-\l Z^*)^{-1}\la^{-\half}G_{21}(\l)=\\
&\qquad=(I-\l Z^*)^{-1}Z^*  K\wtil{B}
+(I-\l Z^*)^{-1}\la^{-1}P C^* Q_\circ X(\l) R_\circ\\
&\qquad=\left((I-\l Z^*)^{-1}Z^*  \la^{-1}\wtil{B}R_\circ+
(I-\l Z^*)^{-1}\la^{-1}P C^* Q_\circ X(\l)\right) R_\circ.
\end{align*}
Therefore, we have
\begin{align}
&I+\l \wtil{B}^*(I-\l Z^*)^{-1}\la^{-\half}G_{21}(\l)=  I+ \nn  \\
& \quad +\left(\l \wtil{B}^*(I-\l Z^*)^{-1}Z^*  \la^{-1}\wtil{B}R_\circ +\l \wtil{B}^*(I-\l Z^*)^{-1}\la^{-1}P C^* Q_\circ X(\l)
\right)R_\circ \nn \\
&\quad =\left( R_\circ^{-1} +\l \wtil{B}^*(I-\l Z^*)^{-1}Z^*  \la^{-1}\wtil{B}R_\circ+\Up_{21}(\l)X(\l)\right)R_\circ.\label{topart2}
\end{align}
From the definition $R_\circ$ in \eqref{defQR0} it follows that $R_\circ^{-2}-\wtil{B}^*\la^{-1}\wtil{B}=I_\sU$, and hence
\begin{align}
&R_\circ^{-1} +\l \wtil{B}^*(I-\l Z^*)^{-1}Z^*  \la^{-1}\wtil{B}R_\circ=\nn \\
&\quad=R_\circ^{-1}+\wtil{B}^*(I-\l Z^*)^{-1}\left(I-(I-\l Z^*)\right)\la^{-1}\wtil{B}R_\circ  \nn \\
&\quad= R_\circ^{-1}-\wtil{B}^*\la^{-1}\wtil{B}R_\circ+\wtil{B}^*(I-\l Z^*)^{-1}\la^{-1}\wtil{B}R_\circ \nn \\
&\quad=\left(R_\circ^{-2}-\wtil{B}^*\la^{-1}\wtil{B}\right)R_\circ+\wtil{B}^*(I-\l Z^*)^{-1}\la^{-1}\wtil{B}R_\circ\nn  \\
&\quad=R_\circ +\wtil{B}^*(I-\l Z^*)^{-1}\la^{-1}\wtil{B}R_\circ=\Upsilon_{22}(\l).\label{altdefUp22}
\end{align}
Inserting the identity \eqref{altdefUp22} in \eqref{topart2} we obtain the identity \eqref{part2}.

We proceed with the  left hand side of  \eqref{part1}.
\begin{align}
&G_{11}(\l)+\l B^*(I-\l Z^*)^{-1}\la^{-\half}G_{21}(\l)=B^*K\wtil{B} + D^* Q_\circ X(\l) R_\circ+\nn \\
&\qquad +
\l B^*\left((I-\l Z^*)^{-1}Z^*  \la^{-1}\wtil{B}R_\circ+
(I-\l Z^*)^{-1}\la^{-1}P C^* Q_\circ X(\l)\right) R_\circ \nn \\
&\hspace{.5cm}=B^*\la^{-1}\wtil{B}R_\circ^2+ \l B^* (I-\l Z^*)^{-1}Z^* \la^{-1}\wtil{B}R_\circ^2 +\nn \\
&\hspace{3cm}+ \Big(D^* Q_\circ+\l B^*(I-\l Z^*)^{-1}\la^{-1}P C^* Q_\circ\Big)X(\l) R_\circ\nn \\
&\hspace{.5cm}=B^*\la^{-1}\wtil{B}R_\circ^2   +\l B^* (I-\l Z^*)^{-1}Z^* \la^{-1}\wtil{B}R_\circ^2
+\Up_{11}(\l)X(\l) R_\circ. \label{topart1}
\end{align}
Next we compute
\begin{align}
&B^*\la^{-1}\wtil{B}R_\circ^2   +\l B^* (I-\l Z^*)^{-1}Z^* \la^{-1}\wtil{B}R_\circ^2= \nn \\
&\qquad= B^*\la^{-1}\wtil{B}R_\circ^2 +B^* (I-\l Z^*)^{-1}
\left(I- (I-\l Z^*)\right)\la^{-1}\wtil{B} R_\circ^2\nn \\
&\qquad=B^*\la^{-1}\wtil{B}R_\circ^2-B^*\la^{-1}\wtil{B}R_\circ^2+B^*
 (I-\l Z^*)^{-1}\la^{-1}\wtil{B} R_\circ^2  \nn \\
&\qquad=
B^* (I-\l Z^*)^{-1}\la^{-1}\wtil{B} R_\circ^2
=\Upsilon_{12}(\l) R_\circ.\label{altdefUp12}
\end{align}
Inserting the identity \eqref{altdefUp12} in \eqref{topart1} we obtain the identity \eqref{part1}.  Hence we have shown that all solutions are obtained through \eqref{allsol1a}.

To complete the proof we show that the map $X\mapsto F$ given by \eqref{allsol1a} is one-to-one. This is a direct consequence of the uniqueness claims in Corollary \eqref{C:Gform} and Theorem \ref{thm:allsol1}. Indeed, by Corollary \eqref{C:Gform}, the map $X\mapsto G$ from $\sS(\sU,\sE)$ to $\sS(\sU\oplus\sZ_\circ,\sY\oplus\sZ_\circ)$ given by \eqref{Gform} is one-to-one, and each $G$ obtained in this way has $G(0)\sF=\o$. By Theorem \ref{thm:allsol1}, the map $G\mapsto F$ from the set of $G\in \sS(\sU\oplus\sZ_\circ,\sY\oplus\sZ_\circ)$ with $G(0)|\sF=\o$ to the set of solutions in $\sS(\sU,\sY)$ given by \eqref{allsol1} is also one-to-one. Since the map $X\mapsto F$ defined here is the composition of these two maps, it follow that this map is one-to-one as well.
\end{proof}

\setcounter{equation}{0}
\section{Proof of Theorem \ref{thm:main1a}}\label{sec:Thm1.2}
We begin with a general remark  concerning the formulas for the functions $\Upsilon_{ij}$, $1\leq  i,j\leq 2$, appearing in Theorem \ref{thm:main1}.

Let  $\{W, \wt{W}, Z\}$  be a LTONP data set, and assume that the associate Pick operator $\la$ is strictly positive.  Then  $Z^*$ is pointwise stable. Using the definitions of
 $B=W E_\sY$ and $\wt{B}=\widetilde{W} E_\sU$ (see  in \eqref{basicid1a}  and  \eqref{basicid1b}) with  the intertwining relations $S_\sY^* W^* = W^* Z^*$
and $S_\sU^* \widetilde{W}^* = \widetilde{W}^* Z^*$ (see \eqref{data1}), we obtain
\begin{align*}
B^*(I-\l Z^*)^{-1}&=E_\sY^* W^*(I-\l Z^*)^{-1}=E_\sY^*(I-\l S_\sY^*)^{-1}W^*\quad (\l \in \BD),\\
\wt{B}^*(I-\l Z^*)^{-1}&=E_\sU^* \wt{W}^*(I-\l Z^*)^{-1}=E_\sU^* (I-\l S_\sU^*)^{-1}\wt{W}^*\quad (\l \in \BD).
\end{align*}
It follows that  the formulas \eqref{defUp11} -- \eqref{defUp22} can be rewritten  as follows:
\begin{align}
\Upsilon_{11}(\l)&=D^*Q_\circ  +\l E_\sY^*(I-\l S_\sY^*)^{-1}W^*\la^{-1}{P}C^* Q_\circ, \label{defUp11a} \\[.1cm]
\Upsilon_{12}(\l)&=E_\sY^*(I-\l S_\sY^*)^{-1}W^*\la^{-1}\wt{B}R_\circ,\label{defUp12a} \\[.1cm]
\Upsilon_{21}(\l)&=\l E_\sU^* (I-\l S_\sU^*)^{-1}\wt{W}^*\la^{-1} PC^*Q_\circ, \label{defUp21a} \\[.1cm]
\Upsilon_{22}(\l)&=R_\circ +E_\sU^* (I-\l S_\sU^*)^{-1}\wt{W}^*\la^{-1}\wt{B} R_\circ.\label{defUp22a}
\end{align}

\begin{proof}[\bf Proof of  Theorem \ref{thm:main1a}]
As before let
$\{W, \wt{W}, Z\}$  be a LTONP data set,
and assume that the associate Pick operator $\la$ is strictly positive.  Note that
\[
WW^*=\la +\wt{W}\wt{W}^* \gg 0.
\]
Hence $P=WW^*$ is  also strictly positive.  It follows that the operator $A=W^*P^{-1}\wt{W}$ in \eqref{defA} is well-defined. Finally, it is noted that $W A = \widetilde{W}$.

We first show that $A$ is strictly contractive following arguments similar to the ones used in \cite[Remark II.1.4]{FFGK98}. Note that
\begin{align*}
I-A^*A&=I-\wt{W}^*P^{-1}W W^*P^{-1}\wt{W}=I-\wt{W}^*P^{-1}\wt{W}\\
&=I-\left(\wt{W}^*P^{-\frac{1}{2}}\right)\left (P^{-\frac{1}{2}}\wt{W}\right).
\end{align*}
Put $W_0= P^{-\frac{1}{2}} W$ and $\wt{W}_0=P^{-\frac{1}{2}}\wt{W}$. Then $I-A^*A=I-\wt{W}_0^*\wt{W}_0$. Furthermore,
\begin{align*}
I-\wt{W}_0 \wt{W}_0^*=&I-P^{-\frac{1}{2}}\wt{W}\wt{W}^*P^{-\frac{1}{2}}= P^{-\frac{1}{2}}\left(P-\wt{W}\wt{W}^*\right) P^{-\frac{1}{2}}\\
=&P^{-\frac{1}{2}}\la P^{-\frac{1}{2}}\gg 0.
\end{align*}
Thus $\wt{W}_0^*$ is a strict contraction, and hence the same holds true for $\wt{W}_0$. We conclude that
\[
I-A^*A=I-\wt{W}_0^*\wt{W}_0\gg 0,
\]
and $A$ is a strict contraction.

From the above calculations it follows that $I-A^*A$ is invertible and we can obtain the inverse of  $I-A^*A$ by using the standard operator identity:
\begin{equation}\label{gen-id}
(I-ML)^{-1}=I+M(I-LM)^{-1}L.
\end{equation}
Indeed, we have
\begin{align*}
(I-A^*A)^{-1}&=(I-\wt{W}_0^*\wt{W}_0)^{-1}=I+\wt{W}_0^*
\left(I -\wt{W}_0\wt{W}_0^*\right)^{-1}\wt{W}_0 \nn \\
&=I+\wt{W}^*P^{-\frac{1}{2}}\Big(I-P^{-\frac{1}{2}}\wt{W}\wt{W}^*P^{-\frac{1}{2}}\Big)^{-1}P^{-\frac{1}{2}}\wt{W}\nn \\
&=I+\wt{W}^*\left(P-\wt{W}\wt{W}^*\right)^{-1}\wt{W}\nn
= I+\wt{W}^*\la^{-1}\wt{W}.
\end{align*}
This readily implies that
\begin{equation}\label{inversion1}
 (I-A^*A)^{-1} = I+\wt{W}^*\la^{-1}\wt{W}.
\end{equation}

Next we derive formulas \eqref{Q0} and  \eqref{R0}.  We begin with $Q_\circ$. Note that
\begin{align*}
&A(I-A^*A)^{-1}A^*=W^*P^{-1}\wt{W}\left(I+\wt{W}^*\la^{-1}\wt{W}\right)\wt{W}^*P^{-1}W\nn \\
&\quad=W^*P^{-1}\wt{W}\wt{W}^*P^{-1}W+W^*P^{-1}\wt{W}\wt{W}^*\la^{-1}\wt{W}\wt{W}^*P^{-1}W\nn \\
&\quad=W^*P^{-1}(P-\la)P^{-1}W+W^*P^{-1}(P-\la)\la^{-1}(P-\la)P^{-1}W\nn \\
&\quad=W^*P^{-1}(P-\la)P^{-1}W+W^*P^{-1}(P-\la)\la^{-1}W+\nn\\
&\hspace{6cm}-W^*P^{-1}(P-\la)P^{-1}W\nn \\
&\quad=W^*P^{-1}(P-\la)\la^{-1}W= W^*\la^{-1}W-W^*P^{-1}W.
\end{align*}
In other words,
\begin{equation}\label{inversion2}
 A(I-A^*A)^{-1}A^* = W^*\la^{-1}W-W^*P^{-1}W.
\end{equation}
Thus
\[
WA(I-A^*A)^{-1}A^*W^*=P\la^{-1}P-P=P\left(\la^{-1}-P^{-1}\right)P.
\]
Combining this with
 $Q_\circ =\left(I+CP(\la^{-1}-P^{-1})PC^*\right)^{-\frac{1}{2}}$
 (see \eqref{defQR0})  yields the formula
 $Q_\circ =\left(I + C W A \left(I - A^*A\right)^{-1}A^* W^* C^*\right)^{-\frac{1}{2}}$  for $Q_\circ$ in \eqref{Q0}.

We proceed by  deriving formula  \eqref{R0}. According to the right hand side  of  \eqref{defQR0} and using the identity \eqref{inversion1} we have
\begin{align*}
R_\circ&=\left(I_\sU+\wt{B}^*\la^{-1}\wt{B}\right)^{-\frac{1}{2}}
=\left(I_\sU+E_\sU^*\wt{W}^*\la^{-1}\wt{W}E_\sU\right)^{-\frac{1}{2}}\\
&=\left(E_\sU^*\left(I+\wt{W}^*\la^{-1}\wt{W}\right)E_\sU\right)^{-\frac{1}{2}}
=\left(E_\sU^*(I-A^*A)^{-1}E_\sU\right)^{-\frac{1}{2}}.
\end{align*}
We conclude that  \eqref{R0} is proved.

It remains to show that formulas \eqref{defUp11} -  \eqref{defUp22} can be rewritten as \eqref{Upsilon11} --  \eqref{Upsilon22}, respectively. To do this we  use the remark preceding the present  proof.  In other words  we may assume that the functions $\Upsilon_{ij}$, $1\leq i,j \leq 2$, are given by  \eqref{defUp11a} --  \eqref{defUp22a}.  Then, to derive \eqref{Upsilon11} --  \eqref{Upsilon22},  it  suffices to show that
\begin{align}
W^*\la^{-1}P&= (I-AA^*)^{-1}W^*, \qquad W^*\la^{-1}\wt{B}=A(I-A^*A)^{-1}E_\sU,  \label{ident12a}\\[.1cm]
\wt{W}^*\la^{-1}P&=A^* (I-AA^*)^{-1}W^*, \qquad \wt{W}^*\la^{-1}\wt{B}=(I-A^*A)^{-1}E_\sU-E_\sU. \label{ident21a}
\end{align}
Obviously, the first three identities are  enough to derive formulas  \eqref{Upsilon11}, \eqref{Upsilon12}, and \eqref{Upsilon21} from  the formulas   \eqref{defUp11a}, \eqref{defUp12a},  and \eqref{defUp21a}, respectively. To see that a similar result holds true for the second identity in \eqref{ident21a}, note that this second identity in \eqref{ident21a} implies that
\begin{align*}
\Upsilon_{22}(\l)&=R_\circ +E_\sU^* (I-\l S_\sU^*)^{-1}\wt{W}^*\la^{-1}\wt{B} R_\circ\\[.1cm]
&=R_\circ +E_\sU^* (I-\l S_\sU^*)^{-1}\left((I-A^*A)^{-1}-I\right)E_\sU R_\circ
\\[.1cm]
&=R_\circ +E_\sU^* (I-\l S_\sU^*)^{-1}(I-A^*A)^{-1}E_\sU R_\circ-E_\sU^* (I-\l S_\sU^*)^{-1}E_\sU R_\circ \\[.1cm]
&=R_\circ - E_\sU^* E_\sU R_\circ+ E_\sU^* (I-\l S_\sU^*)^{-1}(I-A^*A)^{-1}E_\sU R_\circ\\[.1cm]
&=E_\sU^* (I-\l S_\sU^*)^{-1}(I-A^*A)^{-1}E_\sU R_\circ,
\end{align*}
which proves \eqref{Upsilon22}.

It remains to prove the four identities in \eqref{ident12a}  and \eqref{ident21a}. Note that the second identity in \eqref{ident21a} follows from \eqref{inversion1}. Indeed,
\[
\wt{W}^*\la^{-1}\wt{B}=\wt{W}^*\la^{-1}\wt{W}E_\sU=\left((I-A^*A)^{-1}-I\right)E_\sU=(I-A^*A)^{-1}E_\sU-E_\sU.
\]
To prove the other identities we first use \eqref{inversion1} to show that
\begin{align}
A(I-A^*A)^{-1}&=W^*P^{-1}\wt{W}\left(I+\wt{W}^*\la^{-1}\wt{W}\right)\nn
\\[.1cm]
&=W^*P^{-1}\wt{W}+W^*P^{-1}\wt{W}\wt{W}^*\la^{-1}\wt{W}\nn\\[.1cm]
&=W^*P^{-1}\wt{W}+W^*P^{-1}(P-\la)\la^{-1}\wt{W}\nn\\[.1cm]
&=W^*P^{-1}\wt{W}+W^*\la^{-1}\wt{W}-W^*P^{-1}\wt{W}\nn\\[.1cm]
&=W^*\la^{-1}\wt{W}.\label{inversion3}
\end{align}
Since $W^*\la^{-1} \wt{B}=W^*\la^{-1} \wt{W}E_\sU$, formula \eqref{inversion3} yields the second identity in \eqref{ident12a}.

Next, using the general identity \eqref{gen-id} and the identity \eqref{inversion2}, we see that
\begin{equation} \label{inversion4}
(I-AA^*)^{-1}=I+A(I-A^*A)^{-1}A^*=I+W^*\la^{-1}W-W^*P^{-1}W.
\end{equation}
It follows that
\begin{align}
(I-AA^*)^{-1}W^*&=W^*+W^*\la^{-1}WW^*-W^*P^{-1}WW^*\nn \\
&=W^*+W^*\la^{-1}P-W^*P^{-1}P\nn \\
&=W^*\la^{-1}P.\label{inversion5}
\end{align}
This proves the first identity in \eqref{ident12a}. Finally, using \eqref{inversion5}, we have
\begin{align*}
A^*(I-AA^*)^{-1}W^*&=A^*W^*\la^{-1}P=\wt{W}^*P^{-1}WW^*\la^{-1}P=\wt{W}^*\la^{-1}P.
\end{align*}
Hence  the first identity in \eqref{ident21a} is
 proved.
\end{proof}

\setcounter{equation}{0}
\section{Proof of  Proposition \ref{prop:propsUps} and the quotient formula for the central solution} \label{sec:propsUps}

{Throughout this section  $\{W, \wt{W}, Z\}$ is a  data set for a LTONP interpolation problem, and we assume that $\la=WW^*-\wt{W}\wt{W}^* $ is strictly positive.}

The section consists of three subsections. In the first subsection we show that  the function  $\Upsilon_{22}$   defined by \eqref{defUp22} is outer, and we derive a quotient formula for the central solution.  In the second subsection we prove our statement concerning   the  $J$-contractiveness  of the coefficient matrix   contained  in Proposition \ref{prop:propsUps}. The final statement in  Proposition \ref{prop:propsUps} about $\Upsilon_{22}^{-1}$ being a Schur class function is covered by the final part of Proposition \ref{prop:Upp22b}.
 The third subsection consists of a few remarks about the case when the operator $Z$ is exponentially stable .

\subsection{The quotient formula}\label{ssec:quotient}
{First notice  that the formulas \eqref{Hardy1} and \eqref{Hardy2} directly follow from the identities \eqref{defUp11a} -- \eqref{defUp22a}. Let us prove this for \eqref{Hardy1}.  Since $W^*$ and $\wt{W}^*$ are bounded linear operators  from $\sZ$ into $\ell_+^2(\sY)$ and $\ell_+^2(\sU)$, respectively, it follows that $W^*\la^{-1}PC^*Q_\circ$ and  $\wt{W}^*\la^{-1}PC^*Q_\circ$  are bounded linear operators mapping  $\sE$ into  $\ell_+^2(\sY)$ and $\ell_+^2(\sU)$, respectively. Thus
\[
W^*\la^{-1}PC^*Q_\circ x\in \ell_+^2(\sY)\ands \wt{W}^*\la^{-1}PC^*Q_\circ x\in \ell_+^2(\sU)\quad (x\in \sE).
\]
But then, applying \eqref{Ftransf} for $\sY$ and for $\sU$ in place of $\sY$,  we see that  the inclusions in  \eqref{Hardy1} are proved. Similar arguments prove \eqref{Hardy2}.}

\begin{proposition} \label{prop:Ups22a} The function $\Upsilon_{22}$ defined by \eqref{Upsilon22} is outer   and for each $\l\in \BD$ the operator $\Upsilon_{22}(\lambda)$ is invertible and
\begin{equation}\label{invu22}
\Upsilon_{22}(\lambda)^{-1}
=R_\circ - \lambda R_\circ
\wt{B}^*\left(I - \lambda Z^*(\Lambda +  \widetilde{B}\widetilde{B}^*)^{-1}\Lambda\right)^{-1}Z^*\la^{-1}{\wt{B}R_\circ^2.}
\end{equation}
In particular, the spectrum of $Z^*(\Lambda +  \widetilde{B}\widetilde{B}^*)^{-1}\Lambda$ is contained in the closed unit disc.
Furthermore,  the function $\Upsilon_{22}(\lambda)^{-1}$ belongs to $H^\iy(\sU,\sU)$, that is, $\Upsilon_{22}(\lambda)^{-1}$ is uniformly bounded on the open unit disk.  Finally, if $\sZ$ is finite dimensional, then both $Z^*$ and
$Z^*(\Lambda +  \widetilde{B}\widetilde{B}^*)^{-1}\Lambda$ are  exponentially stable , and $\Upsilon_{22}(\lambda)$ is an invertible outer function.
\end{proposition}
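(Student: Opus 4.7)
The plan is to prove the five assertions sequentially. For the inverse formula \eqref{invu22}, I will first rewrite \eqref{defUp22} as
\[
\Upsilon_{22}(\l) = R_\circ^{-1} + \l\wt{B}^*(I - \l Z^*)^{-1}(Z^*\la^{-1}\wt{B}R_\circ),
\]
using $(I-\l Z^*)^{-1} = I + \l(I-\l Z^*)^{-1}Z^*$ together with $R_\circ^{-2} = I + \wt{B}^*\la^{-1}\wt{B}$. This exhibits $\Upsilon_{22}$ as a realization $D + \l C(I-\l A)^{-1}B$ with invertible feedthrough $D = R_\circ^{-1}$. The standard state-space inversion formula then gives $\Upsilon_{22}^{-1}(\l) = D^{-1} - \l D^{-1}C(I - \l A^\times)^{-1}BD^{-1}$ with $A^\times = A - BD^{-1}C = Z^*(I - \la^{-1}\wt{B}R_\circ^2\wt{B}^*)$. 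The Sherman--Morrison--Woodbury identity together with $K^{-1} = \la + \wt{B}\wt{B}^*$ yields $I - \la^{-1}\wt{B}R_\circ^2\wt{B}^* = (I + \la^{-1}\wt{B}\wt{B}^*)^{-1} = K\la$, so $A^\times = Z^*K\la$, which delivers \eqref{invu22}.

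The spectral statement is then obtained by similarity: $Z^*K\la = \la^{-1}(\la Z^*K)\la$ is similar to the operator $T$ of Theorem \ref{thm:centrsol}, whose spectral radius is proved there to be at most one, so $\s(Z^*K\la) \subset \ol{\BD}$ and $(I - \l Z^*K\la)^{-1}$ is analytic on $\BD$. Hence \eqref{invu22} defines $\Upsilon_{22}^{-1}$ as an analytic function on $\BD$.

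The main step is to show $\Upsilon_{22}^{-1}\in H^\infty(\sU,\sU)$. My plan is to verify the uniform lower bound $\Upsilon_{22}(\l)^*\Upsilon_{22}(\l)\geq I$ on $\BD$, which yields $\|\Upsilon_{22}(\l)^{-1}\| \leq 1$. Starting from the factorisation $\Upsilon_{22}(\l) = (I + \wt{B}^*(I-\l Z^*)^{-1}\la^{-1}\wt{B})R_\circ$ from the first step and substituting $\wt{B}\wt{B}^* = Z\la Z^* - \la + BB^*$ (from the Stein identity \eqref{Pick1a}), the quantity $R_\circ^{-1}\Upsilon_{22}(\l)^*\Upsilon_{22}(\l)R_\circ^{-1} - I$ rearranges into the manifestly non-negative kernel $B^*(I-\bar{\l}Z)^{-1}\la^{-1}(I-\l Z^*)^{-1}B$, with the indefinite cross-terms cancelling via the Stein equation. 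Once $\Upsilon_{22}^{-1}\in H^\infty$ is in hand, outerness of $\Upsilon_{22}$ follows immediately: for any $h \in H^2(\sU)$, $g := \Upsilon_{22}^{-1}h$ lies in $H^2(\sU)$ (multiplication by an $H^\infty$-function preserves $H^2$) and $\Upsilon_{22}g = h$, so $\Upsilon_{22}H^2(\sU) = H^2(\sU)$.

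For the finite-dimensional case, pointwise stability of $Z^*$ on a finite-dimensional $\sZ$ forces $\s(Z^*) \subset \BD$, so $Z^*$ is exponentially stable; the similarity $Z^*K\la = \la^{-1}(\la Z^*K)\la$ then transfers the exponential stability of $\la Z^*K$ (already established in Theorem \ref{thm:centrsol} for the finite-dimensional case) to $Z^*K\la$. Consequently $\Upsilon_{22}$ extends analytically to a neighbourhood of $\ol{\BD}$ with invertible values there, making it invertible outer. The principal obstacle will be the third step, since the naive $2\times 2$ system matrix associated with \eqref{invu22} is not contractive in general; establishing $\Upsilon_{22}^*\Upsilon_{22}\geq I$ must therefore proceed by algebraic cancellation exploiting the Stein identity rather than by a routine realisation-theoretic bound.
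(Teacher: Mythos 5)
Your Step 1 (deriving \eqref{invu22} via the state-space inversion formula and Sherman--Morrison--Woodbury, identifying $A^\times = Z^*K\la$ with $K=(\la+\wt{B}\wt{B}^*)^{-1}$) is the paper's argument in identical form, and the spectral and finite-dimensional assertions are also handled as in the paper.

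Your Step 3 is a genuinely different route and its underlying idea is sound: the paper does not prove a pointwise lower bound $\Upsilon_{22}^*\Upsilon_{22}\geq I$ inside this proposition but instead applies Lemma~\ref{lem:outer1} (checking $S_\sU^*A^*AS_\sU\leq A^*A$ through $Z^*P^{-1}Z\leq P^{-1}$); the bound you aim for is precisely the $(2,2)$-entry of the $J$-contractivity identity of Theorem~\ref{T:J-cont} (see the remark following it), and it would deliver at once the stronger Schur-class assertion of Proposition~\ref{prop:Upp22b}. However, your stated formula has two defects. First, $R_\circ^{-1}\Upsilon_{22}^*\Upsilon_{22}R_\circ^{-1}-I=\Phi^*\Phi-I$, where $\Phi(\l)=I+\wt{B}^*(I-\l Z^*)^{-1}\la^{-1}\wt{B}$; showing $\Phi^*\Phi\geq I$ only gives $\Upsilon_{22}^*\Upsilon_{22}\geq R_\circ^2$, which is vacuous since $R_\circ\leq I$. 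You need $\Phi^*\Phi\geq R_\circ^{-2}$. Second, $B^*(I-\overline{\l}Z)^{-1}\la^{-1}(I-\l Z^*)^{-1}B$ acts on $\sY$, whereas the difference is an operator on $\sU$. After inserting the Stein identity $\wt{B}\wt{B}^*=Z\la Z^*-\la+BB^*$ the correct expression is
\[
\Phi(\l)^*\Phi(\l)-R_\circ^{-2}
=\wt{B}^*\la^{-1}(I-\overline{\l}Z)^{-1}\Bigl(BB^*+(1-|\l|^2)Z\la Z^*\Bigr)(I-\l Z^*)^{-1}\la^{-1}\wt{B}\geq 0,
\]
which yields $\Upsilon_{22}^*\Upsilon_{22}\geq I$ upon conjugation by $R_\circ$. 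With those corrections Step 3 is fine.

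The genuine gap is Step 4. From $\Upsilon_{22}^{-1}\in H^\infty(\sU,\sU)$ you cannot \emph{immediately} conclude that $\Upsilon_{22}$ is outer. Your argument shows that every $h\in H^2(\sU)$ is a pointwise product $\Upsilon_{22}g$ with $g=\Upsilon_{22}^{-1}h\in H^2(\sU)$, but outerness asks that $\bigvee_{n\geq 0}S_\sU^n\Upsilon_{22}\sU$ be dense in $\ell_+^2(\sU)$, and passing from pointwise surjectivity to density requires multiplication by $\Upsilon_{22}$ to map $H^2$-convergent sequences of polynomials to $H^2$-convergent sequences. Since $\Upsilon_{22}$ is in general only in $H^2$ and not $H^\infty$ (the operator $Z$ can have spectral radius one), that boundedness is unavailable, and the deduction breaks. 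The paper obtains outerness directly from Lemma~\ref{lem:outer1}. Alternatively, a boundary-value argument closes the gap: if $h\perp S_\sU^n\Upsilon_{22}u$ for all $n\geq 0$, $u\in\sU$, the $L^1$-valued function $\theta\mapsto\Upsilon_{22}(e^{i\theta})^*\wh{h}(e^{i\theta})$ has vanishing Fourier coefficients of index $\geq 0$; multiplying by $(\Upsilon_{22}^{-1})^*$, which lies in the conjugate of $H^\infty(\sU,\sU)$, places $\wh{h}$ simultaneously in $H^2(\sU)$ and its orthogonal complement, so $h=0$.
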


\begin{proof}[\bf Proof]
From Theorem \ref{thm:centrsol} we know that the operator
 $T=\la Z^*(\Lambda +  \wt{B}\wt{B}^*)^{-1}$  has
 spectral radius  less than or equal to one. Since  $Z^*(\la +  \wt{B}\wt{B}^*)^{-1}\Lambda = \Lambda^{-1}T \Lambda$
is similar to $T$, we see that the operator
$Z^*(\la +  \wt{B}\wt{B}^*)^{-1} \Lambda$  also has spectral radius  less than or equal to one.  In particular, $I-\l Z^*(\la +  \wt{B}\wt{B}^* )^{-1}\la$ is
invertible for each $\lambda \in \BD$.  The remaining part of the proof is done in   four steps.

\smallskip
\noindent\textsc{Step 1.}
In this part we show that for each $\l\in \BD$ the operator $\Upsilon_{22}(\lambda)$ is invertible and that its inverse is given by \eqref{invu22}. The invertibility of  $\Upsilon_{22}(\lambda)$ we already know from Theorem \ref{thm:main1}; see the paragraph directly  after  Theorem \ref{thm:main1}. Here the main point is to prove the identity \eqref{invu22}. To do this notice that
\begin{align*}
\Upsilon_{22}(\lambda)R_\circ^{-1} &= I+\wt{B}^*(I -\l Z^*)^{-1}\la^{-1}\wt{B}\\
&= I + \wt{B}^*\la^{-1}\wt{B}
+ \lambda \wt{B}^*(I -\l Z^*)^{-1}Z^*\la^{-1}\wt{B}.
\end{align*}
Recall the following  state space identity when $D$ is invertible:
\[
\left(D + \lambda C(I - \lambda A)^{-1}B\right)^{-1} =
D^{-1} - \lambda D^{-1} C \left(I - \lambda (A - B D^{-1}C)\right)^{-1}B D^{-1}.
\]
Using this with $R_\circ^2 = (I + \wt{B}^*\la^{-1}\wt{B})^{-1}$, {we see that
\begin{equation}\label{withX}
R_\circ \Upsilon_{22}(\lambda)^{-1} =
R_\circ^2 - \lambda R_\circ^2 \wt{B}^* Y(\l)^{-1}  Z^*\la^{-1}\wt{B}R_\circ^2,
\end{equation}
where
\begin{align*}
Y(\l)&=I - \l  \left(Z^* - Z^*\la^{-1} \wt{B}R_\circ^2\wt{B}^*\right)
=I - \l Z^*\left(I - \la^{-1} \wt{B}R_\circ^2\wt{B}^*\right)\\
&=I - \l Z^*\left(I - \la^{-1} \wt{B}(I + \wt{B}^*\la^{-1}\wt{B})^{-1}\wt{B}^*\right)\\
&=I - \l Z^*\left(I-\la^{-1} \wt{B}\wt{B}^*(I+\la^{-1}\wt{B}\wt{B}^*)^{-1}\right)\\
&=I - \l Z^*\left(I-\left[(I+\la^{-1}\wt{B}\wt{B}^*)-I\right](I+\la^{-1}\wt{B}\wt{B}^*)^{-1}\right)\\
&=I - \l Z^*\left(I+\la^{-1}\wt{B}\wt{B}^*\right)^{-1}=I - \l Z^*\left(\la+\ \wt{B}\wt{B}^*\right)^{-1}\la.
\end{align*}
Inserting this formula for $ Y(\l)$ into \eqref{withX} we obtain} the inverse formula for $\Upsilon_{22}(\lambda)$ in \eqref{invu22}.

\smallskip
\noindent\textsc{Step 2.}
We proceed by  proving  that the function $\Upsilon_{22}(\lambda)$ is outer. To accomplish this we use that $\Upsilon_{22}(\lambda)$  is also given by \eqref{Upsilon22}, with $A=W^*P^{-1}\wt{W}$ as in  \eqref{defA},  and we  apply  Lemma \ref{lem:outer1} in Subsection   \ref{sec-out} in the Appendix. Using $P = Z P Z^* + BB^*$ {and the fact that $P$ is strictly positive,} we see that
\[
I = P^{-\frac{1}{2}}Z P^{\frac{1}{2}}P^{\frac{1}{2}}Z^* P^{-\frac{1}{2}} +
P^{-\frac{1}{2}}BB^* P^{-\frac{1}{2}}.
\]
In particular, $P^{-\frac{1}{2}}Z P^{\frac{1}{2}}$ is a contraction.
Hence
\[
I \geq \left(P^{-\frac{1}{2}}Z P^{\frac{1}{2}}\right)^*P^{-\frac{1}{2}}Z P^{\frac{1}{2}}
= P^{\frac{1}{2}} Z^* P^{-1}Z P^{\frac{1}{2}}.
\]
Multiplying both sides by $P^{-\frac{1}{2}}$, we see that
\begin{equation}\label{invPZ}
Z^* P^{-1}Z \leq P^{-1}.
\end{equation}
Using this with $A^*A = \widetilde{W}^* P^{-1}\widetilde{W}$
and $\widetilde{W} S_\sU = Z\widetilde{W}$, we obtain
\begin{align*}
S_\sU^* A^* A S_\sU &= S_\sU^* \widetilde{W}^* P^{-1}\widetilde{W} S_\sU
= \widetilde{W}^*Z^* P^{-1}Z\widetilde{W} \leq \widetilde{W}^*  P^{-1}\widetilde{W}
= A^*A.
\end{align*}
Therefore  $S_\sU^* A^* A S_\sU \leq A^*A$. But then, according to Lemma \ref{lem:outer1}  in  Subsection  \ref{sec-out}, the function
\begin{equation}\label{defPhi6}
\Phi(\l):=E_\sU^* (I-\l S_\sU^*)^{-1}(I-A^*A)^{-1}E_\sU, \quad \l\in \BD,
\end{equation}
is outer. Because $R_\circ$ is invertible, it follows  that the function $\Upsilon_{22}(\lambda)=\Phi(\l)R_\circ$ is outer too.

\smallskip
\noindent\textsc{Step 3.}
Let $\Phi$ be given by \eqref{defPhi6}. Since $\Upsilon_{22}(\lambda)$ is invertible for each $\l \in \BD$ and $R_\circ$ is invertible,  the operator $\Phi(\l)$  is also invertible for each $\l \in \BD$. But then the final part  of Lemma \ref{lem:outer1} tells us that the function $\Phi(\l)^{-1}$ belongs to $H^\iy(\sU,\sU)$. But then $\Upsilon_{22}(\l)^{-1}=R_\circ^{-1}\Phi(\l)^{-1}$ also belongs to $H^\iy(\sU,\sU)$.

\smallskip
\noindent\textsc{Step 4.}
Finally, assume $\sZ$ is finite dimensional. Since $Z^*(\Lambda +  \widetilde{B}\widetilde{B}^*)^{-1}\Lambda$ is similar to $T=\la Z^*(\Lambda +  \wt{B}\wt{B}^*)^{-1}$, we have $\spec(Z^*(\Lambda +  \widetilde{B}\widetilde{B}^*)^{-1}\Lambda)=\spec(T)<1$; note that $\spec(T)<1$ follows from Theorem \ref{thm:centrsol}. Furthermore, $Z^*$ is pointwise stable, by part (ii) of Lemma \ref{L:useobs1}, which implies all eigenvalues of $Z^*$ are contained in $\BD$. Hence $\spec(Z)=\spec(Z^*)<1$. This yields that $\Upsilon_{22}$ is an invertible outer function.
\end{proof}

The next proposition shows that for the strictly positive case   the definition  of the  central solution $F_\circ$ to the LTONP interpolation problem  given in    Remark \ref{rem:centrsol} coincides with the one given in the paragraph  directly after Theorem \ref{thm:main1}.  The proposition also justifies  the title of this subsection.

\begin{proposition}\label{quotientf}
Let $F_\circ$ be the  central solution  of the  LTONP problem with data set $\{W, \widetilde{W}, Z\}$. If the Pick operator  $\Lambda$ is strictly positive, then  $F_\circ$  is given by the quotient formula:
\begin{equation}\label{quotientid}
F_\circ(\l)=\Upsilon_{12}(\l) \Upsilon_{22}(\l)^{-1}, \quad \l \in \BD.
\end{equation}
In other words, when the free parameter $X$ in \eqref{allsol1a} is zero, then the resulting function is the central solution.
\end{proposition}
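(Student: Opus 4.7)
The plan is to exploit the bijective parametrization $X \leftrightarrow G \leftrightarrow F$ that is built into the proof of Theorem \ref{thm:main1}, and to verify that the value $X \equiv 0$ corresponds on one end to $F_\circ$ and on the other to the quotient formula \eqref{quotientid}. Since Corollary \ref{C:Gform} establishes that the map $X \mapsto G$ given by \eqref{Gform} is a bijection between $\sS(\sU,\sE)$ and the set of $G \in \sS(\sU \oplus \sZ,\sY \oplus \sZ)$ with $G(0)|\sF = \omega$, and Theorem \ref{thm:allsol1} establishes that the map $G \mapsto F$ through \eqref{allsol1} is a bijection onto the solution set, it suffices to track the choice $X \equiv 0$ through both bijections.

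First, I would set $X \equiv 0$ in the explicit formula \eqref{Gform}. All terms containing $X(\lambda)$ vanish, leaving
\[
G(\lambda) \equiv \begin{bmatrix} B^*K\wt{B} & B^*K\Lambda^{1/2} \\ \Lambda^{1/2}Z^*K\wt{B} & \Lambda^{1/2}Z^*K\Lambda^{1/2} \end{bmatrix},
\]
which is independent of $\lambda$. By formula \eqref{OmPF} in Lemma \ref{L:useobs1}, this constant operator equals $\omega P_\sF$. This is precisely the function $G_\circ$ singled out in Remark \ref{rem:centrsol} to define the central solution, so by that definition the corresponding $F$ is $F_\circ$.

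Second, I would invoke the derivation carried out in the proof of Theorem \ref{thm:main1}, where it is shown that when $G$ is constructed from $X$ via \eqref{Gform} and $F$ is then constructed from $G$ via \eqref{allsol1}, the resulting $F$ admits the linear fractional representation
\[
F(\lambda) = \bigl(\Upsilon_{11}(\lambda)X(\lambda) + \Upsilon_{12}(\lambda)\bigr)\bigl(\Upsilon_{21}(\lambda)X(\lambda) + \Upsilon_{22}(\lambda)\bigr)^{-1}.
\]
Specializing to $X \equiv 0$ collapses the right-hand side to $\Upsilon_{12}(\lambda)\Upsilon_{22}(\lambda)^{-1}$; the inverse is well defined for every $\lambda \in \BD$ by the remark following Theorem \ref{thm:main1} (or by Proposition \ref{prop:Ups22a}). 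Combining the two steps yields $F_\circ(\lambda) = \Upsilon_{12}(\lambda)\Upsilon_{22}(\lambda)^{-1}$, which is \eqref{quotientid}.

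There is no genuine obstacle: the work has already been done in Theorem \ref{thm:main1}, Corollary \ref{C:Gform}, and Lemma \ref{L:useobs1}. The only care needed is the bookkeeping verification that the vanishing of $X$ in \eqref{Gform} produces exactly the constant $\omega P_\sF$ rather than some other operator, which is immediate once one compares the surviving terms with \eqref{OmPF}. The proposition is therefore essentially a corollary of the uniqueness of the parameter $X$ assigned to each solution $F$ under the bijection of Theorem \ref{thm:main1}.
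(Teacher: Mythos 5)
Your proof is correct, but it takes a genuinely different route from the paper's. The paper proves Proposition \ref{quotientf} by a direct algebraic computation: starting from the formulas \eqref{defUp12} and \eqref{defUp22} for $\Upsilon_{12}$ and $\Upsilon_{22}$, it computes $\Upsilon_{12}(\l)\Upsilon_{22}(\l)^{-1}$ using the resolvent identity $(I+AB)^{-1}A = A(I+BA)^{-1}$ and manipulations of Stein-type relations, arriving at $B^*(\wt{B}\wt{B}^* + \la)^{-1}(I - \l \la Z^*(\wt{B}\wt{B}^* + \la)^{-1})^{-1}\wt{B}$, which it then matches against the explicit formula \eqref{Fcentral} for $F_\circ$ from Theorem \ref{thm:centrsol}. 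Your proof instead traces the parameter $X \equiv 0$ through the two-stage map $X \mapsto G \mapsto F$ established in Corollary \ref{C:Gform} and Theorem \ref{thm:allsol1}, observing that $G$ collapses to the constant $\o P_\sF = G_\circ$ of Remark \ref{rem:centrsol} (hence $F = F_\circ$ by the very definition of the central solution) while the linear fractional formula established inside the proof of Theorem \ref{thm:main1} collapses to $\Upsilon_{12}\Upsilon_{22}^{-1}$. Both arguments are correct and both rely on the same ingredients being in place; yours is conceptually cleaner and avoids reproving the resolvent identities, while the paper's computational route gives an independent verification of the formulas \eqref{defUp12}, \eqref{defUp22}, and \eqref{Fcentral} against one another, which is a useful consistency check given the several reformulations of the coefficients $\Upsilon_{ij}$ between Theorems \ref{thm:main1} and \ref{thm:main1a}. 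Note also that you invoke bijectivity of the parametrization, which is more than is strictly needed: well-definedness of both maps at $X \equiv 0$ already suffices for the conclusion.
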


\begin{proof}[\bf Proof] By using  \eqref{defUp12} and \eqref{defUp22}, we obtain
\begin{align*}
\Upsilon_{12}(\l) \Upsilon_{22}(\l)^{-1}&=
B^*(I -\l Z^*)^{-1}\la^{-1}\wt{B}
\left(I  +\wt{B}^*(I -\l Z^*)^{-1}\la^{-1}\wt{B}\right)^{-1}\\
&= B^*\left(I  +(I -\l Z^*)^{-1}\la^{-1}\wt{B}\wt{B}^*\right)^{-1}
(I -\l Z^*)^{-1}\la^{-1}\wt{B}\\
&=B^*\left(\Lambda - \l \Lambda Z^*  + \wt{B}\wt{B}^*\right)^{-1}\wt{B}\\
&= B^*(\Lambda + BB^*)^{-1}
\left(I - \l \Lambda Z^*(\Lambda + BB^*)^{-1}\right)^{-1}\wt{B}\\
&= F_\circ(\lambda).
\end{align*}
The last equality follows from  formula \eqref{Fcentral} for the central solution
$F_\circ(\lambda)$  in Theorem \ref{thm:centrsol}.
\end{proof}

\begin{proposition}\label{prop:Upp22b}
Let $F_\circ$ be the  central solution  of the  LTONP problem with data set $\{W, \widetilde{W}, Z\}$, with  the Pick operator  $\Lambda$ being  strictly positive, and let $\Upsilon_{22}^{-1}$ be given by \eqref{Upsilon22}. Then the functions $F_\circ$ and $\Upsilon_{22}^{-1}$ are both uniformly bounded on $\BD$  in operator norm, and   the corresponding Toeplitz operators satisfy the following identity:
\begin{equation}\label{eq:specfac}
I-T_{F_\circ}^*T_{F_\circ}=T_{\Upsilon_{22}^{-1}}^*T_{\Upsilon_{22}^{-1}}
\end {equation}
Furthermore, both $F_\circ$ and $\Upsilon_{22}^{-1}$ are Schur class functions.
\end{proposition}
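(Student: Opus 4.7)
By Remark \ref{rem:centrsol}, $F_\circ\in \sS(\sU,\sY)$, so $\|F_\circ\|_\infty\leq 1$; and Proposition \ref{prop:Ups22a} already gives $\Upsilon_{22}^{-1}\in H^\infty(\sU,\sU)$, so both functions are uniformly bounded on $\BD$. Once \eqref{eq:specfac} is established, $T_{\Upsilon_{22}^{-1}}^*T_{\Upsilon_{22}^{-1}} = I - T_{F_\circ}^*T_{F_\circ}\leq I$ forces $\|\Upsilon_{22}^{-1}\|_\infty = \|T_{\Upsilon_{22}^{-1}}\|\leq 1$, i.e., $\Upsilon_{22}^{-1}\in \sS(\sU,\sU)$. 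So the whole proof reduces to establishing the Toeplitz identity \eqref{eq:specfac}.

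\textbf{Reduction to a boundary identity on $\BT$.} Since both $F_\circ$ and $\Upsilon_{22}^{-1}$ belong to $H^\infty$, the Fourier transform \eqref{Ftransf} intertwines the Toeplitz operators with multiplication by the symbols on $H^2$, and Parseval gives, for $x,y\in \ell^2_+(\sU)$ with transforms $\wh x,\wh y\in H^2(\sU)$,
\[
\bigl\langle (T_{F_\circ}^* T_{F_\circ}+T_{\Upsilon_{22}^{-1}}^*T_{\Upsilon_{22}^{-1}})x,y\bigr\rangle = \int_\BT \bigl\langle \bigl(F_\circ^* F_\circ + (\Upsilon_{22}^{-1})^*\Upsilon_{22}^{-1}\bigr)\wh x,\wh y\bigr\rangle\, dm.
\]
Thus \eqref{eq:specfac} follows from $F_\circ(\l)^* F_\circ(\l) + (\Upsilon_{22}(\l)^{-1})^*\Upsilon_{22}(\l)^{-1} = I_\sU$ a.e.\ on $\BT$, which, after multiplying by $\Upsilon_{22}(\l)^*$ on the left and $\Upsilon_{22}(\l)$ on the right and invoking the quotient formula $\Upsilon_{12}=F_\circ \Upsilon_{22}$ of Proposition \ref{quotientf}, amounts to the boundary identity
\[
\Upsilon_{22}(\l)^*\Upsilon_{22}(\l) - \Upsilon_{12}(\l)^*\Upsilon_{12}(\l) = I_\sU \qquad \text{for a.e. } \l \in \BT.
\]

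\textbf{Proof of the boundary identity: inequality plus integral.} The $(2,2)$-block of the $J$-contractive inequality \eqref{Jcontractive} in Proposition \ref{prop:propsUps} reads $\Upsilon_{22}(\l)^*\Upsilon_{22}(\l) - \Upsilon_{12}(\l)^*\Upsilon_{12}(\l)\geq I_\sU$ for every $\l\in\BD$. Since $\Upsilon_{22}, \Upsilon_{12}\in H^2$, for each fixed $u\in\sU$ the columns $\Upsilon_{22}(\cdot)u$ and $\Upsilon_{12}(\cdot)u$ admit $L^2$ radial boundary limits a.e., so passing to $r\uparrow 1$ gives $\langle(\Upsilon_{22}^*\Upsilon_{22} - \Upsilon_{12}^*\Upsilon_{12})(\l)u,u\rangle \geq \|u\|^2$ a.e.\ on $\BT$. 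On the other hand, setting $\Phi = (I-A^*A)^{-1}E_\sU R_\circ :\sU \to \ell^2_+(\sU)$, the formulas \eqref{Upsilon22} and \eqref{Upsilon12} combined with \eqref{Ftransf} identify $\Upsilon_{22}(\cdot)u$ as the Fourier transform of $\Phi u$ and $\Upsilon_{12}(\cdot)u$ as the Fourier transform of $A\Phi u$; Parseval together with $(I-A^*A)\Phi u = E_\sU R_\circ u$ and $E_\sU^*(I-A^*A)^{-1}E_\sU = R_\circ^{-2}$ (see \eqref{R0}) then gives
\[
\int_\BT \bigl\langle (\Upsilon_{22}^*\Upsilon_{22} - \Upsilon_{12}^*\Upsilon_{12})(\l) u, u\bigr\rangle\, dm = \|\Phi u\|^2 - \|A\Phi u\|^2 = \langle (I-A^*A)\Phi u, \Phi u\rangle = \|u\|^2.
\]
A nonnegative function with integral zero vanishes a.e., so $\langle(\Upsilon_{22}^*\Upsilon_{22} - \Upsilon_{12}^*\Upsilon_{12})u,u\rangle = \|u\|^2$ a.e.\ on $\BT$ for every $u\in\sU$, and by polarization the boundary identity holds in the weak sense a.e.\ on $\BT$, which is all that is needed in the preceding reduction step.

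\textbf{Main obstacle.} The delicate point is promoting the pointwise operator inequality on $\BD$ coming from the $J$-contractive condition to equality a.e.\ on $\BT$: the equality assertion in Proposition \ref{prop:propsUps} only covers $\rho(Z)\cap\BT$, which may well have zero Lebesgue measure (for instance, in the Leech setting $Z = S_\sY$, where $\sigma(Z) = \overline{\BD}$ so $\rho(Z)\cap \BT = \emptyset$). The combination of the $H^2$ radial boundary limits with the explicit Parseval evaluation of the integral through the state-space realization is what bridges this gap.
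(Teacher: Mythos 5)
Your proposal is correct, but it takes a genuinely different route from the paper. The paper's proof never passes to boundary values: it establishes the Toeplitz-operator identity directly, by showing the intertwining relation $\langle S_\sY A h, A f\rangle = \langle A S_\sU h, A f\rangle$ and then expanding the double sums that define $\|T_{F_\circ}T_\Phi x\|^2$ for a compactly supported sequence $x$, telescoping via the algebra of $D_A=(I-A^*A)^{1/2}$ to arrive at $\|T_\Phi x\|^2 - \|T_{F_\circ}T_\Phi x\|^2 = \|T_{\Upsilon_{22}^{-1}}T_\Phi x\|^2$, and finally extending by density using the outerness of $\Phi$. Your argument instead converts \eqref{eq:specfac} to a.e.\ boundary equality, obtains one inequality from the $(2,2)$-block of the $J$-contractive bound \eqref{Jcontractive} (the part of Proposition \ref{prop:propsUps} proved independently in Theorem \ref{T:J-cont}, so there is no circularity) together with radial boundary limits of $H^2$ columns, and then closes the gap by the Parseval/state-space computation of the total integral. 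This is a textbook ``harmonic-majoration'' argument: conceptually it explains why $J$-contractivity alone forces the spectral-factorization identity, and it nicely circumvents the problem you flag, that the paper's $J$-unitarity-on-$\BT$ statement covers only $\rho(Z)\cap\BT$, which can be empty. The paper's computation is more elementary (no a.e.\ boundary-value theory) and self-contained, and is perhaps more portable when one cannot invoke $J$-contractivity. One small point you could tighten: when passing from the per-vector a.e.\ identity $\langle(\Upsilon_{22}^*\Upsilon_{22}-\Upsilon_{12}^*\Upsilon_{12})u,v\rangle=\langle u,v\rangle$ a.e.\ (null set depending on $u,v$) to the Toeplitz identity, it is cleanest to test against polynomial $\hat x,\hat y$ (finitely supported $x,y$) and integrate term by term; this avoids worrying about separability of $\sU$ and whether the $L^\infty$ boundary values of $\Upsilon_{12},\Upsilon_{22}$ even exist pointwise, since the only boundary functions you really need are $F_\circ$ and $\Upsilon_{22}^{-1}$, which are in $H^\infty$, and the fixed constant vectors $u,v$.
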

\begin{proof}[\bf Proof]
Since $F_\circ$ is a solution to the LTONP interpolation problem,  $F_\circ$ is a Schur class function. In particular, the function $F_\circ$ is uniformly bounded on $\BD$ in operator norm. The latter  also holds true for $\Upsilon_{22}^{-1}$ by Proposition \ref{prop:Ups22a}.

Let us assume that \eqref{eq:specfac}  is proved. Since $F_\circ $ is  a Schur class function, it follows that $T_{F_\circ}$ is a contraction. But then the identity \eqref{eq:specfac} implies that $\|T_{\Upsilon_{22}^{-1}}^*T_{\Upsilon_{22}^{-1}}\|\leq1$. Hence  the Toeplitz operator $T_{\Upsilon_{22}^{-1}}$    is a contraction  too.   The latter implies that $\Upsilon_{22}^{-1}$ is a Schur class function.  Thus the final statement of the proposition is proved.

It remains to prove  \eqref{eq:specfac}. Recall that $\Upsilon_{22} = \Phi R_\circ$, where the function  $\Phi$  is  given by \eqref{defPhi6} and
$R_\circ = \left(E_{\sU}^* (I - A^*A)^{-1} E_{\sU}\right)^{-\frac{1}{2}}$.
Here   $A = W^* P^{-1} \widetilde{W}$, and hence   $W A = \widetilde{W}$.
 We claim that
\begin{equation}\label{int6}
\langle S_\sY A h, A f\rangle  =  \langle A  S_\sU h, A f\rangle,\quad
 h,f \in \ell_+^2(\sU).
\end{equation}
Using $Z W = W S_\sY$ and  $Z \widetilde{W} = \widetilde{W} S_\sU$, we obtain
\begin{align*}
\langle S_\sY A h, A f\rangle &= \langle W S_\sY A h, P^{-1} \widetilde{W} f\rangle
=\langle Z W   A h, P^{-1} \widetilde{W} f\rangle\\
&= \langle Z\widetilde{W}  h, P^{-1} \widetilde{W} f\rangle =
\langle \widetilde{W} S_\sU h, P^{-1} \widetilde{W} f\rangle\\
&= \langle W A  S_\sU h, P^{-1} \widetilde{W} f\rangle =
\langle A  S_\sU h, A f\rangle.
\end{align*}
This yields \eqref{int6}.

Next, let  $x\in \ell^2_+(\sU)$ be of compact support, that is, $x$ has only a finite number of non-zero entries. We shall  show that for any such $x$ we have
\begin{equation}\label{ImpId}
\|T_\Phi x\|^2 - \|T_{F_\circ} T_\Phi x\|^2 = \|T_{\Upsilon_{22}^{-1}} T_\Phi x\|^2.
\end{equation}
Recall that the central solution  $F_\circ$ is given by the quotient formula \eqref{quotientid} $F_\circ(\lambda) =  \Upsilon_{12}(\lambda)\Upsilon_{22}(\lambda)^{-1}$, where $\Upsilon_{12}$ and $\Upsilon_{22}$ are defined in
\eqref{Upsilon12} and \eqref{Upsilon22}, respectively. Thus
$F_\circ(\lambda) \Upsilon_{22}(\lambda) =  \Upsilon_{12}(\lambda)$ for each $\l\in \BD$. By eliminating $R_\circ$ in the definitions of $\Upsilon_{12}$ and $\Upsilon_{22}$,
we see that
\[
F_\circ(\lambda)E_\sU^*(I - S_\sU^*)^{-1}D_A^{-2} E_\sU =
E_\sY^*(I - S_\sY^*)^{-1}AD_A^{-2} E_\sU,
\]
where $D_A=(I-A^*A)^{\frac{1}{2}}$. So for $x=\{x_n\}_{n=0}^\infty$ in $\ell_+^2(\sU)$ with compact support, we have
\small\begin{align*}
\|T_{F_\circ} T_\Phi x\|^2
&=\|T_{F_\circ} \sum_{n=0}^\infty S_\sU^n D_A^{-2}E_\sU x_n\|^2 =
\|\sum_{n=0}^\infty S_\sY^n A D_A^{-2}E_\sU x_n\|^2\\
&= \langle\sum_{n=0}^\infty S_\sY^n A D_A^{-2}E_\sU x_n,
\sum_{m=0}^\infty S_\sY^m A D_A^{-2}E_\sU x_m\rangle \\
&= \sum_{n \geq m}\langle S_\sY^n A D_A^{-2}E_\sU x_n,S_\sY^m A D_A^{-2}E_\sU x_m\rangle +\\
&\qquad\qquad\qquad + \sum_{n < m}\langle S_\sY^n A D_A^{-2}E_\sU x_n,S_\sY^m  A D_A^{-2}E_\sU x_m\rangle.\\
&= \sum_{n \geq m}\langle S_\sY^{n-m} A D_A^{-2}E_\sU x_n, A D_A^{-2}E_\sU x_m\rangle +\\
&\qquad\qquad\qquad + \sum_{n < m}\langle A D_A^{-2}E_\sU x_n,S_\sY^{m-n}  A D_A^{-2}E_\sU x_m\rangle.
\end{align*}
Using the fact that $A^*A D_A^{-2}=(D_A^{-2}-I)$ we obtain
\begin{align*}
&\sum_{n > m}\langle S_\sY^{n-m} A D_A^{-2}E_\sU x_n, A D_A^{-2}E_\sU x_m\rangle
=\sum_{n > m}\langle A S_\sU^{n-m}D_A^{-2}E_\sU x_n, A D_A^{-2}E_\sU x_m\rangle\\
&\qquad=\sum_{n > m}\langle A S_\sU^{n-m}D_A^{-2}E_\sU x_n, A D_A^{-2}E_\sU x_m\rangle\\
&\qquad =\sum_{n > m}\langle  S_\sU^{n-m}D_A^{-2}E_\sU x_n,  (D_A^{-2} - I)E_\sU x_m\rangle\\
&\qquad
= \sum_{n > m}\langle S_\sU^{n-m}D_A^{-2}E_\sU x_n,  D_A^{-2}E_\sU x_m\rangle
= \sum_{n > m}\langle S_\sU^{n}D_A^{-2}E_\sU x_n, S_\sU^{m} D_A^{-2}E_\sU x_m\rangle.
\end{align*}
A similar computation gives
\[
\sum_{n < m}\langle S_\sY^n A D_A^{-2}E_\sU x_n,S_\sY^m  A D_A^{-2}E_\sU x_m\rangle=
\sum_{n < m}\langle S_\sU^n D_A^{-2}E_\sU x_n,S_\sU^m D_A^{-2}E_\sU x_m\rangle.
\]
For $m=n$ we have
\begin{align*}
\langle A D_A^{-2}E_\sU x_n, A D_A^{-2}E_\sU x_n\rangle
&=\langle A D_A^{-2}E_\sU x_n, A D_A^{-2}E_\sU x_n\rangle\\
&=\langle D_A^{-2}E_\sU x_n, (D_A^{-2}-I)E_\sU x_n\rangle\\
&=\langle D_A^{-2}E_\sU x_n, D_A^{-2}E_\sU x_n\rangle
-\langle D_A^{-2}E_\sU x_n,E_\sU x_n\rangle\\
&=\langle D_A^{-2}E_\sU x_n, D_A^{-2}E_\sU x_n\rangle
-\langle R_\circ^{-1} x_n, x_n\rangle.
\end{align*}
Putting the above computations together gives
\begin{align*}
\|T_{F_\circ} T_\Phi x\|^2
&=\sum_{n,m=0}^\infty\langle S_\sU^n D_A^{-2}E_\sU x_n,S_\sU^m D_A^{-2}E_\sU x_m\rangle
-\sum_{n=0}^\infty\langle R_\circ^{-2} x_n, x_n\rangle\\
&=\langle T_\Phi x , T_\Phi x \rangle -\sum_{n=0}^\infty\| R_\circ^{-1} x_n\|^2
=\| T_\Phi x\|^2 - \| T_{R_\circ^{-1}} x\|^2\\
&=\| T_\Phi x\|^2 - \| T_{R_\circ^{-1}}T_{\Phi^{-1}} T_\Phi x\|^2
=\| T_\Phi x\|^2 - \|T_{\Upsilon_{22}^{-1}} T_\Phi x\|^2.
\end{align*}
Here $T_{R_\circ^{-1}}$ denotes the diagonal Toeplitz operator with the operator $R_\circ^{-1}$ on the main diagonal. We proved \eqref{ImpId} for all $x$ in $\ell_+^2(\sU)$ with compact support. The fact that $\Phi$ is outer implies that $T_\Phi$ maps the compact support sequences in $\ell^2_+(\sU)$ to a dense subset of $\ell^2_+(\sU)$. Therefore
\[
\|v\|^2 - \|T_{F_\circ} v\|^2 = \|T_{\Upsilon_{22}^{-1}} v\|^2,\quad v\in\ell^2_+(\sU).
\]
In other words,
$I - T_{F_\circ}^*T_{F_\circ} = T_{\Upsilon_{22}}^{-*}T_{\Upsilon_{22}}^{-1}$,
and  \eqref{eq:specfac}.
\end{proof}

\subsection{$J$-contractiveness of the coefficient matrix}\label{ssec:Jcontact}

Throughout this section let $\{W,\wtil{W},Z\}$ be a data set for a LTONP interpolation problem. Assume $\la=P-\wtil{P}$ is strictly positive. Define $\Up_{ij}$, $i,j=1,2$, as in \eqref{defUp11}--\eqref{defUp22}. Now set
\begin{equation}\label{Up}
\Up(\l)=\mat{cc}{\Up_{11}(\l)&\Up_{12}(\l)\\ \Up_{21}(\l) & \Up_{22}(\l)}\quad (\l\in\BD).
\end{equation}
Furthermore, set
\[
J_1=\mat{cc}{I_\sY&0\\0& -I_{\sU}}\ands
J_2=\mat{cc}{I_\sE&0\\0& -I_{\sU}}.
\]
The following theorem is the main result of this section.

\begin{theorem}\label{T:J-cont}
Let $\{W,\wtil{W},Z\}$ be a data set for a LTONP interpolation problem. Assume $\la=P-\wtil{P}$ is strictly positive. Then for each $\l\in\BD$ the operator  $\Up(\l) $  is $J$-contractive, that is, $\Up(\l)^* J_1 \Up(\l)\leq J_2$. More precisely, {for each $\l\in\BD$}  we have
\begin{align}\label{UpJineq}
&\Up(\l)^* J_1 \Up(\l)
=J_2+\\
&\quad -(1-|\l|^2)\mat{c}{{Q_\circ} CP\la^{-1}\\ R_\circ\wtil{B}^*\la^{-1}Z}(I-\l Z^*)^{-*}\la (I-\l Z^*)^{-1}\times \notag\\
&\hspace{5cm} \times \mat{cc}{\la^{-1}P C^*Q_\circ & Z^*\la^{-1}\wtil{B}R_\circ}.\notag
\end{align}
Furthermore,   for each $\l$ on the unit circle that  is  not in the spectrum of $Z$ the operator $\Up(\l) $  is $J$-unitary, that is, $\Up(\l)^* J_1 \Up(\l)= J_2$.
\end{theorem}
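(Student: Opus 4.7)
The identity \eqref{UpJineq} implies both conclusions: on $\BD$, the subtracted term is non-negative (a positive conjugation of $\la\gg 0$ with the scalar factor $1-|\l|^2\ge0$), so $\Upsilon(\l)^*J_1\Upsilon(\l)\le J_2$; on $\BT\cap\rho(Z)$, the identity $\sigma(Z^*)=\overline{\sigma(Z)}$ ensures that $(I-\l Z^*)$ is invertible (so $\Upsilon$ extends analytically to $\l$), and the vanishing of $1-|\l|^2$ collapses \eqref{UpJineq} to $J$-unitarity. Hence the task is to establish \eqref{UpJineq}, which I would do by direct algebraic computation.

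Write $\Upsilon$ uniformly as $\Upsilon(\l)=\Upsilon_0+V(I-\l Z^*)^{-1}\la^{-1}N_\l$ with $\Upsilon_0=\sbm{D^*Q_\circ&0\\0&R_\circ}$, $V=\sbm{B^*\\ \wt B^*}$, and $N_\l=\sbm{\l PC^*Q_\circ & \wt B R_\circ}$; each block matches \eqref{defUp11}--\eqref{defUp22}. Expand $\Upsilon(\l)^*J_1\Upsilon(\l)$ into the constant contribution $\Upsilon_0^*J_1\Upsilon_0$, two cross contributions, and one quadratic-in-resolvent contribution. The three key algebraic inputs are: (a) the Pick identity \eqref{Pick1a}, giving $V^*J_1V=BB^*-\wt B\wt B^*=\la-Z\la Z^*$; (b) the admissible-pair relations \eqref{semiunit1}--\eqref{semiunit2}, notably $BD^*+ZPC^*=0$ and $DD^*+CPC^*=I_\sE$, which together with the definitions \eqref{defQR0} yield $DD^*=Q_\circ^{-2}-CP\la^{-1}PC^*$ and $R_\circ^{-2}=I_\sU+\wt B^*\la^{-1}\wt B$; and (c) the resolvent algebra identity
\begin{equation*}
(1-|\l|^2)(I-\bar\l Z)^{-1}\la(I-\l Z^*)^{-1} = \la(I-\l Z^*)^{-1}+(I-\bar\l Z)^{-1}\la-\la - |\l|^2(I-\bar\l Z)^{-1}(\la-Z\la Z^*)(I-\l Z^*)^{-1},
\end{equation*}
verified by multiplying both sides by $(I-\bar\l Z)$ on the left and $(I-\l Z^*)$ on the right and simplifying, using only that $Z$, $Z^*$ commute with their own resolvents.

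Feeding (a)--(c) into the expansion, the $(1-|\l|^2)$-proportional term from (c), after absorbing the $\l$-factors carried by $N_\l$ (and $\bar\l$-factors in $N_\l^*$) into $Z^*$- and $Z$-factors on the resolvents via the identity $\bar\l(I-\bar\l Z)^{-1}Z=(I-\bar\l Z)^{-1}-I$ and its adjoint, reshapes into exactly the right-hand side of \eqref{UpJineq} with the claimed row and column operators flanking the middle kernel $(I-\l Z^*)^{-*}\la(I-\l Z^*)^{-1}$. The boundary contributions from (c) cancel against the two cross blocks on using $\Upsilon_0^*J_1V=\sbm{-Q_\circ CPZ^*\\ -R_\circ\wt B^*}$, which is a direct consequence of $BD^*+ZPC^*=0$. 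Finally, $\Upsilon_0^*J_1\Upsilon_0=\operatorname{diag}(Q_\circ DD^*Q_\circ,-R_\circ^2)$ together with the leftover constant pieces assembles to $J_2=\operatorname{diag}(I_\sE,-I_\sU)$, via the two block identities in (b). The main obstacle is purely organisational: each cancellation is elementary, but the non-commutativity of $\la$ with $Z$, coupled with the interlocking $J_1,J_2$ block structure and the $\l$-dependence of $N_\l$, produces many intermediate terms that must be tracked precisely. A sanity check at $\l=0$, where the $(1,1)$-block of $J_2-\Upsilon(0)^*J_1\Upsilon(0)$ must equal $Q_\circ CP\la^{-1}PC^*Q_\circ$ (immediate from $Q_\circ DD^*Q_\circ=I_\sE-Q_\circ CP\la^{-1}PC^*Q_\circ$), confirms the normalisation and catches any sign errors in the bookkeeping.
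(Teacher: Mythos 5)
Your proposal is correct and takes essentially the same approach as the paper: the paper also proves \eqref{UpJineq} by a direct algebraic expansion (via Lemma \ref{L:Uprealform}, which writes $\Up(\l)=(\what{D}+\l\what{C}(I-\l Z^*)^{-1}\what{B})\operatorname{diag}(Q_\circ,R_\circ)$, and an auxiliary lemma giving $\what{D}^*J_1\what{D}$, $\what{C}^*J_1\what{C}=\la-Z\la Z^*$, and $\what{D}^*J_1\what{C}=-\what{B}^*\la Z^*$), then invokes precisely the resolvent-algebra cancellation you state in (c). Your decomposition $\Up(\l)=\Upsilon_0+V(I-\l Z^*)^{-1}\la^{-1}N_\l$ differs only cosmetically from the paper's, at the cost of carrying the $\lambda$-factor asymmetrically in $N_\l$ (whence the extra step of absorbing $\l$, $\bar\l$ into the resolvents), whereas the paper's uniform $\l\what{C}(I-\l Z^*)^{-1}\what{B}$ form aligns directly with the middle kernel in \eqref{UpJineq} and keeps the bookkeeping a bit cleaner.
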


\begin{remark} Theorem \ref{T:J-cont} can be also used to
show that $\Upsilon_{22}^{-1}$ is a function in $\sS(\sU,\sU)$.
Indeed, the inequality  $\Up(\l)^* J_1 \Up(\l)\leq J_2$,
implies that
\[
\Upsilon_{12}(\lambda)^* \Upsilon_{12}(\lambda) -
\Upsilon_{22}(\lambda)^* \Upsilon_{22}(\lambda) \leq -I
\qquad(\lambda \in \mathbb{D}).
\]
Thus $I \leq \Upsilon_{22}(\lambda)^* \Upsilon_{22}(\lambda)$
for each $\lambda$ in $\mathbb{D}$. Proposition \ref{prop:Ups22a}
shows that $\Upsilon_{22}(\lambda)$ is invertible for $\lambda$ in $\mathbb{D}$.
Hence  $\Upsilon_{22}(\lambda)^{-*}\Upsilon_{22}(\lambda)^{-1} \leq I$
for $\lambda$ in $\mathbb{D}$.  Therefore  $\Upsilon_{22}(\lambda)^{-1}$
is a contraction for all $\lambda$ in $\mathbb{D}$. In other words,
$\Upsilon_{22}(\lambda)^{-1}$ is a function  in $\sS(\sU,\sU)$.
\end{remark}

Before we prove this result it is useful to first {derive} the following two lemmas. The first lemma provides a state space realization for the coefficient matrix-function $\Up$, the second lemma derives a number of useful identities of the operators involved in the realization.

\begin{lemma}\label{L:Uprealform}
The function $\Up$ in \eqref{Up} is given by
\begin{equation}\label{Uprealform}
\Up(\l)= \left(\what{D}+\l \what{C}(I-\l Z^*)^{-1}\what{B}\right) \mat{cc}{Q_\circ & 0\\0 & R_\circ},\quad  \l\in\BD,
\end{equation}
where $\what{B}$, $\what{C}$ and $\what{D}$ are the operators given by
\[
\what{B}=\mat{cc}{\la^{-1}PC^* & Z^* \la^{-1}\wtil{B}},\
\what{C}=\mat{c}{B^*\\ \wtil{B}^*},\
\what{D}=\mat{cc}{D^*& B^*\la^{-1}\wtil{B}\\ 0 & I+\wtil{B}^*\la^{-1}\wtil{B}}.
\]
\end{lemma}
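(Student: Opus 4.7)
The plan is to verify the claim by a direct block-by-block computation. Expanding the product gives
\[
\hat{D}+\l \hat{C}(I-\l Z^*)^{-1}\hat{B}
=\hat{D}+\l\mat{cc}{B^*(I-\l Z^*)^{-1}\la^{-1}PC^* & B^*(I-\l Z^*)^{-1}Z^*\la^{-1}\wtil{B}\\[.1cm] \wtil{B}^*(I-\l Z^*)^{-1}\la^{-1}PC^* & \wtil{B}^*(I-\l Z^*)^{-1}Z^*\la^{-1}\wtil{B}},
\]
and right-multiplication by $\diag(Q_\circ,R_\circ)$ is then compared entry by entry with the four formulas \eqref{defUp11}--\eqref{defUp22}. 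The $(1,1)$ and $(2,1)$ entries of $\hat{D}$ are $D^*$ and $0$, so the corresponding entries of the right-hand side of \eqref{Uprealform} reproduce $\Up_{11}(\l)$ and $\Up_{21}(\l)$ immediately.

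For the $(1,2)$ and $(2,2)$ entries the nontrivial step is to absorb the constant parts of $\hat{D}$, namely $B^*\la^{-1}\wtil{B}$ and $I+\wtil{B}^*\la^{-1}\wtil{B}$, into the resolvent terms. The single algebraic identity I will use is
\[
\l Z^*(I-\l Z^*)^{-1}=(I-\l Z^*)^{-1}-I,
\]
obtained by expanding $(I-\l Z^*)(I-\l Z^*)^{-1}=I$, together with the fact that $(I-\l Z^*)^{-1}$ commutes with $Z^*$. Using this yields
\[
B^*\la^{-1}\wtil{B}+\l B^*(I-\l Z^*)^{-1}Z^*\la^{-1}\wtil{B}=B^*(I-\l Z^*)^{-1}\la^{-1}\wtil{B},
\]
which, after multiplication by $R_\circ$, is exactly $\Up_{12}(\l)$. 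The same manipulation with $B^*$ replaced by $\wt{B}^*$ delivers
\[
\wtil{B}^*\la^{-1}\wtil{B}+\l\wtil{B}^*(I-\l Z^*)^{-1}Z^*\la^{-1}\wtil{B}=\wtil{B}^*(I-\l Z^*)^{-1}\la^{-1}\wtil{B},
\]
and combined with the stray $I R_\circ=R_\circ$ coming from the $(2,2)$ entry of $\hat{D}$ this reproduces $\Up_{22}(\l)$.

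There is no genuine obstacle here; the lemma is simply a book-keeping statement packaging the four scalar formulas \eqref{defUp11}--\eqref{defUp22} into a single state space realization with state operator $Z^*$. The only place where one has to be slightly attentive is the sign/shuffle of the $\l$-factor in the off-diagonal block of $\hat{B}$ (the $Z^*\la^{-1}\wtil{B}$ versus $\la^{-1}\wtil{B}$ choice); this is dictated precisely by the identity displayed above so that the constant block $B^*\la^{-1}\wtil{B}$ in $\hat{D}$ cancels the $-I$ produced by $\l Z^*(I-\l Z^*)^{-1}-(I-\l Z^*)^{-1}$. With this observation the verification is mechanical and the lemma follows.
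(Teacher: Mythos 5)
Your proof is correct and follows essentially the same route as the paper: both expand the block product, note that the first column reproduces $\Upsilon_{11}$ and $\Upsilon_{21}$ directly, and then use the resolvent identity $(I-\l Z^*)^{-1}=I+\l (I-\l Z^*)^{-1}Z^*$ (equivalently $\l Z^*(I-\l Z^*)^{-1}=(I-\l Z^*)^{-1}-I$) to fold the constant blocks of $\what{D}$ into $\Upsilon_{12}$ and $\Upsilon_{22}$.
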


\begin{proof}[\bf Proof]
By writing out the right-hand side of \eqref{Uprealform} in $2 \times 2$ block matrix form, we see that the left upper block and left lower block coincide with $\Upsilon_{11}$ and $\Upsilon_{21}$ in \eqref{defUp11} and \eqref{defUp21}, respectively. It remains to show that $\Upsilon_{12}$ in \eqref{defUp12} and $\Upsilon_{22}$ in \eqref{defUp22} can be written as
\begin{align*}
\Upsilon_{12}(\lambda)& =
 (B^*\la^{-1}\wtil{B}+\l B^*(I-\l Z^*)^{-1}Z^*\la^{-1}\wtil{B})R_\circ;\\
\Upsilon_{22}(\lambda)& =
(I+ \wtil{B}^*\la^{-1}\wtil{B}+\l \wtil{B}^*(I-\l Z^*)^{-1}Z^*\la^{-1}\wtil{B})R_\circ.
\end{align*}
In both cases this is a direct consequence of the fact that
\[
(I-\l Z^*)^{-1}=I+\l (I-\l Z^*)^{-1}Z^*.\qedhere
\]
\end{proof}

\begin{lemma}
With  $\what{B}$, $\what{C}$, $\what{D}$ and $J_1$ defined as above, we have the following identities:
\begin{align}
\label{Ids1}
& \what{D}^* J_1\what{D}=\mat{cc}{Q_\circ^{-2}&0\\0& -R_\circ^{-2}}
-\what{B}^*\la\what{B}\\
\label{Ids2}
& \what{C}^* J_1\what{C}=\la-Z\la Z^*\ands
\what{D}^* J_1\what{C}=-\what{B}^*\la Z^*,
\end{align}

\end{lemma}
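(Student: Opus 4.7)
The plan is to verify all three identities by direct block-matrix multiplication, and to reduce each entry to one of three underlying relations: (a) the Pick identity $BB^*-\wtil{B}\wtil{B}^*=\la-Z\la Z^*$ of \eqref{Pick1a}; (b) the two off-diagonal consequences of the admissibility condition \eqref{semiunit1}, namely $DD^*+CPC^*=I_\sE$ and $DB^*+CPZ^*=0$; and (c) the definitions $Q_\circ^{-2}=I_\sE+CP(\la^{-1}-P^{-1})PC^*$ and $R_\circ^{-2}=I_\sU+\wtil{B}^*\la^{-1}\wtil{B}$.

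First I would dispose of \eqref{Ids2}. For $\what{C}^*J_1\what{C}$ the block multiplication collapses immediately to $BB^*-\wtil{B}\wtil{B}^*$, which equals $\la-Z\la Z^*$ by (a). For $\what{D}^*J_1\what{C}$, direct multiplication yields a $2\times 1$ block whose top entry is $DB^*$ and whose bottom entry simplifies, after expanding $(I+\wtil{B}^*\la^{-1}\wtil{B})\wtil{B}^*$ and substituting (a), to $-\wtil{B}^*\la^{-1}Z\la Z^*$. Matching this against $-\what{B}^*\la Z^*=-\bigl[\begin{smallmatrix}CPZ^*\\ \wtil{B}^*\la^{-1}Z\la Z^*\end{smallmatrix}\bigr]$ requires precisely $DB^*=-CPZ^*$, which is (b).

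Next I would compute the four blocks of $\what{D}^*J_1\what{D}$ and of $\what{B}^*\la\what{B}$ separately and compare. The $(1,1)$ entry asks that $DD^*+CP\la^{-1}PC^*$ equal $Q_\circ^{-2}$; this is immediate from $DD^*=I-CPC^*$ in (b) together with the definition of $Q_\circ^{-2}$ in (c). The two off-diagonal entries reduce again to $DB^*=-CPZ^*$. The main work sits in the $(2,2)$ entry, and I expect this to be the only nontrivial calculation: I would expand
\[
\wtil{B}^*\la^{-1}BB^*\la^{-1}\wtil{B}-\bigl(I+\wtil{B}^*\la^{-1}\wtil{B}\bigr)^{2},
\]
substitute $BB^*=\wtil{B}\wtil{B}^*+\la-Z\la Z^*$ from (a) in the first term, expand the square in the second, and observe that the $\wtil{B}^*\la^{-1}\wtil{B}\wtil{B}^*\la^{-1}\wtil{B}$ contributions cancel, leaving exactly $-R_\circ^{-2}-\wtil{B}^*\la^{-1}Z\la Z^*\la^{-1}\wtil{B}$, which matches the $(2,2)$ entry of the right-hand side.

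The overall strategy is routine block algebra; no estimates or limiting arguments enter. The only step that is not automatic is the $(2,2)$ entry of \eqref{Ids1}, where the telescoping between the square $(I+\wtil{B}^*\la^{-1}\wtil{B})^{2}$ and the quadratic term $\wtil{B}^*\la^{-1}BB^*\la^{-1}\wtil{B}$ has to be handled carefully so that the Pick identity \eqref{Pick1a} can be invoked to produce exactly the $Z\la Z^*$ remainder and the $R_\circ^{-2}$ term. Once that cancellation is organised cleanly, the lemma follows.
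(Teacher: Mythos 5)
Your proposal is correct and follows essentially the same route as the paper: direct block multiplication reduced to the Pick identity \eqref{Pick1a}, the two relations $DD^*+CPC^*=I_\sE$ and $DB^*+CPZ^*=0$ from \eqref{semiunit1}, and the definitions of $Q_\circ^{-2}$, $R_\circ^{-2}$. The only micro-difference is cosmetic — you expand the square $(I+\wtil{B}^*\la^{-1}\wtil{B})^2$ fully and substitute the Pick identity into $\wtil{B}^*\la^{-1}BB^*\la^{-1}\wtil{B}$, whereas the paper factors the square as $(I+\wtil{B}^*\la^{-1}\wtil{B})+\wtil{B}^*\la^{-1}(\la+\wtil{B}\wtil{B}^*)\la^{-1}\wtil{B}$ and then rewrites $\la+\wtil{B}\wtil{B}^*$ — but the cancellations are identical.
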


\begin{proof}[\bf Proof]
Recall that
\[
BB^*-\wtil{B}\wtil{B}^*=\la-Z\la Z^*.
\]
The identities in \eqref{Ids2} follow from this identity and the following straightforward computations:
\begin{align}
\what{C}^* J_1\what{C} &=BB^*-\wtil{B}\wtil{B}^*=\la-Z\la Z^*,\label{Ids3}\\
\what{D}^* J_1\what{C}
&=\mat{cc}{D&0\\ \wtil{B}^*\la^{-1}B&  I+\wtil{B}^*\la^{-1}\wtil{B}}\mat{c}{B^*\\ -\wtil{B}^*}\nn\\
& =\mat{c}{DB^*\\ {\wtil{B}^*}\la^{-1}(BB^* -\la-\wtil{B}\wtil{B}^*)}\nn\\
&=\mat{c}{-CPZ^*\\-{\wtil{B}^*}\la^{-1}Z\la Z^*}=-\mat{c}{CP\la^{-1}\\\wtil{B}^*\la^{-1} Z}\la Z^*
=-\what{B}^*\la Z^*.\label{Ids4}
\end{align}
In establishing the first identity on  the last line  we used $DB^*+CPZ^*=0$, which follows from \eqref{semiunit1}.  Using
$DD^* + C P C^* =I$ from \eqref{semiunit1}, we have
\begin{align*}
\what{D}^* J_1\what{D}
&=\mat{cc}{D&0\\ \wtil{B}^*\la^{-1}B& {I+}\wtil{B}^*\la^{-1}\wtil{B}} \mat{cc}{D^*& B^*\la^{-1}\wtil{B}\\ 0 &-(I+\wtil{B}^*\la^{-1}\wtil{B})}  \\
&=\mat{cc}{DD^* & DB^*\la^{-1}\wtil{B}\\
\wtil{B}^*\la^{-1}BD^* &  \wtil{B}^*\la^{-1}B B^*\la^{-1}\wtil{B} -(I+\wtil{B}^*\la^{-1}\wtil{B})^2}\\
&=\mat{cc}{I-CPC^* & -CPZ^*\la^{-1}\wtil{B}\\ -\wtil{B}^*\la^{-1}ZPC^* &
\wtil{B}^*\la^{-1}B B^*\la^{-1}\wtil{B} -(I+\wtil{B}^*\la^{-1}\wtil{B})^2}.
\end{align*}
Next observe that
\[
I-CPC^*=I+CP(\la^{-1}-P^{-1})PC^*-CP\la^{-1}PC^*=Q_\circ^{-2} -CP\la^{-1}PC^*
\]
and
\begin{align*}
(I+\wtil{B}^*\la^{-1}\wtil{B})^2
&=(I+\wtil{B}^*\la^{-1}\wtil{B})+ \wtil{B}^*\la^{-1}\wtil{B}(I+\wtil{B}^*\la^{-1}\wtil{B})\\
&=R_\circ^{-2}+\wtil{B}^*\la^{-1}(\la +\wtil{B}\wtil{B}^*)\la^{-1}\wtil{B}\\
&=R_\circ^{-2}+\wtil{B}^*\la^{-1}(Z\la Z^*+BB^*)\la^{-1}\wtil{B}\\
&={R_\circ^{-2}+\wtil{B}^*\la^{-1}Z\la Z^*\la^{-1}\wtil{B}+
\wtil{B}^*\la^{-1}BB^*\la^{-1}\wtil{B}.}
\end{align*}
Hence
\[
{(I+\wtil{B}^*\la^{-1}\wtil{B})^2- \wtil{B}^*\la^{-1}B B^*\la^{-1}\wtil{B}}=
R_\circ^{-2}+\wtil{B}^*\la^{-1}Z\la Z^*\la^{-1}\wtil{B}.
\]
Using these identities we obtain that
\begin{align*}
\what{D}^* J_1\what{D} & =
\mat{cc}{Q_\circ^{-1} & 0\\ 0& -R_\circ^{-2}}
-\mat{cc}{CP\la^{-1}PC^* & CPZ^*\la^{-1}\wtil{B}\\ \wtil{B}^*\la^{-1}ZPC^* &{\wtil{B}^*\la^{-1}Z\la Z^*\la^{-1}\wtil{B}}}\\
&{=\mat{cc}{Q_\circ^{-1} & 0\\ 0&-R_\circ^{-2}}
-\mat{c}{CP\la^{-1}\\\wtil{B}^*\la^{-1}Z}\la \mat{cc}{\la^{-1}PC^* & Z^* \la^{-1} \wtil{B}}}.
\end{align*}
This shows that \eqref{Ids1} holds as well.
\end{proof}

\begin{proof}[\bf Proof of Theorem \ref{T:J-cont}] Fix  a  $\l \in \BD$.  In order to prove \eqref{UpJineq}, we multiply the left hand side of \eqref{UpJineq}  from  both sides by $\sbm{Q_\circ& 0\\ 0& R_\circ}^{-1}$.   Then, by using
\eqref{Uprealform},  we  obtain
\begin{align*}
&\mat{cc}{Q_\circ& 0\\ 0& R_\circ}^{-1} \Up(\l)^* J_1 \Up(\l)\mat{cc}{Q_\circ& 0\\ 0& R_\circ}^{-1} =\\
&\quad =(\what{D}^*+\overline{\l}\what{B}^*(I-\overline{\l} Z)^{-1}\what{C}^*)J_1(\what{D}+\l \what{C}(I-\l Z^*)^{-1}\what{B})\\
&\quad
= \what{D}^*J_1\what{D}
+\overline{\l}\what{B}^*(I-\overline{\l} Z)^{-1}\what{C}^* J_1 \what{D}
+\l \what{D}^* J_1 \what{C}(I-\l Z^*)^{-1}\what{B}+\\
&\hspace{3cm}+|\l|^2\what{B}^*(I-\overline{\l} Z)^{-1}\what{C}^* J_1 \what{C}(I-\l Z^*)^{-1}\what{B}\\
&\quad= \what{D}^*J_1\what{D}
-\overline{\l}\what{B}^*(I-\overline{\l} Z)^{-1}Z\la \what{B}
-\l \what{B}^*\la Z^*(I-\l Z^*)^{-1}\what{B}+\\
&\hspace{3cm}+|\l|^2\what{B}^*(I-\overline{\l} Z)^{-1}(\la -Z\la Z^*)(I-\l Z^*)^{-1}\what{B}\\
&\quad= \what{D}^*J_1\what{D}-\what{B}^*(I-\overline{\l} Z)^{-1}\times\\
&\quad \times\Big(\overline{\l}Z\la (I-\l Z^*)+\l(I-\overline{\l}Z)\la Z^* -|\l|^2(\la-Z\la Z^*)\Big)
(I-\l Z^*)^{-1}\what{B}.
\end{align*}
Note that
\begin{align*}
& \overline{\l}Z\la (I-\l Z^*)+\l(I-\overline{\l}Z)\la Z^* -|\l|^2(\la-Z\la Z^*)=\\
& \qquad= -(|\l|^{2}\la -\overline{\l}Z\la-\l\la Z^*+|\l|^2Z\la Z^*)\\
& \qquad= -(I-\overline{\l} Z)\la (I-\l Z^*)+({I}-|\l|^2)\la.
\end{align*}
Inserting this identity into the above computation yields
\begin{align*}
&\mat{cc}{Q_\circ& 0\\ 0& R_\circ}^{-1} \Up(\l)^* J_1 \Up(\l)\mat{cc}{Q_\circ& 0\\ 0& R_\circ}^{-1} =\\
&\qquad =\what{D}^*J_1\what{D}+\what{B}^*\la\what{B}
-(1-|\l|^2)\what{B}^*(I-\overline{\l} Z)^{-1}\la (I-\l Z^*)^{-1}\what{B}\\
&\qquad =\mat{cc}{Q_\circ^{-2}&0\\0& -R_\circ^{-2}}
-(1-|\l|^2)\what{B}^*(I-\overline{\l} Z)^{-1}\la (I-\l Z^*)^{-1}\what{B}.
\end{align*}
Multiplying {the resulting identity from both sides} by  $\sbm{Q_\circ&0\\0&R_\circ}$  {yields} \eqref{UpJineq}.

{By taking limits the  final statement directly follows  from   \eqref{UpJineq}.}
\end{proof}

\subsection{The case when $Z$ is exponentially stable }\label{ssec: strongstable}
We conclude this section with a few remarks about the case  when $Z$ is exponentially stable . Note that this happens when $\sZ$ is finite dimensional.  Recall that $Z$ is
 exponentially stable  if     $\spec(Z)$, the spectral radius of $Z$, is strictly less than one.

\begin{proposition}\label{prop:stable}Assume that the operator $Z$ is exponentially stable .  Then the  operator $Z^*(\la +  \wt{B}\wt{B}^*)^{-1}\la$ is also exponentially stable , and therefore  the functions  $\Upsilon_{ij}(\lambda)$, $i,j=1,2$,  the central solutions $F_\circ$  and the function $\Upsilon_{22}(\lambda)^{-1}$ are analytic on $|\l| <1+\epsilon$ for some $\epsilon>0$.
Furthermore,
\begin{equation}\label{specfact}
I-F_\circ(\l)^*F_\circ(\l)=\Upsilon_{22}(\lambda)^{-*}\Upsilon_{22}(\lambda)^{-1},  \quad |\l |=1.
\end{equation}
Finally, $T_{F_\circ}$ is a strict contraction.
\end{proposition}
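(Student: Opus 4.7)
The approach is to first establish the key spectral claim, namely that $\spec(Z^{*}(\la+\wt{B}\wt{B}^{*})^{-1}\la)<1$, after which the remaining statements follow in a fairly direct manner. Observe that $Z^{*}(\la+\wt{B}\wt{B}^{*})^{-1}\la$ is similar, via $\la^{\half}$, to the operator $T=\la Z^{*}(\la+\wt{B}\wt{B}^{*})^{-1}$ introduced in Theorem \ref{thm:centrsol}; the two therefore share the same spectrum, and from that theorem we already know $\spec(T)\leq 1$. The task is to upgrade this bound to $\spec(T)<1$ using the exponential stability of $Z$.

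The main obstacle is precisely this upgrade, since in Theorem \ref{thm:centrsol} the strict inequality was obtained only under the additional hypothesis that $\sZ$ is finite dimensional, via an eigenvector on the peripheral spectrum. The plan is to generalize that argument by replacing eigenvectors with approximate eigenvectors. Set $A=\la^{\half}Z^{*}K\la^{\half}$ with $K=(\la+\wt{B}\wt{B}^{*})^{-1}$, so that $A$ is similar both to $T$ and to $Z^{*}K\la$ and, by the proof of Theorem \ref{thm:centrsol}, sits as the $(2,2)$ block of the contraction $\o P_\sF$. Assume for contradiction that $\mu\in\s(A)$ with $|\mu|=1$. In the case $\mu$ lies in the approximate point spectrum of $A$, choose $x_n\in\sZ$ with $\|x_n\|=1$ and $Ax_n-\mu x_n\to 0$; then $\|Ax_n\|\to 1$ combined with the contractivity of $\o P_\sF$ forces $\|P_\sF(0\oplus x_n)\|\to 1=\|0\oplus x_n\|$, so that $P_\sF(0\oplus x_n)\to 0\oplus x_n$ in norm. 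Writing $P_\sF(0\oplus x_n)=\sbm{\wt{B}^{*}\\ \la^{\half}}\xi_n$ with $\xi_n=K\la^{\half}x_n$ and invoking the defining relation $\o\sbm{\wt{B}^{*}\\ \la^{\half}}=\sbm{B^{*}\\ \la^{\half}Z^{*}}$, comparison with $\o P_\sF(0\oplus x_n)=0\oplus\mu x_n+o(1)$ yields $(Z^{*}-\mu I)\xi_n\to 0$ together with $\|\xi_n\|$ bounded away from zero, so $\mu\in\s(Z^{*})$, contradicting exponential stability of $Z$. The residual spectrum case, where $\bar\mu$ is an eigenvalue of $A^{*}$, is handled analogously using $(\o P_\sF)^{*}=\o^{*}P_{\sF'}$ in place of $\o P_\sF$, and produces $\mu\in\s(Z^{*})$ directly from an exact eigenvalue.

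Granted the spectral claim, the analyticity statements follow by inspection of the defining formulas: $(I-\l Z^{*})^{-1}$ is analytic on $|\l|<1/\spec(Z)>1$, hence the $\Upsilon_{ij}$ in \eqref{defUp11}--\eqref{defUp22} extend analytically to such a disk; formula \eqref{Fcentral} extends $F_\circ$ to $|\l|<1/\spec(T)>1$; and formula \eqref{invu22} extends $\Upsilon_{22}^{-1}$ to $|\l|<1/\spec(Z^{*}K\la)=1/\spec(T)>1$. For the factorization \eqref{specfact} on $\BT$, note that under exponential stability of $Z$ we have $\s(Z)\cap\BT=\emptyset$, so by Proposition \ref{prop:propsUps} the matrix $\Upsilon(\l)$ is $J$-unitary on $\BT$. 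Expanding the lower right block of $\Upsilon(\l)^{*}J_1\Upsilon(\l)=J_2$ yields $\Upsilon_{22}^{*}\Upsilon_{22}-\Upsilon_{12}^{*}\Upsilon_{12}=I_\sU$ pointwise on $\BT$, and conjugating by $\Upsilon_{22}^{-1}$ together with $F_\circ=\Upsilon_{12}\Upsilon_{22}^{-1}$ from Proposition \ref{quotientf} gives \eqref{specfact}.

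For the strict contractivity of $T_{F_\circ}$, combine the Toeplitz identity $I-T_{F_\circ}^{*}T_{F_\circ}=T_{\Upsilon_{22}^{-1}}^{*}T_{\Upsilon_{22}^{-1}}$ established in Proposition \ref{prop:Upp22b} with a uniform lower bound on $\Upsilon_{22}^{-1}$. Since $\Upsilon_{22}$ is analytic on a neighborhood of $\overline{\BD}$, it is norm-bounded on the compact set $\BT$, say $\|\Upsilon_{22}(\l)\|\leq M$; invertibility of $\Upsilon_{22}(\l)$ on $\BT$ then gives the pointwise lower bound $\|\Upsilon_{22}(\l)^{-1}u\|\geq M^{-1}\|u\|$ for all $u\in\sU$ and $\l\in\BT$. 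Because $\Upsilon_{22}^{-1}$ is a Schur class function, $T_{\Upsilon_{22}^{-1}}$ acts as analytic multiplication on $H^{2}(\sU)$, and integrating the pointwise bound yields $\|T_{\Upsilon_{22}^{-1}}f\|^{2}\geq M^{-2}\|f\|^{2}$, whence $T_{F_\circ}^{*}T_{F_\circ}\leq(1-M^{-2})I$ and $T_{F_\circ}$ is a strict contraction.
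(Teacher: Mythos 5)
Your proof is correct, and the key step — upgrading $\spec\big(Z^*(\Lambda+\wt{B}\wt{B}^*)^{-1}\Lambda\big)\leq 1$ to a strict inequality — is done by a genuinely different route from the paper. The paper first normalizes to $\Lambda = I$, then introduces the auxiliary operator $Y = I + \wt{B}^*(I-Z^*)^{-1}\wt{B}$ and shows both $YY^*\geq I$ and $Y^*Y\geq I$ by an algebraic identity exploiting $(I-Z)(I-Z^*)+BB^*\geq 0$; invertibility of $Y$ then excludes $1$ from $\sigma(Z^*(I+\wt{B}\wt{B}^*)^{-1})$, and rotation $Z\mapsto \l Z$ ($|\l|=1$) sweeps the whole unit circle. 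You instead keep $\Lambda$ general, recall from Theorem~\ref{thm:centrsol} that $A=\Lambda^{\half}Z^*K\Lambda^{\half}$ is the $(2,2)$-corner of the contraction $\o P_\sF$ with $\spec(A)\leq 1$, and then replace the finite-dimensional eigenvector argument of that theorem by an approximate-eigenvector version. This is a clean conceptual generalization: the norm equalities that in finite dimensions force $0\oplus x\in\sF$ become asymptotic statements forcing $P_\sF(0\oplus x_n)\to 0\oplus x_n$, and the invertibility of $\Lambda^{\half}$ is used to keep $\|\xi_n\|$ bounded below and transfer the approximate eigenrelation to $Z^*$. Each approach buys something: the Kos-style trick is short and avoids any appeal to the structure of $\sF$ and $\o$, while your argument makes the finite-dimensional heuristic of Theorem~\ref{thm:centrsol} literally carry over and needs no WLOG normalization. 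A small remark: your two-case split (approximate point spectrum vs.\ residual spectrum) is exhaustive and both branches are correct, but the residual-spectrum case is actually superfluous — since $\sigma(A)\subset\overline{\BD}$ and $|\mu|=1$, any such $\mu$ lies on the topological boundary of $\sigma(A)$, which is always contained in the approximate point spectrum, so your first case already covers everything. The remaining parts (analyticity radius $>1$, the $J$-unitarity argument for \eqref{specfact}, and the strict contractivity via Proposition~\ref{prop:Upp22b} and a uniform lower bound for $\Upsilon_{22}^{-1}$ on $\BT$) track the paper's reasoning closely and are fine; your deduction of strict contractivity through the Toeplitz identity is essentially the paper's $\sup_{\l\in\BD}\|F_\circ(\l)\|<1$ with the implicit compactness argument made explicit.
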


\begin{proof}[\bf Proof]
We first show that   $Z^*(\la +  \wt{B}\wt{B}^*)^{-1}\la$ is exponentially stable . Notice that
\begin{align*}
\Lambda^{\frac{1}{2}}\left(Z^*(\la +  \wt{B}\wt{B}^*)^{-1}\la\right)\Lambda^{-\frac{1}{2}}&=
\la^{\frac{1}{2}} Z^*\Lambda^{-\frac{1}{2}}\Lambda^{\frac{1}{2}}
(\la +  \wt{B}\wt{B}^*)^{-1}\la^{\frac{1}{2}}\\
&=\la^{\frac{1}{2}} Z^*\la^{-\frac{1}{2}}
\left(I+  \la^{-\frac{1}{2}} \wt{B}\wt{B}^*\la^{-\frac{1}{2}}\right)^{-1}.
\end{align*}
Hence $\Lambda^{\frac{1}{2}}\left(Z^*(\la +  \wt{B}\wt{B}^*)^{-1}\la\right)\Lambda^{-\frac{1}{2}}$
and $\la^{\frac{1}{2}} Z^*\la^{-\frac{1}{2}}
\left(I+  \la^{-\frac{1}{2}} \wt{B}\wt{B}^*\la^{-\frac{1}{2}}\right)^{-1}$ \!
are similar. In particular, they have the same spectrum. Furthermore,
$\Lambda - Z \Lambda Z^* = BB^* - \widetilde{B}\widetilde{B}^*$
can be rewritten as
\[
I-\la^{-\frac{1}{2}}Z\la^{\frac{1}{2}} \la^{\frac{1}{2}}Z^*\la^{-\frac{1}{2}}=
\la^{-\frac{1}{2}}BB^*\la^{-\frac{1}{2}}-\la^{-\frac{1}{2}}\wt{B}\wt{B}^*\la^{-\frac{1}{2}}.
\]
Replacing  $\la^{-\frac{1}{2}} Z \la^{\frac{1}{2}}$ by $Z$ and  $\la^{-\frac{1}{2}}B$ by $B$ and  $\la^{-\frac{1}{2}}\wt{B}$ by $\wt{B}$, we see that without loss of generality we may assume that $\la =I$.

So we  assume that
\begin{equation}\label{condspec1}
\spec(Z)  < 1\ands I- ZZ^*+\wt{B} \wt{B}^*  = BB^*\geq 0 .
\end{equation}
We have to show that   $Z^*(I+\wt{B}\wt{B}^*)^{-1}$
is exponentially stable . By consulting \eqref{OmPF} with $\Lambda =I$, we see that
$\Pi_\sZ \omega P_\sF|\sZ = Z^*(I+\wt{B}\wt{B}^*)^{-1}$.
Hence $Z^*(I+\wt{B} \wt{B}^*)^{-1}$ is a contraction, and thus, \begin{equation} \label{radiusspec}
 r_{\rm spec}(Z^*(I+\wt{B}\wt{B}^*)^{-1}) \leq 1.
\end{equation}

Next consider  the auxiliary operator
\begin{equation} \label{defX}
Y = I + \wt{B}^*(I-Z^*)^{-1}\wt{B}: \sU
  \to \sU.
\end{equation}
We shall show that $Y$ is invertible. The idea of the proof is
taken from \cite{Kos}, page 128. One computes that
\begin{align*}
 YY^* & = \{I+\wt{B}^*(I-Z^*)^{-1} \wt{B} \} \{I+\wt{B}^*(I-Z)^{-1} \wt{B} \} \\
  & = I + \wt{B}^* (I-Z^*)^{-1}\wt{B} + \wt{B}^* (I-Z)^{-1} \wt{B}+\\
  &\hspace{3cm}+
  \wt{B}^*(I-Z^*)^{-1}\wt{B} \wt{B}^*(I-Z)^{-1} \wt{B}  \\
  & = I + \wt{B}^*(I-Z^*)^{-1} \{(I-Z)+(I-Z^*)+\wt{B} \wt{B}^* \} (I-Z)^{-1} \wt{B}.
\end{align*}
Now use the second part of \eqref{condspec1} and
\[
ZZ^*-I = (I-Z)(I-Z^*)-(I-Z^*)-(I-Z).
\]
It follows that
\begin{align*}
(I-Z)+(I-Z^*)+\wt{B} \wt{B}^* &=(I-Z)(I-Z^*)+I-ZZ^*+\wt{B} \wt{B}^*\\
&= (I-Z)(I-Z^*)+BB^*\geq 0.
\end{align*}
Hence  $YY^* \geq I$, and $YY^*$  is strictly positive. In a similar fashion one computes that
\begin{align*}
Y^*Y&=I+\wt{B}^* (I-Z)^{-1} \{(I-Z^*)+(I-Z)+\wt{B}\wt{B}^* \} (I-Z^*)^{-1}\wt{B}\\
&=I+\wt{B}^* (I-Z)^{-1} \{(I-Z)(I-Z^*)+BB^*\}(I-Z^*)^{-1}\wt{B}\geq I.
\end{align*}
Thus $Y^*Y$ is   also strictly positive.  Since both $Y^*Y$ and $YY^*$ are strictly positive, we  conclude that the operator $Y$ defined by \eqref{defX} is invertible.


Since $Y$  defined by \eqref{defX} is invertible,
it follows that $I+(I-Z^*)^{-1}\wt{B} \wt{B}^*$ is invertible. Here we used the fact that the nonzero spectrum of the product of two operators are the same.
Multiplying by  $I -Z^*$  on the left shows that  $I-Z^*+\wt{B} \wt{B}^*$ is also invertible.  Multiplying by $(I + \widetilde{B}\widetilde{B}^*)^{-1}$ on the right
 implies  that $I - Z^*(I+\wt{B} \wt{B}^*)^{-1}$ is invertible. In other words,
\begin{equation} \label{onenot}
 1 \notin \sigma\left(Z^*(I+\wt{B} \wt{B}^*)^{-1}\right).
\end{equation}
Recall that  $\sigma(A)$ denotes the spectrum of an operator $A$.  Now take $\lambda \in {\BT}$, and notice that the conditions in \eqref{condspec1} remain valid if $Z$ is replaced by $\l  Z$. Thus \eqref{onenot} yields
\[
 1 \notin \sigma\left(\lambda^{-1} Z^*(I+\wt{B} \wt{B}^*)^{-1}\right).
\]
It follows that $\lambda \notin \sigma(Z^*(I+\wt{B} \wt{B}^*)^{-1}).$ Since $\lambda$ is an arbitrary element of ${\BT}$, we conclude that
$\sigma(Z^*(I+\wt{B} \wt{B}^*)^{-1})\cap\BT$
is empty, and hence using  \eqref{radiusspec} we obtain that
the spectral radius of $Z^*(I+{\wt{B}}{\wt{B}}^*)^{-1}$
is strictly less than one.

Since  both $Z$ and    $Z^*(\la +  \wt{B}\wt{B}^*)^{-1}\la$ are exponentially stable ,  it is clear    from  \eqref{defUp11}  to  \eqref{defUp22},  \eqref{Fcentral} and \eqref{invu22} that the functions  $\Upsilon_{ij}(\lambda)$, $i,j=1,2$,  the central solutions $F_\circ$  and the function $\Upsilon_{22}(\lambda)^{-1}$ are analytic on $|\l| <1+\epsilon$ for some $\epsilon>0$.

Next we prove \eqref{specfact}. Fix $\l \in \BT$. Since  $\spec(Z)<1$,  the final statement of Theorem \ref{T:J-cont} tells is that
 \[
\Upsilon_{12}(\l)^*\Upsilon_{12}(\l)-\Upsilon_{22}(\l)^*\Upsilon_{22}(\l)=-I_\sU.
\]
Multiplying the latter identity from the right by  $\Upsilon_{22}(\l)^{-1}$ and from the left by $\Upsilon_{22}(\l)^{-*}$ and using  the quotient formula \eqref{quotientid} we see that
\[
F_\circ(\l)^* F_\circ(\l)-I=-\Upsilon_{22}(\l)^{-*}\Upsilon_{22}(\l)^{-1},
\]
 which proves \eqref{specfact}.

Finally, using \eqref{specfact}, we see that $\|T_{F_\circ} \|= \sup_{\l \in \BD}\|F_\circ(\l)\|<1$, and hence $T_{F_\circ}$ is a strict contraction.
\end{proof}

 \begin{corollary} Let $\{W,\wtil{W},Z\}$ be a data set for a LTONP interpolation problem, and let $\la=P-\wtil{P}$ be strictly positive.  If in addition  $\sZ$ is finite dimensional, then the operator $Z$ is exponentially stable , and the functions  $\Upsilon_{ij}(\lambda)$, $i,j=1,2$,  the central solutions $F_\circ$,  and the function $\Upsilon_{22}(\lambda)^{-1}$ are  rational operator  functions  with no poles on the closed unit disk and the factorization in \eqref{specfact} is a right canonical factorization.  in the sense of  \cite[Section XXIV3]{GGK1}. In other words, $\Upsilon_{22}$  is invertible outer, that is, $T_{\Upsilon_{22}}$ is invertible  and its inverse  is $T_{\Upsilon_{22}^{-1}}$.
\end{corollary}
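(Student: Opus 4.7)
The plan is to establish the statements in order, leveraging what has already been proven. First, I would show $Z$ is exponentially stable. By Lemma~\ref{L:useobs1}(ii) we know $Z^*$ is pointwise stable, i.e.\ $Z^{*n}h\to 0$ for every $h\in\sZ$. In the finite dimensional setting this forces every eigenvalue of $Z^*$ to lie in the open unit disc (any eigenvalue $\mu$ with $|\mu|\geq 1$ would produce an eigenvector $v$ with $Z^{*n}v=\mu^n v\not\to 0$). Hence $\spec(Z)=\spec(Z^*)<1$.

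Next I would deduce the rationality and absence of poles on $\overline{\BD}$. Since $\spec(Z^*)<1$, the resolvent $(I-\l Z^*)^{-1}$ is a rational function of $\l$ analytic on a disc $|\l|<1+\vp$ for some $\vp>0$. Substituting this into the definitions \eqref{defUp11}--\eqref{defUp22} shows that each $\Upsilon_{ij}$ is rational and pole-free on $\overline{\BD}$. For the central solution I would use \eqref{Fcentral}: the operator $T=\la Z^*(\wt{B}\wt{B}^*+\la)^{-1}$ is exponentially stable by the last assertion of Theorem~\ref{thm:centrsol}, so $(I-\l T)^{-1}$ is analytic on a neighbourhood of $\overline{\BD}$, making $F_\circ$ rational and pole-free on $\overline{\BD}$. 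For $\Upsilon_{22}^{-1}$ I would invoke the explicit inverse formula \eqref{invu22} together with the last part of Proposition~\ref{prop:Ups22a}, which asserts that $Z^*(\la+\wt{B}\wt{B}^*)^{-1}\la$ is exponentially stable when $\sZ$ is finite dimensional; again the relevant resolvent extends analytically across the unit circle.

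Finally, the canonical factorization and invertible-outer statements. From Proposition~\ref{prop:Ups22a} I already have that $\Upsilon_{22}$ is outer, and from the previous paragraph both $\Upsilon_{22}$ and $\Upsilon_{22}^{-1}$ are rational with no poles on $\overline{\BD}$; in particular both belong to $H^\iy(\sU,\sU)$. Combined with the pointwise spectral identity \eqref{specfact} established in Proposition~\ref{prop:stable}, this is precisely a right canonical Wiener--Hopf factorization of $I-F_\circ^*F_\circ$ in the sense of \cite[Section XXIV.3]{GGK1}: the analytic factor $\Upsilon_{22}^{-1}$ and its inverse $\Upsilon_{22}$ are both in $H^\iy(\sU,\sU)$, and the co-analytic factor $\Upsilon_{22}^{-*}$ together with its inverse $\Upsilon_{22}^{*}$ extend analytically to $|\l|>1$ by reflection from the analogous property of $\Upsilon_{22}$ and $\Upsilon_{22}^{-1}$. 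For the equivalent Toeplitz statement I would then apply the standard rule $T_fT_g=T_{fg}$, which is valid whenever $g\in H^\iy$: since $\Upsilon_{22},\Upsilon_{22}^{-1}\in H^\iy(\sU,\sU)$, we obtain
\[
T_{\Upsilon_{22}}T_{\Upsilon_{22}^{-1}}=T_{\Upsilon_{22}\Upsilon_{22}^{-1}}=I
\ands
T_{\Upsilon_{22}^{-1}}T_{\Upsilon_{22}}=T_{\Upsilon_{22}^{-1}\Upsilon_{22}}=I,
\]
so $T_{\Upsilon_{22}}$ is invertible with $T_{\Upsilon_{22}}^{-1}=T_{\Upsilon_{22}^{-1}}$, which is exactly the assertion that $\Upsilon_{22}$ is invertible outer.

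I expect the main obstacle to be cosmetic rather than mathematical: carefully aligning the intrinsic rationality/analyticity data we have with the axiomatic definition of ``right canonical factorization'' in \cite[Section XXIV.3]{GGK1}, so as to verify without ambiguity that the factor $\Upsilon_{22}^{-*}$ is the co-analytic piece and $\Upsilon_{22}^{-1}$ is the analytic piece, both invertible across the unit circle. Once this bookkeeping is done, everything reduces to the already-established analyticity on neighbourhoods of $\overline{\BD}$ and the product rule for Toeplitz operators with $H^\iy$ symbols.
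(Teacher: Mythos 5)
Your proof is correct and follows essentially the same route as the paper: deduce exponential stability of $Z$ from pointwise stability of $Z^*$ and finite dimensionality, read off rationality and analyticity on a neighborhood of $\overline{\BD}$ from the explicit state-space formulas \eqref{defUp11}--\eqref{defUp22}, \eqref{Fcentral}, \eqref{invu22} together with the exponential stability of the relevant state operators, and then invoke Proposition \ref{prop:Ups22a}, Proposition \ref{prop:stable} and \eqref{specfact} to conclude the canonical factorization. The only difference is that you spell out in more detail the verification that the factorization meets the definition in \cite[Section XXIV.3]{GGK1} and the $T_fT_g=T_{fg}$ argument for the Toeplitz invertibility, which the paper leaves implicit.
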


\begin{proof}[\bf Proof]
From Theorem \ref{thm:main1} we know that $Z$ is exponentially stable , But for  a finite dimensional space pointwise  stable is equivalent to exponentially stable . Furthermore, since  $\sZ$ is finite dimensional, formulas  \eqref{defUp11} -- \eqref{defUp22} imply that the functions $\Upsilon_{ij}(\lambda)$, $i,j=1,2$, are rational. Similarly, \eqref{Fcentral} and \eqref{invu22} show that  $F_\circ$   and  $\Upsilon_{22}(\l)^{-1}$ are  rational operator  functions. Recall (see Proposition \ref{prop:stable}) that  $\Upsilon_{22}(\l)$ and $\Upsilon_{22}(\l)^{-1}$ are both analytic at each point of the closed unit disc, which implies that the factorization in \eqref{specfact} is a right canonical factorization and  $\Upsilon_{22}$  is invertible outer.
\end{proof}


\setcounter{equation}{0}

\section{Maximal entropy principle}\label{sec:max}

For a function $F\in \sS(\sU, \sY)$ we define the {\em entropy} to be the  cost function $\sigma_{F}$  defined by   the following optimization problem:
\begin{equation}\label{optme}
\sigma_{F}(u)
 = \inf\left\{\|u - E_\sU^*  T_F^* h\|^2 +
 \lg\left(I - T_F T_F^*\right)h,h\rg
\mid h\in \ell_+^2(\mathcal{Y})\right\},
\end{equation}
where $u$ is a vector  in $\sU$.  Note that the above problem  is precisely the optimization problem in \eqref{optc}  with   $C = T_F$.  Due to the equivalence of the optimization problems in \eqref{optc} and \eqref{optclt}, the  entropy  $\s_F$ is also given by
\begin{equation}\label{optmeclt}
\sigma_{F}(u)
 = \inf\left\{\left\|D_{T_F}\left(E_\sU u - S_\sU e\right)\right\|^2
\mid  e\in \ell_+^2(\mathcal{U})\right\}, \quad u\in \sU.
\end{equation}
This is precisely the notion  of  entropy that is used in  the commutant lifting setting   presented in \cite[Section IV.7]{FFGK98}.  Furthermore, if   $\|F\|_\infty = \|T_F\| < 1$,  then by \eqref{eq:cost3} the  entropy for $F$ is determined by
\begin{equation}\label{optme11}
\sigma_F(u) = \left\lg\left(E_\sU^*(I - T_F^*T_F)^{-1}E_\sU\right)^{-1}u,u\right\rg
\end{equation}

In the band method theory on the  maximal entropy principle the operator $E_\sU^*(I - T_F^*T_F)^{-1}E_\sU$ appears as the multiplicative diagonal of  the function  $I-F(\l)^*F(\l)$, $\l\in \BT$, assuming the Fourier coefficients of $F$   are summable in operator norm; see Sections I.3 and II.3 in \cite{GKW91}, and  Section XXXIV.4 in \cite{GGK2}. For further  information on the multiplicative diagonal we refer to Subsection \ref{ssec:multdiag}.

In this section the function  $F$  is assumed to belong to the set of all solutions to a  LTONP interpolation problem. The following theorem is the  maximal entropy principle for this set of $F$'s.

\begin{theorem}\label{thm:maxent}
Assume that the LTONP interpolation problem with  given data set $\{W,\widetilde{W},Z\}$ is solvable, i.e.,  the Pick matrix $\la$ is nonnegative. Let $F_\circ$  in  $\sS(\sU, \sY)$ be the central solution  to this  LTONP interpolation problem.
Then $F_\circ$ is the unique maximal entropy solution, that is,
if $F\in \sS(\sU, \sY)$ is any  other solution to the LTONP interpolation problem, then
\begin{equation}\label{optME}
\sigma_F(u) \leq \sigma_{F_\circ}(u) \qquad \quad (u \in \sU).
\end{equation}
Moreover, we have $\sigma_F(u) = \sigma_{F_\circ}(u)$
for all $u \in \sU$ if and only if $F = F_\circ$, and the
entropy for the central solution is given by
\begin{equation}\label{ent2a}
\sigma_{F_\circ}(u) =
\left\lg P_\sG (u \oplus 0), (u \oplus 0)\right\rg
\qquad \quad (u \in \sU),
\end{equation}
where $\sG$ is the Hilbert space given by the first part of \eqref{defGGprime}. Finally, if   $\Lambda$ is strictly positive, then the entropy
for the central solution is also determined by
\begin{equation}\label{ent2aa}
\sigma_{F_\circ}(u) =
 \left\lg\left(I + \widetilde{B}^* \la^{-1} \widetilde{B} \right)^{-1}u,u\right\rg
\qquad \quad (u \in \sU).
\end{equation}
\end{theorem}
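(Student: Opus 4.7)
The plan is to work through a $\la$-preferable, observable, co-isometric realization $\si=\{\a,\b,\g,\d\}$ of the solution $F$ (whose existence is guaranteed by Theorem \ref{thm:allco}), with system matrix $M$ and observability operator $\ga$. First I would establish the identity
\[
I_{\ell^2_+(\sY)} - T_F T_F^* = \ga\ga^*
\]
by a block-entry telescoping argument: expanding $(T_F T_F^*)_{n,m}=\sum_{\ell=0}^{\min(n,m)} F_{n-\ell}F_{m-\ell}^*$ and collapsing it via the three relations $\d\d^*+\g\g^*=I$, $\b\d^*+\a\g^*=0$, $\b\b^*+\a\a^*=I$ coming from $MM^*=I$. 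This is the classical identity for co-isometric realizations; it should be routine but somewhat tedious.

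With this in hand, the entropy \eqref{optme} rewrites as $\sigma_F(u)=\inf_h\|(u\oplus 0_\sX)-Lh\|^2$, where $Lh:=E_\sU^* T_F^* h\oplus \ga^* h$ maps $\ell^2_+(\sY)$ to $\sU\oplus\sX$. Computing $L^*(v\oplus x)=T_F E_\sU v+\ga x$ componentwise and using observability of $\si$ to pass from $\g\a^k(\b v+\a x)=0$ for all $k\geq 0$ to $\b v+\a x=0$, one finds $\kr L^*=\kr M$. Hence
\[
\sigma_F(u)=\|P_{\kr M}(u\oplus 0_\sX)\|^2=\|u\|^2-\|M(u\oplus 0_\sX)\|^2.
\]
For the central realization constructed in Lemma \ref{L:centreal}, where $\kr M=\sG$, this specialization yields \eqref{ent2a} at once.

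For the inequality $\sigma_F(u)\leq \sigma_{F_\circ}(u)$, the key observation is $\sF\subset (\kr M)^\perp$: splitting $x\in\sF$ as $x=a+b$ with $a\in(\kr M)^\perp$, $b\in\kr M$, the co-isometry $MM^*=I$ gives $\|Ma\|^2=\|a\|^2$, while $M|_\sF=\o$ unitary forces $\|Mx\|^2=\|x\|^2=\|a\|^2+\|b\|^2$, so $b=0$. Decomposing $(u\oplus 0_\sX)=P_\sF(u\oplus 0_{\sZ_\circ})+(P_\sG(u\oplus 0_{\sZ_\circ})\oplus 0_\sV)$, the $\sF$-part now vanishes under $P_{\kr M}$, yielding
\[
\sigma_F(u)=\|P_{\kr M}(P_\sG(u\oplus 0_{\sZ_\circ})\oplus 0_\sV)\|^2\leq \|P_\sG(u\oplus 0_{\sZ_\circ})\|^2=\sigma_{F_\circ}(u).
\]
Equality for every $u$ forces $\sG\oplus 0_\sV\subset \kr M$; reading off \eqref{M25} together with $M|_\sF=\o$ then collapses the third block row to $\b_2=0$, $\a_{21}=0$, and the first two block rows to $\o P_\sF$ on all of $\sU\oplus\sZ_\circ$. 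The transfer function therefore reduces to $\d_\circ+\l\g_\circ(I-\l\a_\circ)^{-1}\b_\circ=F_\circ$.

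Finally, when $\la\gg 0$, Lemma \ref{L:tau12} provides an isometry $\t_1:\sU\to\sU\oplus\sZ$ with $\im\t_1=\sG$, so $P_\sG=\t_1\t_1^*$ and $\t_1^*(u\oplus 0)=R_\circ u$; hence
\[
\langle P_\sG(u\oplus 0),u\oplus 0\rangle=\|R_\circ u\|^2=\langle(I+\wt{B}^*\la^{-1}\wt{B})^{-1}u,u\rangle,
\]
establishing \eqref{ent2aa}. The main technical obstacle will be the careful verification of the telescoping identity $I-T_FT_F^*=\ga\ga^*$; once that and the block-matrix unpacking in the equality case are in hand, the remaining arguments are short manipulations within the framework already built in Section \ref{sec:ONP-coiso}.
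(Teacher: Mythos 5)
Your proposal is correct and tracks the paper's overall strategy (Lemma \ref{L:RealEnt} plus Lemma \ref{L:centreal}): express $\sigma_F(u)$ as $\|P_{\kr M_\si}\tau_\sU u\|^2$ for a $\la$-preferable, observable, co-isometric realization and exploit $\sF\subset(\kr M_\si)^\perp$. There are, however, two places where you take a genuinely different route, and one small gap.

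For the entropy-kernel identity, the paper derives Lemma \ref{L:RealEnt} by viewing $F$ as the unique solution of the LTONP problem with trivial data $\{I_{\ell^2_+(\sY)},T_F,S_\sY\}$ and then applying Lemma \ref{L:centreal} to a specific canonical realization; your alternative---proving $I-T_FT_F^*=\ga\ga^*$ by the telescoping Toeplitz identity and then identifying $\kr L^*=\kr M_\si$ via observability---avoids the self-referential use of the interpolation machinery and is arguably more self-contained, at the cost of an (admittedly routine) inductive verification you should actually carry out. More notably, your uniqueness argument is genuinely shorter than the paper's. The paper, after obtaining the block structure $M_\si=\bigl[\begin{smallmatrix}\d_\circ&\g_\circ&\star\\\b_\circ&\a_\circ&\star\\0&0&\star\end{smallmatrix}\bigr]$, goes on to show $\kr M_\si=\sG$ exactly (via observability and $\sL=\{0\}$) and then performs a wandering-subspace argument to exhibit the realization as unitarily equivalent to the one from Lemma \ref{L:centreal}. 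You observe that this is unnecessary: once the third block row of the first two block columns vanishes and the top-left $2\times2$ block is $\o P_\sF$, the recursion defining the transfer function never populates the $\sV$-component (since $\b_2=0$, $\a_{21}=0$), so $F(\l)=\d_\circ+\l\g_\circ(I-\l\a_\circ)^{-1}\b_\circ=F_\circ(\l)$ follows directly, with no need to identify the residual state-space block $\a_{22}$ as a shift.

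The one gap worth flagging: to pass from ``$P_\sG(u\oplus 0_{\sZ_\circ})\oplus 0_\sV\in\kr M$ for every $u\in\sU$'' to ``$\sG\oplus\{0\}_\sV\subset\kr M$'', you need $\overline{P_\sG\sU}=\sG$, equivalently $\sU\bigvee\sF=\sU\oplus\sZ_\circ$. The paper verifies this explicitly from \eqref{defFFprime} and the fact that $\sZ_\circ=\overline{\im\la^{1/2}}$; you should do the same before invoking closedness of $\kr M$.
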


The above theorem is a more detailed version of  Theorem IV.7.1 in \cite{FFGK98} specialized for the LTONP interpolation problem. For related earlier results see \cite{GKW91} \cite{GGK2} and Section XXXV in \cite{GGK2}.

The proof of Theorem \ref{thm:maxent} is new.  It will be  given    after the next result, which characterizes the entropy function $\s_F$  of any  $F\in S(\sU, \sY)$ in terms of an observable co-isometric realization.

\begin{lemma}\label{L:RealEnt}
Let $\si=\{\a,\b,\g,\d\}$ be an observable co-isometric realization of  $F\in S(\sU,\sY)$,  and let $M_\si$ be the associated system matrix. Set $\sM_\si=\im M_\si^*$. Then
\begin{equation}\label{RealEnt}
\s_F(u)=\lg P_{\sM_\si^\perp}\tau_\sU u, \tau_\sU u\rg\quad (u\in\sU).
\end{equation}
Here $\tau_\sU$ is the embedding  operator  of $\sU$ into $\sU\oplus\sX$.\end{lemma}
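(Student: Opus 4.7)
The plan is to rewrite the infimum in \eqref{optme} as the squared distance from $\tau_\sU u\in\sU\oplus\sX$ to a suitable closed subspace, and then to identify that subspace with $\sM_\si=\im M_\si^*$. The bridge between the Toeplitz side of the entropy and the system-matrix side will be the identity
\[
\Gamma\Gamma^*=I-T_FT_F^*,
\]
where $\Gamma$ is the observability operator of $\{\g,\a\}$; equivalently $\|D_{T_F^*}h\|^2=\|\Gamma^*h\|^2$ for all $h\in\ell^2_+(\sY)$.

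To establish this identity I would introduce the auxiliary operator $\wh{M}\colon\ell^2_+(\sU)\oplus\sX\to\ell^2_+(\sY)$ given by $\wh{M}(v,x)=T_Fv+\Gamma x$, so that $\wh{M}\wh{M}^*=T_FT_F^*+\Gamma\Gamma^*$. Using $F_0=\d$, $F_n=\g\a^{n-1}\b$ for $n\geq 1$ together with the three relations $\d\d^*+\g\g^*=I_\sY$, $\d\b^*+\g\a^*=0$, and $\b\b^*+\a\a^*=I_\sX$ coming from $M_\si M_\si^*=I$, a straightforward block-entrywise telescoping argument gives $\wh{M}\wh{M}^*=I_{\ell^2_+(\sY)}$, and hence the desired identity.

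With this identity in hand, the entropy \eqref{optme} rewrites as
\[
\s_F(u)=\inf_{h\in\ell^2_+(\sY)}\Bigl(\|u-E_\sU^*T_F^*h\|^2+\|\Gamma^*h\|^2\Bigr)=\inf_{h\in\ell^2_+(\sY)}\|\tau_\sU u-Lh\|^2,
\]
where $L\colon\ell^2_+(\sY)\to\sU\oplus\sX$ is the bounded operator defined by $Lh=(E_\sU^*T_F^*h)\oplus\Gamma^*h$. The standard projection formula then yields $\s_F(u)=\lg P_{(\overline{\im L})^{\perp}}\tau_\sU u,\tau_\sU u\rg$, reducing the lemma to showing $\overline{\im L}=\sM_\si$.

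The main obstacle is this last identification. I would evaluate $L$ on the total set $\{S_\sY^nE_\sY y:n\geq 0,\,y\in\sY\}$ of $\ell^2_+(\sY)$. A direct calculation, again using $F_0=\d$, $F_n=\g\a^{n-1}\b$ and the explicit form of $\Gamma^*$, gives $L(E_\sY y)=M_\si^*(y\oplus 0)$ and $L(S_\sY^nE_\sY y)=M_\si^*\bigl(0\oplus\a^{*(n-1)}\g^* y\bigr)$ for $n\geq 1$. Hence $\im L=M_\si^*(\sY\oplus\sX_0)$, where $\sX_0$ is the linear span of $\{\a^{*n}\g^*y:n\geq 0,\,y\in\sY\}$. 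Observability of $\{\g,\a\}$ is exactly the statement $\overline{\sX_0}=\sX$, and since $M_\si$ is a co-isometry, $M_\si^*$ is an isometry, hence a topological embedding of $\sY\oplus\sX$ into $\sU\oplus\sX$. Therefore
\[
\overline{\im L}=M_\si^*(\sY\oplus\sX)=\im M_\si^*=\sM_\si,
\]
which combined with the earlier reformulation gives \eqref{RealEnt}.
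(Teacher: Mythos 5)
Your proof is correct, and it takes a genuinely different route from the paper's. The paper first observes that the expression $\lg P_{\sM_\si^\perp}\tau_\sU u,\tau_\sU u\rg$ is invariant under unitary equivalence of realizations (by Theorem \ref{thm:co}) and therefore may be evaluated for one particular $\sigma$; it then takes the realization supplied by Lemma \ref{L:centreal} for the trivial data set $\{I_{\ell^2_+(\sY)},T_F,S_\sY\}$, where $\sM_\si^\perp=\ker M_\si$ is identified with the subspace $\wh\sG$, and a direct appeal to the projection theorem closes the argument. You instead keep $\sigma$ arbitrary and verify the formula by hand: your key ingredients are the identity $\Gamma\Gamma^*=I-T_FT_F^*$ (valid for every co-isometric realization and provable by the telescoping you describe, or alternatively by noting that $M_\si M_\si^*=I$ makes $[\,T_F\ \ \Gamma\,]$ a co-isometry), the reformulation of $\s_F(u)$ as the squared distance from $\tau_\sU u$ to $\overline{\im L}$ where $Lh=(E_\sU^*T_F^*h)\oplus\Gamma^*h$, and the computation on the total set $\{S_\sY^nE_\sY y\}$ showing $\overline{\im L}=M_\si^*(\sY\oplus\overline{\sX_0})$; here observability gives $\overline{\sX_0}=\sX$, and co-isometry of $M_\si$ makes $M_\si^*$ an isometry with closed range, so that this closure is exactly $\sM_\si$. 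The paper's route is shorter given the machinery of Lemma \ref{L:centreal}, while yours is more self-contained and makes transparent that the subspace $\sM_\si$ is the intrinsic object: it exposes the defect identity $\Gamma\Gamma^*=I-T_FT_F^*$ as the link between the Toeplitz picture and the system-matrix picture, which the paper leaves implicit (it shows $\Gamma=D_{T^*}$ only for the specific construction in Subsection \ref{Assec:co-iso}).
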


\begin{proof}[\bf Proof]
Fix  $F\in\sS(\sU,\sY)$, and let $\si=\{\a,\b,\g,\d\}$ be an observable  co-isometric realization of $F$ with system matrix $M_\si$,  and put $\sM= \im M_\si^*$ where $M_\si $ is given by \eqref{Msi}. Since $M_\si$ is a co-isometry, the  range of  $M_\si^*$ is closed.  Thus $\sM$ is a subspace of $\sU\oplus \sX$.  We set
\begin{equation}\label{def:rhoF}
\rho_F(u)=\lg P_{\sM_\si^\perp}\tau_\sU u, \tau_\sU u\rg\quad (u\in\sU).
\end{equation}
We have to prove $\s_F=\rho_F$.  Since all observable  co-isometric realizations of $F$ are unitarily equivalent, see Theorem \ref{thm:co}, the definition of $\rho_F$ is independent of the choice of  the  observable co-isometric realization of $F$. Hence it suffices to show $\s_F=\rho_F$ for a particular choice of $\si$.

Observe  that $F$ is   a solution to the LTONP interpolation problem with data set $\{I_{\ell^2_+(\sY)},T_F,S_\sY\}$. Indeed, with
\[
W=I_{\ell^2_+(\sY)}, \quad \wt{W}=T_F, \quad  Z=S_\sY
\]
the identities \eqref{data1} and \eqref{defNP} are automatically fulfilled. Moreover, in this case  $F$ is the unique solution, and hence $F$ is the central solution associated with the  data set $\{I_{\ell^2_+(\sY)},T_F,S_\sY\}$. But then we can apply Lemma \ref{L:centreal} to obtain a special  observable co-isometric realization of $F$.  To do this let us  denote the  subspaces $\sF$ in \eqref{defFFprime} and $\sG$ in \eqref{defGGprime} associated with our data set $\{I_{\ell^2_+(\sY)},T_F,S_\sY\}$ by $\what{\sF}$ and $\what{\sG}$, respectively. In this case the associate Pick operator $\wh{\la}$ is given by $\what{\la}=I-T_F T_F^*=D_{T_F^*}^2$. Note that $\what{\sF}$ is given by
\begin{equation}\label{Fhat}
\what{\sF}=\overline{\im\mat{c}{E_\sU^* T_F^*\\ D_{T_F^*}}}.
\end{equation}
Now let $\wh{\si}$ be the observable co-isometric realization obtained by applying Lemma \ref{L:centreal}. Then \eqref{sys-max2} tells us that
$
(M_{\wh{\si}}^*)^\perp=\kr M_{\wh{\si}}= \wh{\sG}.
$
Thus $\rho_F(u)=\lg P_{\wh{\sG}}\t_u, \t_u \rg$. Using \eqref{Fhat} and the projection theorem we then obtain  for each $u\in\sU$ that
\begin{align*}
\rho_F(u)
&=\lg P_{\what{\sG}}\tau_\sU u, \tau_\sU u\rg
=\inf\left\{\|\tau_\sU u -f\| \mid  f\in\what{\sF} \right\}\\
&=\inf\left\{\left\|\mat{c}{u\\0} -\mat{c}{E_\sU^* T_F^*\\ D_{T_F^*}}h\right\| \mid h\in\ell^2_+(\sY) \right\}\\
&=\inf\left\{\|u - E_\sU^*T_F^* h\|^2 + \lg\left(I - T_F T_F^*\right)h,h\rg
\mid h\in \ell_+^2(\mathcal{Y})\right\}=\s_F(u).
\end{align*}
Thus we proved $\s_F=\rho_F$ for a particular choice of $\si$, which  completes the proof.
\end{proof}

\begin{remark} Note that the formula for $\s_F$ in \eqref{def:rhoF} can be rewritten directly in terms of the system matrix $M_\si$ as
\[
\s_F(u)=\inf  \left\{\left\|  \begin{bmatrix}u\\0 \end{bmatrix} - M_\si^* h\right\|^2\mid
 h \in \mathcal{Y} \oplus \mathcal{X} \right\} \qquad  (u\in \sU).
\]
\end{remark}

\begin{proof}[\bf Proof of Theorem \ref{thm:maxent}]
We shall prove Theorem \ref{thm:maxent} using the formula for $\s_F$ given in Lemma \ref{L:RealEnt}.

First we derive the formula \eqref{optme11} for the central solution. From the proof of Lemma \ref{L:RealEnt}, using Lemma \ref{L:centreal}, we know that
\[
\s_{F_\circ}(u)= \lg P_\sG \t_\sU u, \t_\sU u\rg=
\lg P_\sG\begin{bmatrix}u\\0\end{bmatrix}, \begin{bmatrix}u\\0\end{bmatrix}\rg, \quad u\in \sU,
\]
which yields  \eqref{optme11}.

Let $F\in\sS(\sU,\sY)$ be a solution to the LTONP interpolation problem with data $\{W,\wtil{W},Z\}$, and let $\si=\{\a,\b,\g,\d\}$ be a $\la$-preferable, observable, co-isometric realization of $F$. Then $\s_F$ is given by \eqref{RealEnt} with $\sM_\si^\perp=\kr M_\si$, the null space  of the system matrix $M_\si$.
The fact that $\si$ is $\la$-preferable implies that
$M_\si^* |\mathcal{F}^\prime = \omega^*$.
Hence $\sF=\im \o^*\subset \im M_\si^*$, so that
$\sM^\perp\subset \sF^\perp=\sG\oplus\sV$ with $\sV=\sX\ominus \sZ_\circ$.
Hence $P_{\sM^\perp}\leq P_{\sG\oplus\sV}$.
Since $\sU\perp \sV$, both seen as subspaces of $\sU\oplus\sX$, we have
\begin{align*}
\s_F(u)
&=\left\lg P_{\sM_\si^\perp}\tau_\sU u, \tau_\sU u\right\rg
\leq \left\lg P_{\sG\oplus\sV}\tau_\sU u ,\tau_\sU u\right\rg=\\
&\hspace{2cm}=\left\lg P_{\sG}\tau_\sU u, \tau_\sU u\right\rg=\s_{F_\circ}(u)\quad (u\in\sU).
\end{align*}
Hence the entropy $\s_{F_\circ}(u)$  of   the central solution $F_\circ$ is maximal among all solutions to the LTONP interpolation problem for the data set $\{W,\wtil{W},Z\}$.

Next we show that $F_\circ$ is the unique solution to the LTONP interpolation problem for the data set $\{W,\wtil{W},Z\}$ that maximizes the entropy. Hence, assume that the entropy of the solution $F$ is maximal, that is, $\s_F(u)=\lg P_\mathcal{G}  \t_\sU u,\t_\sU u  \rg$ for each $u\in\sU$. Then
\begin{align*}
\|P_{\sM_\si^\perp} \tau_\sU u\|^2&=\lg P_{\sM_\si^\perp} \tau_\sU u,\tau_\sU u\rg = \s_F(u)=\\
&\hspace{1cm}=\lg P_\mathcal{G} \tau_\sU u,\tau_\sU u\rg=
\|P_\mathcal{G} \tau_\sU u\|^2\quad (u\in\sU).
\end{align*}
We will first show that $\kr M_\si=\sM_\si^\perp=\sG$.
Observe that for $u$ in $\mathcal{U}$ we have
\[
\|P_\mathcal{F} \tau_\sU u\|^2 = \|u\| - \|P_\mathcal{G} \tau_\sU u\|^2 =
 \|u\| - \|P_{\sM_\si^\perp} \tau_\sU u\|^2 = \|P_{\sM_\si} \tau_\sU u\|^2.
\]
Because $M_\si | \mathcal{F} = \omega$, it follows that $\mathcal{F}$ is a subspace of $\im M_\si^* =\sM_\si$.
This yields
\[
\|P_{\mathcal{L}^\perp} \tau_\sU u\|^2=
\|P_\mathcal{F} \tau_\sU u\|^2+\|P_{\sL^\perp \ominus \sF} \tau_\sU u\|^2.
\]
Thus $P_{\sL^\perp \ominus \sF} \tau_\sU u=0$.
Hence $P_\mathcal{F} \tau_\sU u = P_{\mathcal{L}^\perp} \tau_\sU u$
holds for all $u\in\sU$. Then
\[
P_\mathcal{G} \tau_\sU u = \tau_\sU u - P_\mathcal{F} \tau_\sU u=
 \tau_\sU u - P_{\sM_\si}  \tau_\sU u
= P_{\sM_\si^\perp} \tau_\sU u \qquad (u\in\sU).
\]
In what follows the symbol  $\sH\bigvee\sK$ stands  for closed linear hull of the spaces $\sH$ and $\sK$.  By consulting \eqref{defFFprime} and noting that $\sZ_\circ$ is the closure of ${\im \la^\half}$, we see that
\[
\mathcal{U} \bigvee \mathcal{F}=\mat{c}{\sU\\ 0}\bigvee \mat{c}{\wtil{B}^*\\\la^\half}\sZ =\sU\oplus\sZ_\circ.
\]
Hence $\mathcal{F} \oplus \mathcal{G} = \mathcal{U} \oplus \mathcal{Z}_\circ
 = \mathcal{U} \bigvee \mathcal{F}$ and we obtain that
\[
\mathcal{G} = P_\mathcal{G}(\mathcal{F}\oplus  \mathcal{G})
= \overline{P_\mathcal{G} \mathcal{U}} = \overline{P_{\sM_\si^\perp} \mathcal{U}}
\subset \sM_\si^\perp.
\]
Therefore $\mathcal{G}$ is a subset of $\sM_\si^\perp$.
Set $\sV=\sX\ominus \sZ_\circ$, with $\sX$ being
the state space of $\si$. Write $\sM_\si^\perp=\sG\oplus\sL$.
Since $\sF\perp \sM_\si^\perp$, we have
\[
\sL\subset (\sU\oplus\sX)\ominus (\sF\oplus\sG)=
(\sU\oplus\sX)\ominus (\sU\oplus\sZ_\circ)=\sV.\]
Because  $\sG\subset \sM_\si^\perp=\kr M_\si$, we have
\[
M_\si|\left(\sU\oplus\sZ_\circ\right)
=M_{\si}|\left( \sF\oplus\sG\right) = \omega P_{\sF}.
\]
Therefore, $M_\si$ has a block operator decomposition of the form
\[
M_\si=\mat{cc}{\d & \g\\ \b & \a}=\mat{c|cc}{\d_\circ &\g_\circ& M_1\\ \hline \b_\circ & \a_\circ& M_2\\ 0&0&M_3} :\mat{c}{\sU\\ \sZ_\circ\\ \sV}\to\mat{c}{\sY\\ \sZ_\circ\\ \sV}
\]
where $\{\alpha_\circ, \beta_\circ, \gamma_\circ, \delta_\circ\}$ form
the system   matrix for $\omega P_\sF$; see \eqref{sys-max}.
Let $x\in\sL\subset\sV$. We have $M_\si x=0$, and thus,
$M_{j}x=0$ for $j=1,2,3$. But then $\a x=0$ and $\g x=0$.
Hence $\g \a^k x=0$ for each $k$. The fact that $\si$
is an observable realization then implies that $x=0$.
Thus $\sL=\{0\}$ and we obtain that $\kr M_\si=\sM_\si^\perp=\sG$.

Using the fact that $M_\si^*$ is an isometry with $M_\si^*| \mathcal{F}^\prime = \omega^*$ and
$\mathcal{G} = \sM_\si^\perp=\kr M_\si$, we see that $M_\si^*$ admits
a matrix decomposition of the form
\[
M_\si^* = \begin{bmatrix}
        \omega^*P_{\mathcal{F}^\prime}                       & 0 \\
         P_{\mathcal{G}^\prime}   & U_+ \\
      \end{bmatrix}: \begin{bmatrix}
     \mathcal{Y} \oplus \mathcal{Z}_\circ   \\
        \mathcal{V} \\
      \end{bmatrix} \rightarrow
      \begin{bmatrix}
     \mathcal{U} \oplus \mathcal{Z}_\circ   \\
        \mathcal{V} \\
      \end{bmatrix}.
\]
Because $M_\si^*|\left(\mathcal{Y} \oplus \mathcal{Z}_\circ\right)$ is an isometry,
without loss of generality we can assume that the lower
left hand corner of $M_\si^*$ is given by
$P_{\mathcal{G}^\prime}$.
Moreover, $U_+$ is an isometry on $\mathcal{V}$.
Since $M_\si^*$ is an isometry and $\sG = \kr M_\si$,
we have
\begin{equation}\label{wander}
\mathcal{V}  =
\mathcal{G}^\prime
\oplus \im(U_+).
\end{equation}
In particular, $\mathcal{G}^\prime$ is a wandering subspace for
the isometry $U_+$ and we have
$\oplus_{n=0}^\infty  U_+^n \mathcal{G}^\prime \subset \mathcal{V}$.
Because the systems matrix $M_\si$ is observable,
$\mathcal{Z}_\circ \oplus \mathcal{V} =
\bigvee_{n=0}^\infty  \alpha^{*n} \gamma^* \mathcal{Y}$.  Observe that $\alpha^*$ admits a lower triangular matrix decomposition of the form:
\[
\alpha^* = \begin{bmatrix}
        \star                                                & 0 \\
        P_{\mathcal{G}^\prime} & U_+ \\
      \end{bmatrix} \mbox{ on }
      \begin{bmatrix}
        \mathcal{Z}_\circ \\
         \mathcal{V} \\
      \end{bmatrix}.
\]
Furthermore,   $\gamma^* \mathcal{Y}$ is a subset  of
$\mathcal{Z}_\circ \oplus \mathcal{G}^\prime$.
For $y$ in $\mathcal{Y}$, we have
\[
\alpha^{*n} \gamma^* y = \begin{bmatrix}
                           \star  \\
                            \sum_{k=0}^{n-2} U_+^k P_{\mathcal{G}^\prime} \star +
                            U_+^{n-1} P_{\mathcal{G}^\prime} \gamma^* y \\
                         \end{bmatrix}.
\]
The observability condition implies that
$\mathcal{V} = \oplus_{n=0}^\infty U_+^n  \mathcal{G}^\prime$. Therefore $U_+$ can be
viewed as the unilateral shift $S_{\mathcal{G}^\prime}$.
In other words, the realization $\si$ of $F$ is unitarily equivalent to the realization of the central solution obtained in Lemma \ref{L:centreal}. Hence $F=F_\circ$. So the maximal solution is unique.

To conclude the proof it remains to show that \eqref{ent2aa} holds. Assume that $\Lambda$ is strictly positive.
Recall that the operator $\tau_1$ in  \eqref{tau12}
is an isometry from $\sU$ into $\sU\oplus \sZ$ whose range equals
$\sG$. Hence $\tau_1 \tau_1^* = P_\sG$ is the orthogonal
projection onto $\sG$. In other words,
\begin{equation}\label{P_F}
P_\mathcal{G} = \tau_1 \tau_1^* = \begin{bmatrix}
                  I\\
                  - \Lambda^{-\frac{1}{2}} \widetilde{B} \\
                \end{bmatrix} R_\circ^2 \begin{bmatrix}
                 I &
                - \widetilde{B} \Lambda^{-\frac{1}{2}} \\
                \end{bmatrix},
\hspace{.1cm}  \mbox{where}\hspace{.15cm} R_\circ^2 = (I+\widetilde{B}^* \Lambda\widetilde{B})^{-1}.
\end{equation}
So for $u$ in $\sU$, we have
\begin{align*}
\s_{F_\circ}(u) &= \lg P_\sG \t_\sU u,  \t_\sU u\rg =
\lg\tau_1 \tau_1^* (u \oplus 0), (u \oplus 0)\rg =
\lg R_\circ^2 u,u\rg.
\end{align*}
In other words, \eqref{ent2aa} holds.
\end{proof}

\begin{remark}\label{centralcostf}
\textup{Consider the LTONP interpolation problem with data
$\{W,\widetilde{W},Z\}$. Moreover,  assume that $\Lambda$ is strictly positive and $Z$ is exponentially stable . Let $F_\circ$ be the central solution.
Then, by Proposition \ref{prop:stable}, the operator $T_{F_\circ}$ is a strict contraction, and thus \eqref{optme11} holds with  $T_{F_\circ}$  in place of $T_F$. Using   \eqref{specfact} in Proposition \ref{prop:stable} we see that
\begin{align*}
\s_{F_\circ}(u)&=\left\lg (E_\sU^*T_{\Upsilon_{22}^{-1}}^* T_{\Upsilon_{22}^{-1}}E_\sU^{-1})^{-1}u, u \right\rg\\
&=\left\lg(\Upsilon_{22}^{-1}(0)^*\Upsilon_{22}^{-1}(0)u,u \right\rg=\|\Upsilon_{22}^{-1}(0)u \|^2, \quad u\in \BD
\end{align*}
On the other hand, according to \eqref{ent2aa}, we have
\[
\s_{F_\circ}(u)= \left\lg(I_{\sU} + \widetilde{B}^*\Lambda^{-1}\widetilde{B})^{-1}u,u\right\rg, \quad u\in \BD.
\]
Hence
\begin{equation*}
s_{F_\circ}(u) = \|\Upsilon_{22}(0)^{-1}u\|^2 = \left\lg(I_{\sU} + \widetilde{B}^*\Lambda^{-1}\widetilde{B})^{-1}u,u\right\rg, \quad u\in \BD.\end{equation*}
If  $\sU$ is finite dimensional, then the later identity can be rewritten as
\begin{equation*}
 \det [(I_{\sU} + \widetilde{B}^*\Lambda^{-1}\widetilde{B})^{-1}] =
\exp{\left({\frac{1}{2\pi}\int_0^{2\pi} \ln \det[I-F_\circ(e^{i\theta})^*F_\circ(e^{i\theta})] d \theta}\right)}.
\end{equation*}
For more details, in particular concerning the connections with spectral factorization,  we refer to Subsection \ref{ssec:multdiag}.}
\end{remark}


\setcounter{equation}{0}
\section{Commutant lifting  as  LTONP interpolation}\label{CL-ONP}
In the second paragraph after Proposition  \ref{prop:propsUps} we have seen that in the strictly positive case the LTONP interpolation problem is a commutant lifting problem.  In this section we go in the reverse direction.  We consider a large  subclass of  commutant lifting problems, and we show that  this class of  problems is  equivalent to the class of  LTONP interpolation problems. This equivalence will allow us to reformulate Theorem \ref{thm:main1a} as a theorem describing all solutions of a suboptimal commutant lifting problem (see Theorem \ref{thm:main2} below).

 Our starting point is  the quadruple $\{A_\circ, S_\sU, T^\prime, S_\sY\}$ as the given commutant  lifting data set.  Here   $A_\circ$ is an operator mapping  $\ell_+^2(\sU)$ into $\sH^\prime$, where  $\sH^\prime$ is an invariant subspace for $S_\mathcal{Y}^*$. In particular, $\sH^\prime$ is a subspace of $\ell_+^2(\sY)$, and $ \ell_+^2(\sY)\ominus \sH^\prime $ is invariant under  $S_\sY$.   Furthermore,    $T^\prime$ is the compression of $S_\sY$ to $\sH^\prime$, that is,  $T^\prime=\Pi_{\sH^\prime}S_\sY \Pi_{\sH^\prime}^*$, where $\Pi_{\sH^\prime}$ is the orthogonal projection of $\ell_+^2(\mathcal{Y})$ onto $\sH^\prime$. The data set satisfies  the intertwining relation $A_\circ S_\sU=T^\prime A_\circ$. Note that  we do not assume the minimality condition $\bigvee_{n\geq 0}S_\sY^n \sH^\prime=\ell_+^2(\sY)$, which often plays a simplifying role in proofs.

Given the lifting data set $\{A_\circ, S_\sU, T^\prime, S_\sY\}$, the commutant  lifting problem is to find all $F\in \sS(\sU, \sY)$ such that
\[
T_F= \begin{bmatrix}A_\circ\\ \star \end{bmatrix}: \ell_+^2(\sU)\to \begin{bmatrix}  \sH^\prime\\ \ell_+^2(\sY)\ominus \sH^\prime \end{bmatrix}.
\]
If the problem is solvable, then necessarily $A_\circ$ is a contraction.

 To reformulate this commutant lifting problem as a LTONP interpolation problem, put
\begin{equation}\label{CL-ONP1}
\sZ=\sH^\prime, \quad Z = T^\prime, \quad
W = \Pi_\sZ: \ell_+^2( \mathcal{Y})\to\sZ,
\quad
\wt{W}  = A_\circ: \ell_+^2( \mathcal{U})\to \sZ.
\end{equation}
Here  $\Pi_\sZ$ is the orthogonal projection of $\ell_+^2(\mathcal{Y})$ onto    $\sZ=\sH^\prime$.   With $W$, $\wt{W}$ and $Z$  given by \eqref{CL-ONP1}  it is straightforward to check that $ZW=WS_\sY$  and $Z\wt{W}=\wt{W}S_\sU$. Thus the conditions in \eqref{data1} are satisfied. Moreover, the solutions to the LTONP interpolation problem with this data set with data $\{W,W,Z\}$ are precisely the solutions to the commutant lifting problem with data set with data $\{W,W,Z\}$; see the second paragraph after Proposition \ref{prop:propsUps}.
Since $S_\sY^*$ is pointwise stable, it is also clear that $Z^*$ is pointwise stable.  Note that in this case
\begin{align}
&P =\Pi_\sZ \Pi_\sZ^*= I_\sZ, \quad \wt{P}= A_\circ A_\circ^*  \ands \la = P - \wt{P} = I - A_\circ A_\circ^* ,\label{CL-ONP2a}\\
&\hspace{2.6cm}B = \Pi_\sZ E_\sY  \ands \wt{B} = A_\circ E_\sU.\label{CL-ONP2b}
\end{align}
So the commutant lifting problem with data $\{A_\circ, S_\sU, T^\prime, S_\sY\}$ is solvable if and only if $\Lambda$ is positive, or equivalently, $A_\circ$ is a contraction. Finally, it is noted that one can use Theorem \ref{thm:allsol1}
to find all solutions to this commutant lifting problem when $\|A_\circ\| \leq 1$.

Notice that $\kr W=\ell_+^2(\sY)\ominus \sH^\prime$. By  the Beurling-Lax-Halmos theorem  there exists an inner function $\tht\in \sS(\sE, \sY)$ such that  $\ell_+^2(\sY)\ominus \sH^\prime =\kr W=\im T_\tht$, which allows us to define:
\begin{equation}
\label{defCD2}
C=E_\sE^* T_\tht^* S_\sY  \Pi_\sZ^*  : \sZ\to \sE \ands  D=\tht(0)^*:\sY\to \sE.
\end{equation}
Note $C$ and $D$ defined above  are precisely equal to the operators $C$ and $D$  defined by \eqref{defCD} provided the data set $\{W,\wt{W},  Z \}$ is the one defined by the commutant lifting setting  \eqref{CL-ONP1}. It follows that the operators $C$ and $D$ in \eqref{defCD2} is an admissible pair of complementary operators determined  by  the data set  $\{W,\wt{W}, Z\}$  defined by   \eqref{CL-ONP1}.

Using the above connections we can apply Theorem \ref{thm:main1a} to obtain the following theorem which describes all  solutions of the  commutant lifting problem with data $\{A_\circ, S_\sU, T^\prime,  S_\sY\}$ for the case when the operator $A_\circ$ is a strict contraction. Note that in this case the operator $A$ defined by \eqref{defA} is equal to the operator
\begin{equation}\label{defA-CL}
A=\Pi_{\sH^\prime}^* A_\circ=\Pi_{\sZ}^*A_\circ: \ell_+^2(\sU)\to \ell_+^2(\sY).
\end{equation}
Hence using  $\Pi_{\sZ}\Pi_{\sZ}^* =I_\sZ$, we also have $\Pi_{\sZ} A = A_\circ$.

\begin{theorem}\label{thm:main2} Let $\{A_\circ, S_\sU, T^\prime, S_\sU\}$  be a commutant lifting data set. Assume $A_\circ$ is a strict contraction.  Then all solutions $F$ to  the commutant lifting problem for the  data set   $\{A_\circ, S_\sU, T^\prime, S_\sU\}$  are given by
\begin{equation}\label{allsol2a}
F(\l)  = \big(\Upsilon_{11}(\l)X(\l)+\Upsilon_{12}(\l) \big)\big(\Upsilon_{21}(\l)   X(\l)+\Upsilon_{22}(\l)   \big)^{-1}, \quad \l \in \BD,
\end{equation}
where the free parameter $X$ is an arbitrary Schur class function,  $X\in \sS(\sU, \sE)$, and the coefficients $\Upsilon_{i,j}$, $1\leq i, j\leq 2$, are the analytic functions on $\BD$ defined by
\begin{align}
\Upsilon_{11}(\l)  &= D^*Q_\circ + \lambda  E_{\sY}^*
(I - \l S_{\sY}^*)^{-1} \Pi_{\sH^\prime}^*(I - A_\circ A_\circ^*)^{-1}  C^* Q_\circ,\label{Upsilon11b}\\
\Upsilon_{12}(\l)   &=E_{\sY}^*\big(I - \l  S_{\sY}^*\big)^{-1} \Pi_{\sH^\prime}^*A _0^* (I - A_\circ^*A_\circ)^{-1}E_{\sU}R_\circ,\label{Upsilon12b}\\
\Upsilon_{21}(\l)    &=   \lambda E_{\sU}^*
(I  -\l  S_{\sU}^*)^{-1} A_\circ^* (I - A_\circ A_\circ^*)^{-1}  C^*Q_\circ, \label{Upsilon21b}\\
\Upsilon_{22}(\l)  &=
E_{\sU}^*\big(I - \l  S_{\sU}^*\big)^{-1} (I - A_\circ^*A_\circ)^{-1}E_{\sU}R_\circ. \label{Upsilon22b}
\end{align}
Here  $C$ and $D$ are the operators  defined by \eqref{defCD2}, and
\begin{align}
Q_\circ  &=   \Big(I_\sE+CA_\circ \big(I-A_\circ^*  A_\circ\big)^{-1}A_\circ^* C^* \Big)^{-\frac{1}{2}}, \label{CLTQ0}\\
R_\circ & = \Big(E_{\sU}^*\big(I - A_\circ^*A_\circ\big)^{-1}E_{\sU}\Big)^{-\frac{1}{2}},\label{CLTR0}
\end{align}
and these operators are strictly positive.
\end{theorem}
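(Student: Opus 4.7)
The plan is to obtain Theorem \ref{thm:main2} as a direct corollary of Theorem \ref{thm:main1a} applied to the LTONP data set $\{W,\wt{W},Z\}=\{\Pi_{\sH'},A_\circ,T'\}$ defined in \eqref{CL-ONP1}. The two reductions needed for this have essentially been carried out already in the paragraphs preceding the statement: the intertwining relations \eqref{data1} are verified there, and the solution sets of the two problems are seen to coincide. Moreover, the Pick operator is computed as $\la = I_\sZ - A_\circ A_\circ^*$, so the hypothesis that $A_\circ$ is a strict contraction is exactly the hypothesis $\la \gg 0$ of Theorem \ref{thm:main1a}. The pair $\{C,D\}$ defined in \eqref{defCD2} is admissible by the general recipe \eqref{defCD} together with the Beurling--Lax--Halmos factorization $\kr W = \ell_+^2(\sY)\ominus \sH' = \im T_\theta$; strict positivity of $Q_\circ$ and $R_\circ$ is inherited from Theorem \ref{thm:main1a}.

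The substance of the proof is then a direct calculation that reduces the general formulas \eqref{Upsilon11}--\eqref{Upsilon22} and \eqref{Q0}--\eqref{R0} to their present form. The key simplifications come from $P = WW^* = \Pi_\sZ \Pi_\sZ^* = I_\sZ$ and from the operator $A = W^* P^{-1} \wt{W}$ in \eqref{defA} reducing to $A = \Pi_\sZ^* A_\circ$, as already noted in \eqref{defA-CL}. One then immediately computes $A^*A = A_\circ^* \Pi_\sZ \Pi_\sZ^* A_\circ = A_\circ^* A_\circ$ and $WA = \Pi_\sZ \Pi_\sZ^* A_\circ = A_\circ$, so that the right-hand sides of \eqref{Q0} and \eqref{R0} collapse to \eqref{CLTQ0} and \eqref{CLTR0}, and the $E_\sU$-piece in $\Upsilon_{22}$ gives \eqref{Upsilon22b} without further work.

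For the three remaining coefficients one uses the identity $(I-AA^*)^{-1}\Pi_\sZ^* = \Pi_\sZ^*(I - A_\circ A_\circ^*)^{-1}$, which is the only slightly non-mechanical step. Because $\im A \subset \Pi_\sZ^*\sZ = \sH'$ and $A^*$ vanishes on $\sH'^\perp$, the operator $I - AA^*$ is block-diagonal with respect to the orthogonal decomposition $\ell_+^2(\sY) = \sH'\oplus \sH'^\perp$, with blocks $I - A_\circ A_\circ^*$ on $\sH'$ and $I$ on $\sH'^\perp$; inversion and composition with $\Pi_\sZ^*$ then yield the claimed identity. Combining this with $W^*C^* = \Pi_\sZ^* C^*$ turns \eqref{Upsilon11} into \eqref{Upsilon11b}; the same identity (together with $\Pi_\sZ\Pi_\sZ^*= I_\sZ$) converts $A(I-A^*A)^{-1} = \Pi_\sZ^* A_\circ (I - A_\circ^*A_\circ)^{-1}$ and $A^*(I - AA^*)^{-1}W^* = A_\circ^*(I - A_\circ A_\circ^*)^{-1}$, producing \eqref{Upsilon12b} and \eqref{Upsilon21b}.

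No step presents a genuine obstacle; the proof is essentially bookkeeping translating the LTONP description into commutant-lifting language. The only point that requires more than arithmetic is the block-diagonal structure of $I-AA^*$ just described, which underlies every conversion between $(I-AA^*)^{-1}$ on $\ell_+^2(\sY)$ and $(I-A_\circ A_\circ^*)^{-1}$ on $\sZ=\sH'$.
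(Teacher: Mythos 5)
Your proof is correct and follows essentially the same route as the paper: both derive Theorem \ref{thm:main2} as a direct corollary of Theorem \ref{thm:main1a} by substituting $P=I_\sZ$, $W^*=\Pi_{\sH'}^*$ and $A=\Pi_{\sH'}^*A_\circ$ and reducing the resulting operator expressions. The one small addition you make --- the explicit block-diagonal justification of $(I-AA^*)^{-1}\Pi_{\sH'}^*=\Pi_{\sH'}^*(I-A_\circ A_\circ^*)^{-1}$ with respect to $\ell_+^2(\sY)=\sH'\oplus(\sH')^\perp$ --- is a step the paper states without elaboration, so your version is if anything slightly more complete.
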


\begin{proof}[\bf Proof]
The above theorem is a direct corollary of  Theorems \ref{thm:main1}  and \ref{thm:main1a}.   Indeed,  in the present setting  $A = \Pi_\sZ^* A_\circ$  and $\Pi_\sZ A =  A_\circ$ while  the operator $W^*=\Pi_\sZ^*=\Pi_{\sH^\prime}^*$. This   implies that
\[
(I-A^*A)^{-1}= (I-A_\circ^*A_\circ)^{-1}, \quad WA(I-A^*A)^{-1}A^* W^*
=A_\circ(I-A_\circ^*A_\circ)^{-1}A_\circ^*.
\]
It follows that in this case the operators $Q_\circ$ and $R_\circ$ in Theorem \ref{thm:main1a} are given by \eqref{CLTQ0} and \eqref{CLTR0},  respectively.   Furthermore,
\begin{align*}
&A(I-A^*A)^{-1}=\Pi_{\sH^\prime}^* A_\circ (I-A_\circ^*A_\circ)^{-1}, \\[.1cm]
&(I-AA^*)^{-1}W^*=(I-AA^*)^{-1} \Pi_{\sH^\prime}^* = \Pi_{\sH^\prime}^*  (I-A_\circ A_\circ^*)^{-1}, \\[.1cm]
&A^*(I-AA^*)^{-1}W^*= A_\circ^*\Pi_{\sH^\prime}\Pi_{\sH^\prime}^*
 (I-A_\circ A_\circ^*)^{-1}=A_\circ^*  (I-A_\circ A_\circ^*)^{-1}.
\end{align*}
The latter  identities  show that in this case the formulas for the function $\Upsilon_{ij}$, $1\leq i, j \leq 2$, in Theorem \ref{thm:main1a} can be rewritten as in \eqref{Upsilon11b} -- \eqref{Upsilon22b}, which completes the proof.
\end{proof}


 \setcounter{equation}{0}
\section{The Leech problem revisited}\label{sec:Leech}

In this section we discuss the Leech problem and show how it appears as a special case of our LTONP interpolation  problem. We will also show that our first main result, Theorem \ref{thm:main1}, after some minor computations, provides the `infinite dimensional state space' characterization of the solutions to the Leech problem given in Theorem 3.1 in \cite{FtHK15}, without any `minimality' condition.
It is noted that  in \cite{FtHK15} these formulas are used  to derive
algorithms in the rational case.  The paper by R.B. Leech \cite{L14} where this problem originated from was eventually published in 2014; see \cite{KR14} for some background on the history of this paper.

The data set for the Leech problem consists of two functions $G\in H^\infty(\sY,\sV)$ and $K\in H^\infty(\sU,\sV)$, for Hilbert spaces $\sU$, $\sY$ and $\sV$, and the aim is to find Schur class functions $F\in \sS(\sU,\sY)$ such that $GF=K$. In terms of Toeplitz operators, we seek $F\in \sS(\sU,\sY)$ such that $T_GT_F=T_K$. To convert the  Leech problem to a LTONP interpolation problem,  set $\sZ=\ell^2_+(\sV)$ and define
\begin{equation}\label{WwtWZ-Leech}
W=T_G:\ell^2_+(\sY)\to\sZ,\quad \wt{W}=T_K:\ell^2(\sU)\to\sZ,\quad Z=S_\sV:\sZ\to\sZ.
\end{equation}
In this setting,
\begin{equation}\label{LeechBB}
P = T_G T_G^* \ands \widetilde{P} = T_K T_K^*.
\end{equation}
Since $T_G$ and $T_K$ are analytic Toeplitz operators they intertwine the unilateral forward shifts on the appropriate $\ell^2_+$-spaces. This shows that the triple $\{W,\wt{W},Z\}$ satisfies the conditions of being a LTONP
data set; see \eqref{data1}.  Moreover, the solutions to the LTONP interpolation problem associated with the data set $\{W,\wt{W},Z\}$  coincide with the solutions to the Leech problem for the functions $G$ and $K$. Furthermore, note that $Z^*=S_\sV^*$ is pointwise stable, but does not  have spectral radius less than one, as required in Section 1.4 of \cite{FFGK98}. The solution criterion $WW^*-\wt{W}\wt{W}^*\geq 0$ from the LTONP interpolation problem translates to the known solution criterion for the Leech problem, namely $T_GT_G^*-T_KT_K^*\geq0$.

Note that in this setting $B = T_G E_\sY$ and $\widetilde{B} = T_K E_\sU$. On can use Theorem \ref{thm:allsol1} to find a parametrization  of all solutions to the Leech problem when $\Lambda =T_GT_G^*-T_KT_K^*\geq0$.   From Theorem \ref{thm:main1}, we now obtain the following characterization of the solutions to the Leech problem under the condition that $\Lambda =T_GT_G^*-T_KT_K^*$  is strictly positive.

\begin{theorem}\label{thm:Leech}
Let $G\in H^\infty(\sY,\sV)$ and $K\in H^\infty(\sU,\sV)$,  and assume that $T_GT_G^*-T_KT_K^*$ is strictly positive. Let  $\Theta\in \sS(\sE,\sY)$,  for some Hilbert space $\sE$, be the inner function such that $\im T_\Theta=\kr T_G$. Then the solutions $F$ to the Leech problem associated with $G$ and $K$ are given by
\begin{equation}\label{solsLeech}
F(\lambda)= \Big(\Upsilon_{11}(\l)  X(\l)+\Upsilon_{12}(\l)\Big)\Big(\Upsilon_{21}(\l) X(\l) +\Upsilon_{22}(\l)\Big)^{-1},
\end{equation}
where the free parameter $X$ is an arbitrary Schur class function,  $X\in \sS(\sU, \sE)$, and the coefficients in \eqref{solsLeech} are the analytic functions on $\BD$  given by
\begin{align*}
\Upsilon_{11}(\l)
&=
\Theta(0)^*Q_\circ
  -\l E_\sY^*(I -\l S_\sY^*)^{-1}T_G^*(T_GT_G^*-T_KT_K^*)^{-1}N
Q_\circ, \\
\Upsilon_{12}(\l)
&=E_\sY^*(I -\l S_\sY^*)^{-1}T_G^*(T_GT_G^*-T_KT_K^*)^{-1}T_KE_\sU R_\circ,
\\
\Upsilon_{21}(\l)
&=-\l E_\sU^*(I -\l S_\sU^*)^{-1}T_K^*(T_GT_G^*-T_KT_K^*)^{-1}NQ_\circ, \\
\Upsilon_{22}(\l)&=R_\circ  +
E_\sU^*(I -\l S_\sU^*)^{-1}T_K^*(T_GT_G^*-T_KT_K^*)^{-1}T_K E_\sU R_\circ.
\end{align*}
Here $N=-T_GS_\sY^* T_\Theta E_\sE=S_\sV^* T_G E_{\sY}\Theta(0)$ and $Q_\circ$ and  $R_\circ$ are the strictly positive operators given by
\begin{align*}
Q_\circ&=\left(I_\sE+N^*((T_GT_G^*-T_KT_K^*)^{-1}-(T_GT_G^*)^{-1})N\right)^{-\frac{1}{2}}: \sE\to \sE,\\
R_\circ&=(I_\sU+E_\sU^*T_K^*(T_GT_G^*-T_KT_K^*)^{-1}T_K E_\sU)^{-\frac{1}{2}}: \sU\to \sU.
\end{align*}
Moreover, the parametrization given by \eqref{allsol1a}  is proper, that is, the map $X\mapsto F$ is one-to-one.
\end{theorem}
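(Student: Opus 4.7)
The plan is to recognize Theorem \ref{thm:Leech} as a direct specialization of Theorem \ref{thm:main1} to the LTONP data set $\{W,\wt{W},Z\}=\{T_G,T_K,S_\sV\}$ defined in \eqref{WwtWZ-Leech}. First I would verify that this triple is a valid LTONP data set: the intertwining relations \eqref{data1} follow since $T_G$ and $T_K$ are analytic Toeplitz operators, so $S_\sV T_G=T_G S_\sY$ and $S_\sV T_K=T_K S_\sU$. The Pick operator is $\la=T_GT_G^*-T_KT_K^*$, which is strictly positive by hypothesis, so Theorem \ref{thm:main1} applies and in particular yields $P=T_GT_G^*\gg 0$ and pointwise stability of $Z^*=S_\sV^*$. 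Note also $B=WE_\sY=T_GE_\sY$ and $\wt{B}=\wt{W}E_\sU=T_KE_\sU$.

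Next I would construct the admissible pair $(C,D)$ via formula \eqref{defCD}. Since $ZW=WS_\sY$, the subspace $\kr W=\kr T_G$ is $S_\sY$-invariant, hence by Beurling--Lax--Halmos there is an inner $\Theta\in\sS(\sE,\sY)$ with $\kr T_G=\im T_\Theta$; this is the $\Theta$ appearing in the statement. Plugging $W=T_G$ into \eqref{defCD} gives $D=\Theta(0)^*$ and $C=E_\sE^*T_\Theta^*S_\sY T_G^*(T_GT_G^*)^{-1}$, so that
\[
PC^*=T_GS_\sY^*T_\Theta E_\sE=-N.
\]
The second equality follows from $T_GT_\Theta=0$: using $T_\Theta E_\sE=E_\sY\Theta(0)+S_\sY S_\sY^*T_\Theta E_\sE$ and applying $T_G$ gives $0=T_GE_\sY\Theta(0)+S_\sV T_GS_\sY^*T_\Theta E_\sE$, and pre-multiplying by $S_\sV^*$ yields $T_GS_\sY^*T_\Theta E_\sE=-S_\sV^*T_GE_\sY\Theta(0)$, which is the alternative form of $N$.

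Then I would substitute these data into \eqref{defQR0}, \eqref{defUp11}--\eqref{defUp22}. For $Q_\circ$: since $PC^*=-N$ and $CP=-N^*$, we get $CP(\la^{-1}-P^{-1})PC^*=N^*(\la^{-1}-P^{-1})N$, which matches the claimed expression. For $R_\circ$: $\wt{B}^*\la^{-1}\wt{B}=E_\sU^*T_K^*\la^{-1}T_KE_\sU$, again as claimed. To convert the $\Upsilon_{ij}$ formulas, the key intertwining identities are
\[
T_G^*(I-\l S_\sV^*)^{-1}=(I-\l S_\sY^*)^{-1}T_G^*,\qquad T_K^*(I-\l S_\sV^*)^{-1}=(I-\l S_\sU^*)^{-1}T_K^*,
\]
which follow directly from $S_\sV T_G=T_GS_\sY$ and $S_\sV T_K=T_KS_\sU$. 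Inserting these identities together with $B^*=E_\sY^*T_G^*$, $\wt{B}^*=E_\sU^*T_K^*$, and $PC^*=-N$ into \eqref{defUp11}--\eqref{defUp22} reproduces the formulas in the theorem, the minus signs on $\Upsilon_{11}$ and $\Upsilon_{21}$ coming precisely from $PC^*=-N$. The properness of the parameterization and the description of all solutions via \eqref{solsLeech} then transfer verbatim from Theorem \ref{thm:main1}.

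There is no serious obstacle beyond the routine bookkeeping; the only subtlety worth emphasizing is the identification $PC^*=-N$, which requires the two equivalent forms of $N$ and the identity $T_GT_\Theta=0$. Everything else is a direct substitution into Theorem \ref{thm:main1} and an application of the shift-intertwining property of analytic Toeplitz operators.
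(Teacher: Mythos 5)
Your proposal is correct and follows the same route as the paper: specialize Theorem \ref{thm:main1} to the LTONP data set $\{W,\wt{W},Z\}=\{T_G,T_K,S_\sV\}$, compute the admissible pair $(C,D)$ via \eqref{defCD} (equivalently \eqref{def:CD3}) with $W=T_G$ to obtain $PC^*=T_GS_\sY^*T_\Theta E_\sE=-N$, and rewrite the $\Upsilon_{ij}$ of \eqref{defUp11}--\eqref{defUp22} using the shift-intertwining identities $T_G^*(I-\l S_\sV^*)^{-1}=(I-\l S_\sY^*)^{-1}T_G^*$ and $T_K^*(I-\l S_\sV^*)^{-1}=(I-\l S_\sU^*)^{-1}T_K^*$. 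Two small remarks: you re-derive the second form $N=S_\sV^*T_GE_\sY\Theta(0)$ directly from $T_GT_\Theta=0$ (a clean in-line argument, where the paper simply cites Lemma 2.1 of \cite{FtHK15}), and a careful substitution actually gives $D^*Q_\circ=\Theta(0)Q_\circ$ as the constant term of $\Upsilon_{11}$ rather than $\Theta(0)^*Q_\circ$ as printed in the theorem statement -- the latter is a typo (it does not type-check when $\sY\neq\sE$), so your computation is the correct one and you could have flagged this discrepancy rather than asserting that the substitution ``reproduces the formulas in the theorem.''
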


\begin{proof}[\bf Proof]
The formulas for  $\Upsilon_{i,j}$, $1\leq i,j\leq 2$,   follow directly from those in \eqref{defUp11}--\eqref{defUp22} after translation to the current setting, that is, using $\Lambda = T_G T_G^* - T_K T_K^*$ with $P = T_G T_G^*$ and $B = T_G E_\sY$ and $\widetilde{B} = T_K E_\sU$. Using \eqref{def:CD3}
with $W = T_G$ we arrive at  $PC^*=T_GS_\sY^* T_\Theta E_\sE=-N$. For the second formula for $N$, namely $N=S_\sV^* T_G E_{\sY}\Theta(0)$, see Lemma 2.1 in \cite{FtHK15}.
\end{proof}

This characterization of the solutions to the Leech problem is almost identical to that obtained in Theorem 3.1 in \cite{FtHK15}, for the case $\sU=\BC^p$, $\sY=\BC^p$, $\sV=\BC^m$ and under the `minimality' condition that for no nonzero $x\in\BC^p$ the function $z\mapsto G(z)x$ is identically equal to zero. Note that the operators $Q_\circ$  and $R_\circ$ above coincide with $\Delta_1^{-1}$ and
$\Delta_0^{-1}$  of Theorem 3.1 in \cite{FtHK15},
respectively.  However,  in  the definition of $\Delta_1$ in \cite[eqn.\ (3.7)]{FtHK15} it should have been  $((T_GT_G^*-T_KT_K^*)^{-1}-(T_GT_G^*)^{-1})$ rather than $((T_GT_G^*-T_KT_K^*)^{-1}-(T_GT_G^*)^{-1})^{-1}$. To see that $\Upsilon_{12}$ and $\Upsilon_{22}$ in Theorem \ref{thm:Leech} indeed coincide with those in Theorem 3.1 in \cite{FtHK15}, use that $(I -\l S_\sV^*)^{-1}=I+\l (I -\l S_\sV^*)^{-1}S_{\sV}^*$, so that
\begin{align*}
\Upsilon_{12}(\l)
&=E_\sY^*T_G^*(T_GT_G^*-T_KT_K^*)^{-1}T_KE_\sU R_\circ +\\
&\qquad\l E_\sY^*T_G^*(I -\l S_\sV^*)^{-1}S_\sV^*(T_GT_G^*-T_KT_K^*)^{-1}T_KE_\sU R_\circ \\
\Upsilon_{22}(\l)
&=R_\circ  +E_\sU^*T_K^*(T_GT_G^*-T_KT_K^*)^{-1}T_K E_\sU R_\circ\\
&\qquad+\l E_\sU^*T_K^*(I -\l S_\sV^*)^{-1}S_\sV^*(T_GT_G^*-T_KT_K^*)^{-1}T_K E_\sU R_\circ\\
&=R_\circ^{-1} +\l E_\sU^*T_K^*(I -\l S_\sV^*)^{-1}S_\sV^*(T_GT_G^*-T_KT_K^*)^{-1}T_K E_\sU R_\circ,
\end{align*}
where the last identity follows because
\[
(I_\sU+E_\sU^*T_K^*(T_GT_G^*-T_KT_K^*)^{-1}T_K E_\sU) R_\circ= R_{\circ}^{-2}R_\circ=R_\circ^{-1}.
\]

The Toeplitz-corona problem corresponds to the special case of the Leech problem where $\sU=\sV$ and  $K= I_\sU$  is identically equal to the identity operator on $\sU$. In view of the connection made between the LTONP interpolation problem and the commutant lifting problem in Section \ref{CL-ONP}, we refer to Proposition A.5 in \cite{FtHK15}, where the Toeplitz-corona is identified as a special case of the commutant lifting problem discussed in Section \ref{CL-ONP}. Although Proposition A.5 in \cite{FtHK15} is proven only for the case where $\sU$ and $\sY$ are finite dimensional, one easily sees that the result caries over to the infinite dimensional case. We present the result here rephrased in terms of the LTONP interpolation problem, and add a proof for completeness. Note that with $K$ is identically equal to $I_\sU$ we have $\wt{W}=I_{\ell^2_+(\sU)}$. Hence $\wt{W}$ is invertible.
The converse is also true.

\begin{proposition}\label{TC}
Let $\{W,\wt{W},Z\}$ as in \eqref{data1} be a data set for a LTONP interpolation problem where $\wt{W}$ is invertible. Then there exists a function \ $G\in H^\infty(\sY,\sU)$ such that with $K\equiv I_{\sY}$ the operators $W$, $\wt{W}$ and $Z$ are given by \eqref{WwtWZ-Leech}, with $\sV=\sY$, up to multiplication with an invertible operator from $\sZ$ to $\ell^2_+(\sU)$. In fact, $G$ is defined by $T_G=\wt{W}^{-1}W$, or equivalently,  $W = \widetilde{W} T_G$ and
$\widetilde{W} = \widetilde{W} T_I$ and $Z = \widetilde{W}S_\sU \widetilde{W}^{-1}$.
\end{proposition}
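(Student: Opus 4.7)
The plan is to use the invertibility of $\wt{W}$ to turn the intertwining relations \eqref{data1} into a statement about an analytic Toeplitz operator, and then invoke the standard characterization of bounded operators that commute with unilateral shifts. The candidate for $G$ is forced on us: since we want $W=\wt{W}T_G$, we must have $T_G=\wt{W}^{-1}W$, so we define $A:=\wt{W}^{-1}W:\ell_+^2(\sY)\to\ell_+^2(\sU)$, which is well-defined and bounded because $\wt{W}^{-1}$ is bounded by the closed graph (or open mapping) theorem.

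First I would derive the formula for $Z$. The relation $Z\wt{W}=\wt{W}S_\sU$ combined with the invertibility of $\wt{W}$ immediately gives $Z=\wt{W}S_\sU\wt{W}^{-1}$. Next, using this formula together with $ZW=WS_\sY$, I would compute
\[
S_\sU A \;=\; S_\sU\wt{W}^{-1}W \;=\; \wt{W}^{-1}(\wt{W}S_\sU\wt{W}^{-1})W \;=\; \wt{W}^{-1}ZW \;=\; \wt{W}^{-1}WS_\sY \;=\; AS_\sY.
\]
Thus $A$ intertwines the forward shifts, $S_\sU A=AS_\sY$.

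The key step is now to invoke the classical fact that a bounded operator between vector-valued $H^2$-spaces (equivalently $\ell_+^2$-spaces) that intertwines the unilateral shifts in this manner is precisely an analytic Toeplitz operator: there exists $G\in H^\infty(\sY,\sU)$ with $\|G\|_\infty=\|A\|$ and $A=T_G$. This is the only nontrivial input; it can be derived by reading off the columns of $A$ via $AE_\sY:\sY\to\ell_+^2(\sU)$, identifying the resulting element of $H^2(\mathcal{L}(\sY,\sU))$ via the Fourier transform \eqref{Ftransf}, and checking boundedness against all vector polynomials. I anticipate this is the only place where one needs a genuine (though well-known) argument; the remainder of the proposition is algebraic bookkeeping.

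Finally I would assemble the three identities. Setting $K\equiv I_\sU$ so that $T_K=I_{\ell_+^2(\sU)}$, the definition $T_G=\wt{W}^{-1}W$ rearranges to $W=\wt{W}T_G$, and trivially $\wt{W}=\wt{W}T_K=\wt{W}T_{I_\sU}$, while step one gives $Z=\wt{W}S_\sU\wt{W}^{-1}$. Reading these against \eqref{WwtWZ-Leech} (with $\sV=\sU$), one sees that the triple $\{W,\wt{W},Z\}$ coincides, modulo left-multiplication by the invertible operator $\wt{W}:\ell_+^2(\sU)\to\sZ$, with the Leech data set $\{T_G,T_K,S_\sU\}$, which proves the claim.
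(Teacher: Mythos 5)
Your proof is correct and follows essentially the same route as the paper: you rewrite the intertwining relations using $\widetilde{W}^{-1}$ to show that $\widetilde{W}^{-1}W$ intertwines the forward shifts $S_\sY$ and $S_\sU$, then invoke the standard characterization of such intertwiners as analytic Toeplitz operators, and read off the Leech data after conjugating by $\widetilde{W}$. The paper's proof is just a more compressed version of the same algebra; you also correctly repair a typo in the statement (it should be $K\equiv I_\sU$ and $\sV=\sU$, consistent with the Toeplitz--corona discussion that precedes the proposition).
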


\begin{proof}[\bf Proof]
Let $\{W,\wt{W},Z\}$   be a data set for a  LTONP interpolation problem with
$\wt{W}$ invertible. Then $ZW=W S_\sY$ and
$S_\sU \wt{W}^{-1}=\wt{W}^{-1}Z$, so that
\[
S_\sU \wt{W}^{-1} W= \wt{W}^{-1}Z W= \wt{W}^{-1} W S_\sY.
\]
This shows $\wt{W}^{-1} W$ is a Toeplitz operator $T_G$ with defining function  $G\in H^\infty(\sY,\sU)$.  It is also clear that for $K\equiv I_{\sY}$ we have
\[
T_K=I_{\ell^2_+(\sU)}=\wt{W}^{-1}\wt{W}\ands
\wt{W}^{-1}Z\wt{W}=\wt{W}^{-1}\wt{W} S_{\sU}=S_\sU.\qedhere
\]
\end{proof}

\setcounter{equation}{0}
\appendix

\section{}\label{sec:App}
\renewcommand{\theequation}{A.\arabic{equation}}

\setcounter{equation}{0}
 This appendix  consists of  seven  subsections containing  standard background material  that is used throughout the paper.  Often we added  proofs for the sake of completeness.

\subsection{Stein equation}
In this section, we present some standard results
concerning discrete time Stein equations.

\begin{lemma}\label{lem_lyap} Let $Z$ be an operator on $\mathcal{Z}$
such that $Z^*$ is pointwise stable. Let
$\alpha$ be an operator   on $\mathcal{X}$ such that $\sup_{n\geq 0}\|\a^n\|<\iy$ while $\Xi$ is an operator mapping $\mathcal{X}$ into $\mathcal{Z}$.  Assume that the Stein equation
\begin{equation}\label{lyap00}
  \Omega - Z \Omega \alpha =\Xi
\end{equation}
has a solution  $\Omega$  mapping $\mathcal{X}$ into $\mathcal{Z}$.  Then the solution to this Stein equation is unique.
\end{lemma}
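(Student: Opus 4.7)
The plan is to reduce uniqueness to the assertion that the only solution of the homogeneous Stein equation $\Omega_0 - Z\Omega_0\alpha = 0$ is $\Omega_0 = 0$. Indeed, if $\Omega_1$ and $\Omega_2$ both solve \eqref{lyap00}, then their difference $\Omega_0 = \Omega_1 - \Omega_2$ satisfies the homogeneous equation, and once we show $\Omega_0 = 0$ uniqueness follows.

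To handle the homogeneous case, I would first rewrite $\Omega_0 = Z\Omega_0\alpha$ and iterate: a straightforward induction gives
\[
\Omega_0 = Z^n \Omega_0\,\alpha^n \qquad (n = 0,1,2,\ldots).
\]
Taking adjoints yields $\Omega_0^* = \alpha^{*n}\,\Omega_0^*\,Z^{*n}$ for every $n \geq 0$. Now the two hypotheses enter in a complementary way. For an arbitrary $z \in \mathcal{Z}$, we estimate
\[
\|\Omega_0^* z\| \;=\; \|\alpha^{*n}\,\Omega_0^*\,Z^{*n}z\| \;\leq\; \|\alpha^n\|\cdot\|\Omega_0\|\cdot\|Z^{*n}z\|.
\]
The factor $\|\alpha^n\|$ is uniformly bounded in $n$ by hypothesis, and $\|Z^{*n}z\| \to 0$ as $n \to \infty$ because $Z^*$ is pointwise stable. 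Letting $n \to \infty$ therefore forces $\Omega_0^* z = 0$. Since $z$ was arbitrary, $\Omega_0^* = 0$, and hence $\Omega_0 = 0$.

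There is really no serious obstacle here; the argument is a textbook iteration-and-estimate. The only point that requires a moment of attention is the asymmetry of the two stability assumptions: we have uniform boundedness of the powers of $\alpha$ (not pointwise decay) and pointwise decay of $Z^{*n}$ (not uniform decay), and one must pair them correctly by taking adjoints so that the pointwise-stable operator acts on a fixed vector $z$ while the merely power-bounded operator contributes only a bounded multiplicative constant.
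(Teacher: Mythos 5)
Your proof is correct and follows essentially the same iteration-and-estimate argument as the paper: form the difference of two solutions, iterate the homogeneous relation to get $\Omega_0 = Z^n\Omega_0\alpha^n$, pass to adjoints so that the pointwise-stable $Z^{*n}$ acts on a fixed vector while the power-bounded $\alpha^{*n}$ contributes only a bounded constant, and let $n\to\infty$. The phrasing as a reduction to the homogeneous equation is cosmetic; the mechanism is identical to the paper's.
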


\begin{proof}[\bf Proof]
If $\Omega_1$ is another operator satisfying $\Omega_1 - Z \Omega_1 \alpha = \Xi$, then subtracting these two Stein equations yields
 \[
 \Omega - \Omega_1 = Z \big(\Omega - \Omega_1\big) \alpha.
 \]
 Applying this identity recursively,
 we have $\Omega - \Omega_1 = Z^n \big(\Omega - \Omega_1\big) \alpha^n$
 for all integers $n \geq 0$. By taking the adjoint, we obtain
 $\Omega^* - \Omega_1^* = \alpha^{*n} \big(\Omega^* - \Omega_1^*\big) Z^{*n}$. Since $Z^*$ is pointwise stable and $\sup_{n\geq 0}\|\a^n\|<\infty$,  for each $z\in\sZ$  we have
 \[
 \|(\Omega^* - \Omega_1^*) z\|= \|\alpha^{*n} \big(\Omega^* - \Omega_1^*\big) Z^{*n}z\|
 \leq \|\alpha^{*n}\| \|\big(\Omega^* - \Omega_1^*\big)\| \|Z^{*n}z\|\to 0.
 \]
 Hence   $\Omega^* = \Omega_1^*$, or equivalently, $\Omega = \Omega_1$.
 Therefore the solution to the Stein equation
 $\Omega = Z \Omega \alpha + \Xi$ is unique.
\end{proof}

Let $Z$ be an operator on $\sZ$ such that  $Z^*$ is pointwise  stable. Assume that $W$ is an operator mapping $\ell_+^2(\sY)$ into $\sZ$ such that $Z W = W S_\sY$.  Let $B$ be the operator mapping $\sY$ into $\sZ$ defined by $B = W E_\sY$. Then  $P = W W^*$ is the unique solution to the Stein equation
 \begin{equation}\label{lyap_app}
 P = Z P Z^* + BB^*.
\end{equation}
Lemma \ref{lem_lyap} guarantees that the solution to this Stein
equation is unique. Moreover, using $Z W = W S_\sY$, we obtain
\[
P = W W^* = W\left(S_\sY S_\sY^* + E_\sY E_\sY^*\right)W^*
= Z W W^* Z^* + B B^* = Z P Z^* + B B^*.
\]
Hence $P = WW^*$ satisfies the Stein equation \eqref{lyap_app}.
Notice that
\[
\begin{bmatrix}
  E_\sY  & S_\sY E_\sY  & S_\sY^2 E_\sY & S_\sY^3 E_\sY & \cdots\\
\end{bmatrix} = I,
\]
the identity {operator} on $\ell_+^2(\sY)$. Using this with $Z W = W S_\sY$, we see that
\[
W = W \begin{bmatrix}
  E_\sY  & S_\sY E_\sY  & S_\sY^2 E_\sY &  \cdots\\
\end{bmatrix} =
\begin{bmatrix}
  B  & Z B & Z^2 B & \cdots\\
\end{bmatrix}.
\]
In particular, $P = W W^* = \sum_{{n=0}}^\iy Z^n B B^* Z^{*n}$.
Motivated by this analysis we present the following result.

\begin{lemma}\label{lem:lyapP}
Let $Z$ be an operator on $\sZ$ such that $Z^*$ is pointwise stable.
Let $B$ be an operator mapping $\sY$ into $\sZ$. If $P$ is a solution
to the Stein equation $P = Z P Z^* + B B^*$, then $P$ is the only
solution to this Stein equation. Moreover, $P = W W^*$
where $W$ is the operator mapping $\ell_+^2(\sY)$ into $\sZ$
given by
\begin{equation}\label{cont_opt}
 W = \begin{bmatrix}
  B  & Z B & Z^2 B & \cdots\\
\end{bmatrix}: \ell_+^2(\sY) \rightarrow \sZ.
\end{equation}
Finally, $Z W = W S_\sY$ and $W E_\sY = B$.
\end{lemma}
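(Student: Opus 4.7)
The plan is to establish the three claims—uniqueness, the factorisation $P=WW^*$, and the intertwining relations—in that order, using the iterated Stein equation as the central technical tool.

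For uniqueness, I would invoke Lemma~\ref{lem_lyap} directly with $\sX=\sZ$, $\alpha=Z^*$, and $\Xi=BB^*$. The only hypothesis of that lemma which is not immediate is $\sup_{n\geq 0}\|\alpha^n\|<\infty$. Since $Z^*$ is pointwise stable, the family $\{Z^{*n}\}_{n\geq 0}$ is pointwise bounded, and the Banach--Steinhaus theorem then supplies the required uniform bound.

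For the factorisation, I would iterate the Stein equation $n$ times to obtain
\[
P - Z^n P Z^{*n}=\sum_{k=0}^{n-1}Z^k BB^* Z^{*k}\qquad (n\geq 0).
\]
Pairing with an arbitrary $z\in\sZ$ gives
\[
\langle Pz,z\rangle-\langle P Z^{*n}z,Z^{*n}z\rangle=\sum_{k=0}^{n-1}\|B^*Z^{*k}z\|^2,
\]
and since $Z^{*n}z\to 0$ by pointwise stability, the left-hand side converges to $\langle Pz,z\rangle$. Therefore $\sum_{k=0}^\infty\|B^*Z^{*k}z\|^2=\langle Pz,z\rangle$, which simultaneously shows $P\geq 0$ and that the map $V\colon\sZ\to\ell^2_+(\sY)$ defined by $Vz=(B^*Z^{*k}z)_{k\geq 0}$ is a bounded operator with $\|Vz\|^2=\langle Pz,z\rangle$. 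Setting $W:=V^*$, polarisation applied to the self-adjoint operators $WW^*$ and $P$ together with $\langle WW^*z,z\rangle=\|Vz\|^2=\langle Pz,z\rangle$ yields $WW^*=P$.

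The remaining identities then follow by direct verification in the sequence model, noting that $W^*=V$. For $WE_\sY=B$, pair against $z$ and extract the zeroth coordinate of $Vz$, which is $B^*z$. For $ZW=WS_\sY$, take adjoints and observe that both $W^*Z^*z$ and $S_\sY^*W^*z$ equal the shifted sequence $(B^*Z^{*(k+1)}z)_{k\geq 0}$. The same pairing applied to the vectors $S_\sY^j E_\sY y$ gives $WS_\sY^j E_\sY y=Z^j By$, which identifies $W$ with the block row in~\eqref{cont_opt}. The only step that is not a completely routine calculation is the Banach--Steinhaus invocation used to upgrade pointwise stability of $Z^*$ to uniform boundedness of $\{Z^{*n}\}$ so that Lemma~\ref{lem_lyap} applies.
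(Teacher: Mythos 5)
Your proof is correct and follows essentially the same route as the paper: iterate the Stein equation, use pointwise stability of $Z^*$ (and Banach--Steinhaus for the uniform bound on $\|Z^n\|$) to kill the remainder term, and read off $P = WW^*$. The only variation is cosmetic: you work with the quadratic form $\langle Pz,z\rangle = \sum_k \|B^*Z^{*k}z\|^2$ and recover $WW^*=P$ by polarisation, whereas the paper argues directly at the operator level that $Z^{n+1}PZ^{*n+1}\to 0$ pointwise and reads off $P=\sum_j Z^jBB^*Z^{*j}$; the underlying content is identical.
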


\begin{proof}[\bf Proof]
By recursively using $P = Z P Z^* + B B^*$, we obtain
\begin{align*}
P &= BB^*  + Z P Z^* =
BB^*  + Z\left(BB^*  + Z P Z^* \right) Z^*\\
&=  BB^* + Z BB^* Z^* + Z^2\left(BB^*  + Z P Z^* \right) Z^{*2}+\cdots \\
&=  \sum_{j=0}^n Z^jBB^* Z^{*j}  + Z^{n+1} P Z^{* n+1},
\end{align*}
where  $n$ is any positive integer. Because  $Z^*$ is pointwise stable,
the uniform boundedness principle implies that $\sup\{\|Z^n\|:n\geq 0\} <\infty$.
Thus $Z^{n+1} P Z^{* n+1}$ converges to zero pointwise as $n$ tends to infinity. Therefore $P = \sum_{j=0}^\infty  Z^jBB^* Z^{*j}$ with pointwise convergence.  Moreover, $W$ in \eqref{cont_opt} is a well defined bounded operator and $P = W W^*$.
Clearly, $Z W = W S_\sY$ and $B = WE_\sY$.
\end{proof}

\subsection{The Douglas factorization lemma for $K_1 K_1^*=K_2 K_2^*$}\label{Assec:KK12}

In this subsection we review a variant of   the  Douglas factorization lemma; for the full lemma see, e.g., \cite[Lemma XVII.5.2]{GGK2}. The results
presented are used in Sections \ref{sec:ONP-coiso} and \ref{sec:strictlypos}. Consider the two Hilbert space operators and related subspaces given by:
\begin{align}
&K_1:\sH_1\to \sZ \ands \sF=\overline{\im K_1^*}\subset \sH_1,  \label{DK1K2w1}\\
&K_2:\sH_2\to \sZ \ands \sF^\prime=\overline{\im K_2^*}\subset \sH_2.\label{DK1K2w2}
\end{align}
The following two lemmas are  direct corollaries of  the Douglas factorisation lemma.

\begin{lemma}\label{lem:KK1} Let $K_1$ and $K_2$ be two operators of the form \eqref{DK1K2w1} and \eqref{DK1K2w2}. Then the following are equivalent.
\begin{itemize}
  \item[(i)] The operators $K_1 K_1^*=K_2 K_2^*$.
  \item[(ii)] There exists a unitary operator $\omega:\sF\to \sF^\prime$
  such that
  \begin{equation}\label{propsom}
\o K_1^*=K_2^* \quad\mbox{or equivalently} \quad  K_2\o=K_1|\sF.
\end{equation}
  \item[(iii)] There exists an operator $\omega:\sF\to \sF^\prime$ such that
\begin{equation}\label{defom-app1}
K_2 K_2^*=K_2\o K_1^* \ands K_2\o K_1^* = K_1 K_1^*.
\end{equation}
In this case $\omega$ is unitary.
\end{itemize}
If Part (ii) or (iii) holds, then the operator
$\omega$ is uniquely determined. Finally,
each of the identities in \eqref{propsom}
separately can be used as the definition of $\omega$.
\end{lemma}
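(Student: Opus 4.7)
The plan is to establish the cyclic chain of implications \textup{(ii)} $\Rightarrow$ \textup{(iii)} $\Rightarrow$ \textup{(i)} $\Rightarrow$ \textup{(ii)}, and then treat uniqueness and the equivalence of the two forms of \eqref{propsom} separately.

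For \textup{(ii)} $\Rightarrow$ \textup{(iii)}, I would simply apply $K_2$ on the left of $\omega K_1^* = K_2^*$ to obtain the first identity in \eqref{defom-app1}. For the second identity, I would observe that since $\omega:\sF\to \sF^\prime$ is unitary, its inverse is $\omega^*:\sF^\prime\to \sF$, and since $\im K_2^*\subset \sF^\prime$, the relation $\omega K_1^* = K_2^*$ is equivalent (after applying $\omega^*$) to $K_1^* = \omega^* K_2^*$. Taking adjoints (and using $\sF^{\prime\perp}=\kr K_2$, so that $K_2$ is zero outside $\sF'$) then yields $K_2\omega = K_1|\sF$, and multiplying by $K_1$ on the left and taking adjoints gives the second identity. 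The implication \textup{(iii)} $\Rightarrow$ \textup{(i)} is a one-line computation: $K_1K_1^* = K_2\omega K_1^* = K_2K_2^*$.

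The heart of the proof is \textup{(i)} $\Rightarrow$ \textup{(ii)}, which is a direct application of the Douglas factorization idea. The key step is to define $\omega$ on the dense subspace $\im K_1^*$ of $\sF$ by declaring $\omega K_1^* z := K_2^* z$ for each $z\in \sZ$. The crucial calculation is
\[
\|K_2^* z\|^2 = \langle K_2K_2^*z, z\rangle = \langle K_1K_1^*z, z\rangle = \|K_1^*z\|^2, \quad z\in\sZ,
\]
which simultaneously yields (a) well-definedness, since $K_1^*z=0$ forces $K_2^*z=0$, and (b) the isometric property on $\im K_1^*$. Extending by continuity gives an isometry $\omega:\sF\to \sH_2$, whose image contains $\im K_2^*$ and is closed, hence equals $\sF^\prime=\overline{\im K_2^*}$; therefore $\omega:\sF\to \sF^\prime$ is unitary and satisfies $\omega K_1^*=K_2^*$ by construction.

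Uniqueness of $\omega$ is then immediate: any operator $\omega':\sF\to \sF^\prime$ with $\omega'K_1^*=K_2^*$ agrees with $\omega$ on the dense set $\im K_1^*$, hence everywhere on $\sF$ by continuity. For the final claim, the two identities in \eqref{propsom} are adjoint to each other: from $\omega K_1^*=K_2^*$ one recovers $K_2\omega = K_1|\sF$ by composing with $K_2$ on the left and with $\omega^*$ on the right (using $\omega\omega^* = I_{\sF'}$ together with $\sF^{\prime\perp}\subset \kr K_2$), and the reverse derivation is analogous. There is no serious technical obstacle here; the only point requiring care is the bookkeeping for the restriction/extension of $K_2$ between $\sF'$ and all of $\sH_2$, which is handled by the standing identity $\sF^{\prime\perp}=\kr K_2$.
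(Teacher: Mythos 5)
Your construction of $\omega$ for (i) $\Rightarrow$ (ii) --- defining $\omega K_1^* z := K_2^* z$ on $\im K_1^*$, deducing well-definedness and the isometric property simultaneously from $\|K_2^*z\|^2 = \langle K_2K_2^*z,z\rangle = \langle K_1K_1^*z,z\rangle = \|K_1^*z\|^2$, and extending by continuity to a surjective isometry onto $\sF^\prime$ --- is exactly the Douglas-factorization argument the paper invokes, unpacked rather than cited, and the uniqueness argument via density is likewise standard. So the core of your proof is essentially the paper's. The cyclic organization (ii) $\Rightarrow$ (iii) $\Rightarrow$ (i) $\Rightarrow$ (ii), however, carries a cost that the paper's arrangement (i) $\Leftrightarrow$ (ii) followed by (ii) $\Rightarrow$ (iii) $\Rightarrow$ (ii) avoids: your chain establishes that the three \emph{existence} statements are equivalent, but it does not prove the explicit clause in (iii) that ``in this case $\omega$ is unitary,'' nor does it establish uniqueness for the $\omega$ appearing there. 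Your final uniqueness paragraph only covers operators satisfying $\omega K_1^* = K_2^*$, which is the hypothesis of (ii), not of (iii).

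The missing step is precisely the one the paper carries out for (iii) $\Rightarrow$ (ii). From the first identity in \eqref{defom-app1} one gets $K_2\bigl(K_2^* - \omega K_1^*\bigr) = 0$, so for every $z\in\sZ$ the vector $K_2^*z - \omega K_1^*z$ lies in $\kr K_2 = \sH_2 \ominus \sF^\prime$; since both $K_2^*z$ and $\omega K_1^*z$ lie in $\sF^\prime$, the difference must be zero, hence $\omega K_1^* = K_2^*$. Only with this observation does the $\omega$ of (iii) become identified with the unique unitary operator produced in (ii). You already state the needed fact $\kr K_2 = \sH_2\ominus\sF^\prime$ when reconciling the two identities in \eqref{propsom}; the same observation has to be deployed once more to close case (iii). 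Once that is added, the proof is complete and agrees with the paper's in all essentials.
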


\begin{remark}
\textup{The operator products in \eqref{propsom} and \eqref{defom-app1} have to be understood pointwise. For instance, the first identity   in \eqref{propsom} just means that $\o K_1^*x=K_2^*x$ for each $x\in \sZ$. Note that for each $x\in \sZ$ we have   $K_1^*x\in \sF$, and thus $\o K_1^*x$ is well defined and belongs to $\sF^\prime$.  On the other hand,  $K_2^*x$ also belongs to $\sF^\prime$, and hence $\o K_1^*x=K_2^*x$ makes sense. This remark  also  implies to the other identities in this subsection.}
\end{remark}

Let us sketch a proof of Lemma \ref{lem:KK1}. One part of  the Douglas factorization lemma says that if $A$ and $B$ are two operator acting between the appropriate spaces, then $AA^* \leq BB^*$ if and only if there exists
a contraction $C$ from the closure of the range of $B^*$ to
the closure of the range of $A^*$ satisfying $A^* = C B^*$.
Moreover, in this case, the operator $C$ is unique.
If $K_1 K_1^* = K_2K_2^*$, then there exists a contraction
$\omega$ such that $K_2^* = \omega K_1^*$. Because
$K_1 K_1^* = K_2K_2^*$, it follows that $\omega$ is an isometry
from $\sF$ onto $\sF^\prime$. Since $\omega$ is onto, $\omega$
is unitary. On the other hand, if $K_2^* = \omega K_1^*$
where $\omega$ is unitary, then $K_1 K_1^* = K_2K_2^*$.
Therefore Parts (i) and (ii) are equivalent.

Clearly, Part (ii) implies that Part (iii) holds. Assume that  Part (iii) holds. Then by the first identity in \eqref{defom-app1} and the fact that $K_2$ is zero on $\sH_2\ominus \sF^\prime$, we see that $\o K_1^*=K_2^*$. Similarly, using the second identity in \eqref{defom-app1} and $\sF=\overline{\im K_1^*}$, we obtain $K_2\o=K_1|\sF$.
This yields Part (ii). Therefore Parts (i) to (iii) are equivalent.

\begin{lemma}\label{lem:KK2}
Let $K_1$ and $K_2$ be two operators of the form \eqref{DK1K2w1} and \eqref{DK1K2w2}. Assume $K_1K_1^*=K_2K_2^*$ and let  $\omega:\sF\to\sF'$  be the unitary map  uniquely determined by \eqref{defom-app1}.  Let $\t_1:\sU_1\to \sH_1$ and  $\t_2:\sU_2\to \sH_2$ be isometries such that
$\im \t_1=\sH_1 \ominus \sF$ and $\im \t_2=\sH_2 \ominus \sF^\prime$.
 Then all contractions $Y:\sH_1 \to \sH_2$  such that
\begin{equation}\label{defom-app2}
K_2 K_2^*=K_2YK_1^*\ands K_2YK_1^*=K_1 K_1^*.
\end{equation}
are given by
$Y=\t_2 X\t_1^*+ \Pi_{\sF^\prime}^*\o \Pi_\sF$ where $X$ is any contraction mapping $\sU_1$ into $\sU_2$.  Moreover,  the map $X \to Y$ is one-to-one.
\end{lemma}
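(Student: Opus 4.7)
My plan is to analyse any candidate $Y$ via the $2\times 2$ block decomposition relative to the orthogonal splittings $\sH_1=\sF\oplus(\sH_1\ominus\sF)$ and $\sH_2=\sF^\prime\oplus(\sH_2\ominus\sF^\prime)$. The idea is that $\Pi_{\sF^\prime}^*\o\Pi_\sF$ places $\o$ in the upper-left block and vanishes elsewhere, while $\t_2 X\t_1^*$ places a contraction in the lower-right block and vanishes elsewhere; I would then show that the relations \eqref{defom-app2} together with the contraction hypothesis force exactly this block structure.

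For the ``if'' direction, I would fix $Y=\t_2 X\t_1^*+\Pi_{\sF^\prime}^*\o\Pi_\sF$ and first verify contractivity. Splitting $h\in\sH_1$ as $h_1+h_2$ with $h_1\in\sF$ and $h_2\in\sH_1\ominus\sF$, the identities $\t_1^*h_1=0$ and $\Pi_\sF h_2=0$ show that $Yh$ is a sum of two vectors lying in the orthogonal subspaces $\sF^\prime$ and $\sH_2\ominus\sF^\prime$, and then
\[
\|Yh\|^2=\|\o h_1\|^2+\|X\t_1^*h_2\|^2\leq\|h_1\|^2+\|h_2\|^2=\|h\|^2
\]
follows because $\o$ is unitary, $X$ is a contraction, and $\t_1^*$ acts isometrically on $\sH_1\ominus\sF$. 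For \eqref{defom-app2}, the inclusion $\im\t_2\subset\kr K_2$ kills the $\t_2 X\t_1^*$ piece of $K_2YK_1^*$; the remaining piece collapses to $K_2\o K_1^*$ (since $\im K_1^*\subset\sF$ and $\Pi_{\sF^\prime}^*$ is the inclusion), which by \eqref{defom-app1} equals both $K_1K_1^*$ and $K_2K_2^*$.

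For the ``only if'' direction, given a contraction $Y$ satisfying \eqref{defom-app2}, I would write $Y$ as a block operator with entries $A,B,C,D$ relative to the splittings above. Using $\im K_1^*\subset\sF$ and $K_2|(\sH_2\ominus\sF^\prime)=0$, the relation $K_2YK_1^*=K_2\o K_1^*$ reduces to $(K_2|\sF^\prime)(A-\o)K_1^*=0$; since $\im K_1^*$ is dense in $\sF$ and $K_2|\sF^\prime:\sF^\prime\to\sZ$ is injective (its adjoint is $K_2^*$ viewed as mapping into $\sF^\prime$, which has dense range by the definition of $\sF^\prime$), this forces $A=\o$. Then, since $\o$ is unitary and $Y$ is a contraction, applying $Y$ to $h_1\oplus 0$ with $h_1\in\sF$ yields $\|h_1\|^2\geq\|\o h_1\|^2+\|Ch_1\|^2=\|h_1\|^2+\|Ch_1\|^2$, whence $C=0$; the mirror argument on $Y^*$ (a contraction with unitary upper-left block $\o^*$) gives $B=0$. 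The free block $D:\sH_1\ominus\sF\to\sH_2\ominus\sF^\prime$ is an arbitrary contraction, and setting $X:=\t_2^*D\t_1$ gives a contraction $\sU_1\to\sU_2$ such that $\t_2 X\t_1^*$ has block form $\mat{cc}{0&0\\0&D}$, rebuilding $Y$ in the required form.

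The injectivity of $X\mapsto Y$ is then a one-line argument: from $\t_2 X_1\t_1^*=\t_2 X_2\t_1^*$, multiplying on the left by $\t_2^*$ and on the right by $\t_1$ and using $\t_i^*\t_i=I_{\sU_i}$ gives $X_1=X_2$. The subtlest step in the plan is the identification $A=\o$; this is the one place where both density properties $\overline{\im K_1^*}=\sF$ and $\overline{\im K_2^*}=\sF^\prime$ enter in an essential way, whereas the remaining steps reduce to routine block-matrix and orthogonality manipulations.
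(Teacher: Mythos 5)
The paper states this lemma without proof (it is parked among the background lemmas introduced as ``direct corollaries of the Douglas factorization lemma''), so there is no proof of record to compare against. Your argument is correct and complete: the block-matrix decomposition relative to $\sH_1=\sF\oplus(\sH_1\ominus\sF)$ and $\sH_2=\sF'\oplus(\sH_2\ominus\sF')$ is exactly the right framework, and all the individual steps check out --- the ``if'' direction via $K_2\t_2=0$ (since $\im\t_2=\kr K_2$) and $\Pi_\sF K_1^*=K_1^*$; the identification $A=\o$ via injectivity of $K_2|\sF'$ (whose adjoint has dense range in $\sF'$ by definition) together with density of $\im K_1^*$ in $\sF$; the vanishing of the off-diagonal blocks $C$ and $B$ by the standard fact that a contraction with a unitary corner must be block-diagonal, applied to $Y$ and $Y^*$ respectively; and the one-line injectivity argument from $\t_i^*\t_i=I_{\sU_i}$. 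Nothing is missing, and this is almost certainly the argument the authors intended the reader to supply.
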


Recall that $V$ is a right inverse of $U$ if $UV = I$. Next we assume that $N:=K_1K_1^*=K_2K_2^*$  is strictly positive. Then both $K_1$ and $K_2$ are right invertible, the operator  $K_1^*N^{-1}$ is a right inverse of  $K_1$
and the operator  $K_2^*N^{-1}$ is a right inverse of  $K_2$. Indeed, we have
\begin{align*}
K_1K_1^*N^{-1}&=K_1K_1^*(K_1K_1^*)^{-1}=I_\sZ,\\
K_2K_2^*N^{-1}&=K_2K_2^*(K_2K_2^*)^{-1}=I_\sZ.
\end{align*}
Furthermore,  a direct computation shows that the orthogonal projections $P_\sF$ onto $\sF$ and $P_{\sF^\prime}$ onto $\sF^\prime$ are respectively given by
\begin{equation}\label{projFFprime}
P_\sF =K_1^*N^{-1}K_1  \ands  P_{\sF^\prime} =K_2^*N^{-1}K_2.
\end{equation}

\begin{lemma}\label{lem:KK3}
Let $K_1$ and $K_2$ be two operators of the form \eqref{DK1K2w1} and \eqref{DK1K2w2}. Assume that  $K_1K_1^*=K_2K_2^*$ and $N=K_1K_1^*=K_2K_2^*$ is strictly positive. Then the unique operator $\o:\sF\to \sF^\prime$ satisfying \eqref{defom-app1} is given by
\begin{equation}\label{eq:om}
\omega P_\sF  = K_2^*N^{-1}K_1.
\end{equation}
\end{lemma}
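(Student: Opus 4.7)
The plan is to show that the operator $\omega_0 := K_2^{*}N^{-1}K_1$ agrees with $\omega P_{\sF}$ by checking its action separately on $\sF$ and on $\sF^{\perp}$, exploiting the two characterizations of $\omega$ supplied by Lemma~\ref{lem:KK1} together with the explicit projection formula \eqref{projFFprime}.

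First I would observe that $\sF^{\perp}=(\overline{\im K_{1}^{*}})^{\perp}=\kr K_{1}$, so $\omega_{0}$ vanishes on $\sF^{\perp}$; this matches the fact that $\omega P_{\sF}$ also vanishes there. It remains to verify that $\omega_{0}h=\omega h$ for every $h\in\sF$. For this, the key algebraic identity from \eqref{projFFprime} is
\[
K_{1}^{*}N^{-1}K_{1}=P_{\sF},
\]
so that for $h\in\sF$ we have $K_{1}^{*}N^{-1}K_{1}h=h$. Applying $\omega$ to this equality and using the pointwise identity $\omega K_{1}^{*}=K_{2}^{*}$ from \eqref{propsom} with the argument $z=N^{-1}K_{1}h\in\sZ$, I obtain
\[
\omega h=\omega K_{1}^{*}\bigl(N^{-1}K_{1}h\bigr)=K_{2}^{*}N^{-1}K_{1}h=\omega_{0}h.
\]
Combined with the vanishing on $\sF^{\perp}$ noted above, this shows $\omega P_{\sF}=K_{2}^{*}N^{-1}K_{1}$, which is precisely \eqref{eq:om}.

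There is no real obstacle here; the only subtlety worth flagging is the direction of the composition in $\omega K_{1}^{*}=K_{2}^{*}$ (it is a pointwise identity on $\sZ$, as emphasized in the remark following Lemma~\ref{lem:KK1}), which is why the argument $N^{-1}K_{1}h$ must lie in $\sZ$ for the substitution to be legal; this is automatic since $N^{-1}$ acts on $\sZ$ and $K_{1}h\in\sZ$. As a sanity check one can also verify directly that $K_{2}\omega_{0}=K_{2}K_{2}^{*}N^{-1}K_{1}=NN^{-1}K_{1}=K_{1}$ and $\omega_{0}K_{1}^{*}=K_{2}^{*}N^{-1}K_{1}K_{1}^{*}=K_{2}^{*}N^{-1}N=K_{2}^{*}$, so $\omega_{0}$ satisfies \eqref{defom-app1} on the nose, and uniqueness (already established in Lemma~\ref{lem:KK1}) forces $\omega_{0}=\omega P_{\sF}$.
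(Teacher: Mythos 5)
Your proof is correct and rests on exactly the same two ingredients as the paper's one-line argument: the projection formula $P_{\sF}=K_1^*N^{-1}K_1$ from \eqref{projFFprime} and the pointwise identity $\omega K_1^*=K_2^*$ from \eqref{propsom}. The paper simply computes $\omega P_{\sF}h=\omega K_1^*N^{-1}K_1h=K_2^*N^{-1}K_1h$ for all $h\in\sH_1$ in one stroke, so your split into $\sF$ and $\sF^\perp$ (and the concluding uniqueness check against \eqref{defom-app1}) is a mild reorganization of the same argument rather than a different route.
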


\begin{proof}[\bf Proof]
Using  the first identity in \eqref{projFFprime} and next  the first identity in \eqref{propsom} we see that
\[
\o P_\sF  h= \o (K_1^*N^{-1}K_1) h =(\o K_1^*)N^{-1}K_1 h =K_2^*N^{-1}K_1 h,  \quad h\in \sH_1,
\]
and \eqref{eq:om} is proved.
\end{proof}

\subsection{Construction of complementary operators}\label{Assec:CD}

This subsection deals with the construction of operators $C$ and $D$ satisfying \eqref{semiunit1} and \eqref{semiunit2} assuming the operators $Z$ and $B$ are given. As in Section \ref{sec:intro} the operators $Z$ and $B$ are Hilbert space operators, $Z:\sZ\to \sZ$ and   $B:\sY \to \sZ$. Moreover, we assume that $Z^*$ is pointwise stable, and  $P$ is  a strictly positive operator on $\sZ$ satisfying the Stein equation
\begin{equation}\label{lyap-A}
P-ZPZ^*=BB^*.
\end{equation}
The fact that $P$ is strictly   positive, $Z^*$ is pointwise stable  and  satisfies \eqref{lyap-A} implies  that
 \[
 W=\begin{bmatrix} B &ZB &Z^2B& \cdots\end{bmatrix}:
 \ell_+^2(\sY) \rightarrow \sZ
 \]
 defines a bounded linear operator and $P= WW^*$. Moreover,  as in Section \ref{sec:intro}, we have
\begin{equation}\label{dataW1}
 ZW=WS_\sY \ands B=WE_\sY:\sY\to \sZ.
\end{equation}
Finally, note   that  $P$ is not necessarily equal to  $WW^*$ when  $Z^*$ is not pointwise stable. For example, if $Z$ is unitary, and $P = I$, then $B=0$ and $W=0$.

 To see that $W$ is a well-defined  operator,  consider the auxiliary operators
\begin{equation}\label{def:BZ1}
B_1=P^{-\frac{1}{2}}B:\sY \to \sZ\ands  Z_1=P^{-\frac{1}{2}}ZP^{\frac{1}{2}}:\sZ\to \sZ.
\end{equation}
Multiplying   the Stein equation  $P-ZPZ^*=BB^*$  by $P^{-\frac{1}{2}}$ on the left and right yields $I-Z_1 Z_1^*=B_1B_1^*$, and hence
\begin{equation}\label{BZ1*}
\begin{bmatrix}B_1^*\\Z_1^* \end{bmatrix} :\sZ\to \begin{bmatrix}\sY\\\sZ \end{bmatrix}
\end{equation}
is an isometry.  In particular, the operator in \eqref{BZ1*} is a contraction.
But then we can apply  Lemma \ref{lem-obs}   to show that
\begin{equation}\label{obs-K}
K:=\begin{bmatrix} B_1^*\\ B_1^*Z_1^*\\ B_1^*(Z_1^*)^2\\ \vdots\end{bmatrix}:\ell_+^2(\sY)\to \sZ
\end{equation}
is a well defined bounded linear operator and $\|K\|\leq 1$. Note that the adjoint of $K$ is the operator $K^*$ given by
\[
K^*=\begin{bmatrix} B_1& Z_1 B_1& Z_1^2 B_1&\cdots \end{bmatrix}: \ell_+^2(\sY) \to \sZ.
\]
Using the definitions of  $B_1$ and $Z_1$ in \eqref{def:BZ1} we see that
\[
Z_1^n B_1=\left(P^{-\frac{1}{2}}ZP^{\frac{1}{2}}\right)^n P^{-\frac{1}{2}}B=\left(P^{-\frac{1}{2}}Z^nP^{\frac{1}{2}}\right)P^{-\frac{1}{2}}B=
P^{-\frac{1}{2}}Z^nB.
\]
Thus $P^{\frac{1}{2}}K^*=W$, and hence $W$ is a well defined operator from $\ell_+^2(\sY)$ into $\sZ$.  It is emphasized that because $P$ is strictly positive,
the operator  $Z^*$ must be  pointwise stable; see the first part of the proof of  Lemma 3.1.  The latter implies  that the solution of the Stein equation $P = Z P Z^* + BB^*$ is unique (see Lemma \ref{lem_lyap}), and thus $P=WW^*$.

\medskip
As mentioned in the Introduction (in the paragraph after formulas  \eqref{semiunit1} and \eqref{semiunit2}) there are various ways to construct admissible pairs of complementary operators. One such construction, using the Beurling-Lax-Halmos theorem, was given in the Introduction. The next proposition provides an alternative method   which has the advantage that it can be readily used in Matlab in the finite dimensional case.

\begin{proposition}\label{Aprop:CD1}  Let $Z:\sZ\to \sZ$ and $B:\sY \to \sZ$ be Hilbert space operators,   where  $Z^*$ is pointwise stable.
Moreover, assume {that  $P$ is   strictly positive operator
satisfying} the Stein equation $P= Z P Z^* + B B^*$.
Then there exists a Hilbert space $\sE$ and Hilbert space operators $C:\sZ\to \sE$ and $D: \sY\to \sE$  such that
\begin{align}
&\begin{bmatrix}
  D & C \\
  B & Z
\end{bmatrix}\begin{bmatrix}
I_\sY & 0 \\
  0 &  P
\end{bmatrix}\begin{bmatrix}
D^* & B^* \\
  C^* & Z^*
  \end{bmatrix} = \begin{bmatrix}
  I_\sE  & 0 \\
  0 & P
\end{bmatrix},\label{Asemiunit1}\\[.2cm]
&\begin{bmatrix}
D^* &   B^* \\
  C^* & Z^*
\end{bmatrix} \begin{bmatrix}
 I_\sE  & 0  \\
  0 &P^{-1}
\end{bmatrix}
\begin{bmatrix}
  D & C \\
  B & Z
\end{bmatrix}
 = \begin{bmatrix}
  I _\sY & 0  \\
  0 & P^{-1}
\end{bmatrix}. \label{Asemiunit2}
\end{align}
One such a pair of operators can be constructed in the following way.
Let $\varphi$ be any isometry from some space $\sE_0$ onto
the {null space} of
$\begin{bmatrix} B  &  Z P^{\frac{1}{2}}\end{bmatrix}$ of the form
\begin{equation}\label{def:E0}
\varphi = \begin{bmatrix}
            \varphi_1 \\
            \varphi_2 \\
          \end{bmatrix}:\sE_0 \rightarrow
\begin{bmatrix} \sY\\ \sZ \end{bmatrix}.
\end{equation}
 Define the operators $C_0$ and $D_0$ by
{\begin{equation}\label{def:C0D0}
C_0 = \varphi_2^* P^{-\frac{1}{2}}:\sZ\to \sE_0 \ands
D_0= \varphi_1^*:\sY\to \sE_0.
\end{equation}}
Finally, all  operators $C:\sZ\to \sE$ and $D: \sY\to \sE$  satisfying \eqref{Asemiunit1} and  \eqref{Asemiunit2} are given by
\begin{equation}\label{def:CD1}
C=UC_0 \!  \ands \!  D=UD_0 \quad \!\mbox{with $U:\sE_0\to\sE$  any unitary operator}.
\end{equation}
\end{proposition}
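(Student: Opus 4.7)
The plan is to recast \eqref{Asemiunit1} and \eqref{Asemiunit2} as a single unitarity requirement on a rescaled block operator, then to exploit the Stein equation to reduce matters to an elementary completion-of-a-co-isometry problem. Concretely, I would introduce
\[
N = \begin{bmatrix} D & CP^{1/2} \\ P^{-1/2}B & P^{-1/2}ZP^{1/2} \end{bmatrix}: \begin{bmatrix} \sY \\ \sZ \end{bmatrix} \to \begin{bmatrix} \sE \\ \sZ \end{bmatrix}
\]
and verify by a direct block-matrix multiplication that \eqref{Asemiunit1} is equivalent to $NN^* = I_\sE \oplus I_\sZ$ while \eqref{Asemiunit2} is equivalent to $N^*N = I_\sY \oplus I_\sZ$. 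The two conditions together therefore amount to the single statement that $N$ is unitary.

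Next, the Stein equation $P = ZPZ^* + BB^*$ rewrites as $RR^* = I_\sZ$, where $R = P^{-1/2}[B \ ZP^{1/2}]$ is the bottom block row of $N$. Hence $R$ is a co-isometry from $\sY \oplus \sZ$ onto $\sZ$, and a standard argument shows that $N$ is unitary if and only if $[D \ CP^{1/2}]^* : \sE \to \sY \oplus \sZ$ is an isometric isomorphism onto $\kr R$: when this holds, $R^*R$ and $[D \ CP^{1/2}]^*[D \ CP^{1/2}]$ are the orthogonal projections onto the mutually orthogonal subspaces $(\kr R)^\perp$ and $\kr R$, summing to the identity, while $[D \ CP^{1/2}]R^* = 0$. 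Because $P^{-1/2}$ is invertible, $\kr R$ coincides with the null space of $[B \ ZP^{1/2}]$; thus any isometry $\varphi = \sbm{\varphi_1 \\ \varphi_2}: \sE_0 \to \sY \oplus \sZ$ onto that null space produces an admissible pair via the assignments $D_0^* = \varphi_1$ and $P^{1/2}C_0^* = \varphi_2$, which rearrange to exactly the formulas in \eqref{def:C0D0}. Existence of such $\varphi$ is automatic---for instance, take $\sE_0$ to be a Hilbert-space copy of the null space itself with $\varphi$ the inclusion.

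For the uniqueness statement, let $(C,D)$ be any admissible pair with output space $\sE$. The equivalence above forces $\psi := \sbm{D^* \\ P^{1/2}C^*} : \sE \to \sY \oplus \sZ$ to be an isometry with $\im \psi = \kr R = \im \varphi$. Two isometries from Hilbert spaces onto a common closed subspace differ by a unique unitary on the domain side: explicitly, $V := \varphi^* \psi : \sE \to \sE_0$ is unitary with $\psi = \varphi V$. Setting $U := V^*$ and reading off the two block components of $\psi = \varphi V$ yields $D = UD_0$ and $C = UC_0$ with $U : \sE_0 \to \sE$ unitary. Conversely, for any unitary $U : \sE_0 \to \sE$ the pair $(UC_0, UD_0)$ is admissible, since $\sbm{(UD_0)^* \\ P^{1/2}(UC_0)^*} = \varphi U^*$ remains an isometry onto $\kr R$.

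The only real insight is the rescaling in the definition of $N$, which simultaneously absorbs the two weighted quadratic forms in \eqref{Asemiunit1}--\eqref{Asemiunit2} and places the Stein equation in precisely the slot needed to make the bottom row co-isometric; after that, the argument is a routine isometric-extension computation and I do not anticipate a serious technical obstacle.
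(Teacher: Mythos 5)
Your proof is correct and follows essentially the same route as the paper's: both introduce the rescaled operator $N$ (the paper's $M$, with $Z_1 = P^{-1/2}ZP^{1/2}$ and $B_1 = P^{-1/2}B$), observe that \eqref{Asemiunit1}--\eqref{Asemiunit2} are jointly equivalent to unitarity of $N$, use the Stein equation to make the bottom block row a co-isometry, and complete to a unitary by choosing an isometry onto $\kr[B \ ZP^{1/2}]$. Your uniqueness argument is the same underlying fact the paper delegates to Douglas factorization (two isometries onto a common subspace differ by a unitary), given here in elementary form.
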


\begin{proof}[\bf Proof]
Let $Z_1$ and $B_1$ be the operators defined by equation  \eqref{def:BZ1}. Note that  $Z_1Z_1^*+ B_1B_1^*=I$, the  identity operator on $\sZ$.  Furthermore,  the two  identities \eqref{Asemiunit1} and  \eqref{Asemiunit2} are equivalent to the statement that the operator
\begin{equation}\label{Munit}
M:= \begin{bmatrix}     D   &   C P^{\frac{1}{2}} \\
                      B_1   &   Z_1   \end{bmatrix}:
\begin{bmatrix}\sY  \\ \sZ\end{bmatrix}
\to \begin{bmatrix}\sE  \\ \sZ \end{bmatrix}
\end{equation}
is unitary.  Notice that
$\begin{bmatrix}  B  &   Z P^{\frac{1}{2}}\end{bmatrix}$
and $\begin{bmatrix}  B_1  &   Z_1 \end{bmatrix}$
have the same {null space}. By construction the operator
\begin{equation}\label{unit_phi}
\begin{bmatrix}     \varphi_1^*   &   \varphi_2^* \\
                      B_1   &   Z_1   \end{bmatrix}:
\begin{bmatrix}\sY \\ \sZ\end{bmatrix}
\to \begin{bmatrix}\sE_0   \\ \sZ \end{bmatrix}
\end{equation}
is unitary. So choosing $D = \varphi_1^*$ and $C = \varphi_2^* P^{-\frac{1}{2}}$
yields a system $\{Z,B,C,D\}$ satisfying \eqref{Asemiunit1} and
\eqref{Asemiunit2}.  It easily follows that \eqref{Asemiunit1} and
\eqref{Asemiunit2} remain true when $C$ and $D$ are multiplied with a unitary operator on the left side. Hence \eqref{Asemiunit1} and \eqref{Asemiunit2} holds for $C$ and $D$ as in \eqref{def:CD1}.

Let $\{Z,B,C,D\}$ be any system satisfying
 \eqref{Asemiunit1} and \eqref{Asemiunit2}. Because $M$ is unitary {the two operators}
\[
\varphi = \begin{bmatrix}
            \varphi_1 \\
            \varphi_2 \\
          \end{bmatrix}:\sE_0 \rightarrow
\begin{bmatrix}\sY \\ \sZ\end{bmatrix}
\quad \mbox{and} \quad
V = \begin{bmatrix}
            D^* \\
            P^{\frac{1}{2}}C^* \\
          \end{bmatrix}:\sE \rightarrow
\begin{bmatrix}\sY \\ \sZ\end{bmatrix}
\]
are   isometries whose ranges are equal to the  null space  of
$\begin{bmatrix}  B  &   Z P^{\frac{1}{2}}\end{bmatrix}$.
Therefore,  $\varphi\varphi^* = VV^*$  {is equal to}  the orthogonal projection
onto the  null space of
$\begin{bmatrix}  B  &   Z P^{\frac{1}{2}}\end{bmatrix}$. Hence  there exists a unitary
operator $U$ from $\sE_0$ onto $\sE$ satisfying
\[
\begin{bmatrix}
            \varphi_1 \\
            \varphi_2 \\
          \end{bmatrix} = \begin{bmatrix}
            D^* \\
            P^{\frac{1}{2}}C^* \\
          \end{bmatrix}U;
\]
 use the special case of  the
 Douglas factorization  presented in Lemma \ref{lem:KK1}.
 Thus,  $U \varphi_1^* = D$ and $U\varphi_2^* P^{-\frac{1}{2}} =C$.
\end{proof}

\begin{proposition} \label{Aprop:CD2}
Let $Z:\sZ\to \sZ$ and $B:\sY \to \sZ$ be Hilbert space operators where  $Z^*$ is pointwise stable. Moreover, assume  that  $P$ is   strictly positive operator satisfying  the Stein equation $P= Z P Z^* + B B^*$.   Let $C:\sZ\to \sE$ and $D: \sY\to \sE$ be Hilbert space operators  such that \eqref{Asemiunit1} and \eqref{Asemiunit2} are satisfied. Put
\begin{equation}\label{def:Atheta}
\tht(\lambda)=D^* +\lambda B^*(I-\lambda Z^*)^{-1}C^*.
\end{equation}
Then $\tht \in \sS(\sE, \sY)$ and $\tht$ is inner. Moreover,
\begin{equation}\label{eq:CD2}
\kr W=\im T_\tht, \quad C=E_\sE^*T_\tht^*S_\sY W^*P^{-1}, \quad D=\tht(0)^*,
\end{equation}
where  $W=\begin{bmatrix} B &ZB &Z^2B& \cdots\end{bmatrix}$
mapping $\ell_+^2(\sY)$ into $\sZ$  is the operator determined by  \eqref{dataW1}.
\end{proposition}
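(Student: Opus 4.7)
The plan is to establish, in order: (a) $\theta \in \sS(\sE, \sY)$ and $\theta$ is inner; (b) $D = \theta(0)^*$; (c) $C = E_\sE^*T_\theta^*S_\sY W^*P^{-1}$; and (d) $\kr W = \im T_\theta$. The key observation throughout is that $\theta$ is the transfer function of the realization $\{Z^*, C^*, B^*, D^*\}$ with state space $\sZ$, and that the two matrix identities \eqref{Asemiunit1}--\eqref{Asemiunit2} together say precisely that the similarity-transformed system matrix
$\tilde M = \sbm{D^* & B^*P^{-1/2} \\ P^{1/2}C^* & P^{1/2}Z^*P^{-1/2}}$
is unitary, giving the similarity-transformed realization $\{P^{1/2}Z^*P^{-1/2}, P^{1/2}C^*, B^*P^{-1/2}, D^*\}$. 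This immediately puts $\theta$ in the Schur class. For the inner property I would verify $\theta(\zeta)^*\theta(\zeta) = I_\sE$ for $|\zeta|=1$ directly, by expanding $\theta(\zeta)^* = D + \bar\zeta C(I-\bar\zeta Z)^{-1}B$ against $\theta(\zeta)$ and simplifying with the shift-resolvent identity $\bar\zeta Z(I-\bar\zeta Z)^{-1} = (I-\bar\zeta Z)^{-1} - I$ together with the block-entry identities $BB^* = P-ZPZ^*$, $DB^* = -CPZ^*$ and $DD^* = I-CPC^*$ extracted from \eqref{Asemiunit1}. The same manipulation will in fact also produce $\theta(\zeta)\theta(\zeta)^* = I_\sY$, so $\theta$ is bi-inner.

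Claim (b) is immediate from the formula for $\theta$. For (c) I would fix $z\in\sZ$ and compute the zeroth Fourier coefficient on $\BT$ of $\theta(\zeta)^*\cdot \zeta B^*(I-\zeta Z^*)^{-1}P^{-1}z$: the $D$-piece contributes only strictly positive powers of $\zeta$, and the other piece, after expanding both resolvents as geometric series in $\bar\zeta Z$ and $\zeta Z^*$ and matching the $\bar\zeta^m \zeta^m$ terms, collapses through the Lyapunov identity $\sum_{m\geq 0} Z^m BB^* Z^{*m} = P$ to $C P P^{-1} z = C z$.

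For (d), the inclusion $\im T_\theta\subseteq\kr W$ follows from a telescoping argument: writing $h = \theta\psi$ for $\psi \in H^2(\sE)$ and expanding $h_n$ as a finite convolution, the series $WT_\theta\psi = \sum_n Z^n B h_n$ splits into two halves that cancel thanks to $BD^* = -ZPC^*$ and $\sum_m Z^m BB^* Z^{*m} = P$. For the reverse inclusion I would invoke the Beurling-Lax-Halmos theorem to produce an inner function $\theta_0 \in \sS(\sE_0, \sY)$ with $\kr W = \im T_{\theta_0}$, then \emph{define} a candidate admissible pair $C_0 := E_{\sE_0}^* T_{\theta_0}^* S_\sY W^* P^{-1}$ and $D_0 := \theta_0(0)^*$ and verify directly that $(C_0, D_0)$ satisfies \eqref{Asemiunit1}--\eqref{Asemiunit2}. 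Once $(C_0, D_0)$ is known to be admissible, the uniqueness clause of Proposition \ref{Aprop:CD1} furnishes a unitary $U:\sE_0 \to \sE$ with $C = UC_0$ and $D = UD_0$; then $\theta(\lambda) = \theta_0(\lambda)U^*$ and hence $\im T_\theta = \theta_0 U^* H^2(\sE) = \theta_0 H^2(\sE_0) = \kr W$.

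The main obstacle will be the admissibility verification for $(C_0, D_0)$. The key tool is the orthogonal-projection identity $W^* P^{-1} W = I - T_{\theta_0} T_{\theta_0}^*$, which holds because both sides are the orthogonal projection of $\ell_+^2(\sY)$ onto $(\kr W)^\perp = \overline{\im W^*}$ (and $\overline{\im W^*} = \im W^*$ is automatic since $P = WW^*$ is strictly positive, forcing $W^*$ to have closed range). Combined with the intertwinings $S_\sY T_{\theta_0} = T_{\theta_0} S_{\sE_0}$ and $E_\sY^* T_{\theta_0} = D_0^* E_{\sE_0}^*$, each of the six block-equations packaged in \eqref{Asemiunit1}--\eqref{Asemiunit2} becomes a routine Toeplitz manipulation; for instance $E_\sY^* T_{\theta_0} T_{\theta_0}^* E_\sY = D_0^* D_0$, so $B^* P^{-1} B = E_\sY^*(I - T_{\theta_0} T_{\theta_0}^*) E_\sY = I_\sY - D_0^* D_0$, which immediately gives the $(1,1)$ block of \eqref{Asemiunit2}.
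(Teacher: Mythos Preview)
Your plan has a genuine gap in step (a). You propose to establish the inner property by verifying $\theta(\zeta)^*\theta(\zeta) = I_\sE$ pointwise at $|\zeta|=1$, using the resolvent formula $\theta(\zeta)^* = D + \bar\zeta C(I-\bar\zeta Z)^{-1}B$. But the hypotheses only give that $Z^*$ is \emph{pointwise} stable, so the spectral radius of $Z$ may equal $1$ and the resolvents $(I-\zeta Z^*)^{-1}$, $(I-\bar\zeta Z)^{-1}$ need not exist anywhere on $\BT$. Your bi-inner claim is in fact false in general: take $\sZ = \ell^2_+(\BC)$, $Z = S_\BC$, $\sY = \BC$, $B = E_\BC$; then $W = I$, $P = I$, hence $\kr W = \{0\}$, forcing $\sE = \{0\}$, $C=0$, $D=0$ and $\theta \equiv 0$, so $\theta\theta^* = 0 \neq I_\sY$. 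The paper proves the inner property without ever touching the boundary: it shows that the first block column $\Phi = T_\theta E_\sE = \sbm{D^* \\ W^*C^*}$ is an isometry (from $DD^* + CPC^* = I$) and that $\Phi\sE$ is a wandering subspace for $S_\sY$ (from $DB^* + CPZ^* = 0$), whence $T_\theta^*T_\theta = I$.

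Your route for (d), via Beurling--Lax--Halmos together with the uniqueness clause of Proposition~\ref{Aprop:CD1}, is logically sound but different from the paper's argument, which is more direct and avoids BLH entirely: the paper shows that $T_\theta T_\theta^*$ and $I - W^*P^{-1}W$ both satisfy the same Stein equation $X = S_\sY X S_\sY^* + T_\theta E_\sE E_\sE^* T_\theta^*$ and invokes Lemma~\ref{lem_lyap} to conclude they coincide. This distinction matters here because the paper subsequently \emph{derives} the Beurling--Lax--Halmos theorem from Proposition~\ref{Aprop:CD2} (see the example following Proposition~\ref{Aprop:CD3}); your argument would render that derivation circular. Your approaches to (b) and (c) are fine, though for (c) the paper's computation is shorter: from $T_\theta E_\sE = \sbm{D^* \\ W^*C^*}$ one gets $E_\sE^* T_\theta^* S_\sY W^* P^{-1} = \begin{bmatrix} D & CW \end{bmatrix}\sbm{0 \\ W^*P^{-1}} = CWW^*P^{-1} = C$ in one line.
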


\begin{proof}[\bf Proof]
The fact $\tht \in \sS(\sE, \sY)$ and $\tht$ is inner is a direct consequence of   \eqref{Asemiunit1} and the pointwise stability of $Z^*$. Indeed, from \eqref{Asemiunit1} we obtain that the realization of $\Theta$ given by the system matrix $M^*$, with $M$ as in \eqref{Munit}, has an isometric system matrix and a pointwise stable state matrix $Z_1^*=P^\half Z^*P^{-\half}$, so that the claim follows from Theorem III.10.1 in \cite{FFGK98}.  For completeness, we present a proof. Let $\Theta(\lambda) = \sum_{n=0}^\infty  \lambda_n \Theta_n$ be the Taylor series expansion for $\Theta$. Note that
$\Theta(0) = D^*$ and $\Theta_n = B^* (Z^*)^{n-1} C^*$ for all integers
$n \geq 1$. Let $\Phi$ be the operator defined by
\begin{equation}\label{TE100}
\Phi = \begin{bmatrix} \tht_0\\
 \begin{bmatrix}\tht_1 \\ \tht_2\\ \tht_3\\ \vdots\end{bmatrix}\end{bmatrix}
=\begin{bmatrix} D^*\\W^*C^*\end{bmatrix}:\sE \to
\begin{bmatrix} \sY \\ \ell_+^2(\sY) \end{bmatrix}.
\end{equation}
Because $W$ is a bounded operator mapping $\ell_+^2(\sY)$ into $\sZ$, it follows that $\Phi$ is a well defined operator. In fact, $\Phi$
is an isometry. To see this observe that \eqref{Asemiunit1} yields,
\[
\Phi^* \Phi = DD^* + C W W^* C^* = DD^* + C P C^* = I.
\]
Hence $\Phi$ is an isometry. Moreover, $ \Phi \sE$ is a wandering subspace for the unilateral shift $S_\sY$, that is, $\{S_\sY^n \Phi \sE\}_{n=0}^\infty$  forms a set of  orthogonal subspaces. To see this it is sufficient to show that
$ \Phi \sE$ is orthogonal to $S_\sY^n \Phi \sE$ for all integers
$n \geq 1$. Using $S_\sY^* W^* = W^* Z^*$, with $n \geq 1$, we obtain
\begin{align*}
\left(S_\sY^n \Phi\right)^* \Phi &= \Phi^* (S_\sY^*)^n \Phi =
\begin{bmatrix}
  D  & C W \\
\end{bmatrix} (S_\sY^*)^n \begin{bmatrix}
  D^*  \\ W^* C^* \\
\end{bmatrix}\\
 &=\begin{bmatrix}
  D  & C W \\
\end{bmatrix}  \begin{bmatrix}
  B^* (Z^*)^{n-1} C^*  \\ W^* (Z^*)^n C^* \\
\end{bmatrix}\\
&= D B^* (Z^*)^{n-1} C^* + C WW^* (Z^*)^n C^*\\
&= \left(D B^* + C P Z^*\right)(Z^*)^{n-1} C^* =0.
\end{align*}
The last equality follows from  \eqref{Asemiunit1}. Therefore   $\{S_\sY^n \Phi \sE\}_0^\infty$ forms a set of orthogonal subspaces.

The Toeplitz matrix $T_\Theta$ is determined  by
\[
T_\Theta = \begin{bmatrix}
             \Phi & S_\sY \Phi & S_\sY^2 \Phi & \cdots \\
           \end{bmatrix}.
\]
Because $\Phi$ is  an   isometry and $\Phi \sE$ is a wandering
subspace for $S_\sY$, it follow that all the columns $\{S_\sY^n \Phi\}_0^\infty$
are isometric and orthogonal. Therefore  $T_\Theta^* T_\Theta =I$
and $\Theta$ is an inner function.

Now let us  show that  $\kr W=\im T_\tht$. To this end, note that
\begin{equation}\label{TE1}
T_\tht E_\sE= \Phi
= \begin{bmatrix} D^*\\W^*C^*\end{bmatrix}:
 \sE \rightarrow \begin{bmatrix}
                   \sY \\
                   \ell_+^2(\sY) \\
                 \end{bmatrix}.
\end{equation}
 Because $P = WW^*$ is  strictly positive
the range of $W^*$ is closed. Moreover, one can directly verify that
$W^* P^{-1}W$ is the orthogonal
projection onto the range of $W^*$. Hence $I - W^* P^{-1}W$ is the orthogonal
projection onto $\kr W$. Since $T_\Theta$ is an isometry $T_\Theta T_\Theta^*$
is an orthogonal projection. We claim that $I - W^* P^{-1}W =  T_\Theta T_\Theta^*$,
and thus, $\kr W=\im T_\Theta$. To this end,  notice that
$T_\Theta T_\Theta^*$ is the unique solution to the Stein equation
\begin{equation}\label{LYAP00}
T_\Theta T_\Theta^* = S_\sY T_\Theta T_\Theta^* S_\sY^* +
T_\Theta E_\sE E_\sE^* T_\Theta^*.
\end{equation}
Because $S_\sY^*$ is pointwise stable, the solution $T_\Theta T_\Theta^*$
to this Stein equation
is unique; see Lemma \ref{lem_lyap}.  Moreover, using $W = \begin{bmatrix}
                                  B  & Z W \\
                                \end{bmatrix}$ with \eqref{Asemiunit2}, we have
\begin{align*}
& I - W^* P^{-1}W - S_\sY\big(I - W^* P^{-1}W\big)S_\sY^*\\
&=
E_\sY E_\sY^*+  S_\sY W^* P^{-1}W S_\sY^* - W^* P^{-1}W\\
&= \begin{bmatrix}
     I & 0 \\
     0 & W^* P^{-1}W \\
   \end{bmatrix} - \begin{bmatrix}
                                  B^*  \\ W^* Z^* \\
                                \end{bmatrix}P^{-1}\begin{bmatrix}
                                  B  & Z W \\
                                \end{bmatrix}\\
&=\begin{bmatrix}
     I - B^*  P^{-1} B   & - B^* P^{-1} Z W \\
     -W^* Z^* P^{-1} B   & W^* P^{-1}W - W^* Z^* P^{-1} Z W \\
   \end{bmatrix} \\
   &= \begin{bmatrix}
     D^* D            & D^* C W  \\
      W^* C^* D    & W^* C^* CW \\
   \end{bmatrix} = T_\Theta E_\sE E_\sE^* T_\Theta.
\end{align*}
So $I - W^* P^{-1}W$ is also the solution to the Stein equation
\eqref{LYAP00}. Because $S_\sY^*$ is pointwise stable, the solution
to this Stein equation is unique. Therefore
$T_\Theta  T_\Theta = I - W^* P^{-1}W$ and $\kr W = \im T_\Theta$.

It remains to prove the second and third identity in \eqref{eq:CD2}. Using \eqref{TE1} we see that
\[
E_\sE^*T_\tht^*S_\sY W^*P^{-1}=
\begin{bmatrix}  D& C W  \end{bmatrix}
\begin{bmatrix}  0 \\ W^*P^{-1}  \end{bmatrix} CWW^*P^{-1}=CPP^{-1}=C.
\]
This proves the second identity in \eqref{eq:CD2}. The third follows by taking
$\lambda=0$  in \eqref{def:Atheta}.
\end{proof}

\begin{proposition}\label{Aprop:CD3}  Let $Z:\sZ\to \sZ$ and $B:\sY \to \sZ$ be Hilbert space operators  where  $Z^*$ is pointwise stable.
Moreover, assume   that  $P$ is   strictly positive operator satisfying  the Stein equation $P= Z P Z^* + B B^*$.    Let $\Theta \in \sS(\sE, \sY)$ be any inner function such that $\kr W=\im T_\tht$, where $W$ is the operator appearing in \eqref{dataW1}. Then  the operators
\begin{equation}\label{def:CD3}
C:=E_\sE^* T_\tht^* S_\sY W^* P^{-1}: \sZ\to \sE \ands  D:=\tht(0)^*:\sY\to \sE.
\end{equation}
form an admissible pair  of complementary operators determined by $\{B, Z\}$,  that is, with this choice of $C$ and $D$ the identities  \eqref{Asemiunit1} and \eqref{Asemiunit2} are satisfied.
\end{proposition}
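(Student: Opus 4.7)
The plan is to reduce to the existence result in Proposition \ref{Aprop:CD1} via the Beurling-Lax-Halmos uniqueness of inner functions representing the same shift-invariant subspace, rather than verifying \eqref{Asemiunit1}--\eqref{Asemiunit2} by a head-on computation. Concretely: Proposition \ref{Aprop:CD1} already guarantees that \emph{some} admissible pair $(C_0, D_0)$ exists, say with output space $\sE_0$; Proposition \ref{Aprop:CD2} then produces from $(C_0, D_0)$ an inner $\tht_0 \in \sS(\sE_0, \sY)$ satisfying $\kr W = \im T_{\tht_0}$ together with the quotient-type identities $C_0 = E_{\sE_0}^* T_{\tht_0}^* S_\sY W^* P^{-1}$ and $D_0 = \tht_0(0)^*$. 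The plan is to express the given $\tht$ in terms of $\tht_0$ and then read off that the corresponding $C, D$ defined by \eqref{def:CD3} are obtained from $(C_0, D_0)$ by a unitary multiplication on the left; admissibility is then immediate from the last clause of Proposition \ref{Aprop:CD1}.

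The key step will be the comparison $\tht = \tht_0 V$ for some constant unitary $V:\sE \to \sE_0$, which I would obtain from the Beurling-Lax-Halmos uniqueness: two inner functions whose Toeplitz operators have the same range must agree up to right multiplication by a constant unitary from the right. Writing $T_V$ for the diagonal Toeplitz operator with symbol $V$, we then have $T_\tht = T_{\tht_0} T_V$, so $T_\tht^* = T_V^* T_{\tht_0}^*$, and using the straightforward identity $E_\sE^* T_V^* = V^* E_{\sE_0}^*$ one computes
\begin{align*}
C &= E_\sE^* T_\tht^* S_\sY W^* P^{-1}
   = V^* E_{\sE_0}^* T_{\tht_0}^* S_\sY W^* P^{-1} = V^* C_0,\\
D &= \tht(0)^* = (\tht_0(0) V)^* = V^* \tht_0(0)^* = V^* D_0.
\end{align*}
Since $V^*:\sE_0 \to \sE$ is unitary, Proposition \ref{Aprop:CD1} (specifically the parametrization \eqref{def:CD1} of all admissible pairs) shows at once that $(C,D)$ satisfies both \eqref{Asemiunit1} and \eqref{Asemiunit2}.

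The main obstacle is the application of Beurling-Lax-Halmos uniqueness in the form I need. The paper states Beurling-Lax-Halmos in the appendix and uses it to produce the factorization $\kr W = \im T_\tht$, but I have to pin down the uniqueness statement that if $T_{\tht} \ell^2_+(\sE) = T_{\tht_0}\ell^2_+(\sE_0)$ with both factors inner, then there is a unique constant unitary $V:\sE \to \sE_0$ with $\tht = \tht_0 V$. This follows by setting $T_V := T_{\tht_0}^* T_\tht$, observing it is an isometry from $\ell^2_+(\sE)$ to $\ell^2_+(\sE_0)$ commuting with $S_\sE, S_{\sE_0}$ (since $T_\tht, T_{\tht_0}$ are analytic Toeplitz operators), and then invoking the standard fact that such an intertwining isometry must itself be an analytic Toeplitz operator with constant unitary symbol. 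Once that lemma is in hand (either cited from the appendix or proved as an aside), the rest of the argument is a two-line identification, and no direct verification of the two block-matrix identities \eqref{Asemiunit1}--\eqref{Asemiunit2} is needed.
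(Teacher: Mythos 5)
Your approach is correct and coincides almost verbatim with the paper's own \emph{Alternative proof of Proposition \ref{Aprop:CD3}}: start from the existence result of Proposition \ref{Aprop:CD1}, pass through Proposition \ref{Aprop:CD2} to produce a reference inner function, invoke the Beurling--Lax--Halmos uniqueness to obtain a constant unitary relating $\tht$ to the reference factor, and pull back the admissible pair by that unitary. The only thing worth noting is that the paper's primary proof takes the other, head-on route, verifying \eqref{Asemiunit1} and \eqref{Asemiunit2} directly (first getting the intertwining $S_\sY^* T_\Theta E_\sE = W^* C^*$ from Douglas factorization, then reading off the block identities from $T_\Theta^* T_\Theta = I$ and $T_\Theta T_\Theta^* = I - W^* P^{-1} W$); the paper then supplies the Beurling--Lax--Halmos-based argument as an explicit alternative, which is precisely the one you have sketched, including the intertwining lemma establishing that the unitary $T_{\tht_0}^*T_\tht$ is a constant Toeplitz operator.
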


\begin{proof}[\bf Proof]
Notice that $S_\sY^* T_\Theta E_\sE$ is orthogonal to
$\im T_\Theta$. To see this simply observe that
\[
T_\Theta^* S_\sY^* T_\Theta E_\sE =S_\sE^* T_\Theta^*  T_\Theta E_\sE =
S_\sE^*   E_\sE =0.
\]
Because $\im T_\Theta = \kr W$, we see that  the range of  $S_\sY^* T_\Theta E_\sE$ is  contained  in the range of $W^*$. Since $P= WW^*$ is strictly positive, the range of $W^*$ is closed and $W^*$ is one to one.  Hence $\kr W^*=\{0\}$. By another implication of the Douglas factorization lemma, see e.g., \cite[Lemma XVII.5.2]{GGK2},  we obtain that there  exists a unique operator $C$ mapping $\sZ$ into $\sE$ such that $S_\sY^* T_\Theta E_\sE= W^* C^*$.
By taking the adjoint  we have  $C W = E_\sE^*T_\Theta^* S_\sY$. Hence
\[
C = C W W^* P^{-1} = E_\sE^*T_\Theta^* S_\sY W^* P^{-1}.
\]
In other words, $C$ is determined by the first equation in \eqref{def:CD3}.
By taking the Fourier transform we get
\begin{align*}
\Theta(\lambda) &= E_\sY^*(I-\l S_\sY^*)^{-1}T_\Theta E_\sE=
\Theta(0) + \lambda E_\sY^*(I-\l S_\sY^*)^{-1} S_\sY^* T_\Theta E_\sE\  \\
&= D^* +  \lambda E_\sY^*(I-\l S_\sY^*)^{-1} W^* C^* =
D^* + \lambda B^*(I- \lambda Z^*)^{-1}C^*.
\end{align*}
In other words, $\Theta(\lambda) = D^* + \lambda B^*(I- \lambda Z^*)^{-1}C^*$
and \eqref{eq:CD2} holds.

To derive \eqref{Asemiunit1}
 recall that $W^* C^* = S_\sY^* T_\Theta E_\sE$. Hence
\begin{align*}
DD^* + C P C^* &= \Theta(0)^*\Theta(0) + C WW^* C^*\\ &=
E_\sE^* T_\Theta^* E_\sY E_\sY^* T_\Theta E_\sE +
E_\sE^* T_\Theta^* S_\sY S_\sY^* T_\Theta E_\sE\\
&= E_\sE^* T_\Theta^* T_\Theta E_\sE =I.
\end{align*}
Hence $DD^* + C P C^* =I$. Moreover,
\begin{align*}
B D^* + Z P C^* &= \begin{bmatrix}
                    B & Z W \\
                  \end{bmatrix}\begin{bmatrix}
                    \Theta(0) \\ W^* C^* \\
                  \end{bmatrix}
                  =\begin{bmatrix}
                    B & Z W \\
                  \end{bmatrix}\begin{bmatrix}
                    \Theta(0) \\ S_\sY^* T_\Theta E_\sE \\
                  \end{bmatrix}\\
                  &=  W T_\Theta E_\sE =0.
\end{align*}
Thus $B D^* + Z P C^*=0$. This with $P = BB^* + Z P Z^*$,
yields \eqref{Asemiunit1}.

To obtain \eqref{Asemiunit2}, notice that $T_\Theta$ admits a decomposition of the form
\[
T_\Theta = \begin{bmatrix}
             D^*      & 0  \\
             W^* C^*  & T_\Theta \\
           \end{bmatrix}: \begin{bmatrix}
             \sE  \\
             \ell_+^2(\sE) \\
           \end{bmatrix}\rightarrow \begin{bmatrix}
             \sY  \\
             \ell_+^2(\sY) \\
           \end{bmatrix}.
\]
Because $\kr W = \im T_\Theta$ and $W^*P^{-1}W$ is the orthogonal
projection onto the range of $W^*$, we have
$T_\Theta T_\Theta^* = I - W^*P^{-1}W$. Using
$W = \begin{bmatrix}
       B  & Z W \\
     \end{bmatrix}$,   we obtain
\begin{align*}
&\begin{bmatrix}
  I - B^*P^{-1}B  & - B^*P^{-1}Z W  \\
  - W^* Z^* P^{-1} B & I - W^*Z^* P^{-1}Z W \\
\end{bmatrix} =I - \begin{bmatrix}
   B^*      \\
  W^* Z^*   \\
\end{bmatrix}P^{-1} \begin{bmatrix}
  B &  Z W  \\
\end{bmatrix}\\
&= I - W^*P^{-1}W = T_\Theta T_\Theta^* =
\begin{bmatrix}
  D^*D        & D^* C W  \\
  W^* C^* D   & W^* C^*C W + T_\Theta T_\Theta^* \\
\end{bmatrix}\\
&= \begin{bmatrix}
  D^*D        & D^* C W  \\
  W^* C^* D   & W^* C^*C W + I - W^* P^{-1} W \\
\end{bmatrix}.
\end{align*}
By comparing the upper left hand corner of the first and
last matrices, we have
$D^*D + B^* P^{-1} B =I$.  Because $W$ is onto,  comparing the
upper right hand corner shows that $D^* C + B^* P^{-1} Z = 0$.
Since $W^*$ is one to one, comparing the lower right hand
corner shows that $P^{-1} = Z P^{-1}Z^* + C^*C$.
This yields \eqref{Asemiunit2}. Therefore $\{C, D\}$ is
 an admissible pair of complementary operators.
\end{proof}

\begin{proof}[\bf Alternative proof of Proposition \ref{Aprop:CD3}]
To gain  some further insight, let us  derive Proposition \ref{Aprop:CD3} as a corollary of Proposition \ref{Aprop:CD2}  using the uniqueness part of the Beurling-Lax-Halmos theorem;  see \cite[Theorem 3.1.1]{FB10}.

Let $\wt{C}:\sZ\to \wt{\sE}$ and $\wt{D}: \sY\to \wt{\sE}$ be Hilbert space operators  such that \eqref{Asemiunit1} and \eqref{Asemiunit2} are satisfied with $\wt{C}$ and $\wt{D}$ in place of $C$ and $D$, respectively.  Set
\[
\wt{\Theta}(\lambda)=\wt{D}^* +\lambda B^*(I-\lambda Z^*)^{-1}\wt{C}^*.
\]
Then, by Proposition \ref{Aprop:CD2}, the function $\wt{\tht}$ is inner and $\kr W=\im T_{\wt{\tht}}$. Thus $\im T_{\wt{\tht}}=\im T_\tht$, and hence using the uniqueness part of the Beurling-Lax-Halmos theorem there exists a unitary operator $U$ from $\wt{\sE}$ onto $\sE$ such that
\[
\tht(\lambda)U=\wt{\tht}(\lambda) \qquad (\lambda\in \BD).
\]
Now put $C=U\wt{C}$ and $D=U\wt{D}$. From the final part of  Proposition \ref{Aprop:CD1} we know $\{C, D\}$  form an admissible pair  of complementary operators determined by $\{B, Z\}$.

It remains to show that $C$ and $D$ are given by  \eqref{def:CD3}.  From the second and third identity in \eqref{eq:CD2} we know that
\begin{equation}\label{wtCD}
 \wt{C}=E_{\wt{\sE}}^*T_{\wt{\tht}}^*S_\sY W^*P^{-1} \ands \wt{D}=\wt{\tht}(0)^*.
\end{equation}
Since $U:\wt{\sE}\to \sE$ is unitary we have $UE_{\wt{\sE}}^*T_{\wt{\tht}}^*=E_\sE^* T_\tht^* $. Thus the first identity in \eqref{wtCD} shows that $C=U\wt{C}$ is given by the first identity in \eqref{def:CD3}.  Similarly, we have
\[
D=U\wt{D}=U\wt{\tht}(0)^*=U\big(\tht(0)U\big)^*=\tht(0)^*,
\]
which proves the second identity in \eqref{def:CD3}.
\end{proof}

\paragraph{\bf An example}  Let $\sM$ be a subspace of $\ell_+^2(\sY)$ invariant under the block forward shift $S_\sY$. The Beurling-Lax-Halmos theorem   \cite[Theorem 3.1.1]{FB10} tells us that there exist a Hilbert space $\sE$ and an inner function
$\Theta\in \sS(\sE, \sY)$ such that $\sM=\im T_\tht$. Moreover, if $\Psi$ is an inner function
in $\sS(\sE_\circ, \sY)$ satisfying  $\sM=\im T_\Psi$,
then $\Theta(\lambda) U = \Psi(\lambda) $ where $U$ is a
constant unitary operator mapping $\sE_\circ$ into $\sE$.

We shall derive this result as a special case   of  Proposition \ref{Aprop:CD2}.  Put $\sZ=\ell_+^2(\sY)\ominus \sM$, and define
\begin{equation}
Z= \Pi_\sZ S_\sY \Pi_\sZ ^*: \sZ\to \sZ \ands
B=\Pi_\sZ  E_\sY:\sY\to \sZ. \label{BLH-ZB}
\end{equation}
Note that $Z$ is the compression of $S_\sY$  to  $\sZ$, and $\sZ$ is an invariant subspace for $S_\sY^*$.  Let $W$ be the operator mapping $\ell_+^2(\sY)$ onto   $\sZ$ defined by $W = \Pi_\sZ$. Since $\sM$ is   an invariant subspace for $S_\sY$, we have
\[
S_\sY=\begin{bmatrix}Z&0\\ \star &\star \end{bmatrix}:
\begin{bmatrix}\sZ\\ \sM  \end{bmatrix}\to \begin{bmatrix}\sZ\\ \sM  \end{bmatrix}
\]
where $\star$ represents an unspecified entry.  In particular, this implies that
\[
W S_\sY = \begin{bmatrix}
            I & 0 \\
          \end{bmatrix}\begin{bmatrix}Z&0\\ \star &\star \end{bmatrix}
= \begin{bmatrix}
            Z & 0 \\
          \end{bmatrix} = Z \begin{bmatrix}
            I & 0 \\
          \end{bmatrix} = Z W.
\]
Hence $Z W = W S_\sY$. By construction $B = W E_\sY$. Thus
$I = WW^*$ is the unique solution to the Stein equation
$P = Z P Z^* + BB^*$.

%
%
The fact that $W=\Pi_\sZ$ implies that $\kr W=\ell_+^2(\sY)\ominus \sZ=\sM$. But then Proposition \ref{Aprop:CD2} tells us that there exist a Hilbert space $\sE$ and an inner function $\tht\in \sS(\sE, \sY)$ such $\sM= \im T_\tht$ which is the Beurling-Lax-Halmos result.  Moreover, Propositions \ref{Aprop:CD1}  and \ref{Aprop:CD2} together provide a procedure to construct $\tht$ .

To prove the uniqueness, assume that
$\Psi$ is another  inner function
in $\sS(\sE_\circ, \sY)$ satisfying  $\sM=\im T_\Psi$.
Because $T_\Theta$ and $T_\Psi$ are two isometries
whose range equals $\sM$, it follows that
$T_\Theta T_\Theta^*= T_\Psi T_\Psi^* = P_\sM$,
the orthogonal projection onto
$\sM$. According to the  variant of the Douglas factorization lemma  discussed in the preceding subsection (see Lemma \ref{lem:KK1})
we have $T_\Theta V= T_\Psi$ where $V$ is a unitary operator
from $\ell_+^2(\sE_\circ)$ onto $\ell_+^2(\sE)$.
 Because $S_\sY T_\Theta = T_\Theta S_{\sE}$ and
 $S_\sY T_\Psi = T_\Psi S_{\sE_\circ}$, we see that
 $S_{\sE} V = V S_{\sE_\circ}$.
 So $V$ is a lower triangular unitary Toeplitz operator. Hence
 $V = T_U$ where $U$ is  a constant function on $\BD$ whose value is a unitary operator, also denoted by $U$, mapping $\sE_\circ$
 into $\sE$. Therefore  $\Theta(\lambda) U = \Psi(\lambda) $.


\subsection{Construction of a co-isometric realization}\label{Assec:co-iso}

In Section \ref{sec:ONP-coiso} an important role is played by the classical fact  that an operator-valued function $F$ is a Schur class function  if and only if $F$ admits an observable co-isometric realization (see Theorem  \ref{thm:co}).  The ``if part''  in this theorem  is straightforward and holds true for any contraction. Indeed,  assume
that
\begin{equation}\label{M}
M = \begin{bmatrix}
      \delta & \gamma \\
      \beta  & \alpha \\
    \end{bmatrix}:
    \begin{bmatrix}
    \mathcal{U} \\
      \mathcal{X} \\
    \end{bmatrix}\rightarrow\begin{bmatrix} \mathcal{Y}\\\mathcal{X} \\ \end{bmatrix}
\end{equation}
 is a contraction.  Then $\alpha$ is a contraction, and thus  $(I - \lambda \alpha)^{-1}$ is well defined
for all $\lambda$ in the open unit disc $\mathbb{D}$.
Hence $F(\lambda) = \delta + \lambda \gamma (I - \lambda \alpha)^{-1} \beta$
is analytic in $\mathbb{D}$.
Now observe that for $u$ in $\mathcal{U}$, we have
\[
\begin{bmatrix}
  F(\lambda)u \\
   (I - \lambda \alpha)^{-1} \beta u \\
\end{bmatrix}
= \begin{bmatrix}
      \delta & \gamma \\
      \beta  & \alpha \\
    \end{bmatrix} \begin{bmatrix}
  u \\
  \lambda (I - \lambda \alpha)^{-1} \beta u \\
\end{bmatrix}.
\]
Using the fact that $M$ is contraction, we see that
\[
\|F(\lambda) u\|^2 \leq
\|F(\lambda) u\|^2 + \|(I - \lambda \alpha)^{-1} \beta u\|^2(1 - |\lambda|^2) \leq \|u\|^2.
\]
Hence  $\|F(\lambda)\| \leq 1$ for each $\lambda \in \mathbb{D}$.
Therefore $F$ is in the Schur class $\mathcal{S}(\mathcal{U},\mathcal{Y})$.

The only ``only if part''  is much less trivial and has a long and interesting history (see the paragraph directly after Theorem \ref{thm:co}).    Here we present an alternative proof  of  the ``only if part'' inspired by the proof of Theorem \ref{thm:allsol1}; see the end of this section for more details.

\begin{proof}[\bf Proof of the ``only if'' part of  Theorem \ref{thm:co}]
Let $F\in S(\sU,\sY)$, and let  $T= T_F$  be the block Toeplitz
 operator mapping $\ell_+^2(\sU)$ into $\ell_+^2(\sY)$
 defined by $F$. The fact that $F$ is a Schur class
 function implies that $T$ is a contraction, and
 hence the defect operator $D_{T^*}=(I-T T^*)^{\frac{1}{2}}$
 is well defined. With $T$ we associate the following two auxiliary operators:
\begin{align*}
K&=\begin{bmatrix} E_\sY & S_\sY D_{T^*}\end{bmatrix}:
\begin{bmatrix}\sY \\[.1cm]  \ell_+^2(\sY)\end{bmatrix}\to \ell_+^2(\sY),\\
L&=\begin{bmatrix} T E_\sU &   D_{T^*}\end{bmatrix}:
\begin{bmatrix}\sU \\[.1cm]  \ell_+^2(\sY)\end{bmatrix}\to \ell_+^2(\sY).
\end{align*}
Here $D_{T^*}$ is the positive square root of
$I - TT^*$.

\smallskip\noindent
\textsc{Part 1.} We first show that there exists a co-isometry $M$ mapping  $ \sU\oplus \ell_+^2(\sY)$ into $ \sY\oplus \ell_+^2(\sY)$ such that $KM=L$. To see this, note that
\begin{align*}
KK^*&=E_\sY E_\sY^*+S_\sY (I-T T^*)S_\sY^*=
E_\sY E_\sY^*+S_\sY S_\sY^*-TS_\sU S_\sU^*T^* \\
&=I_{\ell_+^2(\sY)} - TS_\sU S_\sU^* T^*;\\
LL^*&=T E_\sU E_\sU^* T^*+(I-T T^*)=I_{\ell_+^2(\sY)} -T(I- E_\sU E_\sU^* )T^*\\
&=I_{\ell_+^2(\sY)} - TS_\sU S_\sU^* T^*.
\end{align*}
Thus $KK^*=LL^*$. It follows (apply  Lemma  \ref{lem:KK1}  with $K_1=K$ and $K_2=L$) that there exists a unique unitary operator $\tau_1$ mapping $\overline{\im K^*}$  onto $\overline{\im L^*}$ such $\tau_1 K^*f=L^* f$ for each $f\in  \ell_+^2(\sY)$. Furthermore, $\begin{bmatrix} y&x \end{bmatrix}{}^\top \in \kr K$ if and only if   $y=0$ and  $x\in\kr D_{T^*}$. The latter implies that the operator $\tau_2$ from $ \kr K$ to $\kr L$ defined by
\[
\tau_2  \begin{bmatrix} 0\\ x \end{bmatrix}= \begin{bmatrix} 0\\ x \end{bmatrix}, \quad x\in \kr D_{T^*}
\]
is a well defined isometry from  $ \kr K$ to $\kr L$. Since
\[
\overline{\im K^*} \oplus \kr K= \sY\oplus \ell_+^2(\sY) \ands \overline{\im L^*} \oplus \kr L = \sU\oplus \ell_+^2(\sY).
\]
It follows that $N=\tau_1 \oplus \tau_2$ is an isometry from $ \sY\oplus \ell_+^2(\sY)$ into  $ \sU\oplus \ell_+^2(\sY)$ such that $NK^*=L^*$.
But then $M=N^*$ is a co-isometry from $ \sU\oplus \ell_+^2(\sY)$ into $ \sY\oplus \ell_+^2(\sY)$ such that $KM=L$.

We partition $M$ as a $2\ts 2$ operator matrix using the Hilbert space direct sums $\sU\oplus \ell_+^2(\sY)$ and  $\sY\oplus \ell_+^2(\sY)$, as follows:
\[
M= \begin{bmatrix}
      \delta & \gamma \\
      \beta  & \alpha \\
    \end{bmatrix}:  \begin{bmatrix}\sU \\[.1cm]  \ell_+^2(\sY)\end{bmatrix}\to \begin{bmatrix}\sY \\[.1cm]  \ell_+^2(\sY)\end{bmatrix}.
\]
Finally, using this decomposition with $K M = L$, we obtain
\[
\begin{bmatrix} E_\sY & S_\sY D_{T^*}\end{bmatrix}
\begin{bmatrix}
      \delta & \gamma \\
      \beta  & \alpha \\
    \end{bmatrix} = \begin{bmatrix} T E_\sU &   D_{T^*}\end{bmatrix}.
\]

\smallskip\noindent
\textsc{Part 2.} We show that $F$ is given by the state space realization
\begin{equation}\label{realFa}
F(\lambda) = \delta +  \lambda \gamma (I - \lambda \alpha)^{-1} \beta
\qquad (\lambda \in \mathbb{D}).
\end{equation}
Since $M$ is a co-isometry, $M$ is a contraction, and hence
 the operator $\begin{bmatrix}\gamma &\a \end{bmatrix}{}^\top$
is also a contraction. But then we can apply  Lemma  3.1 in \cite{FFK02} (see Lemma  \ref{lem-obs} below) to show that the  observability  operator
\begin{equation}\label{defGa1}
\ga:= \begin{bmatrix}\gamma\\ \gamma\a\\ \gamma\a^2\\ \vdots \end{bmatrix}:
\sZ  \to \ell_+^2(\sY)
\end{equation}
is well defined and a contraction. Note that
\begin{equation}\label{prGa1}
\Gamma-S_{\sY} \Gamma\a=E_\sY \gamma.
\end{equation}
Furthermore,  the identity $ KM=L$ is equivalent to
\begin{equation}\label{KML1}
T E_\sU= E_\sY \d + S_{\sY} D_{T^*}\beta \ands
D_{T^*}= E_\sY \gamma +S_{\sY} D_{T^*}\a .
\end{equation}
Using the second identity in \eqref{KML1} along with \eqref{prGa1} we see that
\begin{align}
D_{T^*}-\ga&=\left(E_\sY \gamma +S_{\sY} D_{T^*}\a \right)-\left(E_\sY \gamma +S_{\sY} \Gamma\a\right)=S_{\sY}(D_{T^*}-\ga)\a \nn \\[.1cm]
&=S_{\sY}^n(D_{T^*}-\ga)\a^n, \quad n=0, 1, 2, \cdots.\label{KLM2}
\end{align}
Since    $\a$ is a contraction and  $S_\sY^*$  is pointwise stable, it follows  that for each $f\in \ell_+^2(\sY)$ we have
\[
(D_{T^*}- \Gamma^*)f
= (\a^*)^n \Big(D_{T^*}-\ga^*\Big)(S_\sY^*)^n f \to 0 \quad (n\to \iy).
\]
But then we have $\ga= D_{T^*}$. Thus, by the first identity in
 \eqref{KML1},  we obtain
\begin{align*}
TE_\sU&=  E_\sY \d + S_{\sY} D_{T^*}\beta =E_\sY \d + S_{\sY}\ga \b\\[.1cm]
=&
\begin{bmatrix} \d \\ 0 \\0 \\0 \\ \vdots  \end{bmatrix} +
\begin{bmatrix} 0 \\ \g \\ \g\a\\ \g\a^2\\ \vdots  \end{bmatrix} \b.
\end{align*}
Since the first column of $T$ is given by the Fourier coefficients $F_0, F_1, F_2, \ldots $ of the Schur class function $F$, we conclude that
\[
F_0=\d \ands F_n=\g \a^{n-1}\beta, \quad n=1, 2, \ldots.
\]
This proves \eqref{realFa}.
\end{proof}

\begin{lemma}\label{lem-obs}
\textup{(\cite[Lemma 3.1]{FFK02})}
Assume that $\begin{bmatrix}\gamma &\alpha \end{bmatrix}{}^\top$ is a contraction mapping $\mathcal{Z}$ into $\sY \oplus \mathcal{Z}$. Then the observability operator $\Gamma = \col \big[\g \a^j\big]_{j=0}^\infty$ is also a contraction mapping $\mathcal{Z}$ into $\ell_+^2(\sY)$.
\end{lemma}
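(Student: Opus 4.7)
The plan is to derive the contraction property of $\Gamma$ from the hypothesis by a simple telescoping argument applied to the iterates $\alpha^j z$.

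First, I would unpack the hypothesis: saying that $\begin{bmatrix}\gamma \\ \alpha\end{bmatrix}$ is a contraction from $\sZ$ into $\sY\oplus\sZ$ is equivalent to the pointwise inequality
\[
\|\gamma z\|^2+\|\alpha z\|^2\leq \|z\|^2\qquad (z\in \sZ).
\]
Replacing $z$ by $\alpha^j z$ for $j\geq 0$ yields
\[
\|\gamma \alpha^j z\|^2\leq \|\alpha^j z\|^2-\|\alpha^{j+1} z\|^2,
\]
which is exactly a telescoping bound.

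Next, I would sum this from $j=0$ to $j=n$, obtaining
\[
\sum_{j=0}^n \|\gamma \alpha^j z\|^2 \leq \|z\|^2-\|\alpha^{n+1} z\|^2 \leq \|z\|^2
\]
for every integer $n\geq 0$. This shows that the partial sums of nonnegative terms $\|\gamma\alpha^j z\|^2$ are uniformly bounded by $\|z\|^2$, so the series converges. Consequently, for each $z\in\sZ$ the sequence $\col \big[\g \a^j z\big]_{j=0}^\infty$ lies in $\ell^2_+(\sY)$, confirming that $\Gamma z$ is a well-defined element of $\ell^2_+(\sY)$, and
\[
\|\Gamma z\|^2 = \sum_{j=0}^\infty \|\gamma\alpha^j z\|^2 \leq \|z\|^2.
\]

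Finally, I would note that linearity of $\Gamma$ is immediate from its definition, so the estimate above shows that $\Gamma$ is a bounded linear operator with $\|\Gamma\|\leq 1$. There is no real obstacle here; the proof is essentially the one-line telescoping identity, combined with the observation that the tail $\|\alpha^{n+1}z\|^2\geq 0$ can simply be dropped to produce the desired uniform bound.
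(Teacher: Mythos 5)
Your proof is correct and uses essentially the same idea as the paper: the paper carries out the identical recursion at the level of operator inequalities ($I \geq \gamma^*\gamma + \alpha^*\alpha$, then substituting recursively to get $I \geq \sum_{j=0}^n \alpha^{*j}\gamma^*\gamma\alpha^j + \alpha^{*n+1}\alpha^{n+1}$), while you phrase the same telescoping argument pointwise on vectors $z$, which is an equivalent and arguably more transparent presentation.
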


\begin{proof}[\bf Proof]
Because $\begin{bmatrix}\gamma &\alpha \end{bmatrix}{}^\top$ is a contraction,
$I \geq \gamma^* \gamma + \alpha^* \alpha$. By recursively using this fact,
we obtain
\begin{align*}
I &\geq \gamma^* \gamma + \alpha^* \alpha \geq
\gamma^* \gamma + \alpha^*\left(\gamma^* \gamma + \alpha^* \alpha\right) \alpha\\[.1cm]
&\geq \gamma^* \gamma + \alpha^*\gamma^* \gamma \alpha +
\alpha^{*2}\left(\gamma^* \gamma + \alpha^* \alpha\right) \alpha^2\cdots \\
&\geq \sum_{j=0}^n \alpha^{*j}\gamma^* \gamma \alpha^j + \alpha^{*n+1}\alpha^{n+1}, \quad
n=0, 1,2, \dots.
\end{align*}
In particular,
$I \geq \sum_{0}^n \alpha^{*j}\gamma^* \gamma \alpha^j$ for any integer
$n \geq 0$. Therefore $I \geq \Gamma^*\Gamma$ and $\Gamma$ is a contraction.
\end{proof}

\subsection{Outer functions}
\label{sec-out}
The first lemma presented in this section plays an important role in the proof of  Proposition~\ref{prop:Ups22a}.
 Recall that an operator-valued   function $\Phi$ whose values are operators mapping $\sU$ into $\sY$ is called \emph{outer} if  $\Phi$ is analytic on $\BD$, for each $u\in \sU$ the function $\Phi(\cdot)u$ is in  $H^2(\sY)$, and $\Phi(\cdot)\sU$ is cyclic with respect to the forward shift on $H^2(\sY)$. The latter is equivalent to the following condition:
\begin{equation}\label{def:outer}
\bigvee_{n\geq 0}S_\sY^n
 \begin{bmatrix}
\Phi_0\\ \Phi_1\\ \Phi_2\\ \vdots
\end{bmatrix}\sU =\ell_+^2(\sY) \qquad   \mbox{where }
\Phi(\lambda) = \sum_{j=0}^\infty \lambda^j \Phi_j.
\end{equation}
The following result has  its roots in  \cite{FFGK98} and its proof is presented for the sake of  completeness.

\begin{lemma}\label{lem:outer1} Let $A$ be a strict contraction  mapping $\ell_+^2(\sU)$ into an auxiliary Hilbert space $\sH^\prime$  satisfying the inequality $S_\sU^* A^* A S_\sU \leq A^*A$. Then
\[
\Phi(\lambda) = E_\sU^*(I - \lambda S_\sU^*)^{-1} (I- A^*A)^{-1}E_\sU, \quad \l\in \BD,
\]
is an outer function.  Furthermore, there exists a function $\Psi\in H^\infty(\sU,\sU)$ such that $\Psi(\l)\Phi(\l)u=u$ for each $u\in\sU$ and $\l\in\BD$. In particular, if $\Phi(\l)$ is invertible for each $\l\in\BD$, then $\Phi(\l)^{-1}$ is in $H^\infty(\sU,\sU)$.
\end{lemma}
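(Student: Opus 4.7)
The plan is to prove the three claims together. The key step is the construction of an $H^\infty(\sU,\sU)$ left inverse $\Psi$ with $\Psi(\lambda)\Phi(\lambda)=I_\sU$ for all $\lambda\in\BD$; outerness of $\Phi$ and the final invertibility statement will both follow from this.

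First I would reformulate the hypothesis. Setting $D_A^2:=I-A^*A$, which is strictly positive and boundedly invertible since $A$ is a strict contraction, and using $S_\sU^*S_\sU=I_{\ell_+^2(\sU)}$, the condition $S_\sU^*A^*AS_\sU\leq A^*A$ is equivalent to the Stein-type inequality
\[
S_\sU^*D_A^2 S_\sU\ \geq\ D_A^2.
\]
The operator identity $(I-A^*A)^{-1}=I+A^*(I-AA^*)^{-1}A$ combined with $(I-\lambda S_\sU^*)^{-1}E_\sU=E_\sU$ (which holds because $S_\sU^*E_\sU=0$) gives the representation
\[
\Phi(\lambda)=I_\sU+E_\sU^*(I-\lambda S_\sU^*)^{-1}A^*(I-AA^*)^{-1}AE_\sU,\qquad\lambda\in\BD.
\]
With $A_0:=AE_\sU$ and $A_+:=AS_\sU$, a Schur-complement calculation for the block decomposition of $I-A^*A$ along $\ell_+^2(\sU)=E_\sU\sU\oplus S_\sU\ell_+^2(\sU)$ then yields $\Phi(0)^{-1}=I_\sU-A_0^*(I-A_+A_+^*)^{-1}A_0$, where $I-A_+A_+^*$ is boundedly invertible thanks to the Stein inequality. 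In particular $\Phi(0)$ is strictly positive and boundedly invertible.

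The heart of the argument is the construction of $\Psi$. Motivated by the Schur-complement form of $\Phi(0)^{-1}$ above, I would seek $\Psi$ of the state-space form
\[
\Psi(\lambda)=\Phi(0)^{-1}-\lambda\,\Phi(0)^{-1}\gamma(I-\lambda\alpha)^{-1}\beta,
\]
with operators $\alpha,\beta,\gamma$ built from $A$, $S_\sU$, and $D_A$, chosen so that $\alpha$ is a genuine contraction on an auxiliary Hilbert space (thus giving $\Psi\in H^\infty$ via the uniform bound $\|(I-\lambda\alpha)^{-1}\|\leq(1-\|\alpha\|)^{-1}$). Verification of $\Psi\Phi=I_\sU$ reduces to an algebraic identity whose crucial cancellations rely precisely on the Stein inequality $S_\sU^*D_A^2 S_\sU\geq D_A^2$. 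The main obstacle is to identify the right realization triple $(\alpha,\beta,\gamma)$ with $\|\alpha\|<1$: naive candidates such as the conjugated shift $D_A S_\sU D_A^{-1}$ have operator norm at least one, so a more refined construction that exploits both the Stein inequality and the strict contractivity of $A$ is required.

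Given $\Psi\in H^\infty(\sU,\sU)$ with $\Psi\Phi=I_\sU$, outerness of $\Phi$ follows by a Beurling-Lax-Halmos argument. The closed forward-shift-invariant subspace $\mathcal{M}\subset H^2(\sU)$ generated by $\Phi(\cdot)\sU$ takes the form $\mathcal{M}=T_\Theta H^2(\sE)$ for some inner $\Theta\in\sS(\sE,\sU)$; writing each $\Phi u=T_\Theta g_u$ with $g_u\in H^2(\sE)$ and using the bounded Toeplitz operator $T_\Psi$, the identity $T_\Psi(p\Phi u)=pu$ valid for every polynomial $p$ shows that $T_\Psi$ sends $\mathcal{M}$ onto a dense subspace of $H^2(\sU)$. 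Combining this with the pointwise invertibility of $\Phi(0)$ and the isometric property of $T_\Theta$ forces $\Theta$ to be a constant unitary, whence $\mathcal{M}=\ell_+^2(\sU)$, establishing outerness. The final statement is then immediate: if $\Phi(\lambda)$ is invertible for every $\lambda\in\BD$, pointwise inversion of $\Psi(\lambda)\Phi(\lambda)=I_\sU$ gives $\Phi(\lambda)^{-1}=\Psi(\lambda)$, so $\Phi^{-1}=\Psi\in H^\infty(\sU,\sU)$.
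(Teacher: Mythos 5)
Your proposal has a genuine gap at the step you yourself identify as the heart of the argument: the construction of an $H^\infty(\sU,\sU)$ left inverse $\Psi$. You propose to realize $\Psi$ in state-space form with a state operator $\alpha$ satisfying $\|\alpha\|<1$, observe that the natural candidate $D_A S_\sU D_A^{-1}$ has norm at least one, and then write that ``a more refined construction \ldots\ is required'' without producing one. That is exactly the hard part, so the proof is not complete. (As an aside, $\|\alpha\|<1$ is not the right target anyway: boundedness of the transfer function only requires, say, a contractive system matrix, not a state operator of norm less than one; but even that weaker goal is not achieved here.) The logical order you chose also runs against the grain of the problem: the paper's construction of $\Psi$ (Part~2 of the proof of Lemma~\ref{lem:outer2}) uses the outerness of $\Phi$ to know that the range of the ``multiplication by $\Phi$'' map on polynomials is dense, and then extends its bounded inverse to all of $\ell_+^2(\sU)$, recognizing it as a shift-commuting (hence Toeplitz) operator $T_\Psi$. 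If you want to go the other way---build $\Psi$ first, then deduce outerness via Beurling--Lax--Halmos---you need an independent construction of $\Psi$, which is precisely what is missing.

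The paper proves outerness directly, and differently from what you sketch: from $\Omega\leq S_\sU^*\Omega S_\sU$ (your reformulation $S_\sU^*D_A^2S_\sU\geq D_A^2$, with $\Omega=I-A^*A$) it invokes the Douglas factorization lemma to produce a contraction $C$ on $\ell_+^2(\sU)$ with $C\,\Omega^{1/2}S_\sU=\Omega^{1/2}$. If $h$ is orthogonal to $S_\sU^n\Omega^{-1}E_\sU\sU$ for all $n\geq 0$, one deduces the recursion $\Omega^{-1/2}h=C^{*n}\Omega^{-1/2}S_\sU^{*n}h$; contractivity of $C$ together with pointwise stability of $S_\sU^*$ then forces $h=0$, so the span of $\{S_\sU^n\Omega^{-1}E_\sU\sU\}$ is all of $\ell_+^2(\sU)$, i.e.\ $\Phi$ is outer. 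Only after this does the paper obtain $\Psi$, via a lower bound of the form $\|\Omega^{1/2}Mu\|^2\geq\sum_j\|\Delta^{1/2}u_j\|^2$ (with $\Delta=E_\sU^*\Omega^{-1}E_\sU$) on compactly supported $u$, which again exploits the Stein inequality directly. Your preliminary observations (the Stein reformulation, the identity $(I-A^*A)^{-1}=I+A^*(I-AA^*)^{-1}A$, and the Schur-complement computation of $\Phi(0)^{-1}$) are all correct and could be useful auxiliaries, but they do not substitute for the missing construction. I would suggest either (i) adopting the paper's order---prove outerness via Douglas factorization and the contraction $C$, then get $\Psi$ from the lower bound and density---or (ii) if you wish to keep your order, actually exhibit a realization of $\Psi$ with a contractive system matrix and verify $\Psi\Phi=I$; the sketch as written does neither.
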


We shall derive the above lemma as a corollary of  the following somewhat more general lemma.

\begin{lemma}\label{lem:outer2}Let $\Omega$ be a strictly positive operator on $\ell_+^2(\sU)$, and assume that $\Omega\leq S_\sU^* \Omega S_\sU$. Then
the function $\Phi(\l)=E_\sU^*(I-\l S_\sU^*)^{-1}\Omega^{-1}E_\sU$ is outer.
Furthermore, there exists a function $\Psi\in H^\infty(\sU,\sU)$ such that $\Psi(\l)\Phi(\l)u=u$ for each $u\in\sU$ and $\l\in\BD$. In particular, iif $\Phi(\l)$ is invertible for each $\l\in\BD$, then $\Phi(\l)^{-1}$ is in $H^\infty(\sU,\sU)$.
\end{lemma}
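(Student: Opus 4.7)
The plan is to handle both assertions together by constructing the $H^\infty$ left inverse $\Psi$ from a state-space realization of $\Phi$ and then extracting outerness by a direct orthogonal-complement argument. By \eqref{Ftransf}, $\Phi(\cdot)u=\widehat{\Omega^{-1}E_\sU u}\in H^2(\sU)$, with Taylor coefficients $\Phi_n=E_\sU^*(S_\sU^*)^n\Omega^{-1}E_\sU$, so the column representative of $\Phi(\cdot)u$ in $\ell_+^2(\sU)$ is precisely $\Omega^{-1}E_\sU u$; consequently outerness of $\Phi$ is equivalent to
\[
\sM\;:=\;\overline{\bigvee_{n\ge 0}S_\sU^n\,\Omega^{-1}E_\sU\sU}\;=\;\ell_+^2(\sU).
\]

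I would realize $\Phi$ as the transfer function of the system $(\alpha,\beta,\gamma,\delta)=(S_\sU^*,\,\Omega^{-1}E_\sU,\,E_\sU^*S_\sU^*,\,E_\sU^*\Omega^{-1}E_\sU)$, noting $\delta$ is invertible since $\Omega^{-1}$ is strictly positive. The standard state-space inversion formula then provides the candidate
\[
\Psi(\l)\;:=\;\delta^{-1}-\l\,\delta^{-1}\gamma\,(I-\l\alpha^\times)^{-1}\,\beta\delta^{-1},
\qquad
\alpha^\times\;:=\;\alpha-\beta\delta^{-1}\gamma\;=\;(I-P)S_\sU^*,
\]
with $P:=\Omega^{-1}E_\sU(E_\sU^*\Omega^{-1}E_\sU)^{-1}E_\sU^*$ the projection of $\ell_+^2(\sU)$ onto $\Omega^{-1}E_\sU\sU$ along $\im S_\sU$. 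A direct power-series expansion gives $\Psi(\l)\Phi(\l)=I_\sU$ pointwise wherever $I-\l\alpha^\times$ is invertible.

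The decisive technical step is the verification that $\Psi\in H^\infty(\sU,\sU)$, i.e., that $(I-\l\alpha^\times)^{-1}$ is analytic and uniformly bounded on $\BD$. Here the hypothesis enters via the equivalent inner product $\langle\cdot,\cdot\rangle_\Omega:=\langle\Omega\cdot,\cdot\rangle$: by direct computation $\langle\Omega^{-1}E_\sU u,S_\sU w\rangle_\Omega=\langle E_\sU u,S_\sU w\rangle=0$, so in $\sH_\Omega:=(\ell_+^2(\sU),\langle\cdot,\cdot\rangle_\Omega)$ the subspaces $\Omega^{-1}E_\sU\sU$ and $\im S_\sU$ are orthogonal and $P$ is an $\Omega$-orthogonal projection. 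Combined with the Lyapunov-type identity $\|S_\sU w\|_\Omega^2-\|w\|_\Omega^2=\langle(S_\sU^*\Omega S_\sU-\Omega)w,w\rangle\ge 0$ and the inclusion $\im\alpha^\times\subset\im S_\sU$, this shows $\alpha^\times$ is an $\Omega$-contraction; by equivalence of the $\Omega$-norm and the standard norm (since $\Omega$ and $\Omega^{-1}$ are bounded), the desired uniform resolvent bound on $\BD$ follows, yielding $\Psi\in H^\infty(\sU,\sU)$.

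Outerness is extracted as follows: the identity $T_\Psi T_\Phi=T_{\Psi\Phi}=I$ on $H^2(\sU)$ makes $T_\Phi$ injective and bounded below, while density of $\im T_\Phi=\sM$ in $\ell_+^2(\sU)$ is established by a direct argument. Any $h\in\sM^\perp$ satisfies $\Omega^{-1}(S_\sU^*)^n h\in\im S_\sU$ for every $n\ge 0$; writing $(S_\sU^*)^n h=\Omega S_\sU g_n$ yields the recursion $S_\sU g_{n+1}=(I+\Omega^{-1}(S_\sU^*\Omega S_\sU-\Omega))g_n$, and combining the pointwise stability of $S_\sU^*$ (forcing $g_n\to 0$ in $\ell_+^2(\sU)$) with the $\sH_\Omega$-expansiveness of $S_\sU$ forces $g_0=0$, hence $h=0$. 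The ``in particular'' statement is then immediate: if $\Phi(\l)$ is invertible pointwise, the left inverse $\Psi(\l)$ must coincide with the two-sided inverse $\Phi(\l)^{-1}$, so $\Phi^{-1}\in H^\infty(\sU,\sU)$. The main obstacle will be the contractivity of $\alpha^\times$ in the $\Omega$-inner product, where the $\Omega$-orthogonality of $P$ and the expansiveness of $S_\sU$ in $\sH_\Omega$ must be carefully combined to extract uniform control on the composition $(I-P)S_\sU^*$ over $\BD$.
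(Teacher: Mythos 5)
Your proposal takes a genuinely different route from the paper's argument: you realize $\Phi$ by a state-space system, invert the system to obtain a candidate $\Psi$, and then try to show $\alpha^\times = (I-P)S_\sU^*$ is a contraction in the $\Omega$-inner product. By contrast, the paper first proves outerness via the Douglas lemma (producing a contraction $C$ with $C\Omega^{1/2}S_\sU=\Omega^{1/2}$, then iterating $C^*$), and then proves $T_\Phi$ restricted to compactly supported sequences is bounded below through a telescoping estimate $\|\Omega^{1/2}Mu\|^2\geq\sum_j\|\Delta^{1/2}u_j\|^2$; the left inverse $\Psi$ appears as the symbol of a bounded Toeplitz operator obtained from this lower bound and density, with no realization of $\Psi$ ever written down.

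The decisive step of your argument, however, is not valid as stated. Even if you could show that $\alpha^\times$ is an $\Omega$-contraction (and I could not verify this: the identity $\Omega-(\alpha^\times)^*\Omega\alpha^\times=\Omega-S_\sU\Omega S_\sU^*+S_\sU E_\sU\delta^{-1}E_\sU^*S_\sU^*$ does not obviously reduce to the hypothesis $\Omega\leq S_\sU^*\Omega S_\sU$, since $\Omega-S_\sU\Omega S_\sU^*$ can fail to be nonnegative), contractivity of a state operator in an equivalent Hilbert-space norm yields only the generic estimate $\|(I-\l\alpha^\times)^{-1}\|\leq C(1-|\l|)^{-1}$ on $\BD$. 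This is not a uniform bound and therefore does not yield $\Psi\in H^\infty(\sU,\sU)$: for instance, $\alpha^\times=1$ on $\BC$ is a contraction but produces a transfer function with a pole at $\l=1$. To conclude $\Psi\in H^\infty$ one would need that the entire system matrix $\begin{bmatrix}\delta^{-1}&-\delta^{-1}\gamma\\\beta\delta^{-1}&\alpha^\times\end{bmatrix}$ is a contraction in a suitable weighted inner product (or some structurally equivalent statement), and that is not established; this is exactly the quantitative control that the paper's lower-bound argument for $T_\Phi$ provides directly.

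Your sketch of the outerness proof is also incomplete. You derive the recursion $\Omega S_\sU g_{n+1}=S_\sU^*\Omega S_\sU g_n$ and assert that $g_n\to 0$ together with $\Omega$-expansiveness of $S_\sU$ forces $g_0=0$, but no monotonicity along the recursion is actually proved: the operator $\Omega^{-1}S_\sU^*\Omega$ appearing in the iteration is not an $\Omega$-contraction in general (it is the $\Omega$-adjoint of the $\Omega$-expansive $S_\sU$, so if anything it is norm-increasing), so it is not clear why the sequence $\|g_n\|$ cannot tend to zero from a nonzero $g_0$. The paper supplies exactly what is missing here: the Douglas-lemma contraction $C$ with $C\Omega^{1/2}S_\sU=\Omega^{1/2}$, so that $\Omega^{-1/2}h=C^{*n}\Omega^{-1/2}(S_\sU^*)^n h$, and the contractivity of $C^*$ gives $\|\Omega^{-1/2}h\|\leq\|\Omega^{-1/2}(S_\sU^*)^n h\|\to 0$.
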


The additional invertibility condition appearing in the final sentences of the above two lemmas is always fulfilled if $\sU$ is finite dimensional; see Remark 3.2.3 in \cite{FB10}.  Moreover, this invertibility  condition is also satisfied if $\Phi=\Upsilon_{22}$, where  $\Upsilon_{22}$ is given by \eqref{Upsilon22}.

\begin{proof}[\bf Proof of Lemma \ref{lem:outer1}]
Put   $\Omega =I- A^*A$. Since $S_\sU^* A^* A S_\sU \leq A^* A$, we have
\[
\Omega = I - A^*A \leq I-S_\sU^* A^* A S_\sU = S_\sU^*\left(I- A^* A \right)S_\sU = S_\sU^* \Omega S_\sU.
\]
Applying the Lemma  \ref{lem:outer2} with $\Omega =I- A^*A$
yields  the desired result.
\end{proof}

\begin{proof}[\bf Proof of Lemma \ref{lem:outer2}]
Notice that
\[
\Omega^{\frac{1}{2}} \Omega^{\frac{1}{2}} =
\Omega\leq S_\sU^* \Omega S_\sU =
\left(\Omega^{\frac{1}{2}} S_\sU\right)^*\Omega^{\frac{1}{2}} S_\sU.
\]
According to the Douglas factorization lemma  there exists a contraction $C$  mapping the subspace
 $\sM = \overline{\Omega^{\frac{1}{2}}S_\sU \ell_+^2(\sU)}$ into
 $\ell_+^2(\mathcal{U})$ satisfying $C \Omega^{\frac{1}{2}} S_\sU = \Omega^{\frac{1}{2}}$.  We extend  $C$ to the whole space   $\ell_+^2(\sU)$ by setting $C|\sM^\perp =0$. So $C$ is a  well defined contraction on $\ell_+^2(\sU)$.
The remaining part of the proof is split into two parts.

\smallskip
\noindent \textsc{Part 1.}
In this part we show that the function $\Phi(\lambda)$ is outer. Assume that   $h$ is a vector in $\ell_+^2(\sU)$  which is orthogonal to $S_\sU^n \Omega^{-1}  E_\sU \sU$  for all integer $n \geq 0$. We have to show that $h=0$.  Since $h$ is orthogonal  $S_\sU^n \Omega^{-1} E_\sU \sU$ for all $n\geq 0$,  we obtain  $\Omega^{-1} S_\sU^{* n} h$ is orthogonal to $E_\sU \sU$ for all $n \geq 0$. So there exists a vector $h_n$ in $\ell_+^2(\sU)$ such that $\Omega^{-1} S_\sU^{* n} h = S_\sU h_n$. Multiplying on the left by $\Omega^{\frac{1}{2}}$ shows that $\Omega^{-\frac{1}{2}} S_\sU^{* n} h = \Omega^{\frac{1}{2}}  S_\sU h_n$ is a vector in $\sM$ for all $n \geq 0$. We claim that
\begin{equation}\label{np176}
C^* \Omega^{-\frac{1}{2}} S_\sU^{* n+1} h  = \Omega^{-\frac{1}{2}}  S_\sU^{* n} h
\qquad (\mbox{for all integers } n\geq 0).
\end{equation}
To see this notice that for $g$ in $\ell_+^2(\sU)$, we have
\begin{align*}
\lg C^* \Omega^{-\frac{1}{2}}  S_\sU^{* n+1} h, \Omega^{\frac{1}{2}}  S_\sU g \rg &=
\lg \Omega^{-\frac{1}{2}} S_\sU^{* n+1} h,C   \Omega^{\frac{1}{2}} S_\sU g\rg\\[.1cm]
&= \lg  \Omega^{-\frac{1}{2}} S_\sU^{* n+1} h,   \Omega^{\frac{1}{2}} g) = ( S_\sU^{* n+1} h, g \rg\\[.1cm]
&=\lg  S_\sU^{* n} h,S_\sU g\rg = \lg  \Omega^{-\frac{1}{2}} S_\sU^{* n} h,   \Omega^{\frac{1}{2}}S_\sU g\rg.
\end{align*}
Since $\Omega^{\frac{1}{2}} S_\sU \ell_+^2(\sU)$ is dense in $\sM$ and
$\Omega^{-\frac{1}{2}} S_\sU^{* n} h \in \sM$, we obtain \eqref{np176}.
The recursion relation in \eqref{np176} implies that
\[
 \Omega^{-\frac{1}{2}}  h = C^* \Omega^{-\frac{1}{2}} S_\sU^{*} h =
 C^{*2}  \Omega^{-\frac{1}{2}}  S_\sU^{* 2} h = \cdots
 = C^{*n} \Omega^{-\frac{1}{2}} S_\sU^{* n} h.
\]
In other words, $ \Omega^{-\frac{1}{2}}  h = C^{*n} \Omega^{-\frac{1}{2}} S_\sU^{* n} h$ for all integers $n \geq 0$.
Because $C$ is a contraction, we have
\[
\|\Omega^{-\frac{1}{2}}  h\| =
\|C^{*n} \Omega^{-\frac{1}{2}}S_\sU^{* n} h \| \leq\| \Omega^{-\frac{1}{2}} S_\sU^{* n} h\| \rightarrow 0\quad (n\to \infty).
\]
Since $ \Omega^{-\frac{1}{2}}  $ is invertible, $h =0$. So
 the closed linear span of $\{S_\sU^n \Omega^{-1} E_\sU  \sU\}_0^\infty$ equals $\ell_+^2(\sU)$ and the function $\Phi$ is outer.


\smallskip
\noindent \textsc{Part 2.}
In this part we prove the remaining claims.
In order to do this, let $\sL$ be the linear space of all sequences  $u=\{u_j\}_{j=0}^\iy$, $u_j\in \sU$ for $j=0, 1, 2, \ldots$, with compact support. The latter means that $u_j\not =0$   for a finite number of  indices $j$ only. Note that $\sL\subset \ell^2_+(\sU) $ and that $\sL$ is invariant under the forward shift $S_\sU$.  Given  $\sL$ we consider the linear map    $M$ from $\sL$ into $\ell^2_+(\sU) $ defined by
 \begin{equation*}
M u = \begin{bmatrix}
            \Omega^{-1} E_\sU   &
            S_\sU \Omega^{-1} E_\sU  & S_\sU^2 \Omega^{-1} E_\sU  & \cdots \\
         \end{bmatrix} u =\sum_{j=0}^\infty S_\sU^j \Omega^{-1} E_\sU u_j.
\end{equation*}
If we identify $\ell_+^2(\sU)$ with the Hardy space $H^2(\sU)$ using the Fourier transform, then $\sL$ is just the space of all $\sU$-valued polynomials, and $M$ is the operator of multiplication by $\Phi$ acting on the $\sU$-valued polynomials.

We shall show that there exists $\epsilon>0$  such that $\|M u \|\geq \epsilon \|u\|$  for each $u=\{u_j\}_{j=0}^\iy$.  Note that
{\small
\begin{align*}
 &\|\Omega^\half M u\|^2= \left\|\Omega^{\frac{1}{2}}\sum_{j=0}^\infty S_\sU^j \Omega^{-1} E_\sU u_j\right\|^2=
 \left\langle \Omega \sum_{j=0}^\infty S_\sU^j \Omega^{-1} E_\sU u_j,
 \sum_{k=0}^\infty S_\sU^k \Omega^{-1} E_\sU u_k\right\rangle\\
 &\ =\left\langle \Omega \left(\Omega^{-1} E_\sU u_0 +
 S_\sU\sum_{j=0}^\infty S_\sU^j \Omega^{-1} E_\sU u_{j+1}\right),
 \Omega^{-1} E_\sU u_0 +
 S_\sU\sum_{k=0}^\infty S_\sU^k \Omega^{-1} E_\sU u_{k+1}\right\rangle.
\end{align*}}
Set $\Delta = E_\sU^*\Omega^{-1}E_\sU$. Using the fact that $E_\sU^* S_\sU=0$ and  $S_\sU^*\om S_\sU\geq \om$ we obtain that

\begin{align*}
\small\|\Omega^\half M u\|^2
&= \langle \Delta u_0,u_0\rangle
+\left\langle S_\sU^*\Omega S_\sU \sum_{j=0}^\infty S_\sU^j \Omega^{-1} E_\sU u_{j+1},
\sum_{k=0}^\infty S_\sU^k \Omega^{-1} E_\sU u_{k+1}\right\rangle\\
&\geq \langle \Delta u_0,u_0\rangle
+\left\langle  \Omega  \sum_{j=0}^\infty S_\sU^j \Omega^{-1} E_\sU u_{j+1},
\sum_{k=0}^\infty S_\sU^k \Omega^{-1} E_\sU u_{k+1}\right\rangle\\
&=\|\Delta^\half u_0\|^2+\|\Omega^\half M S_\sU^* u\|^2.
\end{align*}
Applying the above computation to $S_\sU^* u$ instead of $u$, and continuing recursively we obtain that
\begin{equation}\label{ineqM}
\|\Omega^\half M u\|^2\geq \sum_{j=0}^\infty \|\Delta^\half u_j\|^2.
\end{equation}
Since $\Delta$ is strictly positive, there exists  a $\epsilon_1>0$ such that
$ \| \Delta^\half u_j\| \geq \epsilon_1\|u_j\|$ for all $j=0,1,2, \ldots$. But then the inequality \eqref{ineqM} shows that
\begin{align}
\|M u\|^2 &\geq \|\Omega^\half\|^{-1}\| \Omega^\half M u\|^2\geq \|\Omega^\half\|^{-1}\sum_{j=0}^\infty \|\Delta^\half u_j\|^2\nn \\
&\geq \epsilon_1^2 \|\Omega^\half\|^{-1} \sum_{j=0}^\infty\|u_j\|^2 \geq \epsilon_1^2 \|\Omega^\half\|^{-1}\|u\|^2 \nn \\
&= \epsilon^2 \|u\|^2, \hspace{.15cm}  \mbox{where $\epsilon=\epsilon_1 \|\Omega^\half\|^{-\frac{1}{2}}$}.\label{ImpIneq}
\end{align}
We conclude that $M$ is bounded from below.

Next, put $\sR=M\sL \subset \ell_+^2(\sU)$. Then $M$ maps $\sL$ in a one-to-one way onto $\sR$. By $T$ we denote the corresponding inverse operator. Then the result of the previous paragraph tells us that $\|Tf\|\leq \epsilon^{-1}\|f\| $ for each $f\in \sR$. The fact that $\Phi$ is outer implies that $\sR$  is dense  in $\ell_+^2(\sU)$. It follows that $T$ extends to a bounded linear operator   from $\ell_+^2(\sU)$ into $\ell_+^2(\sU)$ which we also denote by $T$. Recall that  $\sL$ is invariant under the forward shift $S_\sU$. Since   $S_\sU Mu=M S_\sU u$ for each $u\in \sL$, we also have $S_\sU Tf=T S_\sU f$  for each $f\in \sR$. But then the fact that $T$ is a bounded linear operator on $\ell_+^2(\sU)$ implies by continuity that $S_\sU Tg=T S_\sU g$  for each $g\in \ell_+^2(\sU)$. It follows that $T$  is a (block) lower triangular Toeplitz operator. Let $\Psi \in H^\iy(\sU,\sU)$   be its defining function, i.e., $T=T_\Psi$. Since $TM u=u$ for each  $u\in \sL$, we have
\begin{equation}\label{leftinv}
\Psi(\l)\Phi(\l)u=u,  \qquad  u\in \sU,\, \l\in\BD.
\end{equation}
Now if $\Phi(\l)$ is invertible for each $\l\in \BD$, then it is clear that $\Phi(\l)^{-1}=\Psi(\l)$ is in $H^\infty(\sU,\sU)$.
\end{proof}

Observe that for the case when $\dim \,\sU <\infty$  the identity \eqref{leftinv} implies that $\Phi(\l)$ is invertible for each $\l\in\BD$ without using Remark 3.2.3. in \cite{FB10}.

\begin{remark}
\textup{It is interesting to consider the special case when    $\Omega$ is a strictly positive Toeplitz operator on $\ell_+^2(\sU)$. In this case
$\Omega = S_\sU^* \Omega S_\sU$, and the proof of Lemma \ref{lem:outer2}  yields  a classical   result on spectral factorization; see, e.g., Proposition 10.2.1 in \cite{FB10}. Indeed, put $\Psi(\lambda) =
\left(E_\sU^* \Omega^{-1} E_\sU\right)^{\frac{1}{2}}\Phi(\lambda)^{-1}$  where, as before,   $\Phi(\l)=E_\sU^*(I-\l S_\sU^*)^{-1}\Omega^{-1}E_\sU$. The fact that $\Omega$ is a strictly positive Toeplitz operator then implies that  $\Phi(\l) $  is invertible for each $\l \in \BD$, and
 $\Psi(\l)$ and $\Psi(\l)^{-1}$ are both functions in $H^\infty(\sU,\sU)$. Moreover,  $\Psi$ is the outer spectral factor for $\Omega$, that is, $\Omega = T_\Psi^* T_\Psi$ and $\Psi$ is an outer function. To prove the latter using  elements of  the proof of Lemma \ref{lem:outer2}, observe that in this setting,
 we have equality in  \eqref{ineqM}, that is,
\[
\|\Omega^{\frac{1}{2}} T_\Phi u \|^2 =  \sum_{j=0}^\iy \|\Delta^{\frac{1}{2}}u_j \|^2 \hspace{.15cm} \mbox{for all $u$ in $\ell_+^2(\sU)$ with compact support}.
\]
Because $T_\Phi^{-1}$ is a bounded  operator, we have
$\|\Omega^{\frac{1}{2}} u \|^2 = \|\Delta^{\frac{1}{2}}T_\Phi^{-1} u\|^2$ for
all $u$ in $\ell_+^2(\sU)$. In other words, $\Omega = T_\Psi^* T_\Psi$. Since
$\Omega$ is strictly positive and $\Phi$ is outer, $T_\Psi$  is well defined bounded invertible operator.  Hence $\Psi$ and $\Psi^{-1}$ are both functions in $H^\infty(\sU,\sU)$,  and $\Psi$ is the outer spectral factor for $\Omega$. See   Section 10.2 in \cite{FB10} for  further  details.}
\end{remark}

\subsection{An operator optimization problem}\label{ssec: optimization}
The results in this subsection  provide background  material  for Section \ref{sec:max}. We begin with an elementary optimization problem. Let $A_1: \sH\to \sU$ and $A_2: \sH\to \sR$ be a Hilbert space operators, where $\overline{\im A_2}=\sR$ and $\sR\subset \sH$. With these two operators we associate a  \emph{cost function}  $\sigma(u)$ on $\sU$,  namely
\begin{equation}\label{def:cost1}
\s(u)=\inf \{\|u-A_1 h\|^2 +\|A_2 h\|^2 \mid h\in \sH\},\qquad  u\in \sU.
\end{equation}
To understand the problem better  let $A$ be the operator given by:
\[
A=\begin{bmatrix}A_1\\ A_2  \end{bmatrix}:\sH\to \begin{bmatrix}\sU  \\ \sR\end{bmatrix} \ands \mbox{put $\sA=\overline{\im A}$}.
\]
Then by the projection theorem
\begin{equation*}
\s(u)=\inf  \left\{\left\|\begin{bmatrix}u\\ 0\end{bmatrix} -Ah\right\|^2 \mid h\in \sH\right\}=
\left\|(I- P_{\sA})\begin{bmatrix}u\\ 0\end{bmatrix}\right\|^2.
\end{equation*}
Here $P_\sA$ is the orthogonal projection on $\sU\oplus \sR$  with range $\sA=\overline{\im A}$. Next, let $\Pi_{\sU}$ be the orthogonal projection of $\sU\oplus\sR$ onto $\sU$, and thus $\Pi_{\sU}^*$ is the canonical embedding of $\sU$ into $\sU\oplus\sR$. Using this notation we see that
\begin{align}
\sigma(u)&= \left\|(I- P_\sA)\begin{bmatrix}u\\ 0\end{bmatrix}\right\|^2 = \|(I- P_\sA)\Pi_\sU^* u\|^2\nn\\
&= \lg \Pi_{\sU} P_{\sA^\perp}\Pi_{\sU}^* u, u\rg,\quad  u\in \sU.\label{eq:cost1}
\end{align}
 In particular, $\sigma(u) = \lg \Pi_{\sU} P_{\sA^\perp}\Pi_{\sU}^* u, u\rg$ is quadratic function in $u$.
Here $\sA^\perp$ is the orthogonal complement of $\sA$ in $\sU\oplus \sR$.

The case when $A_2^*A_2$ is strictly positive is of particular interest. In case $A_2^*A_2$ is strictly positive, $A^*A=A_1^*A_1+ A_2^*A_2$   is also strictly positive. It follows that $P_\sA=A(A^*A)^{-1}A^*$. Moreover, we have
\begin{align*}
 \Pi_{\sU} P_{\sA^\perp}\Pi_{\sU}^*&=I_{\sU}-\Pi_{\sU} P_{\sA}\Pi_{\sU}^*=I_{\sU}-\Pi_{\sU} A(A^*A)^{-1}A^*\Pi_{\sU}^*\\
&=I_{\sU}-A_1(A_1^*A_1+ A_2^*A_2)^{-1}A_1^*\\
&= I_{\sU}-A_1\Big(I_{\sH}+ ( A_2^*A_2)^{-1}A_1^*A_1\Big)^{-1}( A_2^*A_2)^{-1}A_1^* \\
&= I_{\sU}-\Big(I_{\sU}+ A_1( A_2^*A_2)^{-1}A_1^*\Big)^{-1}A_1( A_2^*A_2)^{-1}A_1^* \\
&=\Big(I_{\sU}+A_1( A_2^*A_2)^{-1}A_1^*\Big)^{-1}.
\end{align*}
Thus when $A_2$ is strictly positive,  then the cost function is given by
\begin{equation}\label{eq:cost2}
\s (u)=\lg (I_{\sU}+A_1 (A_2^*A_2)^{-1}A_1^*)^{-1} u, u\rg, \qquad  u\in \sU.
\end{equation}


\paragraph{A special choice of  $A_1$ and $A_2$.} Let $C$ be a contraction from the Hilbert space  $\sE$ into the Hilbert space $\sH$, let $\sU$  be a subspace of $\sE$, and let $\sR= \sD_{C^*}$ where  $\sD_{C^*}$  is the closure of the range of the defect operator $D_{C^*}=(I_{\sH} - CC^*)^{\frac{1}{2}}$. Put
\[
A_1=\t_{\sU}^* C^*:\sH\to \sU \ands A_2=D_{C^*}:\sH\to  \sR.
\]
Here $\t_{\sU}$ is the canonical embedding of  $\sU$ into $\sE$. Thus $C\t_{\sU}$ maps $\sU$ into $\sH$. In this case the cost function $\s$ is given by  \begin{equation}\label{optc}
\sigma(u) = \inf\{\|u - \t_{\sU}^* C^* h\|^2 + \lg (I- CC^*)h,h\rg
\mid  h \in \sH\}, \quad  u\in \sU.
\end{equation}
Furthermore, the operator  $A$ is given by
\begin{equation}\label{optcA}
A = \begin{bmatrix}
 \t_{\sU}^* C^* \\
 D_{C^*}  \\
 \end{bmatrix} \mid \sH \to \begin{bmatrix}
  \sU  \\
\sR \\
 \end{bmatrix}, \hspace{.15cm} \mbox{where} \hspace{.15cm}   \sR=\sD_{C^*}.
 \end{equation}
 Finally, if  $C$ is  a strict contraction,  then $D_{C^*}$ is invertible and $\sR=\sH$. Using \eqref{eq:cost2} it follows that
\begin{align*}
\s(u)&=\lg \left(I_{\sU}+\tau_{\sU}^*C^*(I_{\sH} -CC^*)^{-1}C
\t_{\sU}\right)u, u\rg \\
&=\lg \t_{\sU}^*\left(I_{\sE}+ C^*(I_{\sH} -CC^*)^{-1}C\right)\t_{\sU} u, u\rg\\
&=\lg\t_{\sU}^*\left(I_{\sE} + (I_{\sE} -C^*C)^{-1}C^*C \right)\t_{\sU} u, u\rg \\
&=\lg\t_{\sU}^*\left(I_{\sE} + (I_{\sE} -C^*C)^{-1}
(C^*C -I_{\sE} +I_{\sE})\right)\t_{\sU} u, u\rg\\
&=\lg\t_{\sU}^*(I_{\sE} -C^*C)^{-1}\t_{\sU} u, u\rg, \quad  u\in \sU.
 \end{align*}
 Thus in this case the cost function is given by
 \begin{equation}\label{eq:cost3}
 \s(u)=\lg(I_{\sE} -C^*C)^{-1}u , u\rg,\quad  u\in \sU \subset \sE.
 \end{equation}

 The next lemma shows that additional information on $\sE\ominus \sU$ yields alternative formulas for the cost function.

\begin{lemma} Let $V$ be  an isometry on $\sE$ such that $\im V=\sE\ominus \sU$. Then the cost function $\s$ defined by \eqref{optc} is also given by
\begin{equation}\label{optclt}
\sigma(u) = \inf\{\|D_C (\tau_{\sU} u -  V e )\|^2 \mid
u \in \sE\},\qquad  u\in \sU.
\end{equation}
\end{lemma}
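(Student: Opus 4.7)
The plan is to view $\sigma(u)$ geometrically as the squared norm of an orthogonal projection, and then to transport that projection through the Julia unitary associated with $C$, which produces exactly the infimum in \eqref{optclt}. By the general discussion leading to \eqref{eq:cost1}, $\sigma(u)=\|P_{\ker A^*}(u,0)\|^2$ where
$$A=\begin{bmatrix}\tau_\sU^* C^*\\ D_{C^*}\end{bmatrix}:\sH\to\sU\oplus\sD_{C^*},\quad \ker A^*=\bigl\{(y,d)\in\sU\oplus\sD_{C^*}:\,C\tau_\sU y+D_{C^*}d=0\bigr\}.$$
Identify $\sU$ with $\tau_\sU\sU\subset\sE$, so $(u,0)$ becomes $(\tau_\sU u,0)\in\sE\oplus\sD_{C^*}$; the orthogonal projection onto $\ker A^*$ is unaffected by this enlargement of ambient space, since $\ker A^*\subset\sU\oplus\sD_{C^*}$. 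Note also that $V^*\tau_\sU=0$ because $\im V=\sE\ominus\sU$.

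Consider the Julia unitary of $C$,
$$U_C=\begin{bmatrix}C & D_{C^*}\\ D_C & -C^*\end{bmatrix}:\sE\oplus\sD_{C^*}\longrightarrow\sH\oplus\sD_C,$$
whose unitarity is the standard consequence of $C^*C+D_C^2=I_\sE$, $CC^*+D_{C^*}^2=I_\sH$ and the intertwining $CD_C=D_{C^*}C$ (equivalently $C^*D_{C^*}=D_CC^*$, which also ensures that $-C^*$ carries $\sD_{C^*}$ into $\sD_C$). The heart of the proof is the identity $U_C(\ker A^*)=\{0\}\oplus\sD_C'$, where $\sD_C':=\sD_C\ominus\overline{\im D_CV}=\{k\in\sD_C:V^*D_Ck=0\}$. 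For $\subseteq$, given $(y,d)\in\ker A^*$ the constraint kills the first coordinate of $U_C(\tau_\sU y,d)$, leaving $k:=D_C\tau_\sU y-C^*d\in\sD_C$; using $C^*D_{C^*}=D_CC^*$ and the constraint, $D_Ck=D_C^2\tau_\sU y-C^*D_{C^*}d=(D_C^2+C^*C)\tau_\sU y=\tau_\sU y$, whence $V^*D_Ck=V^*\tau_\sU y=0$. For $\supseteq$, given $k\in\sD_C'$, the vector $U_C^*(0,k)=(D_Ck,-Ck)$ lies in $\sU\oplus\sD_{C^*}$ (first coordinate in $\sU$ because $V^*D_Ck=0$; second in $\sD_{C^*}$ because $CD_C=D_{C^*}C$), and $C\cdot D_Ck+D_{C^*}\cdot(-Ck)=0$ is immediate from the same intertwining.

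The proof then finishes by unitarity. Since $U_C(\tau_\sU u,0)=(C\tau_\sU u,D_C\tau_\sU u)$ and $U_CP_{\ker A^*}U_C^*=P_{U_C(\ker A^*)}$,
$$\sigma(u)=\|U_CP_{\ker A^*}(\tau_\sU u,0)\|^2=\|P_{\{0\}\oplus\sD_C'}(C\tau_\sU u,D_C\tau_\sU u)\|^2=\|P_{\sD_C'}D_C\tau_\sU u\|^2,$$
and since $\sD_C'$ is the orthogonal complement in $\sD_C$ of $\overline{\im D_CV}$, this last quantity equals $\inf_{e\in\sE}\|D_C\tau_\sU u-D_CVe\|^2=\inf_{e\in\sE}\|D_C(\tau_\sU u-Ve)\|^2$, which is precisely the right-hand side of \eqref{optclt}. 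The main obstacle is the two-sided characterization $U_C(\ker A^*)=\{0\}\oplus\sD_C'$: tracing the constraint through the intertwining identities of the Julia operator, and exploiting $V^*\tau_\sU=0$ to land in the correct subspace, is exactly where the isometry $V$ plays its geometric role.
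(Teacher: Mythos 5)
Your proof is correct and takes essentially the same route as the paper: both hinge on the Julia/rotation unitary (you write it as $U_C$, the paper as $R = U_C^*$), both identify $\sA^\perp = \ker A^*$ with the subspace $\mathfrak{D} = \sD_C \ominus \overline{D_C V\sE}$ via that unitary, and both conclude $\sigma(u) = \|P_{\mathfrak{D}} D_C \tau_\sU u\|^2$ before unwinding this as the claimed infimum. The two-directional verification that $U_C(\ker A^*) = \{0\}\oplus\sD_C'$, using $C^*D_{C^*} = D_CC^*$ and $V^*\tau_\sU = 0$, is exactly the content of the paper's parameterization of $\sA^\perp$ by the second column of $R$.
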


\begin{proof}[\bf Proof]
To prove the lemma   we shall use the so-called \emph{rotation matrix} $R$ associated with the contraction $C$. Recall  (see, e.g., the paragraph after Proposition 1.2 in \cite[Section XXVII.1]{GGK2} that
\begin{equation}\label{rotation}
R = \begin{bmatrix}
      C^*       & D_C \\
      D_{C^*}   & -C
    \end{bmatrix}:\begin{bmatrix}
      \sH \\
      \sD_C \\
    \end{bmatrix} \rightarrow \begin{bmatrix}
      \sE \\
      \sD_{C^*}
    \end{bmatrix}
\end{equation}
is a unitary operator. As before, let $A$ be the operator given in \eqref{optcA}.
Using \eqref{optcA} one  sees  that $f \oplus g$ is a vector in $\sA^\perp$ if and only if $f \oplus g \in \sU \oplus \sD_{C^*}$ and $f \oplus g$ is orthogonal to $\sA$, that is,
\begin{align*}
0&= \left\lg \begin{bmatrix}f\\ g \end{bmatrix}, \begin{bmatrix}\t_{\sU}^*C^* \\ D_{C^*}\end{bmatrix}h\right\rg
=\left\lg \begin{bmatrix}\t_{\sU} f\\ g \end{bmatrix}, \begin{bmatrix}C^* \\ D_{C^*}\end{bmatrix}h\right\rg
=\left\lg \begin{bmatrix} f\\ g \end{bmatrix}, \begin{bmatrix}C^* \\ D_{C^*}\end{bmatrix}h\right\rg, \quad h\in \sH.
\end{align*}
Thus $f \oplus g$ is a vector in $\sA^\perp$ if and only if $f \oplus g \in \sU \oplus \sD_{C^*}$ and $f \oplus g$  is orthogonal to the  range of the first column of the  operator matrix $R$. Since $R$ is unitary, we conclude that $f \oplus g\in \sA^\perp$ if and only if  $f \oplus g \in \sU \oplus \sD_{C^*}$ and is contained in the range of the second column of $R$. In other words, $f \oplus g\in \sA^\perp$  if and only if
$f \oplus g \in \sU \oplus \sD_{C^*}$ and $f = D_C v$
and  $g = - C v$  for some $v\in \sD_C$.
Clearly, $D_C v \in \sU$ if and only if $D_C v$ is orthogonal to $\sU^\perp=\sE\ominus \sU$. However, $D_C v$ is orthogonal to $\sU^\perp$ if and only if $v$ is orthogonal to $D_C\sU^\perp$, or equivalently, $v$ is in $\sD_C \ominus  D_C \sU^\perp$. Since $D_C \sU^\perp = D_C V \sE$, we have
\[
(\im A)^\perp = \begin{bmatrix} D_C \\ -C \end{bmatrix}
\left(\sD_C \ominus D_C V \sE\right) =
\begin{bmatrix} D_C \\ -C \end{bmatrix}\mathfrak{D},
\]
where $\mathfrak{D} = \sD_C \ominus D_C V \sE$.  Therefore the orthogonal projection $P_{\sA^\perp}$ is given by
\[
P_{\sA^\perp}\begin{bmatrix} u\\ 0 \end{bmatrix} = \begin{bmatrix} D_C \\ -C \end{bmatrix}
P_\mathfrak{D}
 \begin{bmatrix} D_C& -C^*  \end{bmatrix}\begin{bmatrix} \t_{\sU} u\\ 0 \end{bmatrix}, \quad u\in \sU.
\]
Notice that  $\mathfrak{D}^\perp = \overline{D_C V \sE}$.
Using \eqref{eq:cost1} it follows  that
\begin{align*}
\sigma(u) &=  \left\lg P_{\sA^\perp}\begin{bmatrix} u\\ 0  \end{bmatrix}, \begin{bmatrix} u\\ 0   \end{bmatrix}\right\rg
=\lg P_\mathfrak{D} D_C \t_{\sU}  u, D_C \t_{\sU}  u\rg\\[.1cm]
&= \|P_\mathfrak{D} D_C \t_{\sU} u\|^2 =
\inf\{\|D_C \t_{\sU} u - d \|^2  \mid  d \in \mathfrak{D}^\perp\}\\[.1cm]
&= \inf\{\|D_C \t_{\sU} u - D_C V e\|^2 \mid  e \in \sE\}.
\end{align*}
Therefore the cost function $\sigma$ in the two optimization
problems \eqref{optc} and \eqref{optclt} are the same.
\end{proof}

\subsection{A connection to prediction theory and multiplicative diagonals}\label{ssec:multdiag}
Let $T_{R}$ be a non-negative  Toeplitz operator on $\ell_+^2(\sU)$ with
symbol $R$ in $L^\infty(\sU,\sU)$. Then a classical prediction problem is
solve the following optimization problem:
\begin{equation}\label{Pred1}
\sigma(u) = \inf\{\langle T_{R}(E_{\sU} u - S_{\sU}h),
E_{\sU} u - S_{\sU}h\rangle:
h \in \ell_+^2(\sU)\}
\end{equation}
where $u$ is a specified vector in $\sU$;
see Helson-Lowdenslager \cite{HL1,HL2}.

Recall that a non-negative  Toeplitz operator $T_R$ on $\ell_+^2(\sU)$ with
defining function $R$ in $L^\infty(\sU,\sU)$  admits an \emph{outer spectral factor} if there exists an outer function $\Psi$  in $H^\infty(\sU,\sE)$
such that $T_{R} = T_{\Psi}^* T_{\Psi}$, or equivalently,
$R(e^{i \theta}) = \Psi(e^{i \theta})^*\Psi(e^{i \theta})$
almost everywhere.  In this case, the outer spectral
factor $\Psi$ for $R$ is unique up to a
unitary constant on the left. In other words,  if  $\Phi$  in $H^\infty(\sU,\sV)$
 is another outer function satisfying $T_{R} =  T_{\Phi}^* T_{\Phi}$, then
$\Psi(\lambda) = U  \Phi(\lambda)$ where $U$ is a constant unitary operator
mapping $\sV$ onto $\sE$. Finally, it is noted that
not all non-negative  Toeplitz  operators  admit an outer spectral
factor. For example, if $R(e^{i \theta}) = 1$ for $0 \leq \theta \leq \pi$
and zero otherwise, then $T_{R}$ is a  non-negative  Toeplitz operator
on $\ell_+^2$ and does not admit an outer spectral factor. For
further results concerning outer spectral factorization
see \cite{sz-nf,FB10}. Following some ideas in Sz.-Nagy-Foias \cite{sz-nf}, we obtain the following result.

\begin{proposition}\label{prop:pred1} Assume that $T_{R}$ admits an outer spectral factorization $T_{R} = T_{\Psi}^* T_{\Psi}$ where $\Psi$ is an outer function in  $H^\infty(\sU,\sE)$. Then the function $\sigma$ in \eqref{Pred1} is also given by $\sigma(u) = \|\Psi(0)u\|^2$ for each $u\in \sU$. Moreover, the  cost function $\sigma$ is independent of the outer spectral factor $\Psi$ chosen for $R$.
\end{proposition}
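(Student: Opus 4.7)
The plan is to exploit the factorization $T_R = T_\Psi^* T_\Psi$ to rewrite the quadratic form in \eqref{Pred1} as a squared norm in $\ell^2_+(\sE)$, and then to recognize the resulting infimum as a distance to a shift-invariant subspace. First I would observe that
\[
\lg T_R(E_\sU u - S_\sU h), E_\sU u - S_\sU h\rg = \|T_\Psi E_\sU u - T_\Psi S_\sU h\|^2.
\]
Since $\Psi \in H^\infty(\sU,\sE)$, the analytic Toeplitz operator $T_\Psi$ intertwines the unilateral shifts, i.e.\ $T_\Psi S_\sU = S_\sE T_\Psi$. Hence the expression above equals $\|T_\Psi E_\sU u - S_\sE T_\Psi h\|^2$.

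Next, as $h$ runs over $\ell^2_+(\sU)$, the vector $T_\Psi h$ runs over $T_\Psi \ell^2_+(\sU)$, and the outerness of $\Psi$ is precisely the statement that this range is dense in $\ell^2_+(\sE)$. Therefore $S_\sE T_\Psi h$ runs over a dense subset of the closed subspace $S_\sE \ell^2_+(\sE)$, and by continuity of $g\mapsto \|T_\Psi E_\sU u - g\|^2$ the infimum over this dense set coincides with the infimum over its closure. Consequently
\[
\sigma(u) = \inf_{g\in S_\sE\ell^2_+(\sE)}\|T_\Psi E_\sU u - g\|^2 = \|P_{(S_\sE \ell^2_+(\sE))^\perp} T_\Psi E_\sU u\|^2.
\]
Since $(S_\sE \ell^2_+(\sE))^\perp = E_\sE \sE$, the orthogonal projection is $E_\sE E_\sE^*$, giving $\sigma(u) = \|E_\sE^* T_\Psi E_\sU u\|^2$.

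The final identification is that $E_\sE^* T_\Psi E_\sU$ is the $(0,0)$ block of the block Toeplitz matrix for $\Psi$, which is the constant Taylor coefficient $\Psi_0 = \Psi(0)$; this yields $\sigma(u) = \|\Psi(0)u\|^2$. For independence of the outer spectral factor, if $\Phi \in H^\infty(\sU,\sV)$ is a second outer spectral factor of $R$, then by uniqueness of the outer spectral factorization there is a constant unitary $U:\sV \to \sE$ with $\Psi(\lambda) = U\Phi(\lambda)$; thus $\|\Phi(0)u\| = \|U^*\Psi(0)u\| = \|\Psi(0)u\|$, so the cost $\sigma$ depends only on $R$. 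The only mildly delicate point, which I expect to be the main obstacle, is the density-based exchange of infima in the second paragraph; this is routine but should be written out carefully to make clear that the outer property of $\Psi$ is used precisely at this step.
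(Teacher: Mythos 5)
Your proof is correct and takes essentially the same route as the paper's: both rewrite the quadratic form as $\|T_\Psi(E_\sU u - S_\sU h)\|^2$, use the intertwining $T_\Psi S_\sU = S_\sE T_\Psi$, and invoke outerness of $\Psi$ (density of $\im T_\Psi$ in $\ell^2_+(\sE)$) to reduce the infimum to the norm of the projection onto $E_\sE\sE = (S_\sE\ell^2_+(\sE))^\perp$. The paper makes this explicit by decomposing $T_\Psi E_\sU u$ via $I = E_\sE E_\sE^* + S_\sE S_\sE^*$ and showing the residual infimum vanishes, while you package the same step as the projection theorem applied to the closed shift-invariant subspace; the point you flag as delicate — the density-based exchange of infima — is indeed where outerness enters, and the paper handles it in exactly the same way.
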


\begin{proof}[\bf Proof] Observe that in this case
\begin{align*}
\sigma(u) &= \inf\{\langle T_{\Psi}^* T_{\Psi}
(E_{\sU} u - S_{\sU}h),E_{\sU} u - S_{\sU}h\rangle:
h \in \ell_+^2(\sU)\}\\
&= \inf\{\|T_{\Psi} (E_{\sU} u - S_{\sU}h)\|^2:h \in \ell_+^2(\sU)\}\\
&=  \inf\{\| E_{\sE}E_{\sE}^*T_{\Psi} E_{\sU} u
+ S_{\sE}S_{\sE}^*T_{\Psi} E_{\sU} u
- T_{\Psi}S_{\sU}h\|^2:h \in \ell_+^2(\sU)\}\\
&=  \inf\{\| E_{\sE}\Psi(0) u
+ S_{\sE}S_{\sE}^*T_{\Psi} E_{\sU} u
- S_{\sE}T_{\Psi}h\|^2:h \in \ell_+^2(\sU)\}\\
&= \inf\{\|\Psi(0)u\|^2+
\|S_{\sE} S_{\sE}^* T_{\Psi} E_{\sU} u - S_{\sE} T_{\Psi}  h\|^2:
h \in \ell_+^2(\sU)\}\\
&= \|\Psi(0)u\|^2+
\inf\{\|S_{\sE}^*T_{\Psi} E_{\sU} u -  T_{\Psi}  h\|^2:
h \in \ell_+^2(\sU)\} =\|\Psi(0)u\|^2.
\end{align*}
The last equality follows from the fact that $\Psi$ is outer, that is,
the range of $T_{\Psi}$ is dense in $\ell_+^2(\sE)$. Therefore
\begin{equation}\label{pred-out}
 \sigma(u) = \|\Psi(0)u\|^2 = \langle\Psi(0)^*\Psi(0)u,u\rangle, \qquad u \in \sU.
\end{equation}

The final statement  follows from the fact that the outer spectral factor $\Psi$ for $R$ is unique up to a unitary constant on the left.
\end{proof}

If $\sU$ is finite dimensional, then $R$ admits an outer
spectral factor $\Psi$ in $H^\infty(\sU,\sU)$ if and only if
\begin{equation}\label{szego00}
 \frac{1}{2\pi}\int_0^{2\pi} \ln \det[R(e^{i\theta})] d \theta > -\infty.
\end{equation}
In this case, the classical  Szeg\"{o} formula tells us that
\begin{equation}\label{szego}
 \det[\Psi(0)^*\Psi(0)] =
\exp\left({\frac{1}{2\pi}\int_0^{2\pi} \ln \det[R(e^{i\theta})] d \theta}\right)
\end{equation}
where $\det[T]$ is the determinant of a finite dimensional operator
with respect to any basis.

The following proposition is well known. The  equality in \eqref{Pred3} follows by a standard Schur complement computation.

\begin{proposition}\label{prop:pred2}
If $T_{R}$ is a strictly positive operator on $\ell_+^2(\sU)$, then
$T_{R}$ admits an outer spectral factor $\Psi$ in $H^\infty(\sU,\sU)$ and
\begin{equation}\label{Pred3}
 \sigma(u) = \|\Psi(0)u\|^2 = \langle\left(E_{\sU}^* T_R^{-1} E_{\sU}\right)^{-1}u,u\rangle, \quad u\in \sU.
\end{equation}
Moreover,   $\Psi(\lambda)^{-1}$ is also a function in $H^\infty(\sU,\sU)$.
\end{proposition}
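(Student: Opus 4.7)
The plan is to derive the proposition from machinery already established in the paper: Lemma \ref{lem:outer2} together with the remark following it, and Proposition \ref{prop:pred1}. Because $T_R$ is a Toeplitz operator, the identity $S_\sU^* T_R S_\sU = T_R$ holds, so the hypothesis $\Omega \leq S_\sU^* \Omega S_\sU$ of Lemma \ref{lem:outer2} is satisfied (with equality) when $\Omega = T_R$. The remark immediately following that lemma then supplies the outer spectral factor explicitly as
\[
\Psi(\lambda) = (E_\sU^* T_R^{-1} E_\sU)^{1/2} \Phi(\lambda)^{-1}, \qquad \Phi(\lambda) = E_\sU^*(I - \lambda S_\sU^*)^{-1} T_R^{-1} E_\sU,
\]
belonging to $H^\infty(\sU,\sU)$, together with the assertion that $\Psi^{-1} \in H^\infty(\sU,\sU)$. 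This immediately establishes both the existence of the outer spectral factor and the closing sentence of the proposition.

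With $\Psi$ in hand, Proposition \ref{prop:pred1} delivers the first equality $\sigma(u) = \|\Psi(0) u\|^2$. For the second equality, I would follow the hint and carry out a direct Schur complement computation. Decompose $\ell^2_+(\sU) = E_\sU \sU \oplus S_\sU\ell^2_+(\sU)$ and represent $T_R$ in $2 \times 2$ block form with entries
\[
R_0 = E_\sU^* T_R E_\sU, \qquad B = E_\sU^* T_R S_\sU, \qquad D = S_\sU^* T_R S_\sU.
\]
Strict positivity of $T_R$ and the fact that $S_\sU$ is an isometry guarantee $D \gg 0$, so completing the square in $h$ in the quadratic form yields
\[
\langle T_R(E_\sU u - S_\sU h), E_\sU u - S_\sU h\rangle = \langle (R_0 - B D^{-1} B^*) u, u\rangle + \langle D(h - D^{-1}B^* u), h - D^{-1}B^* u\rangle.
\]
The infimum over $h \in \ell^2_+(\sU)$ is therefore $\langle (R_0 - B D^{-1} B^*) u, u\rangle$, and the standard Schur complement identification of the $(0,0)$-block of the inverse of a strictly positive $2 \times 2$ block operator gives $R_0 - B D^{-1} B^* = (E_\sU^* T_R^{-1} E_\sU)^{-1}$.

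There is no substantive obstacle here: all the hard work — the existence of $\Psi$, its pointwise invertibility on $\BD$, and the $H^\infty$-membership of $\Psi^{-1}$ — is already packaged in the remark after Lemma \ref{lem:outer2}. As a consistency check, one may also recover $\Psi(0)^*\Psi(0) = (E_\sU^* T_R^{-1} E_\sU)^{-1}$ straight from the explicit formula: evaluating at $\lambda = 0$ gives $\Phi(0) = E_\sU^* T_R^{-1} E_\sU$, hence $\Psi(0) = (E_\sU^* T_R^{-1} E_\sU)^{-1/2}$, in agreement with both Proposition \ref{prop:pred1} and the Schur complement computation above.
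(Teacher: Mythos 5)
Your proof is correct and simply fills in what the paper leaves as ``well known'' and ``a standard Schur complement computation'': you obtain the outer spectral factor $\Psi$ and the membership $\Psi^{-1}\in H^\infty(\sU,\sU)$ from the remark after Lemma \ref{lem:outer2} applied with $\Omega=T_R$, you get the first equality $\sigma(u)=\|\Psi(0)u\|^2$ from Proposition \ref{prop:pred1}, and you derive the second equality by completing the square and identifying the Schur complement $R_0-BD^{-1}B^*$ with $\big(E_\sU^*T_R^{-1}E_\sU\big)^{-1}$, exactly as the paper's one-line hint indicates. One small caveat worth being aware of: the pointwise invertibility of $\Phi(\lambda)$ for $\lambda\in\BD$, which you take from the remark, is itself asserted there only with a citation to \cite{FB10} --- Lemma \ref{lem:outer2} by itself produces only a bounded left inverse of $\Phi(\lambda)$, and in infinite dimensions the step from left-invertibility to invertibility genuinely requires the extra Toeplitz structure of $T_R$, which the paper does not spell out. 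Granting that remark (as is reasonable, since the paper states it), your argument is complete, and your closing consistency check $\Psi(0)=(E_\sU^*T_R^{-1}E_\sU)^{-1/2}$ is a useful confirmation that all three ingredients cohere.
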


When $T_{R}$ is strictly positive, then  $R$ also admits a factorization of the form:
\[
R(e^{i \theta}) = \Psi(e^{i \theta})^*\Psi(e^{i \theta})
=  \Psi_\circ(e^{i \theta})^*\Delta \Psi_\circ(e^{i \theta})
\]
where $\Psi_\circ$ is an outer function in $H^\infty(\sU,\sU)$
satisfying $\Psi_\circ(0) =I$ and $\Delta$ is a strictly  positive operator
on $\sU$. In fact, $\Delta = \Psi(0)^*\Psi(0)$ and
$\Psi_\circ(\lambda) = \Psi(0)^{-1} \Psi(\lambda)$.
The factorization $R(e^{i \theta}) = \Psi_\circ(e^{i \theta})^*\Delta \Psi_\circ(e^{i \theta})$
where $\Psi_\circ$ is an outer function in $H^\infty(\sU,\sU)$ satisfying
$\Psi_\circ(0)=I$ is unique. Moreover, $\Delta$ is called the
\emph{$($right$)$ multiplicative diagonal} of $R$. In this setting,
$\sigma(u) = \langle\Delta u,u\rangle$. Finally, it is noted that
the multiplicative diagonal is usually mentioned in the framework
of the Wiener algebra (see Remark \ref{rem:multipldiag} below).

Now assume that $F$ is a Schur function in $\sS(\sU,\sY)$.
Then $I - T_{F}^* T_{F}$ is a non-negative  Toeplitz operator on $\ell_+^2(\sU)$.
In this case, the optimization problem in \eqref{optclt}
with $V = S_{\sU}$ is equivalent to
\begin{equation}\label{Pred4}
\sigma(u) = \inf\{\langle (I - T_{F}^* T_{F})(E_{\sU} u - S_{\sU}h),
E_{\sU} u - S_{\sU}h\rangle:
h \in \ell_+^2(\sU)\}
\end{equation}
where $u$ is a specified vector in $\sU$.
Assume that $I - F^*F$ admits an outer spectral factor,
that is, $I - T_{F}^* T_{F} = T_{\Psi}^* T_{\Psi}$
for some outer function $\Psi$ in $H^\infty(\sU,\sE)$.
Then the corresponding
cost function $\sigma(u) = \|\Psi(0) u\|^2$.

If $T_{F}$ is a strict contraction,
or equivalently, $\|F\|_\infty <1$, then $I-T_F^* T_F$ is a strictly
positive operator on $\ell_+^2(\sU)$. Hence $I-T_F^* T_F$ admits an
outer spectral $\Psi$ factor in $H^\infty(\sU,\sU)$ and
$\Psi(\lambda)^{-1}$ is also in $H^\infty(\sU,\sU)$. Choosing $R = I - F^*F$ in
\eqref{Pred3}, yields
\begin{equation}\label{pred6}
\sigma(u) = \|\Psi(0) u\|^2 =
\langle \left(E_{\sU}^*(I - T_{F}^* T_{F})^{-1}E_{\sU}\right)^{-1} u,u\rangle.
\end{equation}
Finally, if $\sU$ is finite dimensional, then
\begin{equation}\label{szegoCC}
 \det[\Psi(0)^*\Psi(0)] =
\exp{\left(\frac{1}{2\pi}\int_0^{2\pi} \ln \det[I - F(e^{i\theta})^*F(e^{i\theta})] d \theta\right)}.
\end{equation}

\medskip
\begin{remark}\label{rem:multipldiag}
\textup{Let $\sH$  be a Hilbert spaces, and  $W_\sH(\BT)$ we denote the operator Wiener algebra   on the unite circle which consists of  all $\sL(\sH, \sH)$-valued  functions on $\BT$ of the form
\[
F(\l)=\sum_{j=-\iy}^\iy \l^j F_j,\qquad \l\in \BT,
\]
where $F_j\in \sL(\sH, \sH)$ for each $j$ and $\sum_{j=-\iy}^\iy \| F_j\|<\iy$. By $W_{\sH, +}(\BT)$ we denote the subalgebra of  $W_\sH(\BT)$
consisting of all $F$ in $W_\sH(\BT)$  with $F_j=0$ for each $j\leq -1$. Now assume that $F(\l)$ is strictly positive  for each $\l\in \BD$. Then there exists a unique function $\Psi$ in $W_{\sH,+}(\BT)$ and a unique strictly positive operator $\de(F)$ on $\sH$  such that $\Psi$ is invertible in $W_{\sH, +}(\BT)$,  its index zero Fourier coefficient   $\Psi_0 =I_\sH$, and
\[
F(\l)=\Psi(\l)^* \de(F)\Psi(\l), \quad \l\in \BT.
\]
The operator $\de(F)$ is called the \emph{$($right$)$ multiplicative diagonal} of $F$. It is known that  $\de(F)$ is also given by
\[
\de(F)= \left(E_\sH^* T_F^{-1} E_\sH\right)^{-1}.
\]
See \cite{GKW91} where the notion of multiplicative diagonal is introduced  in a $\star$-algebra setting, and Sections XXXIV.4 and XXXV.1 in \cite{GGK2} for further  information.}
\end{remark}

\paragraph{Acknowledgement.}
We thank  Joseph A. Ball for his valuable comments on an earlier version of the present  paper. His observations  varied from remarks on the used terminology to relating  some of our results and proofs    to those in multivariable interpolation theory, in particular, in his work with Vladimir  Bolotnikov \cite{BB08}.

\paragraph{NRF statement disclaimer.}
 The present work is based on the research supported in part by the National Research Foundation of South Africa. Any opinion, finding and conclusion or recommendation expressed in this material is that of the authors and the NRF does not accept any liability in this regard.



\begin{thebibliography}{99}

\bibitem{AAK71}
V.M. Adamjan, D.Z. Arov, and M.G. Kre\v{\i}n, Infinite Hankel block matrices and related problems of extension (Russian), {\em Izv.\ Akad.\ Nauk Armjan.\ SSR Ser.\ Mat.} {\bf 6} (1971), no.\ 2--3, 87--112.

\bibitem{am1}
J. Agler and J.E. McCarthy,  Nevanlinna-Pick interpolation on the bidisk, {\em J. reine angew. Math.}  {\bf 506} (1999), 191--204.


\bibitem{am2} J. Agler and J.E. McCarthy, {\em Pick interpolation
and Hilbert  function spaces,} American Mathematical  Society, 2002.


\bibitem{Ando90} T. Ando,  \emph{De Branges spaces and analytic operator functions}, Sapporo, Japan, 1990.

\bibitem{AD08} D.Z. Arov and H. Dym, \emph{$J$-contractive matrix-valued functions and related topics}, Encyclopedia of Mathematics and its Applications \textbf{116}, Cambridge University Press, 2008.

\bibitem{AG92} D.Z. Arov and L.Z. Grossman, Scattering matrices in the theory of unitary extensions of isometric operators, \emph{Math. Nachr.} \textbf{157} (1992), 105Ð123.

\bibitem{b} J.A. Ball, Linear systems, operator model theory and scattering: multivariable generalizations, in: {\em Operator theory and its applications (Winnipeg, MB, 1998)}, pp.\ 151--178,  Fields Inst.\ Commun. {\bf 25}, Amer.\ Math.\ Soc., Providence, RI, 2000.


\bibitem{BB08} J.A. Ball and V. Bolotnikov, Interpolation problems for Schur multipliers on the Drury- Arveson space: from Nevanlinna-Pick to Abstract Interpolation Problem, \emph{Integr equ. oper. theory} \textbf{62} (2008), 301--349.


\bibitem{BB13} J.A. Ball and V. Bolotnikov, Weighted Bergman Spaces: Shift-Invariant Subspaces and Input/State/Output Linear Systems, \emph{Integr.\ Equ.\ Oper.\ Theory} \textbf{76} (2013), 301--356.

\bibitem{BBF07} J.A. Ball, V. Bolotnikov, and Q. Fang, Multivariable backward-shift-invariant subspaces and observability operators, \emph{Multidim Syst Sign Process}  \textbf{18} (2007), 191--248

\bibitem{BK70}  J.A. Ball and A. Kheifets, The inverse commutant lifting problem. I.: Coordinate-free formalism, \emph{Integral Equ.\ Oper.\ Theory} \textbf{70} (2011), 17-62.

\bibitem{BT98} J.A. Ball and T.T. Trent, Unitary colligations, reproducing kernel Hilbert spaces, and  Nevanlinna-Pick  interpolation in several variables,  {\em J. Funct.\ Anal.} {\bf 157}  (1998), 1--61.

\bibitem{btv}
J.A. Ball, T.T. Trent and V. Vinnikov, Interpolation and commutant lifting for multipliers on reproducing kernel Hilbert spaces,  {\em Oper.\ Theory  Adv.\ Appl.} {\bf 122} (2002), 89--138.

\bibitem{deBrR66a} L. de Branges and J. Rovnyak, {\em Square summable power series}, Holt, Rinehart and Winston, New York-Toronto, Ont.-London, 1966.

\bibitem{deBrR66b} L. de Branges and J. Rovnyak, Canonical models in quantum scattering theory, in:\ {\em Perturbation Theory and its Applications in Quantum Mechanics (Proc.\ Adv.\ Sem.\ Math.\ Res.\ Center, U.S. Army, Theoret.\ Chem.\ Inst., Univ.\ of Wisconsin, Madison, Wis., 1965)}, pp.\ 295ï¿½392, Wiley, New York, 1966.

\bibitem{cs} M. Cotlar and C. Sadosky, On the Helson-Szeg\"{o} theorem and a related class of modified Toeplitz kernels, {\em Proc.\ Symp.\ Pure Math. AMS,} {\bf 35}(1) (1979), 383--407.


\bibitem{D2009}
V. Derkach, Abstract Interpolation Problem in Nevanlinna Classes, {\em Oper.\ Theory  Adv.\ Appl.} \textbf{190} (2009), 283--298.


\bibitem{FF90}
C. Foias and A. Frazho, \emph{The commutant lifting approach to interpolation problems}, {\em Oper.\ Theory  Adv.\ Appl.} \textbf{100}, Birkh\"auser Verlag, Basel, 1990.



\bibitem{FFGK98}
C. Foias, A. Frazho, I. Gohberg, and M.A. Kaashoek, \emph {Metric constrained interpolation, commutant lifting and systems}, \emph{Oper.\ Theory  Adv.\ Appl.} \textbf{100}, Birkh\"auser Verlag, Basel, 1998.

\bibitem{FFK02}
C. Foias, A.E. Frazho, and M.A. Kaashoek, Contractive liftings and the commutator, \emph{C.R. Acad.\ Sci.\ Paris Ser. I} \textbf{335}, (2002), 1--6.


\bibitem{FB10}
A. Frazho and W. Bhosri, \emph{An operator perspective on signals and systems}, \emph{Oper.\ Theory  Adv.\ Appl.} {\bf 204}, Birkh\"auser Verlag, Basel, 2010.

\bibitem{FtHK14a} A.E. Frazho, S. ter Horst, and M.A. Kaashoek, State space formulas for stable rational matrix solutions of a Leech problem,  \emph{Indagationes Math.} \textbf{25} (2014), 250--274.

\bibitem{FtHK14b} A.E. Frazho, S. ter Horst, and M.A. Kaashoek,
State space formulas for a suboptimal rational Leech problem I: Maximum entropy solution, \emph{Integral Equ.\ Oper.\ Theory} \textbf{79} (2014), 533--553.

\bibitem{FtHK15}
A.E. Frazho, S. ter Horst, and M.A. Kaashoek, State space formulas for a suboptimal rational Leech problem II: Parametrization of all solutions, {\em Oper.\ Theory  Adv.\ Appl.} \textbf{244} (2015), 149--179.

\bibitem{GGK1}
I. Gohberg, S. Goldberg, and M.A. Kaashoek, \emph{Classes of Linear Operators, Volume I},  \emph{Oper.\ Theory  Adv.\ Appl.} {\bf 49} Birkh\"auser Verlag, Basel, 1990.

\bibitem{GGK2}
I. Gohberg, S. Goldberg, and M.A. Kaashoek, \emph{Classes of Linear Operators, Volume II},  \emph{Oper.\ Theory  Adv.\ Appl.} {\bf 63} Birkh\"auser Verlag, Basel, 1993.

\bibitem{GKW89a}
I. Gohberg, M.A. Kaashoek, and H.J. Woerdeman, The band method for positive and contractive extension problems, \emph{J. Operator Theory} \textbf{22} (1989), 109--155.


\bibitem{GKW89b}
I. Gohberg, M.A. Kaashoek, and H.J. Woerdeman,  The band method for positive and contractive extension problems: An alternative version and new applications, \emph{Integr.\ Equ.\ Oper.\ Theory} \textbf{12} (1989), 343--382.

\bibitem{GKW91} I. Gohberg, M.A. Kaashoek, and H.J. Woerdeman, A maximum entropy principle in the general frame work of the band method' \emph{J. Funct.\ Anal.}  \textbf{95}  (1991),  231--254

\bibitem{helson} H. Helson, {\em Lectures on Invariant Subspaces,}
Academic Press, New York, 1964.

\bibitem{HL1} H. Helson and D. Lowdenslager, Prediction theory
and Fourier series in several variables, {\em Acta Math.},
{\bf 99} (1958), 165--202.

\bibitem{HL2} H. Helson and D. Lowdenslager, Prediction theory
and Fourier series in several variables II, {\em Acta Math.},
{\bf 106} (1961), 175--213.



\bibitem{K99}
M.A. Kaashoek, {\em The band method and tangential Nevanlinna-Pick interpolation for operator-valued functions at operator points}, unpublished manuscript.


\bibitem{KZ99}
M.A. Kaashoek and C.G. Zeinstra, The band method and generalized Carath\'edory-Toeplitz interpolation at operator points, \emph{Integr.\ Equ.\ Oper.\ Theory} \textbf{33} (1999), 175--210.


\bibitem{KR14}
M.A. Kaashoek and J. Rovnyak, On the preceding paper by R. B. Leech, {\em Integr.\ Equ.\ Oper.\ Theory} {\bf 78} (2014), 75-77.

\bibitem{KKY07}
V.E. Katsnelson, A.Ya. Kheifets, P.M. Yuditskii, An abstract interpolation problem and the extension theory of isometric operators, {\em Oper.\ Theory  Adv.\ Appl.} \textbf{95} (1994), 283--298.

\bibitem{Kh98}
A.Ya. Kheifets, The abstract interpolation problem and applications, in:\ {\em Holomorphic spaces (Berkeley, CA, 1995)}, pp.\ 351--379, {\em Math.\ Sci.\ Res.\ Inst.\ Publ.} {\bf 33}, Cambridge Univ.\ Press, Cambridge, 1998.

\bibitem{Kos} J. Kos,   \emph{Time-dependent problems in linear operator theory}, Ph. D. thesis, Department of Mathematics, Vrije Universiteit,
Amsterdam, 1995.


\bibitem{TL80}
A.E. Taylor and D.C. Lay, \emph{Introduction to Functional Analysis}, 2nd edition, John Wiley and Sons, Inc, 1980.

\bibitem{L14}
R.B. Leech, Factorization of analytic functions and operator inequalities, {\em Integr.\ Equ.\ Oper.\ Theory} {\bf 78} (2014), 71--73.

\bibitem{N19} R. Nevanlinna,  \"Uber beschr\"ankte
Funktionen, die in gegebene Punkten vorgeschriebene Werte annehmen,
{\em Ann.\ Acad.\ Sci.\ Fenn.\ Ser.\ A}  \textbf{13} (1919), no.\ 1.

\bibitem{P16} G. Pick, \"Uber die Beschr\"ankungen analytischer
Funktionen, welche durch vorgegebene Funktionswerte bewirkt werden,
{\em Math.\ Ann.} \textbf{77} (1916), 7--23.

\bibitem{sz-nf} B. Sz.-Nagy and C. Foias, \emph{Harmonic Analysis
of Operators on Hilbert Space,}  North Holland Publishing Co.,
Amsterdam-Budapest,  1970.

\bibitem{sz-nfBK} B. Sz.-Nagy, C. Foias,  H. Bercovici, and L. K\'erchy, \emph{Harmonic Analysis of Operators on Hilbert Space,}  Springer Verleg, New York, 2010.


\bibitem{Sz.-nk} B. Sz.-Nagy and A. Koranyi, Relations d'un probl\`{e}me de Nevanlinna et Pick avec la theorie des op\'{e}rateurs de l'espace Hilbertien, {\em Acta Sci Math.} \textbf{7} (1956), 295--302.

\end{thebibliography}
\end{document}